\documentclass[9pt, reqno]{amsart}
\usepackage{amsmath, amssymb}

\usepackage{latexsym, amssymb, amsmath, amsthm, amscd}
\usepackage[all,cmtip]{xy}
\usepackage{amsmath}
\usepackage{amsthm}
\usepackage{amssymb}
\usepackage{amsfonts}
\usepackage{amsrefs}
\usepackage{color,authblk}
\usepackage{tikz}
\usepackage{amscd} 
\usepackage{comment}
\usepackage{mathrsfs}
\usepackage{comment}
\usepackage[all]{xy}
\usepackage{graphicx}
\usepackage{authblk}

\usepackage{bbm}
\usepackage{fullpage}
\usepackage[all,cmtip]{xy}
\usepackage{relsize}
\usepackage{amsmath,amscd}
\usepackage{tikz-cd}
\usepackage{tikz}
\usetikzlibrary{matrix}
\usepackage[mathcal]{euscript}
\usepackage{txfonts}
\DeclareMathAlphabet{\matholdcal}{OMS}{cmsy}{m}{n}

\numberwithin{equation}{section} \DeclareMathSizes{2}{10}{12}{13}
\parindent=0.0in

\usepackage{etoolbox}
\patchcmd{\section}{\scshape}{\bfseries}{}{}
\makeatletter
\renewcommand{\@secnumfont}{\bfseries}
\makeatother

\textwidth 6.5in
\textheight 8.2in

\newtheorem{thmi}{Theorem}

\makeatletter
\newcommand*{\doublerightarrow}[2]{\mathrel{
		\settowidth{\@tempdima}{$\scriptstyle#1$}
		\settowidth{\@tempdimb}{$\scriptstyle#2$}
		\ifdim\@tempdimb>\@tempdima \@tempdima=\@tempdimb\fi
		\mathop{\vcenter{
				\offinterlineskip\ialign{\hbox to\dimexpr\@tempdima+1em{##}\cr
					\rightarrowfill\cr\noalign{\kern.5ex}
					\rightarrowfill\cr}}}\limits^{\!#1}_{\!#2}}}
\newcommand*{\triplerightarrow}[1]{\mathrel{
		\settowidth{\@tempdima}{$\scriptstyle#1$}
		\mathop{\vcenter{
				\offinterlineskip\ialign{\hbox to\dimexpr\@tempdima+1em{##}\cr
					\rightarrowfill\cr\noalign{\kern.5ex}
					\rightarrowfill\cr\noalign{\kern.5ex}
					\rightarrowfill\cr}}}\limits^{\!#1}}}
\makeatother

\newtheorem{thm}{Proposition}[section]
\newtheorem{Thm}[thm]{Theorem}

\newtheorem{eg}[thm]{Example}
\newtheorem{lem}[thm]{Lemma}
\newtheorem{defn}[thm]{Definition}

\usepackage[hidelinks]{hyperref}
\newenvironment{nouppercase}{%
  \renewcommand{\uppercasenonmath}[1]{}}{}

\title{\large Galois measurings for noncommutative base change of entwined contramodule and entwined comodule categories}

\date{}

\begin{document}

\linespread{0.9}
	
	\begin{nouppercase}
\maketitle
\end{nouppercase}
	
	\centerline{Divya Ahuja \footnote{Department of Mathematics, Indian Institute of Technology, Delhi. Email: divyaahuja1428@gmail.com.} 
\footnote{DA was partially supported by CSIR Fellowship 09/086(1430)/2019-EMR-I.} $\qquad$ Abhishek Banerjee \footnote{Department of Mathematics, Indian Institute of Science, Bangalore. Email: abhishekbanerjee1313@gmail.com.} 
\footnote{\label{footcrg} AB and SK were partially supported by SERB Core Research Grant 2023/004143} $\qquad$ Surjeet Kour \textsuperscript{\ref{footcrg}} \footnote{Department of Mathematics, Indian Institute of Technology, Delhi. Email: koursurjeet@gmail.com.} }

	\begin{abstract}
		\normalsize We study the noncommutative base change of an entwining structure $(A,C,\psi)$ by a Grothendieck category $\mathfrak S$, using two module like categories. These are the categories of entwined comodule objects and entwined contramodule objects in $\mathfrak S$ over the entwining structure 
$(A,C,\psi)$. We consider criteria for maps between these noncommutative spaces, induced by generalized maps between entwining structures, known as measurings, to behave like Galois extensions. We also study conditions for extensions of these noncommutative spaces, understood as functors between module like categories, to have separability, Frobenius or Maschke type properties. 
	\end{abstract}
	
	\medskip
	{MSC(2020) Subject Classification: 16T15, 18E10, 18E50}

	\medskip
	{Keywords:}  Galois measurings, entwining structures, Frobenius conditions, separability conditions, coalgebra Galois extensions
	
	\medskip

	\section{Introduction}
	
	The purpose of this paper is to study extensions between noncommutative spaces using properties of functors between module like categories.  The spaces that we consider are noncommutative base changes of entwining structures. We study categorical criteria for morphisms between these noncommutative spaces to behave like Galois extensions, Frobenius or separable extensions, or to have Maschke type properties. 
	
	\smallskip
	Let $k$ be a field. Let $A$ be a $k$-algebra and let $C$ be a $k$-coalgebra. An entwining structure $(A,C,\psi)$ (see Brzezi\'nski and Majid \cite{BM}) consists of a map 
	$\psi:C\otimes A\longrightarrow A\otimes C$ such that the coalgebra structure on $C$ is well behaved with respect to the multiplication on $A$ and vice versa. The data of an entwining structure can be used to formulate a number of noncommutative spaces, such as the ``coalgebra fiber bundle'' corresponding to a Galois extension, or the algebraic counterpart of the quotient produced by an affine algebraic group acting freely on a  scheme. In noncommutative geometry, a space is commonly replaced by an algebra, and an algebra is studied through its category of modules. In a similar vein, the noncommutative space corresponding to an entwining structure $(A,C,\psi)$ is studied by means of modules over it. The latter are called entwined modules (see Brzezi\'nski \cite{Tb}). This has given rise to a rich theory that binds together several notions which appear naturally in the study of comodule algebras, or in Hopf-Galois theory, such as those of Doi-Hopf modules, relative Hopf modules, and Yetter-Drinfeld modules (see, for instance, \cite{Abu}, \cite{BBR1}, \cite{BBR2}, \cite{Tb*}, \cite{Tbart}, \cite{BCT1}, \cite{BCT2}, \cite{CR}, \cite{Jia},  \cite{Sch}). 
	
	\smallskip
	The noncommutative spaces that we consider in this paper are given by the base change of an entwining structure $(A,C,\psi)$ with respect to a Grothendieck category $\mathfrak S$.  If $R$, $R'$ are commutative $k$-algebras, a module over $R\otimes_kR'$ can be expressed as an $R$-module $M$ along with a homomorphism of rings from $R'$ to the $R$-module endomorphisms of $M$. If $A$ is a $k$-algebra (not necessarily commutative), the category of $A$-modules is understood to be the noncommutative affine scheme corresponding to $A$. We also know that Grothendieck categories often play the role of noncommutative spaces (see, for instance, \cite{LGS}). We want to think about a quasi-coherent sheaf over the  noncommutative base change of the noncommutative affine scheme corresponding to $A$ by a Grothendieck category $\mathfrak S$. This can be understood as an object $\mathcal M\in \mathfrak S$ along with a map of rings from $A$ to the endomorphisms of $\mathcal M$ in $\mathfrak S$.  The idea of this category $\mathfrak S_A$ of  (right) $A$-module objects in $\mathfrak S$ is a classical notion that goes back to Popescu \cite{NP}.  These categories arise naturally in the study of noncommutative projective schemes due to Artin and Zhang \cite{AZ0}. For a graded $k$-algebra $A$, the noncommutative counterpart of the space of homogeneous prime ideals of $A$ is given by either of two categories. The first is the category $Gr(A)$ of graded $A$-modules. The second is the category $QGr(A)$ given by the quotient of $Gr(A)$ over modules which can be obtained as filtered colimits of bounded graded modules (see 
	Artin and Zhang \cite{AZ0}). Both $Gr(A)$ and $QGr(A)$ are Grothendieck categories with excellent properties with respect to base change. For instance, we recall (see  \cite{AZ}) that a Grothendieck category is said to be strongly locally noetherian if the category $\mathfrak S_R$ of $R$-module objects in $\mathfrak S$ is locally noetherian for every noetherian 
	commutative $k$-algebra $R$.  We know (see \cite[$\S$ B5]{AZ}) that the category of modules over a (not necessarily commutative) $k$-algebra $R'$ is strongly locally noetherian if and only if $R'$ is strongly locally noetherian, i.e., $R'\otimes_kR$ is noetherian for every noetherian 
	commutative $k$-algebra $R$ (see Artin, Small and Zhang \cite{ASZ}). Similarly, a graded $k$-algebra $A$ is strongly locally noetherian if and only if the Grothendieck category $Gr(A)$ is strongly locally noetherian (see \cite[$\S$ B5]{AZ}). If $A$ is a graded $k$-algebra and strongly locally noetherian, then the category $QGr(A)$ is also strongly locally noetherian 
	(see \cite[$\S$ B8]{AZ}).
	
	\smallskip
	In \cite{AZ}, Artin and Zhang developed the theory of the 
category $\mathfrak S_A$ to study Hilbert functors on   graded modules over
general noncommutative graded algebras. Further, the authors in \cite{AZ} also established extensions for a number of constructions in commutative algebra  and homological algebra, such as localizations, completions, derived functors of Hom and tensor products for $A$-module objects in $\mathfrak S$. Previously in \cite{BBK}, \cite{BKg},  we have developed counterparts for associated primes and their connection to indecomposable injectives, as well as injective envelopes in the abstract module categories $\mathfrak S_A$. The comodule version of base change by means of Grothendieck categories was introduced by  Brzezi\'nski  and Wisbauer in \cite[$\S$ 39]{BWb}, which gives the category $\mathfrak S^C$ of $C$-comodule objects in $\mathfrak S$, where $C$ is a $k$-coalgebra. In \cite{BBK}, we showed that the category $\mathfrak S^C$ embeds as a coreflective subcategory of the abstract module category $\mathfrak S_A$, where $A$ is a $k$-algebra and $C\otimes A\longrightarrow k$ is a pairing that is ``$\mathfrak S$-rational.'' In \cite{BBK}, we also introduced the category ${_A}\mathfrak S^H$ of relative 
$(A,H)$-Hopf module objects in $\mathfrak S$, where $H$ is a Hopf algebra and $A$ is an $H$-comodule algebra. We then combined this with our work in \cite{BKg} to study injective resolutions, as well as terms appearing in Grothendieck spectral sequences for computing cohomology in  ${_A}\mathfrak S^H$.

\smallskip
As mentioned before, the modules over an entwining structure $(A,C,
\psi)$ incorporate the framework of relative Hopf modules. As such, in this paper, we use two different module like categories to study the base change (with respect to a Grothendieck category $\mathfrak S$) of the noncommutative space corresponding to an entwining structure  $(A,C,
\psi)$. These are the categories of entwined comodule objects and entwined contramodule objects in $\mathfrak S$ over  $(A,C,\psi)$. An object in the former category  $\mathfrak S_A^{C}(\psi)$  consists of $\mathcal M\in \mathfrak S$ that is both a $C$-comodule object in $\mathfrak S$ and an $A$-module object in $\mathfrak S$, such that the two structures are compatible with respect to the entwining $\psi:C\otimes A\longrightarrow A\otimes C$ (see Definition \ref{D2.3} for details). Similarly, an object in the latter category $_{\hspace{1em}A}^{[C,-]}\mathfrak S(\psi)$ consists of $\matholdcal M\in \mathfrak S$  that is both a $C$-contramodule object in $\mathfrak S$ and an $A$-module object in $\mathfrak S$, such that the two structures are compatible with respect to the entwining $\psi$ (see Definition \ref{D3.8} for details). While contramodules were introduced by Eilenberg and Moore alongside comodules in \cite{EM}, they appear to have received insufficient attention in the literature compared to comodules. However, with a revival of interest in contramodules in recent years (see, for instance, \cite{BBR3}, 
\cite{Bazz}, \cite{Pos1}, \cite{Pos2}), one of the key features of our paper is that we study Galois extensions, Frobenius, separability and Maschke type properties in the context of both  entwined contramodules and  entwined comodules. 

\smallskip
We now describe the paper in more detail. For each $k$-vector space $V$, we have a pair of adjoint functors (see Artin and Zhang \cite{AZ})
\begin{equation}\label{Intr1.0}
V\otimes - : \mathfrak S\longrightarrow \mathfrak S\qquad (V,-): \mathfrak S\longrightarrow \mathfrak S
\end{equation} 
We begin in Section 2 by considering the category $\mathfrak S_A^{C}(\psi)$ of entwined comodule objects in $\mathfrak S$ over $(A,C,\psi)$, along with a pair of adjoint functors connecting $\mathfrak S_A^{C}(\psi)$ to the category $\mathfrak S^C$. Using the fact that $\mathfrak S^C$ is a Grothendieck category from our previous work in \cite{BBK}, we show in Theorem \ref{T2.6} that
$\mathfrak S_A^{C}(\psi)$ is also a Grothendieck category. In Section 3, we show that   the category  $_{\hspace{1em}A}^{[C,-]}\mathfrak S(\psi)$ of entwined contramodule objects in 
$\mathfrak S$ over $(A,C,\psi)$ is abelian, and give an explicit description of its generators. As with  ordinary contramodules over a coalgebra, the category  $_{\hspace{1em}A}^{[C,-]}\mathfrak S(\psi)$ of entwined contramodule objects in $\mathfrak S$ over 
$(A,C,\psi)$ is not in general a Grothendieck category, because it does not satisfy the (AB5) condition. 

\smallskip
From Section 4 onwards, we consider morphisms of noncommutative spaces obtained by the base change of entwining structures   with respect to the Grothendieck category
$\mathfrak S$. As with the noncommutative spaces themselves, these morphisms may not exist in an explicit sense, but are understood in terms of adjoint functors between entwined contramodule objects or entwined comodule objects in $\mathfrak S$. We start with Galois extensions. The advantage of this functorial framework is that it allows us to consider maps between noncommutative spaces induced by more general morphisms of entwining structures, known as measurings. If $A$, $A'$ are $k$-algebras, we recall (see Sweedler \cite{Sweed}) that a coalgebra measuring from $A'$ to $A$ consists of a $k$-coalgebra $C'$ and a $k$-linear map $\alpha:C'\longrightarrow Hom_k(A',A)$ such that
\begin{equation}
\alpha(c')(a'b')=\sum \alpha(c'_{(1)})(a') \alpha(c'_{(2)})(b')\qquad \alpha(c')(1_{A'})=\epsilon_{C'}(c')\cdot 1_A \qquad c'\in C,\textrm{ }a',b'\in A'
\end{equation} where the coproduct $\Delta_{C'}(c'):=\sum c'_{(1)}\otimes c'_{(2)}$ and $\epsilon_{C'}$ is the counit on $C'$. We note that when $c'\in C'$ is a grouplike element, i.e., 
$\Delta_{C'}(c')=c'\otimes c'$ and $\epsilon_{C'}(c')=1$, then $\alpha(c'):A'\longrightarrow A$ is an ordinary homomorphism of  rings. The coalgebra measurings lead to an enrichment of the category of $k$-algebras over $k$-coalgebras. This is known as the Sweedler hom of algebras (see \cite{AJ}), which is related to the Sweedler dual of an algebra (see \cite{Porst}). For more on this subject, we refer the reader for instance, to \cite{Bat}, \cite{bim1}, \cite{bim2}, \cite{Vas1}, \cite{Vas2}, \cite{Vas3}. In \cite{BKtga}, \cite{BKvda}, we have used measurings as generalized morphisms between algebras to induce maps between a number of (co)homology theories.  

\smallskip
As introduced by  Brzezi\'nski \cite{Tb}, a measuring of entwining structures from $(A',C',\psi')$ to $(A,C,\psi)$ is given by a pair $(\alpha,\gamma)$ of linear maps 
\begin{equation}
\alpha: C'\otimes A'\longrightarrow A\qquad \gamma: C'\longrightarrow A\otimes C
\end{equation} satisfying certain conditions.  These generalized maps were then used by  Brzezi\'nski in \cite{Tb} to construct adjoint functors between categories of entwined modules, as well as give a notion of Galois measuring of entwining structures. Motivated by this, we show in Section 4 that a measuring of entwining structures  from $(A',C',\psi')$ to $(A,C,\psi)$ leads to a pair of adjoint functors (see Proposition \ref{P4.4})
\begin{equation}\label{Intr1.2}
		\widetilde{Cohom_C}(C',-):~_{\hspace{1em}A}^{[C,-]}\mathfrak S(\psi)\longrightarrow ~_{\hspace{1em}A'}^{[C',-]}\mathfrak S(\psi') \qquad
		\widetilde{Hom_{A'}}(A,-):~_{\hspace{1em}A'}^{[C',-]}\mathfrak S(\psi')\longrightarrow _{\hspace{1em}A}^{[C,-]}\mathfrak S(\psi) 
\end{equation}
between categories of entwined contramodule objects in $\mathfrak S$. When the adjunction in \eqref{Intr1.2} is particularly well behaved, this leads naturally to the notion of an ``$\mathfrak S$-contra-Galois measuring,'' which we introduce in Definition \ref{D4.5}. Similarly, we show that a measuring of entwining structures induces a pair of adjoint functors (see 
Proposition \ref{P4.10})
\begin{equation}\label{Intr1.3}
-\hat{\otimes}_{A'}A:\mathfrak S_{A'}^{C'}(\psi')\longrightarrow\mathfrak S_A^C(\psi) \qquad -\hat{\square}_C C':\mathfrak S_A^C(\psi)\longrightarrow \mathfrak S_{A'}^{C'}(\psi')
\end{equation}
between categories of entwined comodule objects in $\mathfrak S$. Again, when the adjunction in \eqref{Intr1.3} is particularly well behaved, we introduce in Definition \ref{D4.11} the notion of an ``$\mathfrak S$-co-Galois measuring'' of entwining structures  from $(A',C',\psi')$ to $(A,C,\psi)$. Our main results on Galois properties in Section 4 can now be summarized as follows. 

\begin{thmi} (see Theorem \ref{T4.7} and Theorem \ref{T4.13}) Let $\mathfrak S$ be a $k$-linear Grothendieck category.	Let $(\alpha, \gamma)$ be a measuring of entwining structures from $(A',C',\psi')$ to $(A,C,\psi)$. 

\smallskip
(a) Suppose that  the functor $(A',-):\mathfrak S\longrightarrow \mathfrak S$ is exact. For categories of entwined contramodule objects in $\mathfrak S$, the following are equivalent:
		
		\smallskip
		\begin{itemize}
		\item[(i)]  $\widetilde{Cohom_C}(C',-)$ and $\widetilde{Hom_{A'}}(A,-)$ form an adjoint equivalence between the categories  $_{\hspace{1em}A}^{[C,-]}\mathfrak S(\psi)$ and  $_{\hspace{1em}A'}^{[C',-]}\mathfrak S(\psi')$.

	\item[(ii)] $(\alpha, \gamma)$ is an $\mathfrak S$-contra-Galois measuring and the functors $\widetilde{Cohom_C}(C',-)$ and $\widetilde{Hom_{A'}}(A,-)$ are both exact.
		
\item[(iii)]   $(\alpha, \gamma)$ is an $\mathfrak S$-contra-Galois measuring, $\widetilde{Hom_{A'}}(A,-)$ is exact and $\widetilde{Cohom_C}(C',-)$ is faithfully exact.
		
\item[(iv)] $(\alpha, \gamma)$ is an $\mathfrak S$-contra-Galois measuring, $\widetilde{Cohom_C}(C',-)$ is exact and $\widetilde{Hom_{A'}}(A,-)$ is faithfully exact.
\end{itemize}

\smallskip
(b)  For categories of entwined comodule objects in $\mathfrak S$, the following are equivalent:
		
\begin{itemize}
		\item[(i)]The functors $-\hat{\otimes}_{A'}A$ and $-\hat{\square}_C C'$ form an adjoint equivalence of categories between $\mathfrak S_{A}^{C}(\psi)$ and $\mathfrak S_{A'}^{C'}(\psi')$.
		
	\item[(ii)] $(\alpha, \gamma)$ is an $\mathfrak S$-co-Galois measuring and the functors $-\hat{\otimes}_{A'}A$ and $-\hat{\square}_C C'$ are exact.
		
	\item[(iii)]    $(\alpha, \gamma)$ is an $\mathfrak S$-co-Galois measuring, the functor $-\hat{\square}_C C'$ is exact and the functor $-\hat{\otimes}_{A'}A$ is faithfully exact.
		
\item[(iv)] 	 $(\alpha, \gamma)$ is an $\mathfrak S$-co-Galois measuring, the functor  $-\hat{\otimes}_{A'}A$ is exact and the functor $-\hat{\square}_C C'$ is faithfully exact.

\end{itemize}
\end{thmi}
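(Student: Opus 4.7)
The plan is to prove parts (a) and (b) by parallel arguments, exploiting the duality between the contramodule and comodule settings: in (a) the adjunction \eqref{Intr1.2} has $\widetilde{Cohom_C}(C',-)$ as left adjoint, while (b) uses the tensor--cotensor adjunction \eqref{Intr1.3} with $-\hat{\otimes}_{A'}A$ as left adjoint. The common skeleton is: (i) trivially implies the other three conditions; the nontrivial direction amounts to bootstrapping the isomorphism at a distinguished generating object (which is the content of the Galois condition of Definition \ref{D4.5}, resp.\ Definition \ref{D4.11}) to a natural isomorphism on every object, using exactness of the functors; faithful exactness of one functor is then used to reflect exactness from the image back to the source, closing the chain between (ii), (iii), (iv).

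\smallskip
For (i)$\Rightarrow$(ii), (iii), (iv), an adjoint equivalence of abelian categories is automatically exact on both sides and faithfully exact, and its unit and counit are natural isomorphisms, which specialise at the distinguished object to give the Galois property. Conversely, for (ii)$\Rightarrow$(i) one shows that the Galois condition combined with two-sided exactness forces the unit $\eta$ and counit $\varepsilon$ of the adjunction to be natural isomorphisms. Starting from the Galois isomorphism at the canonical generator, I would invoke the explicit generators of $\mathfrak S_A^C(\psi)$ and $_{\hspace{1em}A}^{[C,-]}\mathfrak S(\psi)$ exhibited in Sections 2 and 3: every object admits a two-term presentation by (co)products of these generators, and applying the exact additive functors together with $\eta$ (resp.\ $\varepsilon$) to the presentation yields a morphism of short exact sequences whose outer terms are already isomorphisms; the five-lemma then extends the isomorphism to arbitrary objects.

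\smallskip
For (iii)$\Rightarrow$(ii) and (iv)$\Rightarrow$(ii), the technique is the standard faithful-exactness reflection trick. Given a short exact sequence in the source of the functor assumed only faithfully exact, apply it followed by the adjoint functor (which is exact by hypothesis); the composite equals the identity up to the unit/counit isomorphism extended by the argument above, and is therefore exact. Faithful exactness then reflects this to exactness of the first functor, reducing (iii) and (iv) to (ii), which has already been treated. The main obstacle will be the bootstrap step in (ii)$\Rightarrow$(i) on the contramodule side of (a): since $_{\hspace{1em}A}^{[C,-]}\mathfrak S(\psi)$ fails to be (AB5) as observed in Section 3, the extension from the generator cannot proceed by filtered colimit arguments and must instead use the explicit (typically product-type) projective generators constructed there. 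Once this presentation argument is in place, the remaining implications are formal diagram chases combining adjunction, exactness, and the faithful-exactness reflection.
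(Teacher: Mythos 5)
Your high-level skeleton does match the paper's: (i) formally implies (ii)--(iv) because an adjoint equivalence makes each functor a two-sided adjoint (hence exact) with invertible unit and counit (hence the Galois condition and reflection of exactness), and (iii), (iv) each contain (ii) outright, since ``faithfully exact'' here means ``preserves \emph{and} reflects exact sequences''; your elaborate reflection trick for (iii)$\Rightarrow$(ii) is therefore unnecessary and, as described, circular, because it invokes the unit/counit isomorphism on all objects before (i) has been established. The genuine gap is in the central implication (ii)$\Rightarrow$(i). The Galois hypothesis (Definitions \ref{D4.5} and \ref{D4.11}) is \emph{not} an isomorphism at a single distinguished generator: it asserts that $\Psi$ is invertible on the whole family $(C,A,\matholdcal M)$ and $\Phi$ on the family $(A',C',\matholdcal M)$ as $\matholdcal M$ ranges over $\mathfrak S$ (resp.\ $\Omega_{\mathcal M\otimes C'\otimes A'}$ and $\Upsilon_{\mathcal M\otimes A\otimes C}$). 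A presentation of an arbitrary object by coproducts of the generators $(C,\mathcal G)$ or $\mathcal G\otimes A$ from Sections 2--3 cannot be compared against this hypothesis: a generator $\mathcal G$ of ${_A}\mathfrak S$ need not be of the form $(A,\matholdcal M_0)$, and on the contramodule side the functors $(C,-)$, $(A,-)$ and $\widetilde{Hom_{A'}}(A,-)$ are product-preserving, right-adjoint-type constructions that do not commute with the coproducts occurring in such a presentation (and ${}^{[C,-]}_{\hspace{1em}A}\mathfrak S(\psi)$ is not even Grothendieck), so the outer terms of your five-lemma diagram are not objects at which the Galois condition applies.

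What actually closes the argument (and is the content of Propositions \ref{P4.6} and \ref{P4.12}) is a two-stage d\'evissage along the canonical (co)monadic presentations rather than generator presentations: every $\matholdcal N'\in{}^{[C',-]}\mathfrak S$ has the functorial right-exact presentation $(C',C',\matholdcal N')\to(C',\matholdcal N')\to\matholdcal N'\to 0$ and every $\matholdcal M'\in{_{A'}}\mathfrak S$ the left-exact presentation $0\to\matholdcal M'\to(A',\matholdcal M')\to(A',A',\matholdcal M')$; applying the exact functor $(A',-)$ to the first --- this is exactly where the standing hypothesis in (a) that $(A',-)$ is exact enters, a hypothesis your proposal never uses --- produces sequences whose outer terms lie in the Galois family, and exactness of $\widetilde{Cohom_C}(C',-)$ and $\widetilde{Hom_{A'}}(A,-)$ then lets one compare the two rows and conclude first for free modules and cofree contramodules, and only afterwards for arbitrary objects. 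Part (b) runs dually with $\mathcal N\otimes A\otimes A\to\mathcal N\otimes A\to\mathcal N\to 0$ and $0\to\mathcal M\to\mathcal M\otimes C\to\mathcal M\otimes C\otimes C$. Without replacing your generator-based bootstrap by this monadic d\'evissage, (ii)$\Rightarrow$(i) does not go through.
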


	In the second part of the paper, we study Frobenius, separability and Maschke type properties for entwined contramodule objects and entwined comodule objects in $\mathfrak S$ by adapting the 
	methods of  Brzezi\'nski, Caenepeel, Militaru and Zhu \cite{BCMZ}. As with the noncommutative spaces that are understood in terms of the module categories $_{\hspace{1em}A}^{[C,-]}\mathfrak S(\psi)$ and $\mathfrak S_{A}^{C}(\psi)$, the extensions between these spaces are studied using categorical properties of functors between them. The definitions are inspired by  
those of separable and Frobenius extensions in ring theory. Specifically, if  $R\longrightarrow R'$ is a ring
extension, it is said to be separable if the restriction  of scalars functor  is separable. Additionally,
the extension is called Frobenius if the functors of restriction and extension of scalars form a
Frobenius pair. In the case of entwined contramodule objects in $\mathfrak S$ over $(A,C,\psi)$, one has a pair of adjoint functors 
\begin{equation}^{[C,-]}\mathcal F:~_{\hspace{1em}A}^{[C,-]}\mathfrak S(\psi)\longrightarrow~ ^{[C,-]}\mathfrak S~\qquad\qquad~^{[C,-]}\mathcal T:~^{[C,-]}\mathfrak S\longrightarrow~ _{\hspace{1em}A}^{[C,-]}\mathfrak S(\psi)
\end{equation} where $^{[C,-]}\mathcal F$ is the forgetful functor.  Similarly, for entwined comodule objects in $\mathfrak S$ over $(A,C,\psi)$, one has adjoint functors
\begin{equation}\mathcal T^C:\mathfrak S^C\longrightarrow \mathfrak S_A^C(\psi)~\qquad \qquad~\mathcal F^C:\mathfrak S_A^C(\psi)\longrightarrow \mathfrak S^C
\end{equation} where  $\mathcal F^C$ is  the forgetful functor. In \cite{ENak}, Eilenberg and Nakayama showed   that every separable algebra over a field is Frobenius. Conversely, necessary and sufficient conditions for a Frobenius extension to be separable were established
in \cite[Corollary 4.1]{Kad99}.  Further, in \cite{BCMZ},  Brzezi\'nski, Caenepeel, Militaru and Zhu  explored Frobenius and Maschke type theorems for Doi-Hopf modules and entwined modules
and gave a categorical perspective on the connection between separability and Frobenius properties.
We have explored Frobenius and separability conditions for modules over small preadditive categories
entwined with coalgebras in \cite{BBR2} and \cite{ABtra}. In recent work \cite{BKsm},  we have also studied 
 Frobenius, separability and Maschke type theorems for functors between categories
of entwined comodules and entwined contramodules over coalgebras with several objects.
For more  on separability, Frobenius, and Maschke type theorems, we refer the reader to \cite{CM},  \cite{CM99}, \cite{CMZ0}. For separability and Frobenius properties, our main results in this paper may be summarized as follows.

\begin{thmi} (see Propositions \ref{P5.1}, \ref{P5.4} and Theorems \ref{T5.2}, \ref{T5.5}) Let $(A,C,\psi)$ be an entwining structure  and $\mathfrak S$ be a $k$-linear Grothendieck category. Let $~_{\hspace{1em}A}^{[C,-]}\mathfrak S(\psi)$ be the category of entwined contramodule objects in $\mathfrak S$ over $(A,C,\psi)$. 

\smallskip
(a) The space $V:=Nat(id_{^{[C,-]}\mathfrak S}, ^{[C,-]}\mathcal F~^{[C,-]}\mathcal T)$ is isomorphic to the space $V_1$ of  natural transformations $\sigma:id_{\mathfrak S}\longrightarrow (A,C,-)$ which satisfy \begin{equation}
	(A,\Delta,\matholdcal M)\circ (\psi,C,\matholdcal M)\circ (C,\sigma_{\matholdcal M})= (A,\Delta,\matholdcal M)\circ \sigma_{(C,\matholdcal M)}:(C,\matholdcal M)
	\longrightarrow (A,C,\matholdcal M)
\end{equation}
for each $\matholdcal M\in\mathfrak S$. Then, the following are equivalent:
	\begin{itemize}
	\smallskip
	\item[(i)] The functor $^{[C,-]}\mathcal T:~^{[C,-]}\mathfrak S \longrightarrow~ _{\hspace{1em}A}^{[C,-]}\mathfrak S(\psi)$ is separable. 
	
	\smallskip
	\item[(ii)] There exists $\sigma\in V_1$ such that $(\eta,C,\matholdcal M)\circ \sigma_{\matholdcal M}=(\epsilon,\matholdcal M)$ for each $\matholdcal M\in\mathfrak S$.
	\end{itemize}
	
	\smallskip
	(b)  The space $W:=Nat(^{[C,-]}\mathcal T^{[C,-]}\mathcal F, id_{~ _{\hspace{1em}A}^{[C,-]}\mathfrak S(\psi)})$ is isomorphic to the space $W_1$   of natural transformations $\rho:(A,A,-)\longrightarrow (C,-)$ 
between endofunctors on $\mathfrak S$ which satisfy
\begin{equation}
(A,\rho_{\matholdcal M})\circ (\mu,A,\matholdcal M)=(\psi,\matholdcal M)\circ \rho_{(A,\matholdcal M)}\circ (A,\mu,\matholdcal M)\qquad (\Delta,\matholdcal M)\circ (C,\rho_{\matholdcal M})= (\Delta,\matholdcal M)\circ \rho_{(C,\matholdcal M)}
\circ (A,\psi,\matholdcal M)\circ (\psi,A,\matholdcal M)
\end{equation}  for each $\matholdcal M\in\mathfrak S$. 
Then, the following are equivalent

 \begin{itemize}
 
 \smallskip
	\item[(i)]  The functor $^{[C,-]}\mathcal F:~_{\hspace{1em}A}^{[C,-]}\mathfrak S(\psi)\longrightarrow~ ^{[C,-]}\mathfrak S$ is separable 
	
	\smallskip
	\item[(ii)]  There exists $\rho\in W_1$ such that $\rho_{\matholdcal M}\circ (\mu,\matholdcal M)=(\epsilon,\matholdcal M)\circ (\eta,\matholdcal M)$ for each $\matholdcal M\in\mathfrak S$.
\end{itemize}
\end{thmi}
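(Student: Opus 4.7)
The plan is to prove both parts by invoking Rafael's characterization of separable functors: a functor $L$ with right adjoint $R$ is separable if and only if the unit $id\to RL$ admits a natural left inverse, while $R$ is separable if and only if the counit $LR\to id$ admits a natural right inverse. In our setting, $^{[C,-]}\mathcal T\dashv~^{[C,-]}\mathcal F$, so part~(a) concerns retractions of the unit and part~(b) concerns sections of the counit. Each part then splits into (i) identifying the relevant space of natural transformations $V$ (resp.\ $W$) with the concrete space $V_1$ (resp.\ $W_1$) of transformations between endofunctors of $\mathfrak S$, and (ii) translating the retraction or section condition into the pointwise identity given in the second bullet.

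For part~(a), I would first write down $^{[C,-]}\mathcal T$ explicitly: as the left adjoint of the forgetful functor $^{[C,-]}\mathcal F$, it is the ``free entwined contramodule'' functor, so $^{[C,-]}\mathcal T(\matholdcal M)$ is obtained by applying $(A,-)$ to $\matholdcal M\in~^{[C,-]}\mathfrak S$ with the $A$-action coming from $\mu$ and the $C$-contramodule structure on $\matholdcal M$ twisted by $\psi$. A natural transformation $\tau\in V$ is then a family $\tau_{\matholdcal M}\colon\matholdcal M\to(A,\matholdcal M)$ compatible with the contramodule structure maps $(C,\matholdcal M)\to\matholdcal M$. Using the $(V\otimes-)\dashv(V,-)$ adjunction \eqref{Intr1.0}, together with the description of the contramodule structure on $^{[C,-]}\mathcal T(\matholdcal M)$, specifying such a $\tau$ is equivalent to specifying a natural family $\sigma_{\matholdcal M}\colon\matholdcal M\to(A,C,\matholdcal M)$ that satisfies the displayed entwining compatibility defining $V_1$. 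This gives $V\cong V_1$. The condition that $\tau$ is a retraction of the unit then unravels, via the same adjunction and the unit $\eta\colon k\to A$ and counit $\epsilon\colon C\to k$, to the identity $(\eta,C,\matholdcal M)\circ\sigma_{\matholdcal M}=(\epsilon,\matholdcal M)$, yielding the stated equivalence.

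For part~(b), I would proceed dually. An element of $W=Nat(^{[C,-]}\mathcal T~^{[C,-]}\mathcal F,id)$ has components $(A,\matholdcal N)\to\matholdcal N$ on $\matholdcal N\in~_{\hspace{1em}A}^{[C,-]}\mathfrak S(\psi)$. Testing these on the free objects $^{[C,-]}\mathcal T(\matholdcal M)=(A,\matholdcal M)$ for $\matholdcal M\in\mathfrak S$ and applying Yoneda together with the tensor--hom adjunction, one extracts a natural family $\rho_{\matholdcal M}\colon(A,A,\matholdcal M)\to(C,\matholdcal M)$; the two displayed identities defining $W_1$ encode, respectively, $A$-linearity with respect to $\mu$ and $C$-contramodule linearity twisted by $\psi$, so $W\cong W_1$. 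The section condition for the counit $^{[C,-]}\mathcal T~^{[C,-]}\mathcal F\to id$ then becomes, after pushing through the same identification, precisely $\rho_{\matholdcal M}\circ(\mu,\matholdcal M)=(\epsilon,\matholdcal M)\circ(\eta,\matholdcal M)$.

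The main obstacle I anticipate is the careful bookkeeping in establishing the isomorphisms $V\cong V_1$ and $W\cong W_1$. One must verify that the $\psi$-compatibility built into morphisms of entwined contramodule objects corresponds to the displayed pointwise identities in the definitions of $V_1$ and $W_1$, which is a diagrammatic computation in $\mathfrak S$ using associativity of $\mu$, coassociativity of $\Delta$, the entwining axiom, and the unit--counit relations of the tensor--hom adjunction. Once these identifications are in place, translating ``retraction of the unit'' and ``section of the counit'' into the stated pointwise conditions is routine via Rafael's theorem.
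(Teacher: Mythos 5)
Your overall strategy coincides with the paper's: invoke the Rafael-type criterion of \cite[Proposition 1.1]{BCMZ}, identify $V\cong V_1$ and $W\cong W_1$ by explicit mutually inverse maps built from the tensor--hom adjunction, and then translate the splitting condition on the (co)unit into the pointwise identities. However, there is a concrete error at the foundation of your plan: you assert $^{[C,-]}\mathcal T\dashv~^{[C,-]}\mathcal F$ and describe $^{[C,-]}\mathcal T$ as the ``free entwined contramodule'' functor, left adjoint to the forgetful functor. This is the comodule pattern, and it is the wrong way around for contramodules. By Proposition \ref{L3.10}, $^{[C,-]}\mathcal T=(A,-)$ is the \emph{right} adjoint of the forgetful functor, i.e.\ $^{[C,-]}\mathcal F\dashv~^{[C,-]}\mathcal T$, with unit $\omega'_{\matholdcal N}=\mu_{\matholdcal N}\colon\matholdcal N\to(A,\matholdcal N)$ and counit $\omega_{\matholdcal N}=(\eta,\matholdcal N)\colon(A,\matholdcal N)\to\matholdcal N$.

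This reversal is not cosmetic: it flips which splitting condition governs which part, and as written your plan is internally inconsistent with the spaces $V$ and $W$ in the statement. Separability of the right adjoint $^{[C,-]}\mathcal T$ (part (a)) is equivalent to the counit $^{[C,-]}\mathcal F~^{[C,-]}\mathcal T\to id$ admitting a natural \emph{section}, which is an element of $V=Nat(id_{^{[C,-]}\mathfrak S},~^{[C,-]}\mathcal F~^{[C,-]}\mathcal T)$ --- not a ``retraction of the unit,'' which would live in $Nat(^{[C,-]}\mathcal F~^{[C,-]}\mathcal T,id)$ and hence not in $V$ at all. Dually, part (b) concerns a \emph{retraction} of the unit $id\to~^{[C,-]}\mathcal T~^{[C,-]}\mathcal F$, which is an element of $W$; your ``section of the counit'' would live in the wrong space. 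Once the adjunction is corrected, the rest of your outline (extracting $\sigma_{\matholdcal M}=\tau_{(C,\matholdcal M)}\circ(\epsilon,\matholdcal M)$ and $\rho$ by testing on the objects $(C,\matholdcal M)$ and $(A,C,\matholdcal M)$, and verifying the entwining compatibilities) does match the paper's Propositions \ref{P5.1} and \ref{P5.4}; but note also that proving $\chi\circ\vartheta=id$ is not pure bookkeeping --- the paper establishes it by showing $\vartheta$ (resp.\ $\vartheta'$) is injective, and for $W$ this requires a several-step argument using that $\mu_{\matholdcal P}$ is a split monomorphism and $(A,A,\pi_{\matholdcal N})$ is a split epimorphism. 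You should flag and supply that step rather than subsume it under ``diagrammatic computation.''
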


\begin{thmi} (see  Theorem  \ref{T5.3} and Theorem \ref{Tlst})   Let $(A,C,\psi)$ be an entwining structure  and $\mathfrak S$ be a $k$-linear Grothendieck category. Let $\mathfrak S^C_A(\psi)$ be the category of entwined comodule objects in $\mathfrak S$ over $(A,C,\psi)$. 

 \smallskip
 
 (a) The space $V':=Nat(\mathcal F^C\mathcal T^C, id_{\mathfrak S^C})$ is isomorphic to the space $V_1'$ of natural transformations $\sigma:(-\otimes C\otimes A)\longrightarrow id_{\mathfrak S}$ such that
\begin{equation} 
	(\sigma_{\mathcal M}\otimes C)\circ(\mathcal M\otimes C\otimes \psi)\circ (\mathcal M\otimes \Delta\otimes A)=\sigma_{(\mathcal M\otimes C)}\circ (\mathcal M\otimes \Delta\otimes A) 
\end{equation}
for any $\mathcal M\in \mathfrak S$. Then, the following are equivalent:
 \begin{itemize}
	
	\smallskip
	\item[(i)] The functor $\mathcal T^C:\mathfrak S^C\longrightarrow \mathfrak S^C_A(\psi)$ is separable.
	
	\smallskip
	\item[(ii)] There exists $\sigma\in V_1'$ such that $\sigma_{\mathcal M}\circ (\mathcal M\otimes C\otimes \eta)=\mathcal M\otimes \epsilon $ for each $\mathcal M\in\mathfrak S$.
	\end{itemize}

(b) The space $W':=Nat(id_{\mathfrak S_A^C(\psi)}, \mathcal T^C\mathcal F^C)$ is isomorphic to the space $W_1'$  of natural transformations $\rho:(-\otimes C)\longrightarrow (-\otimes A\otimes A)$ between endofunctors on $\mathfrak S$ such that for each $\mathcal M\in\mathfrak S$ 
\begin{equation}
\begin{array}{c}
(\mathcal M\otimes A\otimes \psi)\circ (\mathcal M\otimes \psi\otimes A) \circ \rho_{(\mathcal M\otimes C)}\circ (\mathcal M\otimes \Delta)=(\rho_{\mathcal M}\otimes C)\circ (\mathcal M\otimes \Delta)\\
(\mathcal M\otimes \mu\otimes A)\circ \rho_{(\mathcal M\otimes A)} \circ (\mathcal M\otimes \psi)= (\mathcal M\otimes A\otimes \mu)\circ (\rho_{\mathcal M}\otimes A)\\
\end{array}
\end{equation}
  Then, the following are equivalent
 \begin{itemize}
\smallskip
	\item[(i)] The functor $\mathcal F^C:\mathfrak S^C_A(\psi)\longrightarrow \mathfrak S^C$ is separable.
	
	\smallskip
	\item[(ii)]  There exists $\rho\in W_1'$ such that $(\mu_{\mathcal M}\otimes A)\circ \rho_{\mathcal M}=(\mathcal M\otimes \eta)\circ (\mathcal M\otimes \epsilon)$ for each  $\mathcal M\in\mathfrak S$.
		
	\end{itemize}
\end{thmi}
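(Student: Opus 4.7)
The proof is built on Rafael's criterion for separability of an adjoint functor: in an adjunction $(L,R)$, the left adjoint $L$ is separable if and only if the unit of the adjunction admits a natural left inverse, and the right adjoint $R$ is separable if and only if the counit admits a natural right inverse. Applied to the adjunction $\mathcal T^C \dashv \mathcal F^C$, separability of $\mathcal T^C$ is controlled by $V' = Nat(\mathcal F^C \mathcal T^C, id_{\mathfrak S^C})$, and that of $\mathcal F^C$ by $W' = Nat(id_{\mathfrak S_A^C(\psi)}, \mathcal T^C \mathcal F^C)$. The substance of each part therefore consists of two pieces: first, an identification of these abstract natural transformation spaces with $V_1'$ and $W_1'$, which consist of natural transformations between explicit endofunctors of $\mathfrak S$; second, the translation of the respective splitting conditions into the compact forms (ii).

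For part (a), given $\sigma \in V_1'$, define for each $\mathcal N \in \mathfrak S^C$ with coaction $\rho^C_{\mathcal N}$ the morphism $\tilde\sigma_{\mathcal N} := \sigma_{\mathcal N} \circ (\rho^C_{\mathcal N} \otimes A) : \mathcal N \otimes A \to \mathcal N$. Using naturality of $\sigma$ along $\rho^C_{\mathcal N}$ together with coassociativity, the compatibility equation in $V_1'$ is equivalent to $C$-colinearity of $\tilde\sigma_{\mathcal N}$ for the coaction on $\mathcal F^C \mathcal T^C(\mathcal N) = \mathcal N \otimes A$ given by $(\mathcal N \otimes \psi) \circ (\rho^C_{\mathcal N} \otimes A)$, so $\tilde\sigma \in V'$. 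Conversely, for $\tilde\sigma \in V'$, set $\sigma_{\mathcal M} := (\mathcal M \otimes \epsilon) \circ \tilde\sigma_{\mathcal M \otimes C}$ using the cofree comodule $\mathcal M \otimes C$ with coaction $\mathcal M \otimes \Delta$. The two assignments are mutually inverse: in one direction the counit identity $(\mathcal M \otimes \epsilon \otimes C) \circ (\mathcal M \otimes \Delta) = id$ combined with the colinearity of $\tilde\sigma_{\mathcal M \otimes C}$ recovers $\tilde\sigma$, and in the other, applying $(\mathcal M \otimes \epsilon) \circ -$ to the defining formula for $\tilde\sigma_{\mathcal M \otimes C}$ gives back $\sigma_{\mathcal M}$. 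Rafael's splitting condition $\tilde\sigma_{\mathcal N} \circ (\mathcal N \otimes \eta) = id_{\mathcal N}$, restricted to $\mathcal N = \mathcal M \otimes C$ and post-composed with $\mathcal M \otimes \epsilon$, collapses exactly to condition (ii); the converse follows by a short naturality argument.

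Part (b) proceeds dually but is technically more elaborate. The functor $\mathcal T^C \mathcal F^C$ sends $\mathcal N \in \mathfrak S_A^C(\psi)$ to $\mathcal N \otimes A$ with $A$-action on the right factor and $C$-coaction built from $\rho^C_{\mathcal N}$ and $\psi$. The bijection $W' \cong W_1'$ is established by evaluating $\tilde\rho \in W'$ on the ``free'' entwined objects $\mathcal M \otimes C \otimes A$ for $\mathcal M \in \mathfrak S$ (the values at $\mathcal M$ of the left adjoint to the underlying-object functor $\mathfrak S_A^C(\psi) \to \mathfrak S$, obtained as $\mathcal T^C(\mathcal M \otimes C)$) and projecting on the coalgebra factor via $\mathcal M \otimes \epsilon \otimes A$, thereby extracting $\rho_{\mathcal M}: \mathcal M \otimes C \to \mathcal M \otimes A \otimes A$. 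The two equations defining $W_1'$ correspond precisely to $A$-linearity and $C$-colinearity of $\tilde\rho$ on these generators, with the entwining map $\psi$ entering both diagrams. Under this identification, the counit $\mu_{\mathcal N}$ of the adjunction, evaluated on the generators and composed with the appropriate projection, reduces Rafael's cosplitting condition $\mu_{\mathcal N} \circ \tilde\rho_{\mathcal N} = id_{\mathcal N}$ to condition (ii).

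The main technical obstacle is the verification of the bijection $W' \cong W_1'$ in part (b). Whereas part (a) needed only $C$-colinearity of a single map $\tilde\sigma_{\mathcal N}: \mathcal N \otimes A \to \mathcal N$ into an object in $\mathfrak S^C$, here the transported map $\tilde\rho_{\mathcal N}: \mathcal N \to \mathcal N \otimes A$ must be simultaneously $A$-linear and $C$-colinear into an entwined object whose coaction is twisted by $\psi$. Matching these two constraints against the two explicit equations in $W_1'$ demands a careful diagram chase using the entwining axioms together with the associativity of $\mu$ and coassociativity of $\Delta$; this is where the entwining enters essentially. Once this dictionary is set up, the cosplitting criterion follows routinely by evaluation on the generating free entwined objects and the counit/unit identities of $C$ and $A$.
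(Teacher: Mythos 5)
Your proposal is correct and follows essentially the same route as the paper: both parts rest on Rafael's criterion (the paper's reference to \cite[Proposition 1.1]{BCMZ}), the identifications $V'\cong V_1'$ and $W'\cong W_1'$ are established by exactly the paper's mutually inverse assignments (precompose with the coaction $\Delta_{\mathcal N}\otimes A$, respectively restrict to the cofree/free objects $\mathcal M\otimes C$ and $\mathcal M\otimes C\otimes A$ and project with $\epsilon$ after inserting $\eta$), and the splitting conditions are translated in the same way. The paper itself only sketches these arguments by analogy with the contramodule case (Proposition \ref{P5.1}, Theorems \ref{T5.2} and \ref{T5.5}), and your outline matches that intended proof.
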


\begin{thmi} (see  Theorem  \ref{T6.2} and Theorem \ref{tT6.3}) Let $(A,C,\psi)$ be an entwining structure  and $\mathfrak S$ be a $k$-linear Grothendieck category.

\smallskip
(a) The adjoint pair $(^{[C,-]}\mathcal F, ^{[C,-]}\mathcal T)$ is Frobenius if and only if there exist natural transformations $\sigma\in V_1$ and $\rho\in W_1$ satisfying for  each $\matholdcal M\in\mathfrak S$
\begin{equation}
			 (\epsilon,\matholdcal M)\circ (\eta,\matholdcal M)=(\Delta,\matholdcal M)\circ \rho_{(C,\matholdcal M)}\circ(A,\psi,\matholdcal M)\circ \sigma_{(A,\matholdcal M)} \qquad (\epsilon,\matholdcal M)\circ (\eta,\matholdcal M)=(\Delta,\matholdcal M)\circ \rho_{(C,\matholdcal M)}\circ (A,\sigma_{\matholdcal M})
		\end{equation} 
		
		\smallskip
		(b) The adjoint pair $(\mathcal T^C, \mathcal F^C)$ is Frobenius if and only if there exist natural transformations $\sigma\in V_1'$ and $\rho\in W_1'$ such that for any $\mathcal M\in\mathfrak S$:
		\begin{equation}
			(\mathcal M\otimes \eta)\circ (\mathcal M\otimes \epsilon)=\sigma_{(\mathcal M\otimes A)}\circ (\mathcal M\otimes\psi\otimes A)\circ \rho_{(\mathcal M\otimes C)}\circ (\mathcal M\otimes \Delta) \qquad 
			(\mathcal M\otimes \eta)\circ (\mathcal M\otimes \epsilon)=(\sigma_{\mathcal M}\otimes A)\circ \rho_{(\mathcal M\otimes C)}\circ (\mathcal M\otimes \Delta)
		\end{equation}

\end{thmi}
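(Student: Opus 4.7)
The plan is to invoke the standard categorical criterion that, in view of the underlying adjunction $^{[C,-]}\mathcal F\dashv{}^{[C,-]}\mathcal T$ (the forgetful--cofree adjunction implicit in the separability propositions quoted earlier), the Frobenius property for the pair $(^{[C,-]}\mathcal F,{}^{[C,-]}\mathcal T)$ is equivalent to the existence of the reverse adjunction ${}^{[C,-]}\mathcal T\dashv{}^{[C,-]}\mathcal F$. Such a reverse adjunction is specified by a new unit $\bar\eta\in V$, i.e.\ $\bar\eta:\mathrm{id}_{{}^{[C,-]}\mathfrak S}\to{}^{[C,-]}\mathcal F\,{}^{[C,-]}\mathcal T$, and a new counit $\bar\epsilon\in W$, i.e.\ $\bar\epsilon:{}^{[C,-]}\mathcal T\,{}^{[C,-]}\mathcal F\to\mathrm{id}$, subject to the two triangle identities of the new adjunction. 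Invoking the isomorphisms $V\cong V_1$ and $W\cong W_1$ from the earlier propositions, one rewrites $\bar\eta,\bar\epsilon$ as concrete natural transformations $\sigma\in V_1$ and $\rho\in W_1$ between endofunctors of $\mathfrak S$ built from $(A,-)$ and $(C,-)$, so that the Frobenius property reduces to the existence of a pair $(\sigma,\rho)\in V_1\times W_1$ whose images in $V\times W$ satisfy the translated triangle identities.

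Next I would evaluate each triangle identity at a cofree entwined contramodule ${}^{[C,-]}\mathcal T(\matholdcal M)$ for $\matholdcal M\in\mathfrak S$, and unwind the resulting composites using (i) the explicit description of ${}^{[C,-]}\mathcal F$ and ${}^{[C,-]}\mathcal T$ via the internal-hom functors $(A,-)$ and $(C,-)$; (ii) the bijections $V\cong V_1$, $W\cong W_1$; and (iii) the compatibility conditions defining $V_1,W_1$ together with the entwining axioms for $\psi$. The common left-hand side $(\epsilon,\matholdcal M)\circ(\eta,\matholdcal M):(A,\matholdcal M)\to(C,\matholdcal M)$ should arise as the translation of the identity natural transformation on the relevant side of each triangle. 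The asymmetric appearance of $(A,\psi,\matholdcal M)$ versus $(A,\sigma_{\matholdcal M})$ in the two equations reflects the two distinct ways the $A$-action enters the respective triangles --- through the $A$-module structure on the cofree object in one case, through direct composition with $\sigma$ in the other --- and I expect these to produce exactly the two displayed equations of~(a).

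Part~(b) is treated in parallel. Since $\mathcal T^C\dashv\mathcal F^C$ is the free--forgetful adjunction for entwined comodules, the Frobenius property for $(\mathcal T^C,\mathcal F^C)$ amounts to the reverse adjunction $\mathcal F^C\dashv\mathcal T^C$, specified by a new unit $\bar\eta\in W'$ and new counit $\bar\epsilon\in V'$ satisfying the triangle identities for this new adjunction. Applying the isomorphisms $V'\cong V_1'$ and $W'\cong W_1'$ converts $(\bar\eta,\bar\epsilon)$ into $(\sigma,\rho)\in V_1'\times W_1'$, and unwinding the triangle identities at a free entwined comodule $\mathcal T^C(\mathcal M)$ yields the two displayed equations in~(b). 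The main obstacle throughout is the careful combinatorial translation of the triangle identities through the isomorphisms $V\cong V_1$, $W\cong W_1$ (and their primed analogues): one must track which triangle produces which of the two displayed equations and invoke the correct compatibility condition from $V_1,W_1$ (or $V_1',W_1'$) together with the entwining axioms for $\psi$. Once the bijections are spelled out explicitly, the verification is a routine if lengthy diagram chase, paralleling the arguments of \cite{BCMZ} and \cite{BKsm}.
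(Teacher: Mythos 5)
Your proposal matches the paper's argument: the paper likewise invokes the criterion (citing \cite{BCMZ}) that the pair is Frobenius iff there exist $\tau\in V$ and $\kappa\in W$ satisfying the two triangle identities of the reverse adjunction, transports these through the isomorphisms $V\cong V_1$, $W\cong W_1$ (resp.\ $V'\cong V_1'$, $W'\cong W_1'$), and derives the two displayed equations by evaluating the triangle identities at the cofree objects $(A,C,\matholdcal M)$ and $(C,\matholdcal M)$ (resp.\ $\mathcal M\otimes C\otimes A$ and $\mathcal M\otimes C$) and stripping off the outer $(A,-)$ and $(C,-)$ via $(\eta,C,\matholdcal M)$ and $(A,\epsilon,\matholdcal M)$. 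The converse, which you correctly flag as a routine diagram chase, is carried out in the paper by verifying the triangle identities on arbitrary objects directly from the two displayed equations.
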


The well known Maschke Theorem asserts that the group ring of
a finite group is semisimple  when the characteristic of the field does not divide the order
of the group. Over time, this result has been widely extended, leading to various generalizations
for instance to Hopf algebras, or comodule algebras (see \cite{BlMo}, \cite{DoiY}). The Maschke type theorems were studied for Doi-Hopf modules by  Caenepeel, Militaru, and Zhu in \cite{CMZ} and for entwined modules by Brzezi\'nski in \cite{Tb*}.   Our final result in this paper is as follows.

\begin{thmi} (see Theorem \ref{T7.4} and Theorem \ref{tT7.5})
 Let $\mathfrak S$ be a $k$-linear Grothendieck category. Let  $(A,C,\psi)$ be an entwining structure and $\varphi:A^\ast\otimes C\longrightarrow A$ be a normalized cointegral for $(A,C,\psi)$. Then,
		$^{[C,-]}\mathcal F:~_{\hspace{1em}A}^{[C,-]}\mathfrak S(\psi)\longrightarrow~ ^{[C,-]}\mathfrak S$ and 
		$\mathcal F^C:\mathfrak S_A^C(\psi)\longrightarrow~ \mathfrak S^C$ are  semisimple and Maschke functors.
\end{thmi}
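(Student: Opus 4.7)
The plan is to use the normalized cointegral $\varphi:A^\ast\otimes C\to A$ to produce, for each of the two forgetful functors in question, the natural transformation whose existence is the separability criterion recorded in the earlier theorems of this introduction, and then to deduce the Maschke and semisimple properties from separability. The guiding idea is the one of Brzezi\'nski \cite{Tb*} and Brzezi\'nski--Caenepeel--Militaru--Zhu \cite{BCMZ}: a cointegral encodes an averaging procedure on the entwining structure, and the normalization condition guarantees that this averaging is a splitting of the relevant unit/counit, which is precisely what is needed to split equivariant structures.

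For the contramodule forgetful functor $^{[C,-]}\mathcal F$, I would first reinterpret $\varphi:A^\ast\otimes C\to A$, via the $k$-linear adjunction between $A^\ast\otimes -$ and $Hom_k(A,-)$, as a $k$-linear map $C\to A\otimes A$. Using the structural adjunction $V\otimes-\dashv(V,-)$ of \eqref{Intr1.0} with $V$ equal to $A$ (twice) and to $C$, this data induces a natural transformation $\rho:(A,A,-)\to(C,-)$ between endofunctors of $\mathfrak S$. The next step is a diagram chase showing that the two cointegral axioms (compatibility with $\psi$, $\Delta$ and $\mu$) translate, term-by-term, into the two identities defining membership in the space $W_1$ introduced in the second theorem above; while the normalization identity for $\varphi$ becomes precisely the splitting condition
$\rho_{\matholdcal M}\circ(\mu,\matholdcal M)=(\epsilon,\matholdcal M)\circ(\eta,\matholdcal M)$
required in item (ii) of that theorem. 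This gives separability of $^{[C,-]}\mathcal F$.

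The argument for the comodule forgetful functor $\mathcal F^C$ is entirely parallel, using instead the tensor side of the adjoint pair. Here $\varphi$ produces a natural transformation $\rho:(-\otimes C)\to(-\otimes A\otimes A)$ of endofunctors of $\mathfrak S$; the cointegral axioms give the two compatibility identities characterizing $W_1'$ in the comodule separability theorem; and the normalization condition on $\varphi$ delivers $(\mu,\mathcal M\otimes A)\circ \rho_{\mathcal M}=(\mathcal M\otimes\eta)\circ(\mathcal M\otimes\epsilon)$. This yields separability of $\mathcal F^C$. Separability of a functor is classically known to imply both the semisimple property (every short exact sequence that becomes split after applying the functor is itself split) and the Maschke property; the arguments of \cite{BCMZ} adapt essentially verbatim to our Grothendieck setting, since the categories involved are abelian with enough projectives/injectives furnished by the adjunctions set up earlier in the paper.

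The main obstacle I anticipate is the bookkeeping in this translation between the elementwise formulation of a cointegral $\varphi:A^\ast\otimes C\to A$ and its diagrammatic version as a natural transformation of endofunctors of $\mathfrak S$. The appearance of the $k$-dual $A^\ast$ forces the adjunction $V\otimes-\dashv(V,-)$ to be used in both directions, and one must carefully track how the entwining $\psi$ interacts with this dualization in each of the compatibility diagrams. Once these translations are fixed, the verifications reduce to essentially formal diagram chases modelled on those of \cite{BCMZ}, \cite{Tb*} and \cite{BKsm}.
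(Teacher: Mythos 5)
Your proposal is correct, but it takes a genuinely different route from the paper's. You use the cointegral $\varphi$ and the coevaluation $coev_A$ (the finite-dimensionality of $A$, assumed at the start of Section 7, is what makes $coev_A$ and hence the map $\theta:=(A\otimes\varphi)\circ(coev_A\otimes C):C\longrightarrow A\otimes A$ available; note that $A^{\ast}\otimes-$ and $Hom_k(A,-)$ are naturally \emph{isomorphic} for finite-dimensional $A$ rather than adjoint, a minor slip in your write-up) to manufacture an element of $W_1$ (resp.\ $W_1'$) satisfying the normalization condition of Theorem \ref{T5.5} (resp.\ Theorem \ref{Tlst}), thereby proving the forgetful functors \emph{separable}, and you then invoke the standard fact that separable functors are semisimple and Maschke. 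That argument is sound and in fact delivers a conclusion stronger than the one stated. The paper bypasses separability entirely: Lemma \ref{L7.2} uses $\varphi$ to define an averaging operator $\xi\mapsto\tilde\xi$ sending morphisms of $^{[C,-]}\mathfrak S$ between entwined contramodule objects to morphisms of $_{\hspace{1em}A}^{[C,-]}\mathfrak S(\psi)$; Proposition \ref{P7.3} shows $\tilde\xi=\xi$ when $\xi$ is already entwined (this is where the normalization axiom enters), so sections and retractions lift, semisimplicity follows at once, and the Maschke property is deduced from \cite{CM} since the forgetful functor reflects monomorphisms. The two proofs are cognate --- the paper's averaging map is essentially what your $\rho$ becomes under the isomorphism $W_1\cong W$ of Proposition \ref{P5.4} together with \cite{BCMZ} --- but yours makes the separability of the forgetful functors explicit and reuses the Section 5 machinery, while the paper's is more self-contained. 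Two small cautions: the aside about ``enough projectives/injectives'' is unnecessary (and unsubstantiated), since separability implies the Maschke and semisimple properties purely formally; and in the contramodule case the translation of the cointegral axioms into \eqref{eq5.11}--\eqref{eq5.12} passes through the contravariant functor $(-,\matholdcal M)$, which reverses both compositions and the tensor order in the convention \eqref{eq3.10} --- exactly the bookkeeping you flag, and it does check out.
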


	\section{Entwined comodule objects in a Grothendieck category}
	Throughout this section and the rest of this paper, we assume that $k$ is a field of characteristic zero and that $\mathfrak S$ is a $k$-linear Grothendieck category.  Let $(A,\mu,\eta)$ be a $k$-algebra, equipped with multiplication map  $\mu: A\otimes_{k} A\longrightarrow A$ and unit map $\eta:k \longrightarrow A$. Let $(C,\Delta,\epsilon)$ be a $k$-coalgebra, where $\epsilon:C \longrightarrow k$ is the counit and $\Delta:C\longrightarrow C\otimes_{k} C$ is the comultiplication map. 
	
	\smallskip
	Following Popescu  (see \cite[p 108]{NP}), a  right $A$-module object $(\mathcal M,  \rho_{\mathcal M})$ in $\mathfrak S$ consists of an object $\mathcal M\in\mathfrak S$ and a $k$-algebra homomorphism $\rho_{\mathcal M}:A\longrightarrow \mathfrak S(\mathcal M, \mathcal M)$. A morphism between such objects $(\mathcal M,  \rho_{\mathcal M})$, $(\mathcal M',  \rho_{\mathcal M'})$ is a morphism $\phi:\mathcal M\longrightarrow \mathcal M'$ in $\mathfrak S$ that commutes with the actions of $A$ in the obvious manner. We denote by $\mathfrak S_A$ the category of right $A$-module objects 
	in $\mathfrak S$.  We will often denote an object $(\mathcal M,  \rho_{\mathcal M})\in \mathfrak S_A$ simply by $\mathcal M$.  
	
	\smallskip  Let $Mod_k$ denote the category of $k$-vector spaces. By \cite[\S B3]{AZ}, there is a bifunctor $-\otimes_k-:\mathfrak S \times Mod_k\longrightarrow \mathfrak S$ that preserves colimits in both variables. Since $\mathfrak S$ is a Grothendieck category, it follows from the construction (see \cite[$\S$ B3.1]{AZ}) that the bifunctor $-\otimes_k-$ is also exact in both variables. 
	From \cite[\S B3.14]{AZ}, we know that a right $A$-module object $(\mathcal M,  \rho_{\mathcal M})\in \mathfrak S_A$    can also be described by a pair $(\mathcal M, \mu_{\mathcal M})$, where $\mu_{\mathcal M}: \mathcal M \otimes_{k} A \longrightarrow \mathcal M$ is a $k$-linear map such that the composition   $\mathcal M=\mathcal M\otimes_{k} k \xrightarrow{\mathcal M\otimes_{k} \eta} \mathcal M \otimes_{k} A \xrightarrow{\mu_{\mathcal M}} \mathcal M$ is the identity on $\mathcal M$, and the following diagram commutes 
	\[\begin{tikzcd}
		\mathcal M \otimes_{k} A \otimes_{k} A  \arrow{r}{\mathcal M \otimes_{k} \mu} \arrow[swap]{d}{\mu_{\mathcal M} \otimes_{k} A} & \mathcal M \otimes_{k} A \arrow{d}{\mu_{\mathcal M}} \\%
		\mathcal M \otimes_{k} A \arrow{r}{\mu_{\mathcal M}}& \mathcal M
	\end{tikzcd}
	\]
	
	Accordingly, a morphism $\phi:(\mathcal M,\mu_{\mathcal M})\longrightarrow(\mathcal M',\mu_{\mathcal M'})$ of right $A$-module objects is a morphism $\phi:\mathcal M\longrightarrow \mathcal M$ in $\mathfrak S$ that satisfies $\phi\circ \mu_{\mathcal M}=\mu_{\mathcal M'}\circ (\phi\otimes_k A)$.  
	In a similar manner, one has a category $_{A}\mathfrak S$ of left $A$-module objects in $\mathfrak S$ by considering the bifunctor  $-\otimes_k-:Mod_k\times \mathfrak S  \longrightarrow \mathfrak S$. Further, we know from \cite[Proposition B2.2]{AZ} that $\mathfrak S_A$ is a Grothendieck category.
	%For a given $k$-algebra A, let $_{A}Mod$ denote the category of left $A$-modules. 
	%We recall from \cite{AZ} that there exists a bifunctor
	%\begin{equation}\label{}
	%	-\otimes_{A} -: \mathfrak S_A\otimes~ _{A}Mod\longrightarrow \mathfrak S
	%\end{equation}
	%that is right exact in both the variables. We now fix an object $\mathcal M\in\mathfrak S_A$. Then, we know from  \cite[Proposition B3.1]{AZ} that the functor $\mathcal M\otimes_{A}-:~_{A}Mod\longrightarrow \mathfrak S$ is right exact and is left adjoint to the functor $\mathfrak S(\mathcal M, -):\mathfrak S\longrightarrow ~_{A}Mod$. % For any object $\mathcal M\in \mathfrak S_A$, $\mathcal M\otimes_A A=\mathcal M$. 
	%Moreover, given a left $A$-module $V$, it is shown in \cite[Lemma B4.1]{AZ} that the tensor functor $ - \otimes_A V : \mathfrak S_A \longrightarrow \mathfrak S$ is right exact and is the left adjoint to the internal hom functor, $\underline{Hom}_{A}(V, - ) : \mathfrak S \longrightarrow \mathfrak S_A$.
	Now, for a $k$-coalgebra $C$, we recall from \cite[\S 39]{BWb} the definition of right $C$-comodule objects in $\mathfrak S$.
	\begin{defn}\label{D2.1}
		Let $(C,\Delta,\epsilon)$ be a $k$-coalgebra. A right $C$-comodule object in $\mathfrak S$ is a pair $(\mathcal M, \Delta_{\mathcal M}),$  where $\mathcal M$ is an object in $\mathfrak S$ and $\Delta_{\mathcal M} : \mathcal M  \longrightarrow \mathcal M \otimes_{k} C$ is a $k$-linear map such that the composition   $\mathcal M \xrightarrow{\Delta_{\mathcal M}} \mathcal M \otimes_{k} C \xrightarrow{\mathcal M\otimes_{k} \epsilon} \mathcal M\otimes_k k=\mathcal M$ is the identity on $\mathcal M$, and the following diagram commutes
		\[\begin{tikzcd}
			\mathcal M   \arrow{r}{\Delta_{\mathcal M}} \arrow[swap]{d}{\Delta_{\mathcal M}} & \mathcal M \otimes_{k} C \arrow{d}{\mathcal M\otimes_{k} \Delta} \\%
			\mathcal M \otimes_{k} C \arrow{r}{\Delta_{\mathcal M}\otimes_k C}& \mathcal M \otimes_{k} C \otimes_{k} C
		\end{tikzcd}
		\]
	A morphism $\phi: (\mathcal M, \Delta_{\mathcal M})\longrightarrow (\mathcal M', \Delta_{\mathcal M'})$ of right $C$-comodule objects in $\mathfrak S$ is given by a morphism $\phi:\mathcal M\longrightarrow \mathcal M'$ in $\mathfrak S$ such that $\Delta_{\mathcal M'}\circ \phi =(\phi \otimes_{k} C)\circ \Delta_{\mathcal M}$. The category of right $C$-comodule objects
	in  $\mathfrak S$ is denoted by $\mathfrak S^C$. 
	
		\end{defn}
		
		Similarly, one can define the category $^{C}\mathfrak S$ of left $C$-comodule objects in $\mathfrak S$. Further, we know from \cite[Theorem 3.3]{BBK} that $\mathfrak S^C$ is a Grothendieck category.
	Given a $k$ algebra $A$ and a $k$-coalgebra $C$, we now recall the notion of entwining structure from \cite{BM}. From now onwards, we write $\otimes:=\otimes_k$.
	\begin{defn}\label{D2.2}
		An entwining structure is a triple $(A,C,\psi)$, where $A$ is a $k$-algebra, $C$ is a $k$-coalgebra and  $\psi: C\otimes A\longrightarrow A\otimes C$ is a k-linear map such that the following equalities hold:
		\begin{align}
			\psi \circ (C\otimes \mu) &=(\mu\otimes C)\circ (A\otimes \psi)\circ (\psi\otimes A) \qquad \qquad \psi\circ (C\otimes \eta)=\eta\otimes C\label{eq2.1}\\
			(A\otimes \Delta)\circ \psi &= (\psi\otimes C)\circ (C\otimes \psi)\circ (\Delta\otimes A)\qquad \qquad(A\otimes \epsilon)\circ \psi= \epsilon\otimes A.\label{eq2.2}
		\end{align}
		A morphism between entwining structures $(A,C,\psi)$ and $(A',C',\psi')$ is a pair $(f,g)$ where $f:A\longrightarrow A'$ is an algebra map and $g:C\longrightarrow C'$ is a coalgebra map that satisfies $(f\otimes g)\circ \psi=\psi'\circ (g\otimes f)$.
	\end{defn}

	Given an entwining structure $(A,C,\psi)$, we will now define the category of entwined comodule objects in the $k$-linear Grothendieck category $\mathfrak S$. In case $\mathfrak S=Mod_k$ is the category of $k$-vector spaces, this recovers the original definition of entwined modules over $(A,C,\psi)$ given by Brzezi\'nski  (see \cite[\S 2]{Tb}).
	\begin{defn}\label{D2.3}
		Let $(A,C,\psi)$ be an entwining structure and let $\mathfrak S$ be a $k$-linear Grothendieck category. A right entwined comodule object in $\mathfrak S$ over the entwining structure $(A,C,\psi)$  is a triple $(\mathcal M,\Delta_{\mathcal M}, \mu_{\mathcal M})$ such that
		
		\smallskip
		(i) $(\mathcal M,\Delta_{\mathcal M}:\mathcal M\longrightarrow \mathcal M\otimes C)$ is a right $C$-comodule object in $\mathfrak S$
		
		\smallskip
		(ii)  $(\mathcal M,\mu_{\mathcal M}:\mathcal M\otimes A\longrightarrow \mathcal M)$ is a right $A$-module object in $\mathfrak S$
		
		\smallskip
		(iii) The following diagram commutes in $\mathfrak S$
		\begin{equation}\label{Etm}
			\begin{tikzcd}[row sep=1.8em, column sep = 3.8em]
				\mathcal M\otimes A \arrow{r}{\mu_{\mathcal  M}} \arrow{d} {(\Delta_{\mathcal M}\otimes A)}
				&  \mathcal  M \arrow{r}{\Delta_{\mathcal  M}} &(\mathcal M\otimes C)\\
				(\mathcal M\otimes C\otimes A)\arrow{rr}
				{(\mathcal M\otimes \psi)} &&(\mathcal M\otimes A\otimes C)\arrow{u}[swap]{(\mu_{\mathcal M}\otimes C)}
			\end{tikzcd}
		\end{equation}
		A morphism between right entwined comodule objects in $\mathfrak S$ is a map that preserves both the right $A$-module structure and the right $C$-comodule structure.
		We let $\mathfrak S_A^{C}(\psi)$ denote the category of right entwined comodule objects in $\mathfrak S$ over the entwining structure $(A,C,\psi)$.
	\end{defn}

	In Definition \ref{D2.3}, we notice that $\mathcal M\otimes C$ may be treated as an object of $\mathfrak S_A$ by means of the map $\mathcal M\otimes C\otimes A
	\xrightarrow{\mathcal M\otimes \psi}\mathcal M\otimes A\otimes C\xrightarrow{\mu_{\mathcal M}\otimes C}\mathcal M\otimes C$. Similarly, $\mathcal M\otimes A$
	may be treated as an object of $\mathfrak S^C$ by means of the composition $\mathcal M\otimes A\xrightarrow{\Delta_{\mathcal M}\otimes A}\mathcal M\otimes 
	C\otimes A\xrightarrow{\mathcal M\otimes \psi}\mathcal M\otimes A\otimes C$. 
	From (\ref{Etm}), it may be verified  that $\Delta_{\mathcal M}:\mathcal M\longrightarrow \mathcal M\otimes_k C$ is also a morphism in $\mathfrak S_A$ and $\mu_{\mathcal M}:\mathcal M\otimes_k A\longrightarrow \mathcal M$ is also a morphism in $\mathfrak S^C$.
	\begin{lem}\label{L2.4}
		Let $(\mathcal M,\Delta_{\mathcal M})\in \mathfrak S^C$. Then, $\mathcal M\otimes A$ is an object of $ \mathfrak S_A^{C}(\psi)$ via the structure maps $\mu_{\mathcal M\otimes A} = \mathcal M\otimes \mu$ and
		$\Delta_{\mathcal M\otimes A} = (\mathcal M\otimes \psi)\circ(\Delta_{\mathcal M}\otimes A). $
	\end{lem}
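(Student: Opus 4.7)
The claim breaks up into three routine checks, and the plan is to verify them in the natural order: first that $(\mathcal M\otimes A, \mathcal M\otimes\mu)$ is a right $A$-module object, next that $(\mathcal M\otimes A, \Delta_{\mathcal M\otimes A})$ is a right $C$-comodule object, and finally that the entwining compatibility diagram \eqref{Etm} holds.

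For the $A$-module axioms I would just invoke associativity and unitality of $\mu:A\otimes A\to A$ tensored with $\mathcal M$; there is nothing to do beyond using that $-\otimes_k-$ is functorial. For the $C$-comodule axioms, the counit identity reduces, after expanding $\Delta_{\mathcal M\otimes A}=(\mathcal M\otimes\psi)\circ(\Delta_{\mathcal M}\otimes A)$, to applying the entwining identity $(A\otimes\epsilon)\circ\psi=\epsilon\otimes A$ from \eqref{eq2.2} to kill the $\psi$ and then using the counit property of $\Delta_{\mathcal M}$. For coassociativity, the plan is to chase
\[
(\Delta_{\mathcal M\otimes A}\otimes C)\circ\Delta_{\mathcal M\otimes A}\quad\text{vs.}\quad (\mathcal M\otimes A\otimes\Delta)\circ\Delta_{\mathcal M\otimes A},
\]
expanding both sides as compositions of $\Delta_{\mathcal M}\otimes A$ and $\psi$-insertions; the second relation in \eqref{eq2.2}, namely $(A\otimes\Delta)\circ\psi=(\psi\otimes C)\circ(C\otimes\psi)\circ(\Delta\otimes A)$, together with coassociativity of $\Delta_{\mathcal M}$, makes the two expressions agree.

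For the compatibility diagram \eqref{Etm}, I would compute the top route
\[
\Delta_{\mathcal M\otimes A}\circ(\mathcal M\otimes\mu)=(\mathcal M\otimes\psi)\circ(\Delta_{\mathcal M}\otimes A)\circ(\mathcal M\otimes\mu),
\]
use naturality of the tensor to slide $\Delta_{\mathcal M}\otimes A$ past $\mathcal M\otimes\mu$, and then apply the first relation in \eqref{eq2.1}, $\psi\circ(C\otimes\mu)=(\mu\otimes C)\circ(A\otimes\psi)\circ(\psi\otimes A)$, to rewrite it as
\[
(\mathcal M\otimes\mu\otimes C)\circ(\mathcal M\otimes A\otimes\psi)\circ(\mathcal M\otimes\psi\otimes A)\circ(\Delta_{\mathcal M}\otimes A\otimes A),
\]
which is exactly the other route $(\mu_{\mathcal M\otimes A}\otimes C)\circ(\mathcal M\otimes A\otimes\psi)\circ(\Delta_{\mathcal M\otimes A}\otimes A)$ after unfolding $\Delta_{\mathcal M\otimes A}$.

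There is no real obstacle here; the only thing one has to be careful about is keeping track of the tensor factors in the right order when comparing the two routes in \eqref{Etm}, since the argument is essentially a long but mechanical diagram chase. The proof is, conceptually, the observation that $-\otimes A$ is the free entwined comodule functor on $\mathfrak S^C$, so its structure maps are forced to be given by \eqref{eq2.1} and \eqref{eq2.2} applied to the free module structure.
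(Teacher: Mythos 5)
Your proposal is correct and follows essentially the same route as the paper's proof: the paper likewise declares the $A$-module and $C$-comodule axioms immediate and devotes its computation to the compatibility diagram \eqref{Etm}, reducing it via naturality of $-\otimes-$ and the first identity in \eqref{eq2.1}, exactly as you outline (you run the chase from the top route to the bottom, the paper from the bottom to the top). Your closing remark that $-\otimes A$ is the free entwined comodule functor is consistent with the adjunction $(\mathcal T^C,\mathcal F^C)$ established immediately afterwards in Proposition \ref{P2.5}.
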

	\begin{proof} It is clear that  $\mu_{\mathcal M\otimes A} = \mathcal M\otimes \mu$  makes $\mathcal M\otimes A$ into a right $A$-module object in $\mathfrak S$ and 	
	$\Delta_{\mathcal M\otimes A} = (\mathcal M\otimes \psi)\circ(\Delta_{\mathcal M}\otimes A)$ makes $\mathcal M\otimes A$
	into a right $C$-comodule object in $\mathfrak S$.		From Definition \ref{D2.3}, it remains to show that with the structure maps $\mu_{\mathcal M\otimes A} = \mathcal M\otimes \mu$ and
		$\Delta_{\mathcal M\otimes A} = (\mathcal M\otimes \psi)\circ(\Delta_{\mathcal M}\otimes A)$, the following equality holds
		\begin{equation*}
			\Delta_{\mathcal M\otimes A}\circ \mu_{\mathcal M\otimes A}=(\mu_{\mathcal M\otimes A}\otimes C)\circ (\mathcal M\otimes A\otimes \psi)\circ (\Delta_{\mathcal M\otimes A}\otimes A)
		\end{equation*}
		We observe that $\Delta_{\mathcal M\otimes A}\circ \mu_{\mathcal M\otimes A}=(\mathcal M\otimes \psi)\circ(\Delta_M\otimes A)\circ (\mathcal M\otimes \mu).$ On the other hand, we have
		\begin{align*}
			(\mu_{\mathcal M\otimes A}\otimes C)\circ (\mathcal M\otimes A\otimes \psi)\circ (\Delta_{\mathcal M\otimes A}\otimes A)&=(\mathcal M\otimes \mu\otimes C)\circ (\mathcal M\otimes A\otimes \psi)\circ (\mathcal M\otimes \psi\otimes A)\circ (\Delta_{M}\otimes A\otimes A)\\\notag
			&=\mathcal M\otimes ((\mu\otimes C)\circ (A\otimes \psi)\circ (\psi\otimes A))\circ (\Delta_{M}\otimes A\otimes A)\\\notag
			&=\mathcal M\otimes (\psi\circ (C\otimes \mu))\circ  (\Delta_{M}\otimes A\otimes A)\tag*{\textup{by}~(\ref{eq2.1})}\\\notag
			&=(\mathcal M\otimes \psi)\circ (\mathcal M\otimes C\otimes \mu)\circ (\Delta_{M}\otimes A\otimes A)\\\notag
			&=(\mathcal M\otimes \psi)\circ(\Delta_M\otimes A)\circ (\mathcal M\otimes \mu)
		\end{align*}
		This completes the proof.
	\end{proof}
	By Lemma \ref{L2.4} we  have a functor 
	\begin{equation} \mathcal T^C: \mathfrak S^C\longrightarrow \mathfrak S_A^{C}(\psi) \qquad \mathcal M\mapsto \mathcal M\otimes A 
	\end{equation} 
	Let $\mathcal F^C:\mathfrak S_A^{C}(\psi)\longrightarrow \mathfrak S^C$ denote the forgetful functor. Then, we have the following result.
	\begin{thm}\label{P2.5}
		$(\mathcal T^C,\mathcal F^C)$ is a pair of adjoint functors, i.e., for any $(\mathcal M,\Delta_{\mathcal M})\in \mathfrak S^C$ and $(\mathcal N,\Delta_{\mathcal N},\mu_{\mathcal N}) \in \mathfrak S_A^{C}(\psi)$, we have natural isomorphisms
		\begin{equation*}
			\mathfrak S_A^{C}(\psi)(\mathcal M\otimes A,\mathcal N)\cong \mathfrak S^C(\mathcal M,\mathcal N)
		\end{equation*} 
	\end{thm}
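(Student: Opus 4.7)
The plan is to establish the adjunction by exhibiting explicit mutually inverse natural maps $\Phi$ and $\Psi$, in analogy with the classical Hom-tensor adjunction for modules. The unit and counit of the adjunction are the obvious candidates: the unit $\eta_{\mathcal{M}}: \mathcal{M} \to \mathcal{M}\otimes A$ is $\mathcal{M}\otimes \eta$, and the counit $\varepsilon_{\mathcal{N}}: \mathcal{N}\otimes A \to \mathcal{N}$ is the structure map $\mu_{\mathcal{N}}$. Concretely, given $f\in \mathfrak S_A^{C}(\psi)(\mathcal{M}\otimes A,\mathcal{N})$, I define $\Phi(f):=f\circ(\mathcal{M}\otimes\eta):\mathcal{M}\to\mathcal{N}$, and given $g\in\mathfrak S^C(\mathcal{M},\mathcal{N})$, I define $\Psi(g):=\mu_{\mathcal{N}}\circ(g\otimes A):\mathcal{M}\otimes A\to\mathcal{N}$.

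The first step is to verify that these maps land in the correct Hom-spaces. For $\Phi(f)$, since $f$ is $C$-colinear with respect to the coaction $\Delta_{\mathcal{M}\otimes A}=(\mathcal{M}\otimes\psi)\circ(\Delta_{\mathcal{M}}\otimes A)$ from Lemma \ref{L2.4}, composition with $\mathcal{M}\otimes\eta$ and use of the entwining axiom $\psi\circ(C\otimes \eta)=\eta\otimes C$ from \eqref{eq2.1} show that $\Phi(f)$ intertwines $\Delta_{\mathcal{M}}$ and $\Delta_{\mathcal{N}}$. For $\Psi(g)$, the $A$-linearity follows from associativity of $\mu_{\mathcal{N}}$, while $C$-colinearity is the genuine computation: one writes out $\Delta_{\mathcal{N}}\circ\mu_{\mathcal{N}}\circ(g\otimes A)$, applies the entwined compatibility \eqref{Etm} for $\mathcal{N}$, pulls $g$ through using its $C$-colinearity, and recovers $(\Psi(g)\otimes C)\circ\Delta_{\mathcal{M}\otimes A}$.

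The second step is to check that $\Phi$ and $\Psi$ are mutually inverse. The identity $\Phi(\Psi(g))=g$ is immediate from the unit axiom $\mu_{\mathcal{N}}\circ(\mathcal{N}\otimes\eta)=\mathrm{id}_{\mathcal{N}}$. The identity $\Psi(\Phi(f))=f$ uses the $A$-linearity of $f$: one has $\Psi(\Phi(f))=\mu_{\mathcal{N}}\circ(f\otimes A)\circ(\mathcal{M}\otimes\eta\otimes A)=f\circ(\mathcal{M}\otimes\mu)\circ(\mathcal{M}\otimes\eta\otimes A)=f$. Naturality of $\Phi$ and $\Psi$ in both variables is then a routine diagram chase using naturality of the tensor bifunctor.

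The only genuinely non-formal step is the verification that $\Psi(g)$ is $C$-colinear, where the entwining map $\psi$ must be threaded carefully through the composition; this is the main obstacle, though it reduces to a direct diagram chase once the coaction on $\mathcal{M}\otimes A$ is unpacked according to Lemma \ref{L2.4}. All other verifications use only the unit/associativity axioms for $A$ and the $C$-comodule axioms, together with the fact (from \cite[\S B3]{AZ}) that $-\otimes_k-$ is functorial, so they reduce to straightforward identities in $\mathfrak S$.
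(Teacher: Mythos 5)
Your proposal is correct and follows essentially the same route as the paper: both use the assignments $f\mapsto f\circ(\mathcal M\otimes\eta)$ and $g\mapsto \mu_{\mathcal N}\circ(g\otimes A)$ and verify they are mutually inverse. The only cosmetic difference is that the paper dispenses with your direct colinearity computation for $\Psi(g)$ by observing that $g\otimes A$ is a morphism in $\mathfrak S_A^{C}(\psi)$ via Lemma \ref{L2.4} and that $\mu_{\mathcal N}$ is itself a morphism in $\mathfrak S_A^{C}(\psi)$ by the compatibility diagram \eqref{Etm}, which packages exactly the diagram chase you describe.
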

	\begin{proof}
		Let $\zeta:\mathcal M\otimes A\longrightarrow \mathcal N$ be a morphism in $\mathfrak S_A^{C}(\psi)$. Then, $\xi:\mathcal M\otimes k = \mathcal M\xrightarrow{\mathcal M\otimes \eta} \mathcal M\otimes A\xrightarrow{\zeta}\mathcal N$ is the corresponding morphism in $\mathfrak S^C$.  
Conversely, let $\xi:\mathcal M\longrightarrow \mathcal N$ be a morphism in $\mathfrak S^C$. By Lemma \ref{L2.4},  $\xi\otimes A:\mathcal M\otimes A\longrightarrow \mathcal N\otimes A$ is a morphism in $\mathfrak S^C_A(\psi).$ We have noted before that $\mu_{\mathcal N}: \mathcal N\otimes A\longrightarrow \mathcal N$ is also a morphism in $\mathfrak S^C$ and hence in $\mathfrak S^C_A(\psi)$.   It follows that the composition $\zeta=\mu_{\mathcal N}\circ (\xi\otimes A):\mathcal M\otimes A\longrightarrow \mathcal N$ is a morphism in $\mathfrak S^C_A(\psi)$. It may be verified that these two associations are inverse to each other.
	\end{proof}
	\begin{Thm}\label{T2.6}
		Let $\mathfrak S$ be a $k$-linear Grothendieck category and let $(A,C,\psi)$ be an entwining structure. Then, the category $\mathfrak S_A^{C}(\psi)$ of entwined comodule objects in $\mathfrak S$  over $(A,C,\psi)$  is a $k$-linear Grothendieck category.
	\end{Thm}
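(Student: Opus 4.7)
The plan is to verify the three defining features of a Grothendieck category for $\mathfrak{S}_A^C(\psi)$: that it is abelian, that it satisfies (AB5), and that it admits a generator. All three will be transferred from the corresponding properties of $\mathfrak{S}^C$ (which is Grothendieck by \cite[Theorem 3.3]{BBK}) using the adjunction $(\mathcal{T}^C,\mathcal{F}^C)$ of Theorem \ref{P2.5}, together with the fact that the bifunctor $-\otimes-$ on $\mathfrak{S}\times Mod_k$ is exact and preserves colimits in both variables.

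First, to exhibit the abelian structure, I would take a morphism $f:\mathcal{M}\to\mathcal{N}$ in $\mathfrak{S}_A^C(\psi)$ and form its kernel $K$ and cokernel $Q$ in the underlying category $\mathfrak{S}$. Since $-\otimes A$ and $-\otimes C$ are exact on $\mathfrak{S}$, both $K$ and $Q$ inherit canonical right $A$-module and right $C$-comodule structures by restricting and corestricting $\mu_{\mathcal M}$, $\mu_{\mathcal N}$, $\Delta_{\mathcal M}$, $\Delta_{\mathcal N}$. Using the universal properties of $K$ and $Q$, I would check that the entwining compatibility diagram \eqref{Etm} passes to $K$ and $Q$, and this equips them with the structure of entwined comodule objects whose underlying morphisms from/to $\mathcal{M}$, $\mathcal{N}$ are morphisms in $\mathfrak{S}_A^C(\psi)$ with the correct universal properties there. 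In particular this shows the forgetful functor $\mathfrak{S}_A^C(\psi)\to\mathfrak{S}$ (and hence $\mathcal{F}^C$) is exact.

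Second, for (AB5), let $\{\mathcal{M}_i\}_{i\in I}$ be a filtered diagram in $\mathfrak{S}_A^C(\psi)$ and set $\mathcal{M}=\mathrm{colim}_i\,\mathcal{M}_i$ in $\mathfrak{S}$. Because $-\otimes A$, $-\otimes C$, $-\otimes C\otimes A$ and $-\otimes A\otimes C$ all preserve colimits, one obtains induced maps $\mu_{\mathcal M}$ and $\Delta_{\mathcal M}$ making $\mathcal{M}$ into an entwined comodule object, and universality shows this is the colimit in $\mathfrak{S}_A^C(\psi)$. Since kernels, cokernels, and filtered colimits in $\mathfrak{S}_A^C(\psi)$ are all computed at the level of $\mathfrak{S}$, the (AB5) axiom of $\mathfrak S$ propagates directly.

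Third, for a generator, pick a generator $U$ of $\mathfrak{S}^C$, which exists by \cite[Theorem 3.3]{BBK}. I claim $\mathcal{T}^C(U)=U\otimes A$ is a generator of $\mathfrak{S}_A^C(\psi)$. Indeed, for any proper subobject $\mathcal{N}'\hookrightarrow\mathcal{N}$ in $\mathfrak{S}_A^C(\psi)$, the exactness of $\mathcal{F}^C$ implies that $\mathcal{F}^C\mathcal{N}'\hookrightarrow\mathcal{F}^C\mathcal{N}$ is a proper subobject in $\mathfrak{S}^C$, hence is detected by a morphism $U\to\mathcal{F}^C\mathcal{N}$ not factoring through $\mathcal{F}^C\mathcal{N}'$. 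Applying the natural isomorphism
\[
\mathfrak{S}_A^C(\psi)(U\otimes A,\mathcal{N})\cong \mathfrak{S}^C(U,\mathcal{F}^C\mathcal{N})
\]
from Theorem \ref{P2.5} produces a morphism $U\otimes A\to\mathcal{N}$ in $\mathfrak{S}_A^C(\psi)$ not factoring through $\mathcal{N}'$, as required. The $k$-linearity of $\mathfrak{S}_A^C(\psi)$ is immediate from that of $\mathfrak{S}$.

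The main technical point — and the place I expect to spend the most care — is the first step: verifying that the entwining pentagon \eqref{Etm} indeed restricts to kernels and descends to cokernels (and analogously passes to filtered colimits). This is a diagram chase whose essential content is that all the functors involved ($-\otimes A$, $-\otimes C$, and their iterates) are exact and colimit-preserving in $\mathfrak{S}$, so the two compositions around \eqref{Etm}, when postcomposed with the kernel inclusion or precomposed with the cokernel projection, agree by the universal property; once this is cleanly formulated the remaining verifications are formal.
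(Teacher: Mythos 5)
Your proposal is correct and follows essentially the same route as the paper: abelianness and (AB5) are inherited because kernels, cokernels and filtered colimits in $\mathfrak S_A^{C}(\psi)$ are computed in $\mathfrak S$, and the generator is $\mathcal G\otimes A=\mathcal T^C(\mathcal G)$ for a generator $\mathcal G$ of $\mathfrak S^C$ taken from \cite[Theorem 3.3]{BBK}. The only (immaterial) difference is in verifying generation: the paper composes the epimorphism $(\mathcal G\otimes A)^{(I)}\to\mathcal M\otimes A$ with the epimorphism $\mu_{\mathcal M}$, whereas you use the standard argument that the left adjoint of a faithful exact functor carries generators to generators.
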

	\begin{proof}
	Since the bifunctor $-\otimes_k-:\mathfrak S \times Mod_k\longrightarrow \mathfrak S$ preserves colimits and is exact in both variables, we note that
	$\mathfrak S_A^{C}(\psi)$ is an abelian category with filtered colimits and finite limits computed in $\mathfrak S$. 
		Since $\mathfrak S$ is a Grothendieck category, it follows that filtered colimits commute with finite limits in $\mathfrak S$. 
		
		\smallskip
		We now show  that $\mathfrak S_A^{C}(\psi)$ has a generator. Since $\mathfrak S^C$ is a Grothendieck category (see \cite[Theorem 3.3]{BBK}), we fix a generator $\mathcal G$ for $\mathfrak S^C$. Let $\mathcal M\in \mathfrak S_A^{C}(\psi)$. Then, there exists an epimorphism $\phi:\mathcal G^{(I)}\longrightarrow \mathcal M$ in 
		$\mathfrak S^C$ for some indexing set $I$. Since $-\otimes A$ is exact and preserves colimits, it follows from Lemma \ref{L2.4} that  $\tilde{\phi}=\phi\otimes A:\mathcal G^{(I)}\otimes A=(\mathcal G\otimes A)^{(I)}\longrightarrow \mathcal M\otimes A$ is also an epimorphism in $\mathfrak S_A^{C}(\psi)$. Since the composition $\mathcal M\xrightarrow{\mathcal M\otimes \eta}\mathcal M\otimes A \xrightarrow{
		\mu_{\mathcal M}}\mathcal M$ is the identity in $\mathfrak S$, we note that $\mu_{\mathcal M}:\mathcal M\otimes A\longrightarrow \mathcal M$ is an epimorphism in 
		$\mathfrak S$. Since $\mathcal M\in \mathfrak S_A^{C}(\psi)$, we have noted before that $\mu_{\mathcal M}$ is also a morphism in $\mathfrak S^C$. Clearly, $\mu_{\mathcal M}$ is also right $A$-linear. It follows that $\mu_{\mathcal M}$ is a morphism in  $\mathfrak S^C_A(\psi)$.  Since cokernels in $\mathfrak S^C_A(\psi)$ are computed in $\mathfrak S$, it follows that 
		$\mu_{\mathcal M}$ is an epimorphism in  $\mathfrak S^C_A(\psi)$.  Hence, the composition $\hat{\phi}:(\mathcal G\otimes A)^{(I)}\xrightarrow{\tilde{\phi}} \mathcal M\otimes A\xrightarrow{\mu_{\mathcal M}}\mathcal M$ is an epimorphism in $\mathfrak S^C_A(\psi)$, and  we see that $\mathcal G \otimes A$ is a generator for $\mathfrak S_A^{C}(\psi)$.  
	\end{proof}
	%%%%%%%%%%%%%%%%%%%%%%%%%%%%%%%%%%%%%%%%%%%%%%%%%%%%%%%%%%%%%%%%%%%%%%%%%%%%%%%%%%%%%%%%%%%%%%%%%%%%%%%%%%%%%%%%%%%%%%%%%%%%%%%%%%%%%%%%%%%%%%%%%%%%%%%%%%%%%%%%%%%%%%%%%%%%%%%%%%%%%%%%%%%%%%%%%%%%%%%%%%%%%%%%%%%%%%%%%%%%%%%%%%%%%%%%%%%%%%%%%%%%%%%%%%%%%%%%%%%%%%%%%%%%%%%%%%%%%%%%%%%%%
	
	\section{Entwined contramodule objects  in a Grothendieck category}
	We continue with a $k$-algebra $A$, a $k$-coalgebra $C$ and a $k$-linear Grothendieck category $\mathfrak S$ as before. The contramodules over an entwining structure  $(A,C,\psi)$ were defined by Positselski \cite[$\S$ 10]{Pos1}. In this section,  we study the base change of entwined contramodules with respect to a Grothendieck category $\mathfrak S$.  By \cite[$\S$ B4.1]{AZ}, we know that for any $V\in Mod_k$, the functor $-\otimes V: \mathfrak S\longrightarrow \mathfrak S$ has a right adjoint which we denote by $(V,-):\mathfrak S\longrightarrow \mathfrak S$. In fact, this determines a bifunctor 
	$(-,-): Mod_k^{op}\times \mathfrak S\longrightarrow \mathfrak S$. For $V$, $V'\in Mod_k$, we note that there is a canonical isomorphism $(V,(V',-))\cong (V'\otimes V,-)$ of functors. 
	\begin{defn}\label{D3.1}
		Let $(C,\Delta,\epsilon)$ be a $k$-coalgebra and $\mathfrak S$ be a $k$-linear Grothendieck category. A left $C$-contramodule object in $\mathfrak S$ is a pair $(\matholdcal M, \pi_{\matholdcal M})$, where $\matholdcal M$ is an object in $\mathfrak S$ and $\pi_{\matholdcal M}: (C,\matholdcal M)\longrightarrow \matholdcal M$ is a $k$-linear map such that the composition  $\matholdcal M \xrightarrow{(\epsilon,\mathcal M)} (C,\matholdcal M)  \xrightarrow{\pi_{\matholdcal M}} \matholdcal M$ is the identity on $\matholdcal M$, and the following diagram commutes
		\begin{equation}\label{eq3.1}
			\begin{tikzcd}
				(C,(C,\matholdcal M))\cong (C\otimes C,\matholdcal M) \arrow{r}{(\Delta,\matholdcal M)} \arrow[swap]{d}{(C,\pi_{\matholdcal M})} & (C,\matholdcal M) \arrow{d}{\pi_{\matholdcal M}} \\%
				(C,\matholdcal M)\arrow{r}{\pi_{\matholdcal M}}& \matholdcal M
			\end{tikzcd}
		\end{equation}
		A morphism $\phi:(\matholdcal M,\pi_{\matholdcal M})\longrightarrow (\matholdcal M',\pi_{\matholdcal M'})$ of left $C$-contramodules in $\mathfrak S$ consists of a morphism $\phi:\matholdcal M\longrightarrow \matholdcal M'$ in $\mathfrak S$ such $\pi_{\matholdcal M'}\circ (C,\phi)=\phi\circ \pi_{\matholdcal M}$. We denote the category of  left $C$-contramodule objects in $\mathfrak S$ by $^{[C,-]}\mathfrak S$. 
	\end{defn}
	 
	Since  $(C,-)$ is a right adjoint (and hence it preserves limits), it follows that limits exist in $^{[C,-]}\mathfrak S$ and are computed in $\mathfrak S$. If  $(C,-):\mathfrak S\longrightarrow \mathfrak S$ is exact,   then cokernels also exists in $^{[C,-]}\mathfrak S$ and are computed in $\mathfrak S$, which makes $^{[C,-]}\mathfrak S$ an abelian category. Using \cite[$\S$ B4]{AZ}, we note that this happens for instance   if $\mathfrak S$ satisfies the (AB4*) condition, i.e., products in $\mathfrak S$ are exact.   
	
	\smallskip
	From now onwards, we will assume that  $(C,-):\mathfrak S\longrightarrow \mathfrak S$ is exact, unless otherwise mentioned. For any object $\matholdcal M\in \mathfrak S$, we observe that $(C, \matholdcal M)$ is a left $C$-contramodule object in $\mathfrak S$, where the structure map $\pi_{\matholdcal M}$ is given by
	\begin{equation}\label{eq3.2}
		\pi_{\matholdcal  M}:(C, (C, \matholdcal M))\cong (C\otimes C, \matholdcal M)\xrightarrow{(\Delta,\matholdcal M)} (C, \matholdcal M)
	\end{equation} 
	This determines a functor $^{[C,-]}\mathcal T:\mathfrak S\longrightarrow~^{[C,-]}\mathfrak S$ that associates $\matholdcal M\mapsto (C, \matholdcal M)$.  Let $^{[C,-]}\mathcal F:~^{[C,-]}\mathfrak S\longrightarrow \mathfrak S$ denote the forgetful functor. We now have the following result.
	\begin{thm}\label{P3.2}
		Let $C$ be a $k$-coalgebra and $\mathfrak S$ be a $k$-linear Grothendieck category. Then, $(^{[C,-]}\mathcal T,^{[C,-]}\mathcal F)$ is a pair of adjoint functors, i.e., for any $\matholdcal M\in \mathfrak S$ and $(\matholdcal N,\pi_{\matholdcal N}) \in~^{[C,-]}\mathfrak S$, we have natural isomorphisms
		\begin{equation*}
			^{[C,-]}\mathfrak S((C, \matholdcal M),\matholdcal N)\cong \mathfrak S(\matholdcal M,\matholdcal N)
		\end{equation*} 
	\end{thm}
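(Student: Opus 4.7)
The plan is to construct the adjunction directly, in parallel with the proof of Proposition \ref{P2.5} in the entwined comodule setting. The unit of the adjunction will be given, for each $\matholdcal M\in\mathfrak S$, by the canonical map $(\epsilon,\matholdcal M):\matholdcal M\cong (k,\matholdcal M)\longrightarrow (C,\matholdcal M)$, while the counit at a contramodule $(\matholdcal N,\pi_{\matholdcal N})\in~^{[C,-]}\mathfrak S$ will simply be the structure map $\pi_{\matholdcal N}:(C,\matholdcal N)\longrightarrow \matholdcal N$. Concretely, given a contramodule morphism $\zeta:(C,\matholdcal M)\longrightarrow \matholdcal N$, I set $\Phi(\zeta):=\zeta\circ (\epsilon,\matholdcal M)\in\mathfrak S(\matholdcal M,\matholdcal N)$; conversely, given $\xi:\matholdcal M\longrightarrow \matholdcal N$ in $\mathfrak S$, I set $\Psi(\xi):=\pi_{\matholdcal N}\circ (C,\xi):(C,\matholdcal M)\longrightarrow \matholdcal N$.

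First I would verify that $\Psi(\xi)$ is actually a morphism in $^{[C,-]}\mathfrak S$, which amounts to checking $\pi_{\matholdcal N}\circ (C,\Psi(\xi))=\Psi(\xi)\circ (\Delta,\matholdcal M)$. This follows by expanding $(C,\Psi(\xi))=(C,\pi_{\matholdcal N})\circ (C,(C,\xi))$, applying the contramodule coassociativity \eqref{eq3.1} for $\matholdcal N$, namely $\pi_{\matholdcal N}\circ (C,\pi_{\matholdcal N})=\pi_{\matholdcal N}\circ (\Delta,\matholdcal N)$, and finally invoking naturality of $(\Delta,-)$ in its second variable to move $(C,(C,\xi))$ past $(\Delta,-)$ and produce $(C,\xi)\circ (\Delta,\matholdcal M)$. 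Naturality of the resulting bijection in both $\matholdcal M$ and $\matholdcal N$ is a direct consequence of the functoriality of $(C,-)$ together with the definitions.

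Next I would check that $\Phi$ and $\Psi$ are mutually inverse. For $\Phi(\Psi(\xi))$, naturality of $(-,-)$ gives $(C,\xi)\circ (\epsilon,\matholdcal M)=(\epsilon,\matholdcal N)\circ \xi$, and then the contramodule counit axiom $\pi_{\matholdcal N}\circ (\epsilon,\matholdcal N)=\mathrm{id}_{\matholdcal N}$ recovers $\xi$. Conversely, $\Psi(\Phi(\zeta))=\pi_{\matholdcal N}\circ (C,\zeta)\circ (C,(\epsilon,\matholdcal M))$; since $\zeta$ is a contramodule morphism, this equals $\zeta\circ (\Delta,\matholdcal M)\circ (C,(\epsilon,\matholdcal M))$, and under the canonical isomorphism $(C,(C,\matholdcal M))\cong (C\otimes C,\matholdcal M)$ the composition $(\Delta,\matholdcal M)\circ (C,(\epsilon,\matholdcal M))$ corresponds to $((C\otimes \epsilon)\circ \Delta,\matholdcal M)=(\mathrm{id}_C,\matholdcal M)$ by the coalgebra counit axiom, so we recover $\zeta$. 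The only genuine subtlety is the bookkeeping around the canonical isomorphism $(C,(C,-))\cong (C\otimes C,-)$ and keeping straight the covariant functoriality of $(C,-)$ alongside the contravariant functoriality of $(-,\matholdcal M)$; once these manipulations are set up correctly, everything collapses to the axioms of Definition \ref{D3.1} and the coalgebra counit identity, and I do not anticipate any obstacle beyond this.
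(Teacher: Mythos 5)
Your proposal is correct and follows essentially the same route as the paper: the two associations $\xi\mapsto\pi_{\matholdcal N}\circ (C,\xi)$ and $\zeta\mapsto\zeta\circ(\epsilon,\matholdcal M)$ are exactly the ones the paper uses, and you merely spell out the verifications (that $\pi_{\matholdcal N}\circ(C,\xi)$ is a contramodule morphism, and that the two maps are mutually inverse via the contramodule axioms and the counit identity) which the paper leaves to the reader.
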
 
	\begin{proof}
		Let  $\zeta:\matholdcal M\longrightarrow \matholdcal N$ be a morphism in $\mathfrak S$. Then, the composition $(C, \matholdcal M)\xrightarrow{(C, \zeta)} (C, \matholdcal N) \xrightarrow{\pi_{\matholdcal N}} \matholdcal N$ is the corresponding morphism in $^{[C,-]}\mathfrak S$. Conversely, let $\xi:(C,\matholdcal M)\longrightarrow \matholdcal N$ be a morphism in $^{[C,-]}\mathfrak S$. Then, the composition $\matholdcal M\cong (k,\matholdcal M)\xrightarrow{(\epsilon, \matholdcal M)} (C,\matholdcal M)\xrightarrow{\xi} \matholdcal N$ is the corresponding morphism in $\mathfrak S$. It may be easily seen that these associations are inverse to each other. 
	\end{proof}
	\begin{lem}\label{L3.3}
		Let $C$ be a $k$-coalgebra and let $\mathfrak S$ be a $k$-linear Grothendieck category . Then, we have the following:
		
		\smallskip
		(i) If $\matholdcal G$ is a generator for $\mathfrak S$, then  $(C,\matholdcal G$) is a generator for $^{[C,-]}\mathfrak S$.
		
		\smallskip
		(ii) If $\matholdcal P$ is projective in $\mathfrak S$, then $(C,\matholdcal P$) is projective in $^{[C,-]}\mathfrak S$.
		
		\smallskip
		(iii) If $\matholdcal P$ is a projective generator for $\mathfrak S$, then $(C,\matholdcal P$) is a projective generator for $^{[C,-]}\mathfrak S$.
		
	\end{lem}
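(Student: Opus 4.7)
The plan is to derive all three parts from the adjunction $({}^{[C,-]}\mathcal T, {}^{[C,-]}\mathcal F)$ established in Proposition \ref{P3.2}, together with the fact that both kernels and cokernels in ${}^{[C,-]}\mathfrak S$ are computed in $\mathfrak S$ when $(C,-)$ is exact (as noted immediately after Definition \ref{D3.1}). The latter observation implies that the forgetful functor ${}^{[C,-]}\mathcal F$ is itself exact, which will feed directly into the projectivity argument.

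For part (i), I would argue as follows. Let $f,g\colon \matholdcal N \longrightarrow \matholdcal N'$ be two distinct morphisms in ${}^{[C,-]}\mathfrak S$. They are then distinct as morphisms in $\mathfrak S$ (since ${}^{[C,-]}\mathcal F$ is faithful), so by the generator property of $\matholdcal G$ in $\mathfrak S$ there is $\psi\colon \matholdcal G\longrightarrow \matholdcal N$ with $f\circ\psi \neq g\circ \psi$. Let $\phi := \pi_\matholdcal N\circ (C,\psi)\colon (C,\matholdcal G)\longrightarrow \matholdcal N$ be the adjoint morphism in ${}^{[C,-]}\mathfrak S$. Using that $f$ is contramodule-linear, i.e. $f\circ \pi_\matholdcal N = \pi_{\matholdcal N'}\circ (C,f)$, one computes $f\circ \phi = \pi_{\matholdcal N'}\circ (C,f\psi)$, and similarly for $g$. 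Precomposing both sides with $(\epsilon,\matholdcal G)$ recovers $f\psi$ and $g\psi$ respectively, via the triangle identity in Proposition \ref{P3.2}. Since $f\psi\neq g\psi$, the bijection of the adjunction forces $f\circ\phi\neq g\circ\phi$, so $(C,\matholdcal G)$ generates ${}^{[C,-]}\mathfrak S$.

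For part (ii), I would use the standard principle that a left adjoint preserves projectives whenever its right adjoint is exact. Here ${}^{[C,-]}\mathcal F$ is exact since kernels and cokernels in ${}^{[C,-]}\mathfrak S$ are computed in $\mathfrak S$. Concretely, given an epimorphism $\pi\colon \matholdcal N\twoheadrightarrow \matholdcal N''$ in ${}^{[C,-]}\mathfrak S$ and a map $\phi\colon (C,\matholdcal P)\longrightarrow \matholdcal N''$, apply the adjunction to obtain $\phi^\flat = \phi\circ (\epsilon,\matholdcal P)\colon \matholdcal P\longrightarrow \matholdcal N''$ in $\mathfrak S$; here $\pi$ is still epi in $\mathfrak S$ because ${}^{[C,-]}\mathcal F$ is exact. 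Projectivity of $\matholdcal P$ in $\mathfrak S$ yields a lift $\tilde\phi^\flat\colon \matholdcal P\longrightarrow \matholdcal N$, whose adjoint $\tilde\phi = \pi_{\matholdcal N}\circ (C,\tilde\phi^\flat)\colon (C,\matholdcal P)\longrightarrow \matholdcal N$ satisfies $\pi\circ \tilde\phi = \phi$ by naturality of the adjunction bijection in $\matholdcal N$.

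Part (iii) is then immediate by combining (i) and (ii). The main subtlety is really the bookkeeping in (i): one must take care that the two distinct morphisms one starts with in ${}^{[C,-]}\mathfrak S$ remain distinct after the forgetful functor is applied (guaranteed by faithfulness of ${}^{[C,-]}\mathcal F$), and that the adjoint-separating argument is compatible with the $C$-contramodule structure, which is the content of the identity $f\circ \pi_\matholdcal N = \pi_{\matholdcal N'}\circ (C,f)$. Nothing in the argument is computationally difficult; the entire proof is a clean consequence of the adjunction and the exactness of the forgetful functor.
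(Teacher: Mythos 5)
Your proposal is correct and follows essentially the same route as the paper: both derive all three parts from the adjunction $({}^{[C,-]}\mathcal T,{}^{[C,-]}\mathcal F)$ of Proposition \ref{P3.2} together with the fact that (co)kernels in ${}^{[C,-]}\mathfrak S$ are computed in $\mathfrak S$, so that the forgetful functor is faithful and exact. The only cosmetic difference is in part (i), where you use the morphism-separating characterization of a generator while the paper uses Grothendieck's criterion that some map from the generator fails to factor through a given proper subobject; these are equivalent in the abelian category ${}^{[C,-]}\mathfrak S$, and the adjunction argument is identical in both versions.
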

	\begin{proof}
		
		(i) We consider a monomorphism $\matholdcal M'\hookrightarrow \matholdcal M$ in $^{[C,-]}\mathfrak S$ that is not an isomorphism in $^{[C,-]}\mathfrak S$. Since kernels and cokernels in $^{[C,-]}\mathfrak S$ are computed in $\mathfrak S$, we see that $\matholdcal M'\hookrightarrow \matholdcal M$ is a monomorphism in $\mathfrak S$ that is not an isomorphism.  Since $\matholdcal G$ is a generator for $\mathfrak S$, by \cite[\S 1.9]{Gro}, there exists a morphism $\phi:\matholdcal G\longrightarrow\matholdcal M$ in $\mathfrak S$ that does not factor through $\matholdcal M'$. By the adjunction in Proposition \ref{P3.2}, we obtain a  morphism $\tilde{\phi}: (C,\matholdcal G)\longrightarrow \matholdcal M$ in $^{[C,-]}\mathfrak S$ corresponding to $\phi$ that does not factor through $\matholdcal M'$. It now follows from \cite[\S 1.9]{Gro} that $(C,\matholdcal G)$ is a generator for $^{[C,-]}\mathfrak S$.
		
		\smallskip
		
		(ii) Since $\matholdcal P$ is projective in $\mathfrak S$, $^{[C,-]}\mathfrak S((C, \matholdcal P),-)\cong  \mathfrak S(\matholdcal P,-)$ is an exact functor. Therefore, $(C,\matholdcal P)$ is projective in $^{[C,-]}\mathfrak S$.
		
		\smallskip
		(iii) This is clear from (i) and (ii).
	\end{proof}

\smallskip
	  We now introduce entwined contramodule objects in $\mathfrak S$ over an entwining structure $(A,C,\psi)$. In the context of entwined contramodules, we will typically use 
$\mu_{\matholdcal M}:\matholdcal M\longrightarrow (A,\matholdcal M)$  to denote the structure map of an object $(\matholdcal M,\mu_{\matholdcal M})\in {_A}\mathfrak S$. 	   

	\begin{defn}\label{D3.8}
		Let $(A,C,\psi)$ be an entwining structure and $\mathfrak S$ be a $k$-linear Grothendieck category. A left entwined contramodule object in $\mathfrak S$ over the entwining structure $(A,C,\psi)$ is a triple $(\matholdcal M,\pi_{\matholdcal M}, \mu_{\matholdcal M})$ such that 
		
		\smallskip
		(i) $(\matholdcal M,\pi_{\matholdcal M}:(C,\matholdcal M)\longrightarrow \matholdcal M)$ is a left $C$-contramodule object in $\mathfrak S$ 
		
		\smallskip
		(ii) $(\matholdcal M,\mu_{\matholdcal M}:\matholdcal M\longrightarrow (A,\matholdcal M))$ is a left $A$-module object in $\mathfrak S$
		
		\smallskip
		(iii) The following diagram commutes in $\mathfrak S$
		\begin{equation}\label{eq3.9}
			\begin{tikzcd}[row sep=1.8em, column sep = 3.8em]
				(C,\matholdcal M) \arrow{r}{\pi_{\matholdcal M}} \arrow{d} {(C,\mu_{\matholdcal M})}
				&  \matholdcal M \arrow{r}{\mu_{\matholdcal M}} &(A,\matholdcal M)\\
				(C,(A,\matholdcal M))\cong (A\otimes C,\matholdcal M) \arrow{rr}
				{(\psi,\matholdcal M)} &&(C\otimes A,\matholdcal M)\cong (A,(C,\matholdcal M))\arrow{u}[swap]{(A,\pi_{\matholdcal M})}
			\end{tikzcd}
		\end{equation}
		A morphism between left entwined contramodule objects in $\mathfrak S$ is a map that preserves both the left $A$-module structure and the left $C$-contramodule structure. 
		We denote by $_{\hspace{1em}A}^{[C,-]}\mathfrak S(\psi)$ 
		 the category of left entwined contramodule objects in $\mathfrak S$ over the entwining structure $(A,C,\psi)$.
	\end{defn}

	\smallskip
	From now onwards, for any sequence  $V_1,V_2,\ldots, V_n$   of $k$-vector spaces   and any object $\matholdcal M\in \mathfrak S$, we will write 	
	\begin{equation}\label{eq3.10}
		(V_1,V_2,\ldots, V_n,\matholdcal M):=(V_n\otimes V_{n-1}\otimes ...\otimes V_1,\matholdcal M)=(V_1,(V_2,(.\ldots(V_n,\matholdcal M))))\in \mathfrak S
	\end{equation}
	We will also use this to denote all the equivalent forms in which the right hand side of \eqref{eq3.10} can be written using the hom-tensor adjunction. Then for an entwining structure $(A,C,\psi)$, we observe that   applying the functor $(-,\matholdcal M)$ to (\ref{eq2.1}) gives the following commutative diagrams
	\begin{equation}\label{eq3.11}
		\begin{array}{lll}
			\begin{tikzcd}[row sep=1.8em, column sep = 3.8em]
				(C, A, \matholdcal M)  \arrow{r}{(\psi,\matholdcal M)} \arrow{d}{(C,\mu, \matholdcal M)}& (A, C, \matholdcal M) \arrow{r} {(\mu, C, \matholdcal M)} &(A, A, C, \matholdcal M) \\%
				(C, A, A,\matholdcal M)   \arrow{rr}{(\psi,A,\matholdcal M)}&  &(A, C, A, \matholdcal M)\arrow{u}[swap] {(A,\psi,\matholdcal M)}
			\end{tikzcd}
			\qquad\qquad
			\begin{tikzcd}
				(C,A,\matholdcal M)\arrow{r}{(\psi,\matholdcal M)}\arrow{dr}[swap]{(C,\eta,\matholdcal M)}&(A,C,\matholdcal M)\arrow{d}{(\eta,C,\matholdcal M)}\\
				&(C,\matholdcal M)
			\end{tikzcd}
		\end{array}
	\end{equation}
	Similarly, applying the functor $(-,\matholdcal M)$ to (\ref{eq2.2}) gives the following commutative diagrams 
	\begin{equation}\label{eq3.12}
		\begin{array}{lll}
			\begin{tikzcd}[row sep=1.8em, column sep = 3.8em]
				(C, C, A, \matholdcal M)  \arrow{r}{(\Delta, A, \matholdcal M)} \arrow{d}{(C,\psi, \matholdcal M)}& (C, A,\matholdcal M) \arrow{r} {(\psi,\matholdcal M)} &(A, C, \matholdcal M) \\%
				(C, A, C, \matholdcal M)   \arrow{rr}{(\psi,C, \matholdcal M)}&  &(A, C, C, \matholdcal M)\arrow{u}[swap] {(A,\Delta,\matholdcal M)}
			\end{tikzcd}
			\qquad\qquad
			\begin{tikzcd}
				(A,\matholdcal M)\arrow{r}{(\epsilon,A,\matholdcal M)}\arrow{dr}[swap]{(A,\epsilon,\matholdcal M)}&(C,A,\matholdcal M)\arrow{d}{(\psi,\matholdcal M)}\\
				&(A,C,\matholdcal M)
			\end{tikzcd}
		\end{array}
	\end{equation}
	\begin{lem}\label{L3.9}  
		Let $(\matholdcal M,\pi_{\matholdcal M})$ be an object in $~^{[C,-]}\mathfrak S$. Then, $(A,\matholdcal M)\in ~_{\hspace{1em}A}^{[C,-]}\mathfrak S(\psi)$. This   determines a functor $~^{[C,-]}\mathcal T:~^{[C,-]}\mathfrak S\longrightarrow ~_{\hspace{1em}A}^{[C,-]}\mathfrak S(\psi)$.
	\end{lem}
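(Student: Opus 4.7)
The plan is to equip $(A,\matholdcal M)$ with the obvious left $A$-module structure coming from the multiplication $\mu:A\otimes A\to A$, together with a left $C$-contramodule structure built by combining the entwining $\psi$ with the given $\pi_{\matholdcal M}$, and then to verify the entwining compatibility \eqref{eq3.9}. Concretely, I will set
\[
\mu_{(A,\matholdcal M)}:=(\mu,\matholdcal M):(A,\matholdcal M)\longrightarrow(A,A,\matholdcal M),\qquad \pi_{(A,\matholdcal M)}:=(A,\pi_{\matholdcal M})\circ(\psi,\matholdcal M):(C,A,\matholdcal M)\longrightarrow(A,\matholdcal M).
\]
The left $A$-module axioms for $\mu_{(A,\matholdcal M)}$ are then immediate by applying the contravariant functor $(-,\matholdcal M)$ to the associativity and unit diagrams of $(A,\mu,\eta)$.

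For the left $C$-contramodule axioms, the counit identity is a one-step check: the second commutative triangle in \eqref{eq3.12} gives $(\psi,\matholdcal M)\circ(\epsilon,A,\matholdcal M)=(A,\epsilon,\matholdcal M)$, after which the counit axiom for $\pi_{\matholdcal M}$ forces $\pi_{(A,\matholdcal M)}\circ(\epsilon,(A,\matholdcal M))$ to be the identity on $(A,\matholdcal M)$. Coassociativity is the first substantive step: expanding $\pi_{(A,\matholdcal M)}\circ(C,\pi_{(A,\matholdcal M)})$, I slide $(C,A,\pi_{\matholdcal M})$ past $(\psi,\matholdcal M)$ using naturality of the transformation $(\psi,-):(C,A,-)\Rightarrow(A,C,-)$, apply coassociativity of $\pi_{\matholdcal M}$ to convert $(A,\pi_{\matholdcal M})\circ(A,C,\pi_{\matholdcal M})$ into $(A,\pi_{\matholdcal M})\circ(A,\Delta,\matholdcal M)$, and then use the first commutative square in \eqref{eq3.12} to reassemble the result as $\pi_{(A,\matholdcal M)}\circ(\Delta,(A,\matholdcal M))$.

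The entwining compatibility \eqref{eq3.9} for $(A,\matholdcal M)$ is the main bookkeeping step. Expanding both sides and substituting $(A,\psi,\matholdcal M)\circ(\psi,A,\matholdcal M)\circ(C,\mu,\matholdcal M)=(\mu,C,\matholdcal M)\circ(\psi,\matholdcal M)$ from the first diagram in \eqref{eq3.11}, the comparison reduces to the naturality identity $(\mu,\matholdcal M)\circ(A,\pi_{\matholdcal M})=(A,A,\pi_{\matholdcal M})\circ(\mu,C,\matholdcal M)$ for the natural transformation $(\mu,-):(A,-)\Rightarrow(A,A,-)$, which is automatic. Finally, functoriality of $^{[C,-]}\mathcal T$ is obtained by observing that for any morphism $\phi:\matholdcal M\to\matholdcal M'$ of left $C$-contramodule objects, the induced map $(A,\phi)$ commutes with the $A$-module structures by naturality of $(\mu,-)$ and with the contramodule structures by naturality of $(\psi,-)$ combined with the identity $\phi\circ\pi_{\matholdcal M}=\pi_{\matholdcal M'}\circ(C,\phi)$.

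I expect the main obstacle to be the pure bookkeeping of keeping the identifications in \eqref{eq3.10} consistent while commuting $(\psi,-)$, $(\Delta,-)$, and $(\mu,-)$ past one another: the individual steps are elementary naturality arguments, but the repeated rewriting $(V_1,\dots,V_n,\matholdcal M)\cong(V_1,(V_2,(\cdots(V_n,\matholdcal M))))$ makes it easy to mis-place arguments unless the slot-notation used in \eqref{eq3.11}--\eqref{eq3.12} is followed carefully throughout.
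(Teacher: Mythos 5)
Your proposal is correct and follows essentially the same route as the paper: the same structure maps $\mu_{(A,\matholdcal M)}=(\mu,\matholdcal M)$ and $\pi_{(A,\matholdcal M)}=(A,\pi_{\matholdcal M})\circ(\psi,\matholdcal M)$, with the entwining compatibility \eqref{eq3.9} verified by exactly the substitution from the first diagram of \eqref{eq3.11} followed by naturality of $(\mu,-)$. The only difference is that you spell out the contramodule counit and coassociativity axioms via \eqref{eq3.12}, which the paper asserts without proof; those checks are correct as you describe them.
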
	
	\begin{proof}
		Clearly, $(A,\matholdcal M)$ is a left $A$-module object in $\mathfrak S$ with  structure map $\mu_{(A,\matholdcal M)}:(A,\matholdcal M)\xrightarrow{(\mu, \matholdcal M)}(A,A,\matholdcal M)$. The composition $\pi_{(A,\matholdcal M)}:(C, A,\matholdcal M)\xrightarrow{(\psi, \matholdcal M)} (A,C, \matholdcal M)\xrightarrow{(A,\pi_{\matholdcal M})} (A,\matholdcal M)$ makes $(A,\matholdcal M)$ into a   left $C$-contramodule object in $\mathfrak S$. 
	To show that $(A,\matholdcal M)$ satisfies the commutative diagram in (\ref{eq3.9}), we verify that
		\begin{align*}
			(A,\pi_{(A,\matholdcal M)})\circ (\psi, A,\matholdcal M)\circ (C,\mu_{(A,\matholdcal M)}) 
			&=(A,A,\pi_{\matholdcal M})\circ (A,\psi,\matholdcal M)\circ (\psi, A,\matholdcal M)\circ (C,\mu,\matholdcal M)\\\notag
			&=(A,A,\pi_{\matholdcal M})\circ (\mu,C,\matholdcal M)\circ (\psi,\matholdcal M)\tag*{(\textup{by}~(\ref{eq3.11}))}\\\notag
			&=(\mu,\matholdcal M)\circ (A,\pi_{\matholdcal M})\circ (\psi,\matholdcal M)=\mu_{(A,\matholdcal M)}\circ \pi_{(A,\matholdcal M)} 
		\end{align*}
		This proves the result. 
	\end{proof}	
	\begin{thm}\label{L3.10} The functor $~^{[C,-]}\mathcal T:~^{[C,-]}\mathfrak S\longrightarrow ~_{\hspace{1em}A}^{[C,-]}\mathfrak S(\psi)$ is right adjoint to the forgetful 
	functor 
 $^{[C,-]}\mathcal F:~_{\hspace{1em}A}^{[C,-]}\mathfrak S(\psi)\longrightarrow~^{[C,-]}\mathfrak S$, i.e., for all $(\matholdcal M,\pi_{\matholdcal M},\mu_{\matholdcal M})\in~_{\hspace{1em}A}^{[C,-]}\mathfrak S(\psi)$ and $(\matholdcal N,\pi_{\matholdcal N})\in~^{[C,-]}\mathfrak S$, we have natural isomorphisms
		\begin{equation*}
			^{[C,-]}\mathfrak S(\matholdcal M,\matholdcal N)\cong~ ^{[C,-]}_{\hspace{1em}A}\mathfrak S(\psi)(\matholdcal M,(A,\matholdcal N))
		\end{equation*}
	\end{thm}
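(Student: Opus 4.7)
The plan is to prove the adjunction by exhibiting mutually inverse bijections between the two hom-sets, identifying $\mu_{\matholdcal M}$ as the unit at $\matholdcal M$ and $(\eta,\matholdcal N)$ as the counit at $\matholdcal N$. Concretely, for $\matholdcal M\in{_{\hspace{1em}A}^{[C,-]}\mathfrak S(\psi)}$ and $\matholdcal N\in{^{[C,-]}\mathfrak S}$, I define
\[
\Phi:{^{[C,-]}\mathfrak S}(\matholdcal M,\matholdcal N)\longrightarrow{^{[C,-]}_{\hspace{1em}A}\mathfrak S(\psi)}(\matholdcal M,(A,\matholdcal N)),\qquad \Phi(\zeta):=(A,\zeta)\circ\mu_{\matholdcal M},
\]
\[
\Psi:{^{[C,-]}_{\hspace{1em}A}\mathfrak S(\psi)}(\matholdcal M,(A,\matholdcal N))\longrightarrow{^{[C,-]}\mathfrak S}(\matholdcal M,\matholdcal N),\qquad \Psi(\xi):=(\eta,\matholdcal N)\circ\xi.
\]
Naturality of $\Phi$ and $\Psi$ in both variables will be immediate from these formulas, so the real content lies in verifying that the two maps land in the stated hom-sets and are mutually inverse.

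The first task is to show that $\Phi(\zeta)$ is a morphism in $_{\hspace{1em}A}^{[C,-]}\mathfrak S(\psi)$, using the entwined contramodule structure on $(A,\matholdcal N)$ supplied by Lemma~\ref{L3.9}. Left $A$-linearity follows quickly from the naturality of $(\mu,-)$ together with the associativity axiom $(\mu,\matholdcal M)\circ\mu_{\matholdcal M}=(A,\mu_{\matholdcal M})\circ\mu_{\matholdcal M}$ for the left $A$-module object $\matholdcal M$. Verifying the $C$-contramodule compatibility is the substantive calculation and the \emph{main obstacle} of the proof. I would start from $\Phi(\zeta)\circ\pi_{\matholdcal M}=(A,\zeta)\circ\mu_{\matholdcal M}\circ\pi_{\matholdcal M}$, use the entwining diagram \eqref{eq3.9} on $\matholdcal M$ to rewrite $\mu_{\matholdcal M}\circ\pi_{\matholdcal M}=(A,\pi_{\matholdcal M})\circ(\psi,\matholdcal M)\circ(C,\mu_{\matholdcal M})$, and then exploit $\zeta\circ\pi_{\matholdcal M}=\pi_{\matholdcal N}\circ(C,\zeta)$ together with the naturality of $(\psi,-)$ to arrive at $(A,\pi_{\matholdcal N})\circ(\psi,\matholdcal N)\circ(C,\Phi(\zeta))=\pi_{(A,\matholdcal N)}\circ(C,\Phi(\zeta))$.

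For the second task, I must check that $\Psi(\xi)=(\eta,\matholdcal N)\circ\xi$ is $C$-contramodule linear. Using the $C$-contramodule linearity of $\xi$ together with the formula $\pi_{(A,\matholdcal N)}=(A,\pi_{\matholdcal N})\circ(\psi,\matholdcal N)$, the composite $(\eta,\matholdcal N)\circ\xi\circ\pi_{\matholdcal M}$ rewrites as $(\eta,\matholdcal N)\circ(A,\pi_{\matholdcal N})\circ(\psi,\matholdcal N)\circ(C,\xi)$. Pushing $(\eta,\matholdcal N)$ past $(A,\pi_{\matholdcal N})$ by naturality of the family $(\eta,-):(A,-)\Rightarrow id$, and then applying the triangular identity $(\eta,C,\matholdcal N)\circ(\psi,\matholdcal N)=(C,\eta,\matholdcal N)$ from the right-hand diagram of \eqref{eq3.11}, reduces this expression to $\pi_{\matholdcal N}\circ(C,\Psi(\xi))$, as required.

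For the two compositions, $\Psi\Phi(\zeta)=(\eta,\matholdcal N)\circ(A,\zeta)\circ\mu_{\matholdcal M}=\zeta\circ(\eta,\matholdcal M)\circ\mu_{\matholdcal M}=\zeta$ by naturality of $(\eta,-)$ and the unit axiom of the left $A$-module $\matholdcal M$. Conversely, $\Phi\Psi(\xi)=(A,(\eta,\matholdcal N))\circ(A,\xi)\circ\mu_{\matholdcal M}$, which by the $A$-linearity of $\xi$ (namely $(A,\xi)\circ\mu_{\matholdcal M}=(\mu,\matholdcal N)\circ\xi$) equals $(A,(\eta,\matholdcal N))\circ(\mu,\matholdcal N)\circ\xi$; this collapses to $\xi$ upon applying $(-,\matholdcal N)$ to the algebra unit axiom $\mu\circ(A\otimes\eta)=id_A$. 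The remaining verifications are standard diagram chases that follow the same pattern.
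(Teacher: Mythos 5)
Your proposal is correct and follows essentially the same route as the paper: the same two assignments $\zeta\mapsto(A,\zeta)\circ\mu_{\matholdcal M}$ and $\xi\mapsto(\eta,\matholdcal N)\circ\xi$, with the same use of diagram \eqref{eq3.9} and the identity $(\eta,C,\matholdcal N)\circ(\psi,\matholdcal N)=(C,\eta,\matholdcal N)$ from \eqref{eq3.11}. The only differences are presentational: you verify the contramodule compatibility of $\Phi(\zeta)$ by a direct chase where the paper factors it through the functoriality of $^{[C,-]}\mathcal T$ and the fact that $\mu_{\matholdcal M}$ is itself a morphism in $_{\hspace{1em}A}^{[C,-]}\mathfrak S(\psi)$, and you spell out the mutual-inverse check that the paper leaves to the reader.
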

	\begin{proof}
		Let $\xi:\matholdcal M\longrightarrow (A,\matholdcal N)$ be a morphism in $^{[C,-]}_{\hspace{1em}A}\mathfrak S(\psi)$. We consider the composition $\zeta:\matholdcal M\xrightarrow{\xi}(A,\matholdcal N)\xrightarrow{(\eta,\matholdcal N)}\matholdcal N$. We now show that $\zeta$ is a morphism in $^{[C,-]}\mathfrak S$, i.e., we will show that $\pi_{\matholdcal N}\circ (C,\zeta)=\zeta\circ \pi_{\matholdcal M}.$ We have 
		\begin{align*}
			\zeta\circ \pi_{\matholdcal M}= (\eta,\matholdcal N)\circ \xi\circ \pi_{\matholdcal M}=(\eta,\matholdcal N)\circ \pi_{(A,\matholdcal N)}\circ (C,\xi)
			&=(\eta,\matholdcal N)\circ (A, \pi_{\matholdcal N})\circ(\psi, \matholdcal N)\circ (C,\xi)\\\notag
			&=\pi_{\matholdcal N}\circ (\eta, C,\matholdcal N)\circ (\psi, \matholdcal N)\circ (C,\xi)\\\notag
			&=\pi_{\matholdcal N}\circ (C,\eta,\matholdcal N)\circ (C,\xi)\tag*{(\textup{by}~(\ref{eq3.11}))}\\\notag
			&=\pi_{\matholdcal N}\circ (C,((\eta,\matholdcal N)\circ\xi) )= \pi_{\matholdcal N}\circ (C,\zeta)
		\end{align*}
		For the converse, we consider a morphism $\zeta:\matholdcal M\longrightarrow \matholdcal N$ in $^{[C,-]}\mathfrak S$. Then, we take the composition $\xi: \matholdcal M\xrightarrow{\mu_{\matholdcal M}} (A,\matholdcal M)\xrightarrow{(A,\zeta)} (A,\matholdcal N)$. By the functor  $~^{[C,-]}\mathcal T:~^{[C,-]}\mathfrak S\longrightarrow ~_{\hspace{1em}A}^{[C,-]}\mathfrak S(\psi)$, we see that $(A,\zeta)$ must be a morphism in $^{[C,-]}_{\hspace{1em}A}\mathfrak S(\psi)$. We know that $\mu_{\matholdcal M}$ is left 
		$A$-linear. From the diagram in (\ref{eq3.9}), we can verify that $\mu_{\matholdcal M}$ is also a morphism in $^{[C,-]}\mathfrak S$. Hence, the composition $\xi=(A,\zeta)\circ \mu_{\matholdcal M}$ is a morphism in $^{[C,-]}_{\hspace{1em}A}\mathfrak S(\psi)\big(\matholdcal M,(A,\matholdcal N)\big)$. It may be verified that  these two associations are inverse to each other.
		%				We now show that $\xi$ is also a morphism in $^{[C,-]}\mathfrak S$. For this we shall show that $\pi_{(A,\matholdcal N)}\circ (C,\xi)=\xi\circ \pi_{\matholdcal M}$.
		%				We see that 
		%				\begin{align*}
			%					\pi_{(A,\matholdcal N)}\circ (C,\xi)&=(A,\pi_{\matholdcal N})\circ (\psi,\matholdcal N)\circ (C,A,\zeta)\circ (C, \mu_{\matholdcal M})\\\notag
			%					&=(A,\pi_{\matholdcal N})\circ (A, C, \zeta)\circ (\psi,\matholdcal M)\circ (C, \mu_{\matholdcal M})\\\notag
			%					&=(A,\zeta)\circ (A,\pi_{\matholdcal M})\circ (\psi,\matholdcal M)\circ (C, \mu_{\matholdcal M})\\\notag
			%					&=(A,\zeta)\circ  \mu_{\matholdcal M}\circ \pi_{\matholdcal M}=\xi\circ \pi_{\matholdcal M}
			%				\end{align*}
	\end{proof}
	\begin{lem}\label{L3.11}
		Let $(\matholdcal M,\mu_{\matholdcal M})\in~_A\mathfrak S$. Then, $(C,\matholdcal M)\in ~_{\hspace{1em}A}^{[C,-]}\mathfrak S(\psi)$ with the structure maps $\pi_{(C, \matholdcal M)}:(C,C,\matholdcal M)\xrightarrow{(\Delta,\matholdcal M)}(C,\matholdcal M)$ and $	\mu_{(C,\matholdcal M)}:(C,\matholdcal M)\xrightarrow{(C,\mu_{\matholdcal M})} (C, A, \matholdcal M)\xrightarrow{(\psi, \matholdcal M)} (A,C,\matholdcal M)$. 
	\end{lem}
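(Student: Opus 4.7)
The plan is to verify directly, in order, the three conditions of Definition \ref{D3.8} for the triple $\bigl((C,\matholdcal M),\pi_{(C,\matholdcal M)},\mu_{(C,\matholdcal M)}\bigr)$: that it is a left $C$-contramodule object, a left $A$-module object, and that the compatibility square \eqref{eq3.9} commutes. The only ingredients I will need are the coalgebra axioms of $C$, the associativity and unitality of $\mu_{\matholdcal M}$, and the two entwining axioms in their ``hommed'' forms \eqref{eq3.11} and \eqref{eq3.12}.

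That $\pi_{(C,\matholdcal M)}=(\Delta,\matholdcal M)$ defines a left $C$-contramodule is immediate: the counit condition is $(-,\matholdcal M)$ applied to $(\epsilon\otimes C)\circ\Delta=\mathrm{id}_C$, and the associativity square \eqref{eq3.1} reduces to coassociativity of $\Delta$. For the $A$-module unit law, I would compose $(\eta,C,\matholdcal M)\circ\mu_{(C,\matholdcal M)}$ and collapse the initial $(\eta,C,\matholdcal M)\circ(\psi,\matholdcal M)$ to $(C,\eta,\matholdcal M)$ using the right-hand triangle of \eqref{eq3.11}, then apply $(\eta,\matholdcal M)\circ\mu_{\matholdcal M}=\mathrm{id}_{\matholdcal M}$. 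For associativity, the plan is to apply the left-hand square of \eqref{eq3.11} to push $(\mu,C,\matholdcal M)$ through $(\psi,\matholdcal M)$, then use associativity of $\mu_{\matholdcal M}$ and naturality of $(\psi,-)$ at $\mu_{\matholdcal M}$ to recognize the resulting composite as $(A,\mu_{(C,\matholdcal M)})\circ\mu_{(C,\matholdcal M)}$.

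The main computation is the compatibility square for $(C,\matholdcal M)$. Substituting the definitions, the right-hand side $\mu_{(C,\matholdcal M)}\circ\pi_{(C,\matholdcal M)}$ becomes $(\psi,\matholdcal M)\circ(C,\mu_{\matholdcal M})\circ(\Delta,\matholdcal M)$, which by naturality equals $(\psi,\matholdcal M)\circ(\Delta,A,\matholdcal M)\circ(C,C,\mu_{\matholdcal M})$. After expansion in the innermost slot, the left-hand side becomes $(A,\Delta,\matholdcal M)\circ(\psi,C,\matholdcal M)\circ(C,\psi,\matholdcal M)\circ(C,C,\mu_{\matholdcal M})$. The two sides are then equated using the left-hand square of \eqref{eq3.12}, which encodes the second entwining axiom $(A\otimes\Delta)\circ\psi=(\psi\otimes C)\circ(C\otimes\psi)\circ(\Delta\otimes A)$; after cancelling the common $(C,C,\mu_{\matholdcal M})$ on the right, equality is exactly what that square asserts.

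The main obstacle is not a deep conceptual step but the bookkeeping imposed by the convention $(V_1,\ldots,V_n,\matholdcal M)=(V_n\otimes\cdots\otimes V_1,\matholdcal M)$: one must keep track of which slot each factor acts on and, each time the isomorphism $(V,(V',\matholdcal M))\cong(V'\otimes V,\matholdcal M)$ is used to rewrite an expression such as $(A,\pi_{(C,\matholdcal M)})$, verify that the tensor order has been reversed correctly. Once this is in place, the whole argument is a routine diagram chase driven entirely by the entwining axioms and the left $A$-module structure on $\matholdcal M$.
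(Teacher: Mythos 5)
Your proposal is correct and follows essentially the same route as the paper: the paper likewise leaves the contramodule and module verifications as direct checks and establishes the compatibility square \eqref{eq3.9} by expanding $(A,\pi_{(C,\matholdcal M)})\circ(\psi,(C,\matholdcal M))\circ(C,\mu_{(C,\matholdcal M)})$ to $(A,\Delta,\matholdcal M)\circ(\psi,C,\matholdcal M)\circ(C,\psi,\matholdcal M)\circ(C,C,\mu_{\matholdcal M})$, applying the hommed entwining axiom \eqref{eq3.12}, and finishing with naturality of $(C,\mu_{\matholdcal M})$ against $(\Delta,\matholdcal M)$. The only difference is that you spell out the module unit and associativity checks (correctly, via the triangle and square in \eqref{eq3.11}) where the paper omits them.
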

	\begin{proof}
		We can verify directly that  $\pi_{(C, \matholdcal M)}$ makes $(C,\matholdcal M)$ an object of $^{[C,-]}\mathfrak S$   and that $	\mu_{(C,\matholdcal M)}$ makes 
		$(C,\matholdcal M)$ an object of $_A\mathfrak S$. To show that $(C,\matholdcal M)$ satisfies the commutative diagram in (\ref{eq3.9}), we note that 
		\begin{align*}
			(A,\pi_{(C,\matholdcal M)})\circ (\psi, (C,\matholdcal M))\circ ( C,\mu_{(C,\matholdcal M)})
			&=(A, \Delta,\matholdcal M)\circ (\psi, C,\matholdcal M)\circ (C, \psi,\matholdcal M)\circ (C, C, \mu_{\matholdcal M})\\\notag
			&=(\psi, \matholdcal M)\circ (\Delta, A, \matholdcal M)\circ (C,C,\mu_{\matholdcal M}) \qquad {(\textup{by}~(\ref{eq3.12}))}\\\notag
			&=(\psi, \matholdcal M)\circ (\Delta, A, \matholdcal M)\circ (C\otimes C,\mu_{\matholdcal M}) \\ \notag &=(\psi, \matholdcal M)\circ (C, \mu_{\matholdcal M})\circ (\Delta,\matholdcal M) =\mu_{(C,\matholdcal M)} \circ \pi_{(C,\matholdcal M)} 
		\end{align*}
	\end{proof}	
	By Lemma \ref{L3.11}, we have a functor $_A\mathcal T:~_A\mathfrak S\longrightarrow ~_{\hspace{1em}A}^{[C,-]}\mathfrak S(\psi)$ that takes $\matholdcal M\mapsto (C,\matholdcal M)$ for each $(\matholdcal M,\mu_{\matholdcal M})\in  {_A}\mathfrak S$. Let $_A\mathcal F:~_{\hspace{1em}A}^{[C,-]}\mathfrak S(\psi)\longrightarrow~_A\mathfrak S$ denote the forgetful functor.
	\begin{thm}\label{L3.12}
		Let $\mathfrak S$ be a $k$-linear Grothendieck category. Then, the functors $(_A\mathcal T,_A\mathcal F)$ form an adjoint pair, i.e., for all $(\matholdcal M,\mu_{\matholdcal M})\in~_A\mathfrak S$ and $(\matholdcal N,\pi_{\matholdcal N},\mu_{\matholdcal N})\in~_{\hspace{1em}A}^{[C,-]}\mathfrak S(\psi)$, we have natural isomorphisms
		\begin{equation*}
			^{[C,-]}_{\hspace{1em}A}\mathfrak S(\psi)((C,\matholdcal M),\matholdcal N)\cong~_A\mathfrak S(\matholdcal M,\matholdcal N)
		\end{equation*}
	\end{thm}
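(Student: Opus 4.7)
The plan is to mimic the construction in Proposition \ref{L3.10}, producing mutually inverse natural bijections between $^{[C,-]}_{\hspace{1em}A}\mathfrak S(\psi)((C,\matholdcal M),\matholdcal N)$ and $_A\mathfrak S(\matholdcal M,\matholdcal N)$ via the coalgebra counit and the contramodule structure on $\matholdcal N$. In one direction, I will associate to a morphism $\xi:(C,\matholdcal M)\longrightarrow \matholdcal N$ the composition $\zeta:=\xi\circ(\epsilon,\matholdcal M):\matholdcal M\longrightarrow\matholdcal N$. To check that $\zeta$ is $A$-linear, I first use the $A$-linearity of $\xi$ to rewrite $\mu_{\matholdcal N}\circ\xi$ as $(A,\xi)\circ(\psi,\matholdcal M)\circ(C,\mu_{\matholdcal M})$, then commute $(C,\mu_{\matholdcal M})$ past $(\epsilon,\matholdcal M)$ by naturality of $(\epsilon,-)$, and finally apply the counit triangle in \eqref{eq3.12} to replace $(\psi,\matholdcal M)\circ(\epsilon,A,\matholdcal M)$ by $(A,\epsilon,\matholdcal M)$, yielding $(A,\zeta)\circ\mu_{\matholdcal M}$.

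In the reverse direction, I will associate to a morphism $\zeta:\matholdcal M\longrightarrow \matholdcal N$ in $_A\mathfrak S$ the map $\xi:=\pi_{\matholdcal N}\circ (C,\zeta):(C,\matholdcal M)\longrightarrow \matholdcal N$, and must verify that $\xi$ lies in $^{[C,-]}_{\hspace{1em}A}\mathfrak S(\psi)$. The contramodule compatibility $\pi_{\matholdcal N}\circ(C,\xi)=\xi\circ\pi_{(C,\matholdcal M)}$ will follow by invoking the axiom \eqref{eq3.1} on $\matholdcal N$ to turn $\pi_{\matholdcal N}\circ(C,\pi_{\matholdcal N})$ into $\pi_{\matholdcal N}\circ(\Delta,\matholdcal N)$, then applying the naturality identity $(\Delta,\matholdcal N)\circ(C,C,\zeta)=(C,\zeta)\circ(\Delta,\matholdcal M)$ and using $\pi_{(C,\matholdcal M)}=(\Delta,\matholdcal M)$ from Lemma \ref{L3.11}. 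The main obstacle will be the $A$-linearity of $\xi$: here I will expand $\mu_{\matholdcal N}\circ\pi_{\matholdcal N}$ by the entwined contramodule axiom \eqref{eq3.9} for $\matholdcal N$ as $(A,\pi_{\matholdcal N})\circ(\psi,\matholdcal N)\circ(C,\mu_{\matholdcal N})$, use the $A$-linearity of $\zeta$ to replace $(C,\mu_{\matholdcal N})\circ(C,\zeta)$ by $(C,A,\zeta)\circ(C,\mu_{\matholdcal M})$, and finally exploit the naturality of $(\psi,-)$ to commute $(\psi,\matholdcal N)\circ(C,A,\zeta)$ into $(A,C,\zeta)\circ(\psi,\matholdcal M)$, recovering $(A,\xi)\circ\mu_{(C,\matholdcal M)}$.

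It remains to check that the two associations are mutually inverse. Starting from $\xi$, the composite $\pi_{\matholdcal N}\circ(C,\xi\circ(\epsilon,\matholdcal M))$ will simplify, using the contramodule property of $\xi$, to $\xi\circ(\Delta,\matholdcal M)\circ(C,\epsilon,\matholdcal M)$, which collapses to $\xi$ by the coalgebra counit identity. Starting from $\zeta$, the composite $\pi_{\matholdcal N}\circ(C,\zeta)\circ(\epsilon,\matholdcal M)$ will reduce by naturality of $(\epsilon,-)$ to $\pi_{\matholdcal N}\circ(\epsilon,\matholdcal N)\circ\zeta$, and this equals $\zeta$ by the counit axiom in Definition \ref{D3.1}. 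The most delicate step is the $A$-linearity of $\xi$ in the reverse direction, where the entwining $\psi$ enters essentially through \eqref{eq3.9}, and this is the only place where Lemma \ref{L3.11} (which packages $\mu_{(C,\matholdcal M)}=(\psi,\matholdcal M)\circ(C,\mu_{\matholdcal M})$) does real work in the argument.
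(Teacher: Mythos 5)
Your proposal is correct and follows essentially the same route as the paper: the same adjunction maps $\zeta\mapsto\zeta\circ(\epsilon,\matholdcal M)$ and $\xi\mapsto\pi_{\matholdcal N}\circ(C,\xi)$, with the forward-direction $A$-linearity check via \eqref{eq3.12} matching the paper's computation verbatim. The only difference is that you verify by direct computation that $\pi_{\matholdcal N}\circ(C,\zeta)$ is a morphism in $^{[C,-]}_{\hspace{1em}A}\mathfrak S(\psi)$ and that the two associations are mutually inverse, whereas the paper deduces the former from the functoriality of $_A\mathcal T$ together with the fact that $\pi_{\matholdcal N}$ is a morphism in both $_A\mathfrak S$ and $^{[C,-]}\mathfrak S$, and leaves the latter to the reader; your computations are the correct ones.
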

	\begin{proof}
		Let $\zeta:(C,\matholdcal M)\longrightarrow \matholdcal N$ be a morphism in $^{[C,-]}_{\hspace{1em}A}\mathfrak S(\psi)$. We consider the composition $\xi:\matholdcal M\cong (k,\matholdcal M)\xrightarrow{(\epsilon,\matholdcal M)}  (C,\matholdcal M)\xrightarrow{\zeta} \matholdcal N$.
		To verify that $\xi$ is a morphism in $_A\mathfrak S$, we note that
		\begin{align*}
			\mu_{\matholdcal N}\circ \xi = \mu_{\matholdcal N}\circ \zeta\circ (\epsilon,\matholdcal M)
			=(A,\zeta)\circ \mu_{(C,\matholdcal M)}\circ (\epsilon,\matholdcal M)
			&=(A,\zeta)\circ (\psi,\matholdcal M)\circ (C,\mu_{\matholdcal M})\circ (\epsilon,\matholdcal M)\\\notag
			&=(A,\zeta)\circ (\psi,\matholdcal M)\circ (\epsilon,A,\matholdcal M)\circ \mu_{\matholdcal M}\\\notag
			&=(A,\zeta)\circ (A, \epsilon,\matholdcal M)\circ \mu_{\matholdcal M}\tag*{(\textup{by}~(\ref{eq3.12}))}\\\notag
			&=(A, \zeta\circ ( \epsilon,\matholdcal M))\circ \mu_{\matholdcal M}=(A,\xi)\circ \mu_{\matholdcal M}
		\end{align*}
		Conversely, let $\xi:\matholdcal M\longrightarrow \matholdcal N$ be a morphism in $_A\mathfrak S$. Then, we consider the following composition
		\begin{equation*}
			\zeta:(C,\matholdcal M)\xrightarrow{(C,\xi)}(C,\matholdcal N)\xrightarrow{\pi_{\matholdcal N}} \matholdcal N
		\end{equation*}
		Then $(C,\xi)={_A}\mathcal T(\xi)$ is a morphism in 	$^{[C,-]}_{\hspace{1em}A}\mathfrak S(\psi)$. Since  $(\matholdcal N,\pi_{\matholdcal N},\mu_{\matholdcal N})\in~_{\hspace{1em}A}^{[C,-]}\mathfrak S(\psi)$, we know that $\pi_{\matholdcal N}$ is a morphism in both $_A\mathfrak S$ and $^{[C,-]}\mathfrak S$. Hence,  $\zeta=\pi_{\matholdcal N}\circ (C,\xi)$ lies in $^{[C,-]}_{\hspace{1em}A}\mathfrak S(\psi)$. It may be verified  that these two associations are inverse to each other.
		%					It is clear that $\zeta$ is a morphism in $^{[C,-]}\mathfrak S$. We now verify that $\zeta$ is also a morphism in $_A\mathfrak S$ i.e., we will show that $\mu_{\matholdcal N}\circ \zeta=(A,\zeta)\circ \mu_{(C,\matholdcal M)}$, where $\mu_{(C,\matholdcal M)}:(C,\matholdcal M)\xrightarrow{(C,\mu_{\matholdcal M})} (C,A,\matholdcal M)\xrightarrow{(\psi,\matholdcal M)}(A,C,\matholdcal M)$ is the left $A$-module map on $(C,\matholdcal M)$. Consider
		%					\begin{align*}
			%						(A,\zeta)\circ \mu_{(C,\matholdcal M)}&=(A,\zeta)\circ (\psi,\matholdcal M)\circ (C,\mu_{\matholdcal M})\\\notag
			%						&=(A,\pi_{\matholdcal N})\circ (A,C,\xi)\circ (\psi, \matholdcal M)\circ (C,\mu_{\matholdcal M})\\\notag
			%						&=(A,\pi_{\matholdcal N})\circ (\psi,\matholdcal N)\circ (C,A,\xi)\circ (C,\mu_{\matholdcal M})\\\notag
			%							&=(A,\pi_{\matholdcal N})\circ (\psi,\matholdcal N)\circ (C,\mu_{\matholdcal N})\circ (C,\xi)\\\notag
			%							&=\mu_{\matholdcal N}\circ \pi_{\matholdcal N}\circ (C,\xi)=\mu_{\matholdcal N}\circ \zeta~.
			%						\end{align*}
		
	\end{proof}
	\begin{Thm}\label{T3.13}
		Let $\mathfrak S$ be a $k$-linear Grothendieck category  and let $(A,C,\psi)$ be an entwining structure. Suppose that the functor $(C,-):\mathfrak S
		\longrightarrow \mathfrak S$ is exact. Then $_{\hspace{1em}A}^{[C,-]}\mathfrak S(\psi)$ is an abelian category. 
	\end{Thm}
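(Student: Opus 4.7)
The plan is to show that $_{\hspace{1em}A}^{[C,-]}\mathfrak S(\psi)$ inherits its abelian structure from $\mathfrak S$, with kernels and cokernels computed in $\mathfrak S$. The $k$-linear and additive structure is immediate: the zero object of $\mathfrak S$ carries trivial structure maps, while finite biproducts inherit the contramodule and module structures coordinatewise, so the substance lies in constructing kernels and cokernels and verifying the normality axiom.

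For kernels, given $\phi:\matholdcal M\to\matholdcal N$ in $_{\hspace{1em}A}^{[C,-]}\mathfrak S(\psi)$, I would take $\iota:K\hookrightarrow \matholdcal M$ to be the kernel in $\mathfrak S$. Since $(A,-)$ is a right adjoint it preserves monomorphisms, so $(A,\iota)$ is mono; the composition $(A,\phi)\circ\mu_\matholdcal M\circ\iota=\mu_\matholdcal N\circ\phi\circ\iota$ vanishes, so there is a unique $\mu_K:K\to(A,K)$ with $(A,\iota)\circ\mu_K=\mu_\matholdcal M\circ\iota$. Analogously, the hypothesis that $(C,-)$ is exact makes $(C,\iota)$ mono and produces a unique $\pi_K:(C,K)\to K$ with $\iota\circ\pi_K=\pi_\matholdcal M\circ(C,\iota)$. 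The three axioms for $K$ (the $A$-module law, the $C$-contramodule law, and the entwining compatibility (\ref{eq3.9})) are each verified by composing both sides with the monomorphism $(A,\iota)$ or $\iota$, applying naturality of $(\psi,-)$ in the argument and the corresponding axiom for $\matholdcal M$, and then cancelling the mono. The universal property of $\iota$ in $_{\hspace{1em}A}^{[C,-]}\mathfrak S(\psi)$ transfers directly from $\mathfrak S$ because any test map $g$ that kills $\phi$ factors uniquely through $K$ in $\mathfrak S$, and the resulting factorization automatically commutes with $\mu_K$ and $\pi_K$ since $\iota$ is mono.

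For cokernels, let $p:\matholdcal N\twoheadrightarrow Q$ be the cokernel in $\mathfrak S$. Here the key ingredient is the exactness of $(C,-)$: it ensures that $(C,p)$ is the cokernel of $(C,\phi)$ in $\mathfrak S$, so that $p\circ\pi_\matholdcal N\circ(C,\phi)=p\circ\phi\circ\pi_\matholdcal M=0$ yields a unique $\pi_Q:(C,Q)\to Q$ with $\pi_Q\circ(C,p)=p\circ\pi_\matholdcal N$. On the $A$-module side, $(A,-)$ need not preserve cokernels, but only the universal property of $p$ in $\mathfrak S$ is needed: $(A,p)\circ\mu_\matholdcal N\circ\phi=(A,p\circ\phi)\circ\mu_\matholdcal M=0$, so there is a unique $\mu_Q:Q\to(A,Q)$ with $\mu_Q\circ p=(A,p)\circ\mu_\matholdcal N$. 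The three axioms for $Q$ are now verified dually, by precomposing with the epimorphisms $(C,p)$ or $p$ and invoking the axioms for $\matholdcal N$ along with naturality of $\psi$.

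Finally, I would close the argument with the normality axiom. A morphism $\phi$ in $_{\hspace{1em}A}^{[C,-]}\mathfrak S(\psi)$ is monic (resp. epic) precisely when it is so in $\mathfrak S$: given a monic $\phi$ in $_{\hspace{1em}A}^{[C,-]}\mathfrak S(\psi)$, the kernel constructed above is an entwined contramodule whose inclusion into $\matholdcal M$ is both killed by $\phi$ and a morphism in $_{\hspace{1em}A}^{[C,-]}\mathfrak S(\psi)$, forcing that kernel to be zero in $\mathfrak S$. Hence the image-coimage comparison reduces to the corresponding statement in the abelian category $\mathfrak S$, proving that every mono is a kernel and every epi a cokernel. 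The main obstacle is the diagram chase for (\ref{eq3.9}) on $K$ and $Q$, which couples the two structures via $\psi$; the chase must be arranged so that the $A$-side never demands preservation of epimorphisms by $(A,-)$, using only its preservation of monomorphisms together with the exactness hypothesis on $(C,-)$.
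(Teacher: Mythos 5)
Your proposal is correct and follows essentially the same route as the paper: kernels and cokernels are computed in $\mathfrak S$, the contramodule structure maps on them are induced using the exactness of $(C,-)$, and the entwining compatibility \eqref{eq3.9} is checked by cancelling the monomorphism $(A,\iota)$ (using that the right adjoint $(A,-)$ preserves monomorphisms) for the kernel and the epimorphism $(C,p)$ for the cokernel. The only difference is that the paper shortcuts the $A$-module side by citing that $_A\mathfrak S$ is already a Grothendieck category with (co)kernels computed in $\mathfrak S$, whereas you construct those structure maps directly; both are fine.
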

	\begin{proof}
		We know that any morphism $\phi:\matholdcal M\longrightarrow \matholdcal N$ in $_{\hspace{1em}A}^{[C,-]}\mathfrak S(\psi)$ is also a morphism in $_A\mathfrak S$. Both $ker(\phi)$ and $coker(\phi)$ exist in $_A\mathfrak S$ and are computed in $\mathfrak S$. Since $(C,-)$ is  exact, there are induced morphisms $\pi_{ker(\phi)}:(C,ker(\phi))\longrightarrow ker(\phi)$ and $\pi_{coker(\phi)}:(C,coker(\phi))\longrightarrow coker(\phi)$ making $ker(\phi)$ and $coker(\phi)$ respectively into   $C$-contramodule objects in $\mathfrak S$. Since the right adjoint $(A,-):\mathfrak S\longrightarrow \mathfrak S$ preserves monomorphisms, it follows that $ker(\phi)$ satisfies the condition in \eqref{eq3.9}, i.e., $ker(\phi)\in  {^{[C,-]}}{_A}\mathfrak S(\psi)$. By assumption,  $(C,-):\mathfrak S\longrightarrow \mathfrak S$ preserves epimorphisms, and it follows that $coker(\phi)$ satisfies the condition in \eqref{eq3.9}, i.e., $coker(\phi)\in  {^{[C,-]}}{_A}\mathfrak S(\psi)$. We now see that $_{\hspace{1em}A}^{[C,-]}\mathfrak S(\psi)$ is an abelian category.
	\end{proof}
	\begin{Thm}\label{T3.14}
		Let $\mathfrak S$ be a $k$-linear Grothendieck category and let $\mathcal G$ be a generator for $_A\mathfrak S$. Then, $(C,\mathcal G)$ is a generator for $_{\hspace{1em}A}^{[C,-]}\mathfrak S(\psi)$.
	\end{Thm}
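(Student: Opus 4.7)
The plan is to mirror the argument of Lemma \ref{L3.3}(i), now using the adjunction of Theorem \ref{L3.12} which identifies $(C,-): {_A}\mathfrak S \longrightarrow {_{\hspace{1em}A}^{[C,-]}}\mathfrak S(\psi)$ as left adjoint to the forgetful functor ${_A}\mathcal F$. I would begin with Grothendieck's characterization (\cite[\S 1.9]{Gro}): an object is a generator if and only if, for every non-isomorphism monomorphism $\matholdcal N' \hookrightarrow \matholdcal N$, there is some morphism from the candidate generator to $\matholdcal N$ that does not factor through $\matholdcal N'$.

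So fix a monomorphism $\iota : \matholdcal N' \hookrightarrow \matholdcal N$ in ${_{\hspace{1em}A}^{[C,-]}}\mathfrak S(\psi)$ which is not an isomorphism. First I would observe, invoking Theorem \ref{T3.13}, that kernels and cokernels in ${_{\hspace{1em}A}^{[C,-]}}\mathfrak S(\psi)$ are computed in $\mathfrak S$ and in particular in ${_A}\mathfrak S$. Consequently $\iota$ is also a monomorphism in ${_A}\mathfrak S$ that is not an isomorphism. Since $\mathcal G$ generates ${_A}\mathfrak S$, there exists a morphism $\xi : \mathcal G \longrightarrow \matholdcal N$ in ${_A}\mathfrak S$ that does not factor through $\iota$. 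Applying the adjunction isomorphism in Theorem \ref{L3.12}, $\xi$ corresponds to a morphism $\zeta : (C,\mathcal G) \longrightarrow \matholdcal N$ in ${_{\hspace{1em}A}^{[C,-]}}\mathfrak S(\psi)$, explicitly given by $\zeta = \pi_{\matholdcal N} \circ (C,\xi)$.

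The crux is to check that $\zeta$ does not factor through $\iota$. Suppose, for contradiction, that $\zeta = \iota \circ g$ for some $g : (C,\mathcal G) \longrightarrow \matholdcal N'$ in ${_{\hspace{1em}A}^{[C,-]}}\mathfrak S(\psi)$. By the naturality of the adjunction (or, equivalently, by applying the unit $(\epsilon,\mathcal G) : \mathcal G \longrightarrow (C,\mathcal G)$ of the adjoint pair $({_A}\mathcal T, {_A}\mathcal F)$ and using the triangle identity $\xi = \zeta \circ (\epsilon,\mathcal G)$), we obtain
\begin{equation*}
\xi = \zeta \circ (\epsilon,\mathcal G) = \iota \circ g \circ (\epsilon,\mathcal G),
\end{equation*}
which exhibits $\xi$ as factoring through $\iota$ in ${_A}\mathfrak S$, contradicting our choice of $\xi$. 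Hence $\zeta$ cannot factor through $\iota$, and by Grothendieck's criterion $(C,\mathcal G)$ is a generator for ${_{\hspace{1em}A}^{[C,-]}}\mathfrak S(\psi)$.

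No serious obstacles are expected: the entire argument is a formal transfer along the adjunction together with the fact that monomorphisms and their factorizations are preserved under restriction of scalars (since limits and colimits in ${_{\hspace{1em}A}^{[C,-]}}\mathfrak S(\psi)$ are computed in $\mathfrak S$). The only mildly delicate point is checking that a factorization of $\zeta$ through $\iota$ in the entwined contramodule category descends to a factorization of $\xi$ in ${_A}\mathfrak S$; this is immediate from the explicit form of the adjunction unit, so no hidden computation arises.
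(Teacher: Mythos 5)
Your proposal is correct and follows essentially the same route as the paper: pass to a non-isomorphism monomorphism in ${_A}\mathfrak S$, pick a morphism from $\mathcal G$ not factoring through it, and transport it along the adjunction of Theorem \ref{L3.12} to a morphism $(C,\mathcal G)\longrightarrow \matholdcal N$, concluding by Grothendieck's criterion. The only difference is that you spell out, via the unit $(\epsilon,\mathcal G)$, why a factorization of $\zeta$ would descend to a factorization of $\xi$ — a verification the paper leaves implicit.
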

	\begin{proof}
		Let $\matholdcal M'\hookrightarrow \matholdcal M$ be a monomorphism in $_{\hspace{1em}A}^{[C,-]}\mathfrak S(\psi)$ that is not an isomorphism in $_{\hspace{1em}A}^{[C,-]}\mathfrak S(\psi)$. Since kernels in $_{\hspace{1em}A}^{[C,-]}\mathfrak S(\psi)$ and $_A\mathfrak S$ are computed in $\mathfrak S$, it follows that $\matholdcal M'\hookrightarrow \matholdcal M$ is also a monomorphism in $_A\mathfrak S$ that is not an isomorphism. Then since $\mathcal G$ is a generator for $_A\mathfrak S$, we can choose a morphism $\phi:\mathcal G\longrightarrow \matholdcal M$ in $_A\mathfrak S$ which does not factor through $\matholdcal M'$. By the adjunction in Proposition \ref{L3.12}, we obtain a morphism $\tilde{\phi}:(C, \mathcal G)\longrightarrow \matholdcal M$ in $_{\hspace{1em}A}^{[C,-]}\mathfrak S(\psi)$ that does not factor through $\matholdcal M'$. It now follows from \cite[\S 1.9]{Gro} that $(C,\mathcal G)$ is a generator for $_{\hspace{1em}A}^{[C,-]}\mathfrak S(\psi)$.
	\end{proof}
	%%%%%%%%%%%%%%%%%%%%%%%%%%%%%%%%%%%%%%%%%%%%%%%%%%%%%%%%%%%%%%%%%%%%%%%%%%%%%%%%%%%%%%%%%%%%%%%%%%%%%%%%%%%%%%%%%%%%%%%%%%%%%%%%%%%%%%%%%%%%%%%%%%%%%%%%%%%%%%%%%%%%%%%%%%%%%%%%%%%%%%%%%%%%%%%%%%%%%%%%%%%%%%%%%%%%%%%%%%%%%%%%%%%%%%%%%%%%%%%%
	\section{$\mathfrak S$-contra-Galois measurings, $\mathfrak S$-co-Galois measurings and adjoint functors}
	We take $k$-algebras $(A,\mu,\eta)$ and $(A',\mu',\eta')$ and $k$-coalgebras $(C,\Delta,\epsilon)$ and $(C',\Delta',\epsilon')$. Let $(A,C,\psi)$ and $(A',C',\psi')$ be   entwining structures. We first recall the notion of a measuring between entwining structures due to  Brzezi\'{n}ski   \cite{Tb}.

	\begin{defn}(see \cite[Definition 3.2]{Tb})
		Let $(A,C,\psi)$ and $(A',C',\psi')$ be two entwining structures and let $\alpha: C'\otimes A'\longrightarrow A$ and $\gamma:C'\longrightarrow A\otimes C$ be two $k$-linear maps such that
		\begin{align}
			&\alpha\circ (C'\otimes \mu')=\mu\otimes (\alpha\otimes \alpha)\circ (C'\otimes \psi'\otimes A') \circ (\Delta'\otimes A'\otimes A')\qquad \alpha\circ (C'\otimes \eta')=\eta\circ \epsilon'\label{eq4.1}\\
			&(A\otimes \Delta)\circ \gamma=(\mu\otimes C\otimes C)\circ (A\otimes \psi \otimes C)\circ (\gamma \otimes \gamma)\circ (\Delta')\qquad \qquad(A\otimes \epsilon)\circ \gamma=\eta\circ \epsilon'\label{eq4.2}\\
			&(\mu\otimes C)\circ (\alpha\otimes\gamma)\circ(C'\otimes\psi')\circ (\Delta'\otimes A')=(\mu\otimes C)\circ(A\otimes \psi)\circ (\gamma\otimes\alpha)\circ (\Delta'\otimes A')\label{eq4.3}
		\end{align}
		Then, the pair $(\alpha, \gamma)$ is said to be a measuring from $(A',C',\psi')$ to $(A,C,\psi)$ and is denoted by $(A',C',\psi')~|\frac{\alpha}{\gamma}~(A,C,\psi)$.
	\end{defn} We assume throughout that the functors $(C,-)$, $(C',-):\mathfrak S\longrightarrow \mathfrak S$ are exact. 
	\subsection{$\mathfrak S$-contra Galois measurings  and entwined contramodule objects}
	%Given $k$-algebras $A$ and $A'$ and $k$-coalgebras $C$ and $C'$, let $(A,C,\psi)$ and $(A',C',\psi')$ be two entwining structures.
	Let $\matholdcal M\in \mathfrak S$. Using the notation in \eqref{eq3.10} and   applying the functor $(-,\matholdcal M)$ to the expressions in (\ref{eq4.1}) and  (\ref{eq4.2}), we have the following commutative diagrams.
	\begin{equation}\label{eq4.4}\begin{tikzcd}[row sep=1.8em, column sep = 3.8em]
			(A, \matholdcal M)  \arrow{r}{(\alpha,\matholdcal M)} \arrow{d}{(\mu, \matholdcal M)}& (A', C', \matholdcal M) \arrow{r} {(\mu', C', \matholdcal M)} &(A', A', C', \matholdcal M) \\%
			(A, A,\matholdcal M) \arrow{r}{(\alpha,\alpha,\matholdcal M)}&  (A', C', A', C', \matholdcal M)\arrow{r}{(A', \psi', C',\matholdcal M)}& (A', A', C', C', \matholdcal M)\arrow{u}[swap] {(A',A', \Delta',\matholdcal M)}
		\end{tikzcd}
	\end{equation}
	and
	\begin{equation}\label{eq4.5}\begin{tikzcd}[row sep=1.8em, column sep = 3.8em]
			(C,C,A, \matholdcal M)  \arrow{r}{(\Delta,A,\matholdcal M)} \arrow{d}{(C,C,\mu, \matholdcal M)}& (C,A, \matholdcal M) \arrow{r} {(\gamma, \matholdcal M)} &(C',\matholdcal M) \\%
			(C,C,A, A,\matholdcal M) \arrow{r}{(C,\psi,A,\matholdcal M)}&  (C, A,C, A, \matholdcal M)\arrow{r}{(\gamma,\gamma \matholdcal M)}& (C', C',\matholdcal M)\arrow{u} {(\Delta',\matholdcal M)}
		\end{tikzcd}
	\end{equation}
	Similarly,  applying the functor $(-,\matholdcal M)$ to (\ref{eq4.3}) gives the commutative diagram
	\begin{equation}\label{eq4.6}\small \begin{tikzcd}[row sep=1.8em, column sep = 3.8em]
			(C,A, \matholdcal M)  \arrow{r}{(C,\mu,\matholdcal M)} \arrow{d}{(C,\mu, \matholdcal M)}& (C,A,A,\matholdcal M) \arrow{r} {(\gamma,\alpha, \matholdcal M)} &(C', A', C', \matholdcal M)\arrow{r}{(\psi',C',\matholdcal M)}&(A',C',C',\matholdcal M)\arrow{r}{(A',\Delta',\matholdcal M)}&(A',C',\matholdcal M) \\%
			(C,A, A,\matholdcal M) \arrow{rr}{(\psi,A,\matholdcal M)}&&  (A, C, A, \matholdcal M)\arrow{rr}{(\alpha,\gamma,\matholdcal M)}&& (A', C', C', \matholdcal M)\arrow{u}[swap] {(A', \Delta',\matholdcal M)}
		\end{tikzcd}
	\end{equation}
	In \cite[$\S$ 3]{Tb},  Brzezi\'{n}ski  has used measurings of entwining structures to obtain adjoint functors, similar to restriction and extension of scalars,  between categories of entwined modules. We are inspired by the methods of \cite{Tb} 
	to use measurings of entwining structures to construct adjoint functors between categories  $_{\hspace{1em}A}^{[C,-]}\mathfrak S(\psi)$ and $_{\hspace{1em}A'}^{[C',-]}\mathfrak S(\psi')$ of entwined contradmodule objects in $\mathfrak S$. In Section 4.2, we will obtain adjoint functors between categories $\mathfrak S_A^C(\psi)$ and $\mathfrak S_{A'}^{C'}(\psi')$
	of entwined comodule objects in $\mathfrak S$ induced by measurings.  
	\begin{thm}\label{P4.2}
		Let $\mathfrak S$ be a $k$-linear Grothendieck category. Let $(\alpha, \gamma)$ be a measuring from $(A',C',\psi')$ to $(A,C,\psi)$. Then, the following statements hold.
		
		\smallskip
		(i) Let $(\matholdcal M,\mu_{ \matholdcal M})\in~_A\mathfrak S$. Then, $(C', \matholdcal M)\in~ _{\hspace{1em}A'}^{[C',-]}\mathfrak S(\psi')$ with structure maps
		\begin{equation}\label{vidd1}
		\begin{array}{c}
		\pi_{ (C',\matholdcal M)}: (C',C',\matholdcal M)\xrightarrow{(\Delta',\matholdcal M)} (C',\matholdcal M)
		\\
		\mu_{(C',\matholdcal M)}: (C',\matholdcal M)\xrightarrow{(C',\mu_{\matholdcal M})} (C',A,\matholdcal M)\xrightarrow{ (C',\alpha, \matholdcal M)} (C',A',C',\matholdcal M)\xrightarrow{(\psi',C',\matholdcal M)} (A',C',C',\matholdcal M)\xrightarrow{(A',\Delta', \matholdcal M)}(A',C',\matholdcal M)\\
		\end{array}
		\end{equation}  
		
		\smallskip
		(ii) Let $(\matholdcal M',\pi_{ \matholdcal M'})\in~^{[C',-]}\mathfrak S$. Then, $(A,\matholdcal M')\in ~_{\hspace{1em}A}^{[C,-]}\mathfrak S(\psi)$ with  structure maps
		\begin{equation}
		\begin{array}{c}
		\mu_{(A,\matholdcal M')}: (A,\matholdcal M') \xrightarrow{(\mu,\matholdcal M')}(A,A,\matholdcal M') \\
		\pi_{(A,\matholdcal M')}:(C,A,\matholdcal M')\xrightarrow{(C,\mu,\matholdcal M')}(C,A,A,\matholdcal M')\xrightarrow{(\psi,A,\matholdcal M')}(A,C,A\matholdcal M')\xrightarrow{(A,\gamma,\matholdcal M')}(A,C',\matholdcal M)\xrightarrow{(A,\pi_{\matholdcal M'})}(A,\matholdcal M')\\
		\end{array}
		\end{equation} 
		  
	\end{thm}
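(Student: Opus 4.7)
The plan is to verify, in each of parts (i) and (ii), the three ingredients of Definition \ref{D3.8}: the contramodule structure, the module structure, and the entwining compatibility diagram \eqref{eq3.9}. The key observation is that the three measuring axioms \eqref{eq4.1}--\eqref{eq4.3} have already been translated, by applying the internal-hom functor $(-,\matholdcal M)$ (respectively $(-,\matholdcal M')$), into the commutative diagrams \eqref{eq4.4}, \eqref{eq4.5} and \eqref{eq4.6}. Every verification below will therefore reduce to pasting one of these three diagrams together with (co)associativity and (co)unitality on $A$, $A'$, $C$, or $C'$.

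For part (i), the $C'$-contramodule structure on $(C',\matholdcal M)$ given by $\pi_{(C',\matholdcal M)}=(\Delta',\matholdcal M)$ is immediate from the discussion around \eqref{eq3.2}, using only coassociativity and counitality of $C'$. For the $A'$-module structure given by $\mu_{(C',\matholdcal M)}$, the unit axiom unfolds to the second identity in \eqref{eq4.1} applied through $(-,\matholdcal M)$, together with counitality of $\epsilon'$ and the $A$-module unit for $\matholdcal M$; associativity of the $A'$-action unfolds to diagram \eqref{eq4.4} combined with coassociativity of $\Delta'$. The main step is the compatibility \eqref{eq3.9}: I would expand both sides of
\[
(A',\pi_{(C',\matholdcal M)}) \circ (\psi',C',\matholdcal M) \circ (C',\mu_{(C',\matholdcal M)}) = \mu_{(C',\matholdcal M)} \circ \pi_{(C',\matholdcal M)}
\]
into their full definitions and show the composites agree by splicing \eqref{eq4.6} (which encodes the $\alpha$-$\gamma$-$\psi$-$\psi'$ interaction) with one instance of coassociativity for $\Delta'$ and the $A$-module axiom for $\matholdcal M$.

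Part (ii) is formally dual and runs along exactly parallel lines. The $A$-module structure on $(A,\matholdcal M')$ given by $(\mu,\matholdcal M')$ is immediate from associativity and unitality of $A$. The counit of the proposed $C$-contramodule action $\pi_{(A,\matholdcal M')}$ follows from the second equation in \eqref{eq4.2} together with the contramodule-counit of $\matholdcal M'$, while the contra-associativity of $\pi_{(A,\matholdcal M')}$ reduces to \eqref{eq4.5} combined with the contra-associativity of $\matholdcal M'$. The compatibility \eqref{eq3.9} for $(A,\matholdcal M')$ is again obtained by pasting \eqref{eq4.6} with the associativity of $\mu$ and the contramodule axioms on $\matholdcal M'$.

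The principal obstacle in both parts is precisely the compatibility diagram \eqref{eq3.9}: the two composites to be equated thread $\alpha$, $\gamma$, $\psi$ and $\psi'$ through long strings of internal-homs, so the real work is bookkeeping. Concretely, one must identify which subrectangle of the large commuting paste is exactly the translated measuring compatibility \eqref{eq4.6}, which subrectangle comes from \eqref{eq4.4} or \eqref{eq4.5}, and which comes from the underlying (co)algebra or (co)module axiom for $\matholdcal M$ (respectively $\matholdcal M'$); no new conceptual ingredient beyond these three translated diagrams is needed.
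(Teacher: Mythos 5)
Your overall strategy --- verifying the three ingredients of Definition \ref{D3.8} directly and reducing each check to the translated diagrams plus (co)associativity and naturality --- is the same as the paper's, and your identification of \eqref{eq4.4} for the associativity of the $A'$-action in (i) and of \eqref{eq4.5} for the contra-associativity of $\pi_{(A,\matholdcal M')}$ in (ii) is correct. However, there is a concrete error in what you single out as ``the main step'' and ``the principal obstacle'': you claim that the compatibility diagram \eqref{eq3.9} is obtained in both parts by splicing in \eqref{eq4.6}, the translated third measuring axiom \eqref{eq4.3}. This cannot work. The identity \eqref{eq4.3} relates $\alpha$ and $\gamma$, but in part (i) both structure maps $\pi_{(C',\matholdcal M)}$ and $\mu_{(C',\matholdcal M)}$ are built only from $\Delta'$, $\psi'$, $\alpha$ and $\mu_{\matholdcal M}$ --- $\gamma$ never appears on either side of the compatibility --- and dually in part (ii) only $\gamma$, $\mu$, $\psi$ and $\pi_{\matholdcal M'}$ appear, never $\alpha$. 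So there is no subrectangle of the compatibility paste that is \eqref{eq4.6}, and the step as you describe it would stall.

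What actually closes the compatibility check is the translated \emph{entwining} axiom, not a measuring axiom: in part (i) one uses \eqref{eq3.12}, i.e.\ the image under $(-,\matholdcal M)$ of $(A'\otimes\Delta')\circ\psi'=(\psi'\otimes C')\circ(C'\otimes\psi')\circ(\Delta'\otimes A')$ from \eqref{eq2.2}, together with naturality of $(\psi',-)$ and $(\Delta',-)$ in the object variable (the $A$-module axiom for $\matholdcal M$ is not needed here; $\mu_{\matholdcal M}$ is just carried along by naturality). Dually, in part (ii) one uses \eqref{eq3.11}, the translate of $\psi\circ(C\otimes\mu)=(\mu\otimes C)\circ(A\otimes\psi)\circ(\psi\otimes A)$ from \eqref{eq2.1}. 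The third measuring axiom \eqref{eq4.3}/\eqref{eq4.6} genuinely first enters only in Lemma \ref{L4.3}, where the maps $s^{\matholdcal M}$ and $s_{\matholdcal M'}$ mix $\alpha$ with $\gamma$. You should also note that the paper's proof of the $A'$-module associativity in (i) leans on Lemma \ref{L3.9} (that $(A',C',\matholdcal M)$ is itself an entwined contramodule, which packages the $\psi'$ entwining axioms) in addition to \eqref{eq4.4}; your ``\eqref{eq4.4} plus coassociativity of $\Delta'$'' understates what is needed there as well.
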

	\begin{proof}
		(i) It is clear that the morphism $\pi_{ (C',\matholdcal M)}$ in \eqref{vidd1} makes $(C',\matholdcal M)$ an object of $^{[C',-]}\mathfrak S$. To show that $\mu_{(C',\matholdcal M)}$ in \eqref{vidd1} makes $(C',\matholdcal M)$ an object of $_{A'}\mathfrak S$,  we need to check that the following diagram commutes.
		\begin{small}
			\begin{equation}\label{vidd2}
				\begin{tikzcd}[row sep=2.5em, column sep = 4.8em]
					(C',\matholdcal M) \arrow{r}{(C',\mu_{\matholdcal M})} \arrow{dddd}{\mu_{(C',\matholdcal M)}}
					& (C',A,\matholdcal M)\arrow{r}{(C',\alpha,\matholdcal M)}\arrow{d}{(C',A,\mu_{\matholdcal M})} &(C',A',C',\matholdcal M)\arrow{d}{(C',A',C',\mu_{\matholdcal M})}\arrow{r}{\pi_{(A',C',\matholdcal M)}}&(A',C',\matholdcal M)\arrow{d}{(A',C', \mu_{\matholdcal M})}\\
					&(C',A,A,\matholdcal M)\arrow{r}{(C',\alpha,A,\matholdcal M)}\arrow{rd}{(C',\alpha,\alpha,\matholdcal M)}&(C',A',C',A,\matholdcal M)\arrow{d}{(C',A',C',\alpha,\matholdcal M)}\arrow{r}{\pi_{(A',C',A,\matholdcal M)}}&(A',C',A,\matholdcal M)\arrow{d}{(A',C',\alpha,\matholdcal M)}\\
					&&(C',A',C',A',C',\matholdcal M) \arrow{r}
					{\pi_{(A',C',A',C',\matholdcal M)}}\arrow{d}{(C',A',C',\psi',\matholdcal M)} &(A',C',A',C',\matholdcal M)\arrow{d}{(A',\psi',C',\matholdcal M)}\\
					&&(C',A',A',C',C',\matholdcal M)\arrow{r}{\pi_{(A',A',C',C',\matholdcal M)}}&(A',A',C',C',\matholdcal M)\arrow{d}{(A',A',\Delta',\matholdcal M)}\\
					(A',C',\matholdcal M)\arrow{rrr}{(\mu',C',\matholdcal M)}&&&(A',A',C',\matholdcal M)
				\end{tikzcd}
			\end{equation}
		\end{small}
Since $(C',\matholdcal M)\in {^{[C',-]}\mathfrak S}$, it follows by Lemma \ref{L3.9} that  $^{[C',-]}\mathcal T(C',\matholdcal M)=(A',C',\matholdcal M)\in~_{\hspace{1em}A'}^{[C',-]}\mathfrak S(\psi')$. The condition in \eqref{eq3.9} now gives
		\begin{align*}
			&(\mu', C',\matholdcal M)\circ \mu_{(C',\matholdcal M)}\\\notag&= (\mu', C',\matholdcal M)\circ(A',\Delta',\matholdcal M)\circ (\psi', C',\matholdcal M)\circ (C',\alpha, \matholdcal M)\circ (C',\mu_{\matholdcal M})\\\notag
			&= \mu_{(A',C',\matholdcal M)}\circ \pi_{(A',C',\matholdcal M)}\circ (C',\alpha, \matholdcal M)\circ (C',\mu_{\matholdcal M})\\\notag
			&=(A',A',\Delta',\matholdcal M)\circ (A',\psi',C',\matholdcal M)\circ (\psi',A',C',\matholdcal M)\circ (C',\mu', C',\matholdcal M)\circ (C',\alpha, \matholdcal M)\circ (C',\mu_{\matholdcal M})\tag*{(\textup{by}~ (\ref{eq3.9}))}\\\notag
			&= (A',A',\Delta',\matholdcal M)\circ (A',\psi',C',\matholdcal M)\circ (\psi',A',C',\matholdcal M)\circ (C',A',A',\Delta',\matholdcal M)\circ (C', A',\psi',C',\matholdcal M)\circ\\\notag&\qquad (C',\alpha,\alpha,\matholdcal M)\circ (C',\mu,\matholdcal M)\circ (C',\mu_{\matholdcal M})\tag*{(\textup{by}~ (\ref{eq4.4}))}\\\notag
			&=(A',A',\Delta',\matholdcal M)\circ \pi_{(A',A',C',C',\matholdcal M)}\circ(C',A',\psi',C',\matholdcal M)\circ (C',\alpha,\alpha,\matholdcal M)\circ (C',\mu,\matholdcal M)\circ (C',\mu_{\matholdcal M})\notag
		\end{align*}
		It now follows that 
		\begin{align*}
			&(\mu', C',\matholdcal M)\circ \mu_{(C',\matholdcal M)}\\\notag&=
			(A',A',\Delta',\matholdcal M)\circ \pi_{(A',C',A',C',\matholdcal M)}\circ(C',A',\psi',C',\matholdcal M)\circ (C',A',C',\alpha,\matholdcal M)\circ(C',\alpha,A,\matholdcal M)\circ (C',\mu,\matholdcal M)\circ (C',\mu_{\matholdcal M})\\\notag
			&=(A',A',\Delta',\matholdcal M)\circ \pi_{(A',C',A',C',\matholdcal M)}\circ(C',A',\psi',C',\matholdcal M)\circ (C',A',C',\alpha,\matholdcal M)\circ(C',\alpha,A,\matholdcal M)\circ (C',A,\mu_{\matholdcal M})\circ (C',\mu_{\matholdcal M})\\\notag
			&=(A',A',\Delta',\matholdcal M)\circ(A',\psi',C',\matholdcal M)\circ (A',C',\alpha,\matholdcal M)\circ (A',C',\mu_{\matholdcal M})\circ \pi_{(A',C',\matholdcal M)}\circ (C',\alpha,\matholdcal M)\circ (c',\mu_{\matholdcal M})\\\notag
			&=(A',A',\Delta',\matholdcal M)\circ(A',\psi',C',\matholdcal M)\circ (A',C',\alpha,\matholdcal M)\circ (A',C',\mu_{\matholdcal M})\circ (A',\Delta',\matholdcal M)\circ (\psi',C',\matholdcal M)\circ (C',\alpha,\matholdcal M)\circ (c',\mu_{\matholdcal M})\\\notag
			&=(A',\mu_{(C',\matholdcal M)})\circ \mu_{(C',\matholdcal M)}
		\end{align*}
	It remains to show that $(C',\matholdcal M)$ satisfies the condition in  \eqref{eq3.9}. For  this, we observe that
		\begin{align*}
			\mu_{(C',\matholdcal M)}\circ \pi_{(C',\matholdcal M)}&=(A',\Delta',\matholdcal M)\circ(\psi',C',\matholdcal M)\circ (C',\alpha,\matholdcal M)\circ (C',\mu_{\matholdcal M})\circ (\Delta',\matholdcal M)\\\notag
			&= (A',\Delta',\matholdcal M)\circ(\psi',C',\matholdcal M)\circ (C',\alpha,\matholdcal M)\circ(\Delta',A,\matholdcal M)\circ (C',C',\mu_{\matholdcal M})\\\notag
			&=(A',\Delta',\matholdcal M)\circ(\psi',C',\matholdcal M)\circ(\Delta',A',C',\matholdcal M)\circ (C',C',\alpha,\matholdcal M)\circ (C',C',\mu_{\matholdcal M})\\\notag
			&=(A',\Delta',\matholdcal M)\circ(A',\Delta',C',\matholdcal M)\circ (\psi',C',C',\matholdcal M)\circ(C',\psi',C',\matholdcal M) \circ (C',C',\alpha,\matholdcal M)\circ (C',C',\mu_{\matholdcal M})&&(\textup{by}~ (\ref{eq3.12}))\\\notag
			&=(A',\Delta',\matholdcal M)\circ(\psi',C',\matholdcal M)\circ (C',A',\Delta',\matholdcal M)\circ(C',\psi',C',\matholdcal M) \circ (C',C',\alpha,\matholdcal M)\circ (C',C',\mu_{\matholdcal M})\\\notag
			&=(A',\pi_{(C',\matholdcal M)})\circ (\psi', (C',\matholdcal M))\circ (C',\mu_{(C',\matholdcal M)})
		\end{align*}
		This proves the result.
		
		\smallskip
		(ii) The proof of this is dual to that of (i).
	\end{proof}
	\begin{lem}\label{L4.3}
		Let $(\alpha, \gamma)$ be a measuring from $(A',C',\psi')$ to $(A,C,\psi)$. Then, for any object $(\matholdcal M,\pi_{ \matholdcal M},\mu_{ \matholdcal M})\in ~ _{\hspace{1em}A}^{[C,-]}\mathfrak S(\psi)$ and  any object $(\matholdcal M',\pi_{ \matholdcal M'},\mu_{ \matholdcal M'})\in~_{\hspace{1em}A'}^{[C',-]}\mathfrak S(\psi')$, we have:
		
		\smallskip
		(i) Let $s^{\matholdcal M}:(C',C,\matholdcal M)\longrightarrow (C',\matholdcal M)$ be defined as follows: 
		\begin{equation} \label{dv1}s^{\matholdcal M}:=(C',\pi_{\matholdcal M})-(\Delta', \matholdcal M)\circ (C',\gamma,\matholdcal M)\circ (C',C,\mu_{\matholdcal M})
		\end{equation} Then, $s^{\matholdcal M}$ is a morphism in $_{\hspace{1em}A'}^{[C',-]}\mathfrak S(\psi')$, where $(C',\matholdcal M)$ and $(C',C,\matholdcal M)$ are treated as objects in $_{\hspace{1em}A'}^{[C',-]}\mathfrak S(\psi')$.

		\smallskip
		(ii) Let $s_{\matholdcal M'}:(A,\matholdcal M')\longrightarrow (A,A',\matholdcal M')$ be defined as follows: 
		\begin{equation}\label{dv2}  s_{\matholdcal M'}:=(A,\mu_{\matholdcal M'})-(A,A',\pi_{\matholdcal M'})\circ (A,\alpha,\matholdcal M')\circ(\mu,\matholdcal M')
		\end{equation} Then, $s_{\matholdcal M'}$ is a morphism in $_{\hspace{1em}A}^{[C,-]}\mathfrak S(\psi)$, where $(A,\matholdcal M')$ and $(A,A',\matholdcal M')$ are treated as the objects in $_{\hspace{1em}A}^{[C,-]}\mathfrak S(\psi)$.
		
	\end{lem}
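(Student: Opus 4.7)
My plan is a direct verification for both parts that $s^{\matholdcal M}$ (respectively $s_{\matholdcal M'}$) respects both the module and contramodule structures on its source and target. The source and target structures are read off from Proposition~\ref{P4.2}: in part (i), the target $(C',\matholdcal M)$ acquires its ${}_{\hspace{1em}A'}^{[C',-]}\mathfrak S(\psi')$-structure via Proposition~\ref{P4.2}(i) applied to $\matholdcal M \in {}_A\mathfrak S$, and the source $(C',C,\matholdcal M)$ via the same proposition applied to $(C,\matholdcal M) \in {}_A\mathfrak S$ (the latter identification coming from Lemma~\ref{L3.11}), with structure maps as in \eqref{vidd1}; in part (ii), the roles are played dually by Proposition~\ref{P4.2}(ii) applied to $\matholdcal M' \in {}^{[C',-]}\mathfrak S$ and to $(A',\matholdcal M') \in {}^{[C',-]}\mathfrak S$ (where the contramodule structure on $(A',\matholdcal M')$ is supplied by Lemma~\ref{L3.9} in the primed setting).

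I would split $s^{\matholdcal M} = S_1 - S_2$ with $S_1 = (C',\pi_{\matholdcal M})$ and $S_2 = (\Delta',\matholdcal M)\circ(C',\gamma,\matholdcal M)\circ(C',C,\mu_{\matholdcal M})$, and show each summand is individually a morphism in ${}_{\hspace{1em}A'}^{[C',-]}\mathfrak S(\psi')$; the difference then inherits the property by $k$-linearity. For $S_1$, the entwined contramodule compatibility \eqref{eq3.9} on $\matholdcal M$ is precisely the statement that $\pi_{\matholdcal M}: (C,\matholdcal M) \to \matholdcal M$ is $A$-linear, where $(C,\matholdcal M)$ carries the $A$-module structure of Lemma~\ref{L3.11}. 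The functoriality of the assignment $\matholdcal N \mapsto (C',\matholdcal N)$ from ${}_A\mathfrak S$ to ${}_{\hspace{1em}A'}^{[C',-]}\mathfrak S(\psi')$ built into Proposition~\ref{P4.2}(i) then yields $S_1$ as a morphism in the target category.

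The substance of the lemma is therefore the analogous claim for $S_2$. For $C'$-contra-compatibility, I would expand both $\pi_{(C',\matholdcal M)}\circ(C',S_2)$ and $S_2\circ \pi_{(C',C,\matholdcal M)}$ and reduce the equality to coassociativity of $\Delta'$ together with the hom-form \eqref{eq4.5} of the comultiplicativity \eqref{eq4.2} of $\gamma$. For $A'$-compatibility, I would expand $\mu_{(C',\matholdcal M)}\circ S_2$ and $(A', S_2)\circ \mu_{(C',C,\matholdcal M)}$ using the explicit formulas \eqref{vidd1}; the required equality chains together the hom-form \eqref{eq4.4} of the multiplicativity \eqref{eq4.1} of $\alpha$, the hom-form \eqref{eq4.6} of the measuring compatibility \eqref{eq4.3} between $\alpha$ and $\gamma$, and the axiom \eqref{eq3.9} applied to $\matholdcal M$ to move $\pi_{\matholdcal M}$ past $\psi$ and $\mu_{\matholdcal M}$. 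Part (ii) proceeds entirely dually with the roles of the two summands, of $\alpha$ and $\gamma$, and of diagrams \eqref{eq4.4} and \eqref{eq4.5} interchanged.

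The main obstacle is the bookkeeping: each side of each compatibility equation is a composition of six or seven maps acting on a nested hom-space with as many tensor factors in its input, and one must carefully identify which of the measuring axioms (or the naturality of $(-,-)$, or the entwining axioms) to invoke at each step. The delicate conceptual point is that \eqref{eq4.6} --- the hom-form of the measuring axiom \eqref{eq4.3} that ties $\alpha$ and $\gamma$ together through both entwinings $\psi$ and $\psi'$ --- is essential for $S_2$ to commute with the $A'$-module structure; without this axiom, the pair $(\alpha,\gamma)$ would not define a valid measuring, and the second summand of $s^{\matholdcal M}$ would fail to be a morphism in the target category.
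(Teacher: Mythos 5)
Your proposal is correct and matches the paper's argument in substance: the paper likewise reduces everything to the $A'$-linearity check (the $C'$-contramodule compatibility being essentially immediate because both structure maps are the cofree ones built from $(\Delta',-)$), and its term-by-term expansion of $\mu_{(C',\matholdcal M)}\circ s^{\matholdcal M}$ is exactly your summand-by-summand verification, with \eqref{eq4.6} doing the essential work on the $\gamma$-summand and \eqref{eq3.9} together with functoriality handling the $(C',\pi_{\matholdcal M})$-summand. The only slips are cosmetic: \eqref{eq4.5} and \eqref{eq3.9} are not actually needed for the second summand (its $C'$-compatibility follows from coassociativity and naturality alone, and $\pi_{\matholdcal M}$ does not occur in it), which does not affect the argument.
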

	\begin{proof}
		(i) It is clear $s^{\matholdcal M}$ is a morphism in $^{[C',-]}\mathfrak S.$ We now show that $s^{\matholdcal M}$ is a morphism in $_{A'}\mathfrak S$, i.e., $s^{\matholdcal M}$ satisfies $\mu_{(C',\matholdcal M)}\circ s^{\matholdcal M}=(A',s^{\matholdcal M})\circ \mu_{(C',C,\matholdcal M)}$. From \eqref{vidd1} and \eqref{dv1}, we have
		\begin{align}\label{eq4.7}
			\mu_{(C',\matholdcal M)}\circ s^{\matholdcal M}&=\big((A',\Delta',\matholdcal M)\circ (\psi',C',\matholdcal M)\circ (C',\alpha,\matholdcal M)\circ (C',\mu_{\matholdcal M})\circ (C',\pi_{\matholdcal M})\big)\\ \notag
			&\qquad -\big((A',\Delta',\matholdcal M)\circ (\psi',C',\matholdcal M)\circ (C',\alpha,\matholdcal M)\circ  (C',\mu_{\matholdcal M})\circ(\Delta',\matholdcal M)\circ (C',\gamma,\matholdcal M)\circ (C',C,\mu_{\matholdcal M})\big)\notag
		\end{align}
		On the one hand, we note that 
		\begin{equation}\label{eq4.8}
		\begin{array}{l}
			(A',\Delta',\matholdcal M)\circ (\psi',C',\matholdcal M)\circ (C',\alpha,\matholdcal M)\circ (C',\mu_{\matholdcal M})\circ (C',\pi_{\matholdcal M})\\=(A',\Delta',\matholdcal M)\circ (\psi',C',\matholdcal M)\circ (C',\alpha,\matholdcal M)\circ (C',A,\pi_{\matholdcal M})\circ (C',\psi, \matholdcal M)\circ (C',C,\mu_{\matholdcal M}) \qquad \mbox{(as $ \matholdcal M\in~_{\hspace{1em}A}^{[C,-]}\mathfrak S(\psi)$)} \\
			=(A',\Delta',\matholdcal M)\circ (\psi',C',\matholdcal M)\circ (C',A',C',\pi_\matholdcal M)\circ (C',\alpha,C,\matholdcal M)\circ (C',\psi, \matholdcal M)\circ (C',C,\mu_{\matholdcal M})\\
			=(A',\Delta',\matholdcal M)\circ (A',C',C',\pi_\matholdcal M)\circ (\psi',C',C, \matholdcal M)\circ (C',\alpha,C,\matholdcal M)\circ (C',\psi, \matholdcal M)\circ (C',C,\mu_{\matholdcal M})\\
			=(A',C',\pi_\matholdcal M)\circ (A',\Delta',C,\matholdcal M)\circ (\psi',C',C, \matholdcal M)\circ (C',\alpha,C,\matholdcal M)\circ (C',\psi, \matholdcal M)\circ (C',C,\mu_{\matholdcal M})\\
			\end{array}
		\end{equation}
		Since $(C',\matholdcal M)\in~_{\hspace{1em}A'}^{[C',-]}\mathfrak S(\psi')$, we have on the other hand
		\begin{equation}\label{eq4.9}
		\begin{array}{l}
			(A',\Delta',\matholdcal M)\circ (\psi',C',\matholdcal M)\circ (C',\alpha,\matholdcal M)\circ (C',\mu_{\matholdcal M})\circ(\Delta',\matholdcal M)\circ (C',\gamma,\matholdcal M)\circ (C',C,\mu_{\matholdcal M})\\
			=(A',\Delta',\matholdcal M)\circ (\psi',C',\matholdcal M)\circ (C',A',\Delta',\matholdcal M)\circ (C',\psi',C',\matholdcal M)\circ(C',C',\alpha,\matholdcal M)\circ (C',C',\mu_\matholdcal M)\circ (C',\gamma,\matholdcal M)\circ(C',C,\mu_{\matholdcal M})\\
			=(A',\Delta',\matholdcal M)\circ (\psi',C',\matholdcal M)\circ (C',A',\Delta',\matholdcal M)\circ(C',\psi',C',\matholdcal M)\circ (C',\gamma,\alpha,\matholdcal M)\circ (C',C,\mu, \matholdcal M)\circ(C',C,\mu_{\matholdcal M})\\
			=(A',\Delta',\matholdcal M)\circ (\psi',C',\matholdcal M)\circ (C',A',\Delta',\matholdcal M)\circ(C',\alpha,\gamma,\matholdcal M)\circ (C',\psi,A,\matholdcal M)\circ (C',C,\mu, \matholdcal M)\circ(C',C,\mu_{\matholdcal M}) \qquad \mbox{(by \eqref{eq4.6})}\\
			=\pi_{(A',C',\matholdcal M)}\circ (C',A',\Delta',\matholdcal M)\circ(C',\alpha,\gamma,\matholdcal M)\circ (C',\psi,A,\matholdcal M)\circ (C',C,\mu, \matholdcal M)\circ(C',C,\mu_{\matholdcal M})\\
		\end{array}
		\end{equation}
		where the third equality in \eqref{eq4.9} follows from the following commutative diagram
		\begin{equation}
			\begin{tikzcd}[row sep=1.8em, column sep = 3.8em]
				(C',C,A, \matholdcal M)  \arrow{r}{(C',C,\mu,,\matholdcal M)} \arrow{d}{(C',\gamma,\matholdcal M)}& (C',C,A,A, \matholdcal M) \arrow{r} {(C',C,A,\alpha,\matholdcal M)}\arrow{d}{(C',\gamma,A,\matholdcal M)}\arrow{rd}{(C',\gamma,\alpha,\matholdcal M)} &(C',C,A,A',C',\matholdcal M) \arrow{d}{(C',\gamma,A',C',\matholdcal M)}\\%
				(C',C',\matholdcal M) \arrow{r}{(C',C'\mu_{\matholdcal M})}&  (C',C', A, \matholdcal M)\arrow{r}{(C',C',\alpha, \matholdcal M)}& (C', C',A',C',\matholdcal M)
			\end{tikzcd}
		\end{equation}
		We now consider the following commutative diagram
		\begin{small}
			\begin{equation}\label{416tf}
				\begin{tikzcd}[row sep=2.8em, column sep = 4.8em]
					(C',C,A,\matholdcal M) \arrow{r}{(C',\psi,\matholdcal M)} \arrow{d}{(C',C,\mu,\matholdcal M)}
					& (C',A,C,\matholdcal M)\arrow{r}{(C',\alpha,C,\matholdcal M)}\arrow{d}{(C',A,C,\mu_{\matholdcal M})} &(C',A',C',C,\matholdcal M)\arrow{d}{(C',A',C',C,\mu_{\matholdcal M})}\arrow{r}{\pi_{(A',C',C,\matholdcal M)}}&(A',C',C,\matholdcal M)\arrow{d}{(A',C',C, \mu_{\matholdcal M})}\\
					(C',C,A,A,\matholdcal M)\arrow{r}{(C',\psi,A,\matholdcal M)}&(C',A',C,A,\matholdcal M)\arrow{rd}{(C',\alpha,\gamma,\matholdcal M)}\arrow{r}{(C',\alpha,C,A,\matholdcal M)}&(C',A',C',C,A,\matholdcal M)\arrow{d}{(C',A',C',\gamma,\matholdcal M)}\arrow{r}{(\pi_{(A',C',C,A,\matholdcal M)})}&(A',C',C,A,\matholdcal M)\arrow{d}{(A',C',\gamma,\matholdcal M)}\\
					&&(C',A',C',C',\matholdcal M) \arrow{r}
					{\pi_{(A',C',C',\matholdcal M)}}\arrow{d}{(C',A',\Delta',\matholdcal M)} &(A',C',C',\matholdcal M)\arrow{d}{(A',\Delta',\matholdcal M)}\\
					&&(C',A',C',\matholdcal M)\arrow{r}{\pi_{(A',C',\matholdcal M)}}&(A',C',\matholdcal M)
				\end{tikzcd}
			\end{equation}
		\end{small}
		Using \eqref{416tf}, we see that  \eqref{eq4.9} reduces to 
		\begin{align}\label{eq4.10}
			&(A',\Delta',\matholdcal M)\circ (\psi',C',\matholdcal M)\circ (C',\alpha,\matholdcal M)\circ (C',\mu_{\matholdcal M})\circ(\Delta',\matholdcal M)\circ (C',\gamma,\matholdcal M)\circ (C',C,\mu_{\matholdcal M})\\\notag
			&=(A',\Delta',\matholdcal M)\circ (A',C',\gamma,\matholdcal M)\circ (A',C',C,\mu_\matholdcal M)\circ\pi_{(A',C',C,\matholdcal M)}\circ (C',\alpha,C,\matholdcal M)\circ (C',\psi, \matholdcal M)\circ(C',C,\mu_{\matholdcal M})\\\notag
			&=(A',\Delta',\matholdcal M)\circ (A',C',\gamma,\matholdcal M)\circ (A',C',C,\mu_\matholdcal M)\circ(A',\Delta',C,\matholdcal M)\circ(\psi',C',C,\matholdcal M)\circ (C',\alpha,C,\matholdcal M)\circ (C',\psi, \matholdcal M)\circ(C',C,\mu_{\matholdcal M})
		\end{align}
		From \eqref{eq4.8},  \eqref{eq4.10} and the expression in \eqref{eq4.7},  we get
		\begin{align*}
			\mu_{(C',\matholdcal M)}\circ s^{\matholdcal M}&=\big((A',C',\pi_\matholdcal M)\circ (A',\Delta',C,\matholdcal M)\circ (\psi',C',C, \matholdcal M)\circ (C',\alpha,C,\matholdcal M)\circ (C',\psi, \matholdcal M)\circ (C',C,\mu_{\matholdcal M})\big)-\big((A',\Delta',\matholdcal M)\circ\\\notag&\qquad (A',C',\gamma,\matholdcal M)\circ (A',C',C,\mu_\matholdcal M)\circ(A',\Delta',C,\matholdcal M)\circ(\psi',C',C,\matholdcal M)\circ (C',\alpha,C,\matholdcal M)\circ (C',\psi, \matholdcal M)\circ(C',C,\mu_{\matholdcal M})\big)\\\notag
			&=(A',s^{\matholdcal M})\circ \mu_{(C',C,\matholdcal M)}
		\end{align*}
		This proves the result.
		
		\smallskip
		(ii) The proof of this is dual to that of (i).
	\end{proof}
	We are now ready to define adjoint functors between the categories of entwined contramodule objects in $\mathfrak S$ over $(A,C,\psi)$ and $(A',C',\psi')$. Given a measuring $(\alpha,\gamma)$ of entwining structures from $(A',C',\psi')$ to $(A,C,\psi)$, it follows from  Proposition \ref{P4.2} that for any $\matholdcal M\in~_{\hspace{1em}A}^{[C,-]}\mathfrak S(\psi)$, both $(C',\matholdcal M)$ and $(C',C,\matholdcal M)$ are objects in $_{\hspace{1em}A'}^{[C',-]}\mathfrak S(\psi')$. By Lemma \ref{L4.3}, we know that $s^{\matholdcal M}:(C',C,\matholdcal M)\longrightarrow (C',\matholdcal M)$ is a morphism in $_{\hspace{1em}A'}^{[C',-]}\mathfrak S(\psi')$.   We define $\widetilde{Cohom_C}(C',\matholdcal M)\in~_{\hspace{1em}A'}^{[C',-]}\mathfrak S(\psi')$ to be the cokernel
	\begin{equation}\label{418tf}
		(C',C,\matholdcal M)\xrightarrow{s^{\matholdcal M}}(C',\matholdcal M)\xrightarrow{p^{\matholdcal M}}\widetilde{Cohom_C}(C',\matholdcal M)\longrightarrow 0
	\end{equation}
	As such, we have an induced functor 
	\begin{align*}
		\widetilde{Cohom_C}(C',-):~_{\hspace{1em}A}^{[C,-]}\mathfrak S(\psi)\longrightarrow ~_{\hspace{1em}A'}^{[C',-]}\mathfrak S(\psi'),\qquad \matholdcal M\mapsto \widetilde{Cohom_C}(C',\matholdcal M) 
	\end{align*}
	On the other hand, for $\matholdcal M'\in~_{\hspace{1em}A'}^{[C',-]}\mathfrak S(\psi')$, we know from Proposition \ref{P4.2} and Lemma \ref{L4.3} that both $(A,\matholdcal M')$ and $(A,A',\matholdcal M)$ are objects in $_{\hspace{1em}A}^{[C,-]}\mathfrak S(\psi)$ and $s_{\matholdcal M'}$ is a morphism in $_{\hspace{1em}A}^{[C,-]}\mathfrak S(\psi)$. Let $\widetilde{Hom_{A'}}(A,\matholdcal M')$ denote the kernel of $s_{\matholdcal M'}$. We now set $\widetilde{Hom_{A'}}(A,\matholdcal M') \in~_{\hspace{1em}A'}^{[C',-]}\mathfrak S(\psi')$ to be the kernel 
	\begin{equation}\label{419uy}
		0\longrightarrow \widetilde{Hom_{A'}}(A,\matholdcal M')\xrightarrow{\iota_{\matholdcal M'}} (A,\matholdcal M')\xrightarrow{s_{\matholdcal M'}}(A,A',\matholdcal M')
	\end{equation}
	This determines a functor 
	\begin{align*}
		\widetilde{Hom_{A'}}(A,-):~_{\hspace{1em}A'}^{[C',-]}\mathfrak S(\psi')\longrightarrow _{\hspace{1em}A}^{[C,-]}\mathfrak S(\psi),\qquad \matholdcal M'\mapsto \widetilde{Hom_{A'}}(A,\matholdcal M')
	\end{align*}
	\begin{thm}\label{P4.4}
		Let $(\alpha, \gamma)$ be a measuring from $(A',C',\psi')$ to $(A,C,\psi)$ and let $\mathfrak S$ be a $k$-linear Grothendieck category. Then, $\widetilde{Cohom_C}(C',-):~_{\hspace{1em}A}^{[C,-]}\mathfrak S(\psi)\longrightarrow ~_{\hspace{1em}A'}^{[C',-]}\mathfrak S(\psi')$ is  left adjoint to the functor $\widetilde{Hom_{A'}}(A,-):~_{\hspace{1em}A'}^{[C',-]}\mathfrak S(\psi')\longrightarrow _{\hspace{1em}A}^{[C,-]}\mathfrak S(\psi)$. In other words, we have natural isomorphisms
		\begin{equation}
			_{\hspace{1em}A'}^{[C',-]}\mathfrak S(\psi')(\widetilde{Cohom_C}(C',\matholdcal M),\matholdcal M')\cong  {_{\hspace{1em}A}^{[C,-]}\mathfrak S(\psi)}(\matholdcal M,\widetilde{Hom_{A'}}(A,\matholdcal M'))
		\end{equation}
		for any $(\matholdcal M,\pi_{ \matholdcal M},\mu_{ \matholdcal M})\in~_{\hspace{1em}A}^{[C,-]}\mathfrak S(\psi)$ and $(\matholdcal M',\pi_{ \matholdcal M'},\mu_{ \matholdcal M'})\in~ _{\hspace{1em}A'}^{[C',-]}\mathfrak S(\psi')$.
	\end{thm}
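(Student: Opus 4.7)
The plan is to construct mutually inverse natural bijections between the two hom-sets by reducing to the universal properties of the defining kernel and cokernel, and then building the correspondence explicitly using the measuring data.

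First, by the universal property of the cokernel \eqref{418tf}, I would identify
\[
{_{\hspace{1em}A'}^{[C',-]}\mathfrak S(\psi')}(\widetilde{Cohom_C}(C',\matholdcal M),\matholdcal M')
\]
with the set of morphisms $\tilde\varphi:(C',\matholdcal M)\to\matholdcal M'$ in $_{\hspace{1em}A'}^{[C',-]}\mathfrak S(\psi')$ satisfying $\tilde\varphi\circ s^{\matholdcal M}=0$. Dually, from \eqref{419uy}, I would identify
\[
{_{\hspace{1em}A}^{[C,-]}\mathfrak S(\psi)}(\matholdcal M,\widetilde{Hom_{A'}}(A,\matholdcal M'))
\]
with the set of morphisms $\tilde\psi:\matholdcal M\to(A,\matholdcal M')$ in $_{\hspace{1em}A}^{[C,-]}\mathfrak S(\psi)$ satisfying $s_{\matholdcal M'}\circ\tilde\psi=0$. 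It thus suffices to exhibit a natural bijection between these two classes of partial morphisms.

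Next, I would construct mutually inverse natural maps $\Phi$ and $\Psi$. The key idea is that the $A'$-module structure on $(C',\matholdcal M)$ from Proposition \ref{P4.2}(i) and the $C$-contramodule structure on $(A,\matholdcal M')$ from Proposition \ref{P4.2}(ii) are defined precisely using $\alpha$ and $\gamma$, so the bijection between morphisms with these source/target objects will interchange the roles of $\alpha$ and $\gamma$. Concretely, given $\tilde\psi:\matholdcal M\to(A,\matholdcal M')$, I would define $\Psi(\tilde\psi)$ as a composition of the form
\[
(C',\matholdcal M)\xrightarrow{(C',\tilde\psi)}(C',A,\matholdcal M')\longrightarrow\cdots\longrightarrow\matholdcal M'
\]
where the trailing arrows use $\gamma$, the $A'$-module and $C'$-contramodule structures of $\matholdcal M'$, and $\pi_{\matholdcal M'}$ to land back in $\matholdcal M'$. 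Symmetrically, given $\tilde\varphi:(C',\matholdcal M)\to\matholdcal M'$, I would define $\Phi(\tilde\varphi)$ as a composition beginning with $\mu_{\matholdcal M}:\matholdcal M\to(A,\matholdcal M)$, then invoking $(A,\tilde\varphi)$ after insertion of an appropriate measuring-related map into $(A,C',\matholdcal M)$, and landing in $(A,\matholdcal M')$.

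I would then verify: (a) $\Phi(\tilde\varphi)$ and $\Psi(\tilde\psi)$ are morphisms in the correct categories $_{\hspace{1em}A}^{[C,-]}\mathfrak S(\psi)$ and $_{\hspace{1em}A'}^{[C',-]}\mathfrak S(\psi')$ respectively; (b) they satisfy the kernel/cokernel annihilation conditions $s_{\matholdcal M'}\circ\Phi(\tilde\varphi)=0$ and $\Psi(\tilde\psi)\circ s^{\matholdcal M}=0$; (c) $\Phi$ and $\Psi$ are mutually inverse; (d) naturality in $\matholdcal M$ and $\matholdcal M'$.

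The main obstacle will be the diagram chases in (a) and (b). The translation of the cokernel condition $\tilde\varphi\circ s^{\matholdcal M}=0$ into the kernel condition $s_{\matholdcal M'}\circ\Phi(\tilde\varphi)=0$ is the crux: it requires interweaving the compatibility diagrams \eqref{eq3.9} for the entwined contramodule structures of $\matholdcal M$ and $\matholdcal M'$, the dualized measuring relations \eqref{eq4.4}, \eqref{eq4.5}, \eqref{eq4.6}, and the explicit formulas for $s^{\matholdcal M}, s_{\matholdcal M'}, \mu_{(C',\matholdcal M)}, \pi_{(A,\matholdcal M')}$ from Proposition \ref{P4.2} and Lemma \ref{L4.3}. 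In particular, the mixed compatibility \eqref{eq4.3}, which is the only measuring axiom linking $\alpha$ with $\gamma$ through $\psi$ and $\psi'$, is precisely what is needed to match up the two annihilation conditions. Once this central diagrammatic identity is secured, mutual inverseness in (c) follows by combining the counit/unit identities for $(-\otimes V,(V,-))$ with the normalization identities $\alpha\circ(C'\otimes\eta')=\eta\circ\epsilon'$ and $(A\otimes\epsilon)\circ\gamma=\eta\circ\epsilon'$ from \eqref{eq4.1} and \eqref{eq4.2}.
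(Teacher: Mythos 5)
Your plan matches the paper's proof: the paper likewise reduces to the universal properties of the cokernel \eqref{418tf} and kernel \eqref{419uy} and constructs the two correspondences explicitly, sending $\zeta$ to $(A,\zeta)\circ (A,p^{\matholdcal M})\circ (A,\epsilon',\matholdcal M)\circ \mu_{\matholdcal M}$ and $\xi$ to $\pi_{\matholdcal M'}\circ (C',\eta,\matholdcal M')\circ (C',\iota_{\matholdcal M'})\circ (C',\xi)$, and then verifies exactly the points (a)--(d) you list via the diagram chases you anticipate. The one small correction is that $\alpha$ and $\gamma$ do not appear in these compositions themselves --- the inserted maps are just the counit $\epsilon'$ and the unit $\eta$ --- with the measuring data entering only through the structure maps of Proposition \ref{P4.2} and the relations \eqref{eq4.1}--\eqref{eq4.3} during the verifications.
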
		
	\begin{proof}
		Let $\zeta:\widetilde{Cohom_C}(C',\matholdcal M) \longrightarrow\matholdcal M'$ be a morphism in $_{\hspace{1em}A'}^{[C',-]}\mathfrak S(\psi').$ We define $\xi:\matholdcal M\longrightarrow (A,\matholdcal M')$ to be
		the following composition
		\begin{equation}\label{421qe}
			\xi:\matholdcal M\xrightarrow{\mu_{\matholdcal M}}(A,\matholdcal M)\xrightarrow{(A,\epsilon',\matholdcal M)}(A,C',\matholdcal M)\xrightarrow{(A,p^{\matholdcal M})}(A,\widetilde{Cohom_C}(C',\matholdcal M))\xrightarrow{(A,\zeta)}(A,\matholdcal M')
		\end{equation}
		From \eqref{421qe}, it is evident that $\xi$ is a morphism in $_A\mathfrak S$. To prove that $\xi$ is also a morphism in $^{[C,-]}\mathfrak S$, we must show that $\xi\circ \pi_{\matholdcal M}=\pi_{(A,\matholdcal M')}\circ (C,\xi)$. Since $\pi_{\matholdcal M'}$ and $p^{\matholdcal M}$ are morphisms in $_{\hspace{1em}A'}^{[C',-]}\mathfrak S(\psi')$, we consider the following diagram 
		\begin{small}
			\begin{equation*}
				\begin{tikzcd}[row sep=2.8em, column sep = 4.8em]
					(C,\matholdcal M) \arrow{r}{(C,\mu_{\matholdcal M})} \arrow{d}{\mu_{(C,\matholdcal M)}}
					& (C,A,\matholdcal M)\arrow{r}{(C,A,\epsilon',\matholdcal M)}\arrow{d}{\mu_{(C,A,\matholdcal M)}} &(C,A,C',\matholdcal M)\arrow{d}{\mu_{(C,A,C',\matholdcal M)}}\arrow{r}{(C,A,p^{\matholdcal M})}&(C,A,\widetilde{Cohom_C}(C',\matholdcal M))\arrow{d}{\mu_{(C,A,\widetilde{Cohom_C}(C',\matholdcal M))}}\arrow{r}{(C,A,\zeta)}&(C,A,\matholdcal M')\arrow{d}{\mu_{(C,A,\matholdcal M')}}\\
					(A,C,\matholdcal M)\arrow{r}{(A,C,\mu_{\matholdcal M})}\arrow{dd}{(A,\pi_{ \matholdcal M})}&(A,C,A,\matholdcal M)\arrow{d}{(A,\gamma,\matholdcal M)}\arrow{r}{(A,C,A,\epsilon',\matholdcal M)}&(A,C,A,C',\matholdcal M)\arrow{d}{(A,\gamma,C',\matholdcal M)}\arrow{r}{(A,C,A,p^{\matholdcal M})}&(A,C,A,\widetilde{Cohom_C}(C',\matholdcal M))\arrow{d}{(A,\gamma,\widetilde{Cohom_C}(C',\matholdcal M))}\arrow{r}{(A,C,A,\zeta)}&(A,C,A,\matholdcal M')\arrow{d}{(A,\gamma,\matholdcal M')}\\
					&(A,C',\matholdcal M) \arrow{r}
					{(A,\epsilon',C',\matholdcal M)}\arrow{rd}{id} &(A,C',C',\matholdcal M)\arrow{d}{(A',\Delta',\matholdcal M)}\arrow{r}{(A,C',p^{\matholdcal M})}&(A,C',\widetilde{Cohom_C}(C',\matholdcal M))\arrow{r}{(A,C',\zeta)}\arrow{d}{(A,\pi_{\widetilde{Cohom_C}(C',\matholdcal M)})}&(A,C',\matholdcal M')\arrow{d}{(A,\pi_{ \matholdcal M'})}\\
					(A,\matholdcal M)\arrow{rr}{(A,\epsilon',\matholdcal M)}&&(A,C',\matholdcal M)\arrow{r}{(A,p^{\matholdcal M})}&(A,\widetilde{Cohom_C}(C',\matholdcal M))\arrow{r}{(A,\zeta)}&(A,\matholdcal M')
				\end{tikzcd}
			\end{equation*}
		\end{small}
		Here we note that
		\begin{equation*}
		\begin{array}{ll}
			(A,\gamma,\matholdcal M)\circ (A,C,\mu_{ \matholdcal M})&=(A,\gamma,\matholdcal M)\circ (A,\pi_{(C,A,\matholdcal M)})\circ (A,\epsilon,C,A,\matholdcal M)\circ(A,C,\mu_{ \matholdcal M})\\ 
			&=(A,\gamma,\matholdcal M)\circ (A,\epsilon,A,\matholdcal M)\circ(A,\pi_{(A,\matholdcal M)})\circ(A,C,\mu_{ \matholdcal M})\\ 
			&=(A,\epsilon',\matholdcal M)\circ (A,\eta,\matholdcal M)\circ (A,\pi_{(A,\matholdcal M)})\circ (A,C,\mu_{\matholdcal M})\qquad \qquad \mbox{(by \eqref{eq4.2})}\\ 
			&=(A,\epsilon',\matholdcal M)\circ(A,\pi_{\matholdcal M})\circ (A,C,\eta,\matholdcal M)\circ (A,C,\mu_{\matholdcal M})\\ 
			&=(A,\epsilon',\matholdcal M)\circ(A,\pi_{\matholdcal M})\\
			\end{array}
		\end{equation*}
		It therefore follows from the diagram above that
		\begin{equation*}
		\begin{array}{ll}
			\xi\circ \pi_{\matholdcal M}&=(A,\zeta)\circ(A,p^{\matholdcal M})\circ(A,\epsilon',\matholdcal M)\circ\mu_{\matholdcal M}\circ\pi_{\matholdcal M}\\ 
			&=(A,\zeta)\circ(A,p^{\matholdcal M})\circ(A,\epsilon',\matholdcal M)\circ(A,\pi_{\matholdcal M})\circ(\psi,\matholdcal M)\circ(C,\mu_{\matholdcal M})\qquad
			\qquad \mbox{(by \eqref{eq3.9})}\\ 
			&=(A,\zeta)\circ(A,p^{\matholdcal M})\circ(A,\epsilon',\matholdcal M)\circ(A,\pi_{\matholdcal M})\circ\mu_{(C,\matholdcal M)}\\ 
			&=(A,\zeta)\circ(A,p^{\matholdcal M})\circ(A,\gamma,\matholdcal M)\circ(A,C,\mu_{\matholdcal M})\circ \mu_{(C,\matholdcal M)}\\ 
			&=(A,\pi_{\matholdcal M'})\circ(A,\gamma,\matholdcal M')\circ\mu_{(C,A,\matholdcal M')}\circ(C,A,\zeta)\circ(C,A,p^{\matholdcal M})\circ(C,A,\epsilon',\matholdcal M)\circ(C,\mu_{\matholdcal M})\\ 
			&=(A,\pi_{\matholdcal M'})\circ(A,\gamma,\matholdcal M')\circ(\psi,A,\matholdcal M')\circ(C,\mu,\matholdcal M')\circ(C,A,\zeta)\circ(C,A,p^{\matholdcal M})\circ(C,A,\epsilon',\matholdcal M)\circ(C,\mu_{\matholdcal M})\\ 
			&=\pi_{(A,\matholdcal M')}\circ (C,\xi)\\
			\end{array}
		\end{equation*}
		Hence, $\xi:\matholdcal M\longrightarrow (A,\matholdcal M')$ is a morphism in $_{\hspace{1em}A}^{[C,-]}\mathfrak S(\psi)$.
		To show that $Im(\xi)$ lies inside the kernel $\widetilde{Hom_{A'}}(A,\matholdcal M')$ in \eqref{419uy}, we have to show that $s_{\matholdcal M'}\circ \xi=0$. We first note that
		\begin{align}\label{eq4.11}
			s_{\matholdcal M'}\circ \xi&=\big((A,\mu_{\matholdcal M'})\circ(A,\zeta)\circ(A,p^{\matholdcal M})\circ(A,\epsilon',\matholdcal M)\circ\mu_{\matholdcal M}\big)\\ \notag
			&\quad -\big((A,A',\pi_{\matholdcal M'})\circ(A,\alpha,\matholdcal M')\circ(\mu,\matholdcal M')\circ(A,\zeta)\circ(A,p^{\matholdcal M})\circ (A,\epsilon',\matholdcal M)\circ \mu_{\matholdcal M}\big) \notag
		\end{align}
		Since $p^{\matholdcal M}$ and $\zeta$ are morphisms in $_{\hspace{1em}A'}^{[C',-]}\mathfrak S(\psi')$, we
		consider the following commutative diagram
		\begin{small}
			\begin{equation}\label{eq4.12}
				\begin{tikzcd}[row sep=2.8em, column sep = 4.8em]
					\matholdcal M\arrow{r}{\mu_{\matholdcal M}}
					& (A,\matholdcal M)\arrow{r}{(A,\epsilon',\matholdcal M)} &(A,C',\matholdcal M)\arrow{d}{(\mu_,C'\matholdcal M)}\arrow{r}{(A,p^{\matholdcal M})}&(A,\widetilde{Cohom_C}(C',\matholdcal M))\arrow{d}{(\mu_,\widetilde{Cohom_C}(C',\matholdcal M))}\arrow{r}{(A,\zeta)}&(A,\matholdcal M')\arrow{d}{(\mu,\matholdcal M')}\\
					&&(A,A,C',\matholdcal M)\arrow{r}{(A,A,p^{\matholdcal M})}\arrow{d}{(A,\alpha,C',\matholdcal M)}&(A,A,\widetilde{Cohom_C}(C',\matholdcal M))\arrow{d}{(A,\alpha,\widetilde{Cohom_C}(C',\matholdcal M)}\arrow{r}{(A,A,\zeta)}&(A,A,\matholdcal M')\arrow{d}{(A,\alpha,\matholdcal M')}\\
					&&(A,A',C',C',\matholdcal M) \arrow{r}
					{(A,A',C',p^{\matholdcal M})}\arrow{d}{(A,A',\Delta',\matholdcal M)} &(A,A',C',\widetilde{Cohom_C}(C',\matholdcal M))\arrow{d}{(A,A',\pi_{\widetilde{Cohom_C}(c',\matholdcal M)})}\arrow{r}{(A,A',C',\zeta)}&(A,A',C',\matholdcal M')\arrow{d}{(A,A',\pi_{\matholdcal M'})}\\
					&&(A,A',C',\matholdcal M)\arrow{r}{(A,A',p^{\matholdcal M})}&(A,A',\widetilde{Cohom_C}(C',\matholdcal M))\arrow{r}{(A,A',\zeta)}&(A,A',\matholdcal M')
				\end{tikzcd}
			\end{equation}
		\end{small}
		Further, we note that 
		\begin{equation}\label{eq4.13}
		\begin{array}{lr}
			(A,A',\Delta',\matholdcal M)\circ (A,\alpha,C',\matholdcal M)\circ (\mu,C',\matholdcal M)&\\ =	(A,A',\Delta',\matholdcal M)\circ (A,A',\eta',C',C',\matholdcal M)\circ(A,\mu',C',C',\matholdcal M)\circ (A,\alpha,C',\matholdcal M)\circ (\mu,C',\matholdcal M)&\\
			=(A,A',\Delta',\matholdcal M)\circ (A,\mu_{(C',C',\matholdcal M)})\circ (A,\eta',C',C',\matholdcal M)\circ (A,\alpha,C',\matholdcal M)\circ (\mu,C',\matholdcal M)&\\
			=(A,A',\Delta',\matholdcal M)\circ (A,\mu_{(C',C',\matholdcal M)})\circ(A,\epsilon',C',\matholdcal M)\circ (A,\eta,C',\matholdcal M)\circ (\mu,C',\matholdcal M) & \mbox{( by \eqref{eq4.1})}\\
			=(A,A',\Delta',\matholdcal M)\circ (A,\mu_{(C',C',\matholdcal M)})\circ(A,\epsilon',C',\matholdcal M)& \\
			=(A,A',\Delta',\matholdcal M)\circ(A,A',\epsilon',C',\matholdcal M)\circ (A,\mu_{(C',\matholdcal M)})&\\
			=(A,A',\Delta',\matholdcal M)\circ(A,A',C',\epsilon',\matholdcal M)\circ (A,\mu_{(C',\matholdcal M)})&\\
			=(A,A',\Delta',\matholdcal M)\circ(A,\mu_{(C',C',\matholdcal M)})\circ (A,C',\epsilon',\matholdcal M)&\\
			=(A,A',\Delta',\matholdcal M)\circ(A,\mu_{(C',C',\matholdcal M)})\circ (A,C',\epsilon',\matholdcal M)\circ (A,C',\eta,\matholdcal M)\circ (A,C',\mu_{\matholdcal M})&\\
			=(A,A',\Delta',\matholdcal M)\circ(A,\mu_{(C',C',\matholdcal M)})\circ(A,C',C',\eta',\matholdcal M)\circ (A,C',\alpha,\matholdcal M)\circ (A,C',\mu_{\matholdcal M})&  \mbox{( by \eqref{eq4.1})}\\
			=(A,A',\Delta',\matholdcal M)\circ(A,\psi',C',\matholdcal M)\circ (A,C',\mu_{(C',\matholdcal M)})\circ (A,C',C',\eta',\matholdcal M)\circ (A,C',\alpha,\matholdcal M)\circ (A,C',\mu_{\matholdcal M})&\\
			=(A,A',\Delta',\matholdcal M)\circ(A,\psi',C',\matholdcal M)\circ (A,C',\eta',C',\matholdcal  M)\circ (A,C',\mu',C',\matholdcal M)\circ (A,C',\alpha,\matholdcal M)\circ (A,C',\mu_{\matholdcal M})&\\ 
			=(A,A',\Delta',\matholdcal M)\circ(A,\psi',C',\matholdcal M)\circ (A,C',\alpha,\matholdcal M)\circ (A,C',\mu_{\matholdcal M})&\\
			\end{array}
		\end{equation}
		Consequently, the second term in \eqref{eq4.11} can be expressed as
		\begin{equation*}
		\begin{array}{ll}
			(A,A',\pi_{\matholdcal M'})\circ(A,\alpha,\matholdcal M')\circ(\mu,\matholdcal M')\circ(A,\zeta)\circ(A,p^{\matholdcal M})\circ(A,\epsilon',\matholdcal M)\circ\mu_{\matholdcal M}&\\ 
			=(A,A',\zeta)\circ (A,A',p^{\matholdcal M})\circ (A,A',\Delta',\matholdcal M)\circ (A,\alpha,C',\matholdcal M)\circ (\mu,C',\matholdcal M)\circ (A,\epsilon',\matholdcal M)\circ\mu_{\matholdcal M}& \mbox{(by \eqref{eq4.12})}\\ 
			=(A,A',\zeta)\circ (A,A',p^{\matholdcal M})\circ (A,A',\Delta',\matholdcal M)\circ (A,\psi',C',\matholdcal M)\circ (A,C',\alpha,\matholdcal M)\circ (A,C',\mu_{\matholdcal M})\circ (A,\epsilon',\matholdcal M)\circ\mu_{\matholdcal M}& \mbox{(by \eqref{eq4.13})}\\ 
			=(A,A',\zeta)\circ (A,A',p^{\matholdcal M})\circ(A,\mu_{(C',\matholdcal M)})\circ (A,\epsilon',\matholdcal M)\circ\mu_{\matholdcal M}&\\
			=(A,A',\zeta)\circ(A,\mu_{\widetilde{Cohom_C}(C',\matholdcal M)})\circ (A,p^{\matholdcal M})\circ (A,\epsilon',\matholdcal M)\circ\mu_{\matholdcal M}&\\
			=(A,\mu_{\matholdcal M'})\circ(A,\zeta)\circ (A,p^{\matholdcal M})\circ (A,\epsilon',\matholdcal M)\circ\mu_{\matholdcal M}& \\
			\end{array}
		\end{equation*}
		It is now clear from \eqref{eq4.11} that $s_{\matholdcal M'}\circ \xi=0$. Hence, $Im(\xi)\subseteq \widetilde{Hom_{A'}}(A,\matholdcal M')$.
		
		\smallskip
		Conversely, we consider a morphism $\xi:\matholdcal M\longrightarrow \widetilde{Hom_{A'}}(A,\matholdcal M')$ in $_{\hspace{1em}A}^{[C,-]}\mathfrak S(\psi)$. Then, we define   $\zeta:(C',\matholdcal M)\longrightarrow \matholdcal M'$ in $_{\hspace{1em}A'}^{[C',-]}\mathfrak S(\psi')$ to be the following composition
		\begin{equation}\label{424da}
			(C',\matholdcal M)\xrightarrow{(C',\xi)}(C',\widetilde{Hom_{A'}}(A,\matholdcal M'))\xrightarrow{(C',\iota_{\matholdcal M'})}(C',A,\matholdcal M')\xrightarrow{(C',\eta,\matholdcal M')}(C',\matholdcal M')\xrightarrow{\pi_{\matholdcal M'}}\matholdcal M'
		\end{equation}
		We can now check that $\zeta\circ s^{\matholdcal M}=0$, whence it follows from \eqref{418tf} that   $\zeta$ factors through $ \widetilde{Cohom_C}(C', \matholdcal M)$. We can also verify that these two associations are inverse to each other. This proves the result.
	\end{proof}

	We will now give conditions for the  pair $\big(\widetilde{Cohom_C}(C',-),\widetilde{Hom_{A'}}(A,-)\big)$ in Proposition \ref{P4.4} to be an adjoint equivalence of categories. For this, we begin by describing the unit and counit maps of the adjoint pair $\big(\widetilde{Cohom_C}(C',-),\widetilde{Hom_{A'}}(A,-)\big).$ 
	For any $(\matholdcal M,\pi_{ \matholdcal M},\mu_{ \matholdcal M}) \in~_{\hspace{1em}A}^{[C,-]}\mathfrak S(\psi)$, let $\Psi_{\matholdcal M}: \matholdcal M \longrightarrow \widetilde{Hom_{A'}}(A, \widetilde{Cohom_C}(C', \matholdcal M))$ denote the unit map of adjunction given by
	\begin{equation}\label{421w}
		\matholdcal M\xrightarrow{\mu_{\matholdcal M}}(A,\matholdcal M)\xrightarrow{(A,\epsilon',\matholdcal M)}(A,C',\matholdcal M)\xrightarrow{(A,p^{\matholdcal M})}\widetilde{Hom_{A'}}(A, \widetilde{Cohom_C}(C', \matholdcal M))\subseteq (A, \widetilde{Cohom_C}(C', \matholdcal M))
	\end{equation} By abuse of notation, we denote the last map in \eqref{421w} by $(A,p^{\matholdcal M})$, since we know from the definition in \eqref{421qe} and the proof of 
	Proposition \ref{P4.4} that the composition in \eqref{421w} takes values in the kernel $\widetilde{Hom_{A'}}(A, \widetilde{Cohom_C}(C', \matholdcal M))$. 
	
	\smallskip
	 For  $(\matholdcal M',\pi_{ \matholdcal M'},\mu_{ \matholdcal M'}) \in~_{\hspace{1em}A'}^{[C',-]}\mathfrak S(\psi')$, let  $\Phi_{\matholdcal M'}: \widetilde{Cohom_C}(C', \widetilde{Hom_{A'}}(A, \matholdcal M')) \longrightarrow \matholdcal M'$ denote the counit of the adjunction obtained by 
	 \begin{equation}\label{426da}
			\widetilde{Cohom_C}(C',\widetilde{Hom_{A'}}(A,\matholdcal M'))\xrightarrow{(C',\iota_{\matholdcal M'})}(C',A,\matholdcal M')\xrightarrow{(C',\eta,\matholdcal M')}(C',\matholdcal M')\xrightarrow{\pi_{\matholdcal M'}}\matholdcal M'
		\end{equation} Again by abuse of notation, we denote the first map in \eqref{426da} by $(C',\iota_{\matholdcal M'})$, since we know from the definition in \eqref{424da} and the proof of 
	Proposition \ref{P4.4} that the composition in \eqref{426da} descends to the cokernel $\widetilde{Cohom_C}(C',\widetilde{Hom_{A'}}(A,\matholdcal M'))$.
	 
	\begin{defn}\label{D4.5}
		Let $(\alpha, \gamma)$ be a measuring from $(A',C',\psi')$ to $(A,C,\psi)$ and let   $\mathfrak S$ be a $k$-linear Grothendieck category.	 Then, we say that $(\alpha,\gamma)$ is an $\mathfrak S$-contra-Galois measuring if for every $\matholdcal M\in\mathfrak S$ , the unit $\Psi_{(C,A,\matholdcal M)}$ and the counit $\Phi_{(A',C',\matholdcal M)}$ are  isomorphisms.
	\end{defn}
	\begin{thm}\label{P4.6}
		Let $\mathfrak S$ be a $k$-linear Grothendieck category and let $(\alpha, \gamma)$ be an $\mathfrak S$-contra-Galois measuring from $(A',C',\psi')$ to $(A,C,\psi)$. Suppose that
		
		\smallskip
		(i) The functors $\widetilde{Cohom_C}(C',-):_{\hspace{1em}A}^{[C,-]}\mathfrak S(\psi)\longrightarrow _{\hspace{1em}A'}^{[C',-]}\mathfrak S(\psi')$ and $\widetilde{Hom_{A'}}(A,-):_{\hspace{1em}A'}^{[C',-]}\mathfrak S(\psi')\longrightarrow _{\hspace{1em}A}^{[C,-]}\mathfrak S(\psi)$ are exact. 
		
		\smallskip
		(ii) The functor $(A',-):\mathfrak S\longrightarrow \mathfrak S$ is  exact. 
		
		\smallskip Then, the functors $\widetilde{Cohom_C}(C',-)$ and $\widetilde{Hom_{A'}}(A,-)$ form an adjoint equivalence of categories.
	\end{thm}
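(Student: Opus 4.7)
The plan is to prove that the unit $\Psi$ and counit $\Phi$ of the adjoint pair $\bigl(\widetilde{Cohom_C}(C',-), \widetilde{Hom_{A'}}(A,-)\bigr)$ are natural isomorphisms on every object, which will give the adjoint equivalence. By Definition \ref{D4.5}, we already know $\Psi_{(C,A,\matholdcal N)}$ and $\Phi_{(A',C',\matholdcal N)}$ are isomorphisms for every $\matholdcal N\in\mathfrak S$, so the goal is to propagate these from the ``cofree'' objects to arbitrary objects by exploiting the exactness of both functors together with a Five Lemma argument.

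The main step is to produce, for each $\matholdcal M\in~_{\hspace{1em}A}^{[C,-]}\mathfrak S(\psi)$, a right exact sequence
\begin{equation*}
(C,A,\matholdcal N_1)\longrightarrow (C,A,\matholdcal N_0)\longrightarrow \matholdcal M\longrightarrow 0
\end{equation*}
with $\matholdcal N_0,\matholdcal N_1\in\mathfrak S$. I would build this by starting from the counit $\pi_{\matholdcal M}:(C,\matholdcal M)\to\matholdcal M$ of the adjunction $({_A\mathcal T},{_A\mathcal F})$ of Proposition \ref{L3.12}, which is a split epimorphism in $\mathfrak S$, and then resolving $\matholdcal M$ as an object of $_A\mathfrak S$ by cofree $A$-modules $(A,-)$; the standing assumption that $(C,-)$ is exact lets us apply $_A\mathcal T=(C,-)$ to such a resolution to obtain a right exact sequence of $(C,A,-)$-type objects with cokernel $\matholdcal M$ in $~_{\hspace{1em}A}^{[C,-]}\mathfrak S(\psi)$. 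Applying the exact composite $\widetilde{Hom_{A'}}(A,\widetilde{Cohom_C}(C',-))$ preserves this right exactness, and naturality of $\Psi$ produces a commuting ladder on which $\Psi$ is an isomorphism at the two cofree terms by the Galois hypothesis; the Five Lemma then forces $\Psi_{\matholdcal M}$ to be an isomorphism. A symmetric argument for the counit, now invoking the exactness of $(A',-):\mathfrak S\to\mathfrak S$ to construct the analogous resolution of an arbitrary $\matholdcal M'\in~_{\hspace{1em}A'}^{[C',-]}\mathfrak S(\psi')$ by objects $(A',C',\matholdcal N)$, shows that $\Phi_{\matholdcal M'}$ is an isomorphism.

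The hardest part will be constructing the cofree resolution itself and verifying that the maps remain morphisms in $~_{\hspace{1em}A}^{[C,-]}\mathfrak S(\psi)$. The intermediate object $(C,\matholdcal M)$ appearing in the bar coequalizer of $({_A\mathcal T},{_A\mathcal F})$ is not of the $(C,A,-)$ form, so it must be further resolved using the cofree $A$-module structure of Lemma \ref{L3.11}; the compatibility diagram \eqref{eq3.9} together with the entwining $\psi$ is what allows the resulting composite to descend to $\matholdcal M$ in the entwined contramodule category. Moreover, the two ``cofree'' constructions $(A,C,\matholdcal N)$ (via Lemma \ref{L3.9}) and $(C,A,\matholdcal N)$ (via Lemma \ref{L3.11}) produce distinct objects in $~_{\hspace{1em}A}^{[C,-]}\mathfrak S(\psi)$, and only the second matches the Galois hypothesis; tracking this asymmetry carefully, and invoking the exactness of $(A',-)$ precisely where needed on the primed side to avoid its unprimed counterpart, will be the key technical obstacle.
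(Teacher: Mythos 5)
Your overall strategy --- bootstrap from the Galois hypothesis on the doubly cofree objects to arbitrary objects using exactness of the two functors and a five-lemma comparison --- is the right one and matches the paper's. But the concrete mechanism you propose for the unit does not exist: there is no natural right exact sequence $(C,A,\matholdcal N_1)\longrightarrow (C,A,\matholdcal N_0)\longrightarrow \matholdcal M\longrightarrow 0$ of the kind you describe. In this contramodule setting the $A$-module structure is a map $\mu_{\matholdcal M}:\matholdcal M\longrightarrow (A,\matholdcal M)$, so the canonical presentation of an object of $_A\mathfrak S$ by cofree objects $(A,-)$ is a \emph{kernel} presentation $0\to\matholdcal M\to(A,\matholdcal M)\to(A,A,\matholdcal M)$; the map $(\eta,\matholdcal M):(A,\matholdcal M)\to\matholdcal M$ is not $A$-linear, and there is no canonical epimorphism in $_A\mathfrak S$ from a cofree object onto $\matholdcal M$. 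So ``resolving $\matholdcal M$ as an object of $_A\mathfrak S$ by cofree $A$-modules'' cannot produce the right exact sequence your five-lemma argument needs, and the same obstruction recurs on the primed side when you try to present $\matholdcal M'$ as a cokernel of maps between objects $(A',C',\matholdcal N)$. You flag this as the key technical obstacle but do not resolve it, and it is not resolvable in the form stated.

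The fix is to run the argument in two stages, using presentations of opposite variance in the two directions, which is what the paper does. For the unit: first apply the exact functor $(C,-)$ to the \emph{left} exact sequence $0\to\matholdcal P\to(A,\matholdcal P)\to(A,A,\matholdcal P)$ for $\matholdcal P\in{_A}\mathfrak S$, so that the Galois hypothesis at $(C,A,\matholdcal P)$ and $(C,A,A,\matholdcal P)$ forces $\Psi_{(C,\matholdcal P)}$ to be an isomorphism; then use the \emph{right} exact sequence $(C,C,\matholdcal M)\to(C,\matholdcal M)\to\matholdcal M\to 0$ coming from the contramodule structure map $\pi_{\matholdcal M}$, whose first two terms are of the form $(C,\matholdcal P)$, to conclude for $\Psi_{\matholdcal M}$. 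Dually for the counit: first use the cokernel presentation $(C',C',\matholdcal N')\to(C',\matholdcal N')\to\matholdcal N'\to 0$ in $^{[C',-]}\mathfrak S$ and apply the exact functor $(A',-)$ (this is exactly where hypothesis (ii) enters) to handle $\Phi_{(A',\matholdcal N')}$; then use the kernel presentation $0\to\matholdcal M'\to(A',\matholdcal M')\to(A',A',\matholdcal M')$ to handle $\Phi_{\matholdcal M'}$. Note also that both cofree forms are needed: the unit hypothesis of Definition \ref{D4.5} is stated at $(C,A,\matholdcal M)$ and the counit hypothesis at $(A',C',\matholdcal M)$, so your remark that only the Lemma \ref{L3.11}-type construction matches the Galois hypothesis is accurate only for the unit half of the argument.
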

	\begin{proof}
		We will show that the unit and the counit maps  for the adjoint pair $(\widetilde{Cohom_C}(C',-),\widetilde{Hom_{A'}}(A,-))$ are isomorphisms. For any $(\matholdcal{N'},\pi_{\matholdcal N'})\in~^{[C',-]}\mathfrak S$, we first show that $\Phi_{(A', \matholdcal{N'})}$ is an isomorphism in $_{\hspace{1em}A'}^{[C',-]}\mathfrak S(\psi')$. 
For this, we consider the following commutative diagram in $_{\hspace{1em}A'}^{[C',-]}\mathfrak S(\psi')$
		\begin{equation}\label{428d}
		\small
			\begin{tikzcd}[row sep=1.8em, column sep = 3em]
				\widetilde{Cohom_C}(C',\widetilde{Hom_{A'}}(A,(A',C',C',\matholdcal N'))) \arrow{r}{} \arrow{d}{ \Phi_{(A',C',C',\matholdcal N')} }
				& \widetilde{Cohom_C}(C',\widetilde{Hom_{A'}}(A,(A',C',\matholdcal N'))) \arrow{r}{}\arrow{d}{\Phi_{(A',C',\matholdcal N')}}&\widetilde{Cohom_C}(C',\widetilde{Hom_{A'}}(A,(A',\matholdcal N')))\arrow{d}{\Phi_{(A',\matholdcal N')}}\arrow{r}{}&0\\ 
				(A',C',C',\matholdcal N')\arrow{r}
				{(A',\tilde{\tau}_{\matholdcal N'})} &(A',C',\matholdcal N')\arrow{r}{(A',\pi_{\matholdcal N'})}&(A',\matholdcal N')\arrow{r}{}&0
			\end{tikzcd}
		\end{equation}
where $\tilde{\tau}_{\matholdcal N'}:(C',C',\matholdcal N')\xrightarrow{(\Delta', \matholdcal N')-(C', \pi_{\matholdcal N'})}(C', \matholdcal N')$ and the top row is obtained by applying
$\widetilde{Cohom_C}(C',\widetilde{Hom_{A'}}(A,-))$ to the bottom row. We first claim that both rows in the diagram (\ref{428d}) are exact. Since $(C',-)$ is exact,  $^{[C',-]}\mathfrak S$ is an abelian category with cokernels computed in $\mathfrak S$. Hence, $\matholdcal N'\in~^{[C',-]}\mathfrak S$ fits into the following exact sequence  in $^{[C',-]}\mathfrak S$. 
		\begin{equation}\label{429e}
			(C',C',\matholdcal N')\xrightarrow
			{\tilde{\tau}_{\matholdcal N'}} (C',\matholdcal N')\xrightarrow{\pi_{\matholdcal N'}}\matholdcal N'\longrightarrow 0
		\end{equation}
		By Theorem \ref{T3.13}, the category  $_{\hspace{1em}A'}^{[C',-]}\mathfrak S(\psi')$ is also abelian, and cokernels in $_{\hspace{1em}A'}^{[C',-]}\mathfrak S(\psi')$  are computed in $\mathfrak S$. Applying the exact functor $(A', -)$ to the sequence in (\ref{429e}) gives the following exact sequence  in $_{\hspace{1em}A'}^{[C',-]}\mathfrak S(\psi')$.
		 \begin{equation}\label{430e}
		 		(A',C',C',\matholdcal N')\xrightarrow
		 	{(A',\tilde{\tau}_{\matholdcal N'})} (A',C',\matholdcal N')\xrightarrow{(A',\pi_{\matholdcal N'})}(A',\matholdcal N')\longrightarrow 0
		 \end{equation} 
		 Hence, the bottom row in \eqref{428d} is exact. By assumption,  $\widetilde{Cohom_C}(C',-)$ and $\widetilde{Hom_{A'}}(A,-)$ are exact and it follows that the top row in 
		 \eqref{428d} is also exact.  
		 
		 \smallskip
		Since $(\alpha, \gamma)$ is an $\mathfrak S$-contra-Galois measuring, both $\Phi_{(A', C', \matholdcal N')}$ and $\Phi_{(A', C', C', \matholdcal N')} $ are  isomorphisms in $_{\hspace{1em}A'}^{[C',-]}\mathfrak S(\psi')$. From the diagram (\ref{428d}), it now follows that $\Phi_{(A',\matholdcal N')}$ is also an isomorphism in $_{\hspace{1em}A'}^{[C',-]}\mathfrak S(\psi')$. 
		
		\smallskip
		We now take $(\matholdcal M',\pi_{ \matholdcal M'},\mu_{ \matholdcal M'})\in~_{\hspace{1em}A'}^{[C',-]}\mathfrak S(\psi')$. To show that $\Phi_{\matholdcal M'}$ is a natural isomorphism, we consider the following commutative diagram in $_{\hspace{1em}A'}^{[C',-]}\mathfrak S(\psi')$
		\begin{equation}\label{432d}
		\small
			\begin{tikzcd}[row sep=1.8em, column sep = 3.8em]
				0\arrow{r}{}&\widetilde{Cohom_C}(C',\widetilde{Hom_{A'}}(A,\matholdcal M'))\arrow{r}{} \arrow{d}{ \Phi_{\matholdcal M'}}
				&\widetilde{Cohom_C}(C',\widetilde{Hom_{A'}}(A,(A',\matholdcal M'))) \arrow{r}{}\arrow{d}{\Phi_{(A',\matholdcal M')}}& \widetilde{Cohom_C}(C',\widetilde{Hom_{A'}}(A,(A',A',\matholdcal M'))) \arrow{d}{\Phi_{(A',A',\matholdcal M')}}\\ 
				0\arrow{r}{}&\matholdcal M' \arrow{r}
				{\mu_{\matholdcal M'}}&(A',\matholdcal M')\arrow{r}{(A',\mu_{\matholdcal M'})-(\mu',\matholdcal M')}&(A',A',\matholdcal M')
			\end{tikzcd}
		\end{equation}
where  the top row is obtained by applying
$\widetilde{Cohom_C}(C',\widetilde{Hom_{A'}}(A,-))$ to the bottom row.
		 Since $\matholdcal M'\in {_{A'}}\mathfrak S$, it can be described as the kernel of the morphism $(A',\mu_{\matholdcal M'})-(\mu',\matholdcal M')$ in $\mathfrak S$. Since  kernels in $_{\hspace{1em}A'}^{[C',-]}\mathfrak S(\psi')$ are computed in $\mathfrak S$, the bottom row in \eqref{432d} is exact.   By assumption,  $\widetilde{Cohom_C}(C',-)$ and $\widetilde{Hom_{A'}}(A,-)$ are both exact and it follows that the top row in 
		 \eqref{432d} is also exact.   
		 
		 \smallskip
		Since $\matholdcal M'$, $(A',\matholdcal M')\in  {^{[C',-]}\mathfrak S}$, it follows from the above that both $\Phi_{(A',\matholdcal M')}$ and $\Phi_{(A',A',\matholdcal M')}$ are  isomorphisms in $_{\hspace{1em}A'}^{[C',-]}\mathfrak S(\psi')$. From the diagram (\ref{432d}), it is now clear that  $\Phi_{\matholdcal M'}:\widetilde{Cohom_C}(C',\widetilde{Hom_{A'}}(A,\matholdcal M'))\longrightarrow \matholdcal M'$ is an  isomorphism in $_{\hspace{1em}A'}^{[C',-]}\mathfrak S(\psi')$. 
		  
		  \smallskip
		  %Likewise, given the functors $\widetilde{Hom_{A'}}(A,-)$, $\widetilde{Cohom_C}(C',-)$ and $(C,-)$ are exact, it can be shown that 
		  We now sketch the proof of the fact that for any $(\matholdcal M,\pi_{ \matholdcal M},\mu_{ \matholdcal M})\in~_{\hspace{1em}A}^{[C,-]}\mathfrak S(\psi)$, $\Psi_{\matholdcal M}$ is an  isomorphism in $_{\hspace{1em}A}^{[C,-]}\mathfrak S(\psi)$. Let $(\matholdcal P,\mu_{\matholdcal P})$ be any object in $_A\mathfrak S$. Since the functors 
		  $(C,-)$,  $\widetilde{Cohom_C}(C',-)$  and $\widetilde{Hom_{A'}}(A,-)$ are exact, we have the following commutative diagram with exact rows in $_{\hspace{1em}A}^{[C,-]}\mathfrak S(\psi)$
		  \begin{equation}\label{435d}
		  \small
		  		\begin{tikzcd}[row sep=1.8em, column sep = 2em]
		  		0\arrow{r}{}&(C,\matholdcal P)\arrow{r}
		  		{(C,\mu_{\matholdcal P})}\arrow{d}{ \Psi_{(C,\matholdcal P)} } &(C,A,\matholdcal P)\arrow{r}{(C,(A,\mu_{\matholdcal P})-(\mu,\matholdcal P))}\arrow{d}{\Psi_{(C,A,\matholdcal P)}}&(C,A,A,\matholdcal P)\arrow{d}{\Psi_{(C,A,A,\matholdcal P)}}\\
		  		0\arrow{r}{}&\widetilde{Hom_{A'}}(A,\widetilde{Cohom_C}(C',(C,\matholdcal P))) \arrow{r}{} 
		  		& \widetilde{Hom_{A'}}(A,\widetilde{Cohom_C}(C',(C,A,\matholdcal P))) \arrow{r}{}&\widetilde{Hom_{A'}}(A,\widetilde{Cohom_C}(C',(C,A,A,\matholdcal P)))
		  	\end{tikzcd}
		  \end{equation}
		  Since $(\alpha,\gamma)$ is an $\mathfrak S$-contra-Galois measuring, we see that $\Psi_{(C,A,\matholdcal P)}$ and $\Psi_{(C,A,A,\matholdcal P)}$  are both 
		  isomorphisms. Hence, it follows from \eqref{435d} that   $\Psi_{(C,\matholdcal P)}:(C,\matholdcal P)\longrightarrow \widetilde{Hom_{A'}}(A, \widetilde{Cohom_C}(C', (C,\matholdcal P)))$ is an  isomorphism in $_{\hspace{1em}A}^{[C,-]}\mathfrak S(\psi)$. In general, for any $\matholdcal M\in ~_{\hspace{1em}A}^{[C,-]}\mathfrak S(\psi)$,    we can  consider the following commutative diagram with exact rows
		  \begin{equation}\label{436d}
		  \small
		  		\begin{tikzcd}[row sep=1.8em, column sep = 3em]
		  		(C,C,\matholdcal M)\arrow{r}
		  		{(C,\pi_{\matholdcal M})-(\Delta,\matholdcal M)}\arrow{d}{ \Psi_{(C,C,\matholdcal M)} } &(C,\matholdcal M)\arrow{r}{\pi_{\matholdcal M}}\arrow{d}{\Psi_{(C,\matholdcal M)}}&\matholdcal M\arrow{r}{}\arrow{d}{\Psi_{\matholdcal M}}\arrow{r}{}&0\\
		  		\widetilde{Hom_{A'}}(A,\widetilde{Cohom_C}(C',(C,C,\matholdcal M))) \arrow{r}{} 
		  		& \widetilde{Hom_{A'}}(A,\widetilde{Cohom_C}(C',(C,\matholdcal M))) \arrow{r}{}&\widetilde{Hom_{A'}}(A,\widetilde{Cohom_C}(C',\matholdcal M))\arrow{r}{}&0
		  	\end{tikzcd}
		  \end{equation}
		  Since $\matholdcal M$, $(C,\matholdcal M)\in ~_A\mathfrak S$, it follows from the above that $\Psi_{(C,\matholdcal M)}$ and $\Psi_{(C,C,\matholdcal M)}$ are both isomorphisms in $_{\hspace{1em}A}^{[C,-]}\mathfrak S(\psi)$.  From \eqref{436d}, it is now clear that $\Psi_{\matholdcal M}$ is an isomorphism in $_{\hspace{1em}A}^{[C,-]}\mathfrak S(\psi)$. 
	\end{proof}
	\begin{Thm}\label{T4.7}
		Let $\mathfrak S$ be a $k$-linear Grothendieck category.	Let $(\alpha, \gamma)$ be a measuring from $(A',C',\psi')$ to $(A,C,\psi)$ and suppose that  the functor $(A',-):\mathfrak S\longrightarrow \mathfrak S$ is exact. Then, the following are equivalent:
		
		\smallskip
		(i) The functors $\widetilde{Cohom_C}(C',-)$ and $\widetilde{Hom_{A'}}(A,-)$ form an adjoint equivalence between the categories  $_{\hspace{1em}A}^{[C,-]}\mathfrak S(\psi)$ and  $_{\hspace{1em}A'}^{[C',-]}\mathfrak S(\psi')$.
		
		\smallskip
		(ii) $(\alpha, \gamma)$ is an $\mathfrak S$-contra-Galois measuring and the functors $\widetilde{Cohom_C}(C',-)$ and $\widetilde{Hom_{A'}}(A,-)$ are both exact.
		
		\smallskip
		(iii)  $(\alpha, \gamma)$ is an $\mathfrak S$-contra-Galois measuring, $\widetilde{Hom_{A'}}(A,-)$ is exact and $\widetilde{Cohom_C}(C',-)$ is faithfully exact.
		
		\smallskip
		(iv) $(\alpha, \gamma)$ is an $\mathfrak S$-contra-Galois measuring, $\widetilde{Cohom_C}(C',-)$ is exact and $\widetilde{Hom_{A'}}(A,-)$ is faithfully exact.
	\end{Thm}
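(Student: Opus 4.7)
The proof scheme I have in mind is: Proposition \ref{P4.6} already establishes the substantive implication (ii) $\Rightarrow$ (i), and the remaining implications are formal consequences of general properties of adjoint equivalences combined with the fact that ``faithfully exact'' includes ``exact.'' The plan is to establish (ii) $\Rightarrow$ (i) by direct appeal to Proposition \ref{P4.6}, then (i) $\Rightarrow$ (ii), (iii), (iv) via properties of equivalences, and finally (iii), (iv) $\Rightarrow$ (ii) by a tautological reduction.

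For (ii) $\Rightarrow$ (i), I will invoke Proposition \ref{P4.6} directly: the hypothesis that $(A',-)$ is exact is available from the theorem itself, and in (ii) the measuring $(\alpha,\gamma)$ is assumed $\mathfrak{S}$-contra-Galois while both $\widetilde{Cohom_C}(C',-)$ and $\widetilde{Hom_{A'}}(A,-)$ are assumed exact; so the hypotheses of Proposition \ref{P4.6} are met and the pair forms an adjoint equivalence.

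For (i) $\Rightarrow$ (ii), (iii), (iv), I will use that if the pair $(\widetilde{Cohom_C}(C',-), \widetilde{Hom_{A'}}(A,-))$ is an adjoint equivalence, then each of these functors is itself an equivalence between the abelian categories $_{\hspace{1em}A}^{[C,-]}\mathfrak S(\psi)$ and $_{\hspace{1em}A'}^{[C',-]}\mathfrak S(\psi')$. Equivalences of abelian categories are automatically exact, since they preserve all limits and colimits; and they are faithful, since they reflect isomorphisms. Hence both $\widetilde{Cohom_C}(C',-)$ and $\widetilde{Hom_{A'}}(A,-)$ are faithfully exact. Moreover, the unit $\Psi$ and the counit $\Phi$ associated to the equivalence are natural isomorphisms on all objects; in particular $\Psi_{(C,A,\matholdcal M)}$ and $\Phi_{(A',C',\matholdcal M)}$ are isomorphisms for each $\matholdcal M\in\mathfrak S$, which means $(\alpha,\gamma)$ is an $\mathfrak{S}$-contra-Galois measuring. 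This simultaneously gives (ii), (iii) and (iv).

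Finally, (iii) $\Rightarrow$ (ii) and (iv) $\Rightarrow$ (ii) are immediate, since any faithfully exact functor is in particular exact, so the hypotheses of (iii) and (iv) each contain those of (ii). Combined with Proposition \ref{P4.6}, this closes the implications (iii) $\Rightarrow$ (ii) $\Rightarrow$ (i) and (iv) $\Rightarrow$ (ii) $\Rightarrow$ (i). I do not anticipate a serious obstacle: the substantive work is packaged in Proposition \ref{P4.6}, and the only thing to double-check is that Definition \ref{D4.5} demands the unit and counit only at the specific objects $(C,A,\matholdcal M)$ and $(A',C',\matholdcal M)$, so it is trivially subsumed by $\Psi$ and $\Phi$ being natural isomorphisms on the entire category.
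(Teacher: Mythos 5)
Your proposal is correct and follows essentially the same route as the paper: (ii) $\Rightarrow$ (i) by citing Proposition \ref{P4.6}, (i) $\Rightarrow$ (ii),(iii),(iv) from the fact that an adjoint equivalence has invertible unit and counit and that both functors are then left and right adjoints (hence exact), and (iii),(iv) $\Rightarrow$ (ii) tautologically. The only point worth tightening is your jump from ``exact and reflects isomorphisms'' to ``faithfully exact'': the paper's definition requires reflecting exact sequences, which one gets by applying the exact quasi-inverse to the image sequence and using the naturality of the unit isomorphism $\Psi$ (resp.\ counit $\Phi$) --- exactly the short argument the paper writes out.
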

	\begin{proof}
		$(i) \Rightarrow (ii), (iii)~\textup{and}~(iv)$. We recall that a functor is said to be faithfully exact if it preserves and reflects exact sequences. Since the functors  $\widetilde{Cohom_C}(C',-)$ and $\widetilde{Hom_{A'}}(A,-)$ form an adjoint equivalence, it follows from \cite[p.~93]{Mac} that for each $(\matholdcal M,\pi_{ \matholdcal M},\mu_{ \matholdcal M}) \in~_{\hspace{1em}A}^{[C,-]}\mathfrak S(\psi)$, and $(\matholdcal M',\pi_{ \matholdcal M'},\mu_{ \matholdcal M'}) \in~_{\hspace{1em}A'}^{[C',-]}\mathfrak S(\psi')$, the unit $\Psi_{\matholdcal M}: \matholdcal M \longrightarrow \widetilde{Hom_{A'}}(A, \widetilde{Cohom_C}(C', \matholdcal M))$ and the counit $\Phi_{\matholdcal M'}: \widetilde{Cohom_C}(C', \widetilde{Hom_{A'}}(A, \matholdcal M')) \longrightarrow \matholdcal M'$  maps are natural isomorphisms. From Definition \ref{D4.5}, it is now clear that $(\alpha, \gamma)$ is an $\mathfrak S$-contra-Galois measuring.  
		
		\smallskip Furthermore,  since $(\widetilde{Cohom_C}(C',-),\widetilde{Hom_{A'}}(A,-))$ is an adjoint equivalence, we know that $(\widetilde{Hom_{A'}}(A,-), \widetilde{Cohom_C}(C',-))$ is also a pair of adjoint functors (see \cite[p93]{Mac}). Hence, both the functors $\widetilde{Cohom_C}(C',-)$ and $\widetilde{Hom_{A'}}(A,-)$ are exact. We now check that the functors $\widetilde{Cohom_C}(C',-)$ and $\widetilde{Hom_{A'}}(A,-)$ also reflect exact sequences. 
		For this, we first consider the sequence 
		\begin{equation}\label{435e}
			0\longrightarrow \matholdcal M_1\longrightarrow \matholdcal M_2\longrightarrow \matholdcal M_3\longrightarrow 0
		\end{equation}
		in $_{\hspace{1em}A}^{[C,-]}\mathfrak S(\psi)$ such that the following sequence 
		\begin{equation}\label{436e}
			0\longrightarrow\widetilde{Cohom_C}(C',\matholdcal M_1)\longrightarrow \widetilde{Cohom_C}(C',\matholdcal M_2)\longrightarrow \widetilde{Cohom_C}(C',\matholdcal M_3)\longrightarrow 0
		\end{equation}
		is exact in $_{\hspace{1em}A'}^{[C',-]}\mathfrak S(\psi')$. Since the functor $\widetilde{Hom_{A'}}(A,-):_{\hspace{1em}A'}^{[C',-]}\mathfrak S(\psi')\longrightarrow ~_{\hspace{1em}A}^{[C,-]}\mathfrak S(\psi)$ is exact, the following sequence
		\begin{align}
				0\longrightarrow \widetilde{Hom_{A'}}(A,\widetilde{Cohom_C}(C',\matholdcal M_1))\longrightarrow  \widetilde{Hom_{A'}}(A,\widetilde{Cohom_C}(C',\matholdcal M_2))\longrightarrow  \widetilde{Hom_{A'}}(A,\widetilde{Cohom_C}(C',\matholdcal M_3))\longrightarrow 0	
		\end{align}
is exact in $_{\hspace{1em}A}^{[C,-]}\mathfrak S(\psi)$. As $\Psi_{\matholdcal M}: \matholdcal M \longrightarrow \widetilde{Hom_{A'}}(A, \widetilde{Cohom_C}(C', \matholdcal M))$ is an isomorphism for each $(\matholdcal M,\pi_{ \matholdcal M},\mu_{ \matholdcal M}) \in~_{\hspace{1em}A}^{[C,-]}\mathfrak S(\psi)$, the sequence in (\ref{435e}) must be exact. It follows that the functor $\widetilde{Cohom_C}(C',-):~_{\hspace{1em}A}^{[C,-]}\mathfrak S(\psi)\longrightarrow _{\hspace{1em}A'}^{[C',-]}\mathfrak S(\psi')$ is faithfully exact. Similarly, it can be verified that the functor  $\widetilde{Hom_{A'}}(A,-):~_{\hspace{1em}A'}^{[C',-]}\mathfrak S(\psi')\longrightarrow ~_{\hspace{1em}A}^{[C,-]}\mathfrak S(\psi)$  is faithfully exact.
	
		\smallskip
		$(ii) \Rightarrow (i)$. This follows from Proposition \ref{P4.6}.
		
		\smallskip
		$(iii)\Rightarrow (ii)$, $(iv)\Rightarrow (ii)$. This is clear, as every faithfully exact functor is exact.

	\end{proof}
	\begin{eg}\label{Example4.8}
\emph{Let $C$ be a $k$-coalgebra and let $A$ be a $k$-algebra. Suppose that $A$  is also a right $C$-comodule with structure map $\Delta_A:A\longrightarrow A\otimes C$. Let $B\overset{\iota_{B}}{\hookrightarrow} A$ be the subalgebra of $A$  given by $B:=\{b\in A~|~\forall a\in A, \Delta_A(ba)=b\Delta_A(a)\}.$ We recall from \cite[Definition 2.1]{Tb} that $A$ is said to be a $C$-Galois extension if and only if the canonical left $A$-module and right $C$-comodule map }
\begin{equation} can: A\otimes_B A\xrightarrow{A\otimes_B \Delta_A} A\otimes_B A\otimes C\xrightarrow{\mu\otimes C} A\otimes C
\end{equation} \emph{is bijective. Let $(A,C,\psi)$ be the entwining structure   canonically associated to the $C$-Galois extension (see \cite[Theorem 2.7]{Tb99} for details).   From \cite[\S 3]{Tb}, we know that $(\alpha,\gamma)=(\iota_B,\Delta_A\circ \eta)$ is a measuring from $(B,k,id_B)$ to $(A,C,\psi)$. We will show that if $A$ is a $C$-Galois extension, then $(\alpha,\gamma)$ is an $\mathfrak S$-contra-Galois measuring.  From 
\cite[Example 3.9]{Tb}, we  know that }
\begin{align}  A_0&:=Ker\left(A\xrightarrow{\quad\Delta_A-(\mu \otimes C)(A\otimes \Delta_A)(A\otimes \eta)\quad}A\otimes C\right)=B  \label{4.8dtu}\\
 (A\otimes C)_0&:=Ker\left(A\otimes C\xrightarrow{\quad(A\otimes \Delta)-(((\mu\otimes C)(A\otimes \psi))\otimes C)(A\otimes C\otimes \Delta_A)(A\otimes C\otimes \eta)\quad}A\otimes C\otimes C\right)=A  \label{4.8ud} 
\end{align}
\emph{It may be verified directly that the functor $\widetilde{Hom}_B(A,-):{_B\mathfrak S}\longrightarrow _{\hspace{1em}A}^{[C,-]}\mathfrak S(\psi)$ has a left adjoint given by}
\begin{equation}
\matholdcal N\mapsto \matholdcal N^0:=Coker\left( (C,\matholdcal N)\xrightarrow{\qquad\pi_{\matholdcal N}-(\eta,\matholdcal N)(\Delta_A,\matholdcal N)(C,\mu_{\matholdcal N})\qquad}\matholdcal N\right)
\end{equation}
\emph{for each $(\matholdcal N,\pi_{\matholdcal N},\mu_{\matholdcal N})\in~_{\hspace{1em}A}^{[C,-]}\mathfrak S(\psi)$. By the uniqueness of adjoints, it now follows from 
Proposition \ref{P4.4} that $\widetilde{Cohom_C}(k,\matholdcal N)=\matholdcal N^0$. We now take $\matholdcal M\in \mathfrak S$. Then, $(B,\matholdcal M)\in {_B}\mathfrak S$ and it may be verified that $\widetilde{Hom}_B(A,(B,\matholdcal M))\cong (A,\matholdcal M)$. Accordingly, we see that}
\begin{equation}\label{4.41ce}
\begin{array}{ll}
\widetilde{Cohom_C}(k,\widetilde{Hom}_B(A,(B,\matholdcal M)))&\cong (A,\matholdcal M)^0\\
&=Coker\left((A\otimes C,{\matholdcal M})= (C,A,\matholdcal M)\underset{=(\Delta_A,{\matholdcal M})-(A\otimes \eta,{\matholdcal M})(A\otimes \Delta_A,{\matholdcal M})(\mu\otimes C,{\matholdcal M})}{\xrightarrow{\qquad(\Delta_A,{\matholdcal M})-(\eta,A,\matholdcal M)(\Delta_A,A,\matholdcal M)(C,\mu,{\matholdcal M})\qquad}}(A,\matholdcal M)\right)\\
&=\left(Ker\left(A\xrightarrow{\quad\Delta_A-(\mu \otimes C)(A\otimes \Delta_A)(A\otimes \eta)\quad}A\otimes C\right),\matholdcal M\right)=(A_0,\matholdcal M)\cong (B,\matholdcal M)\\
\end{array} 
\end{equation} \emph{It follows from \eqref{4.41ce} that the counit map $\Phi_{(B,\matholdcal M)}$ induced by the adjunction is an isomorphism. Similarly, for $\matholdcal M\in \mathfrak S$, we know that $(C,A,
\matholdcal M)\in ~_{\hspace{1em}A}^{[C,-]}\mathfrak S(\psi)$. Accordingly, we have}
\begin{equation}\label{442dh}
\begin{array}{l}
\widetilde{Cohom_C}(k,(C,A,
\matholdcal M))=(C,A,
\matholdcal M)^0\\
=Coker\left( (C,C,A,
\matholdcal M)\xrightarrow{\qquad(\Delta,A,
\matholdcal M)-(\eta,C,A,
\matholdcal M)(\Delta_A,C,A,
\matholdcal M)(C,\psi,A,
\matholdcal M)(C,C,\mu,\matholdcal M)\qquad}(C,A,
\matholdcal M)\right)\\
=Coker\left( (A\otimes C\otimes C,
\matholdcal M)\xrightarrow{\qquad(A\otimes \Delta,
\matholdcal M)-(A\otimes C\otimes \eta,
\matholdcal M)(A\otimes C\otimes \Delta_A,
\matholdcal M)(A\otimes \psi\otimes C, 
\matholdcal M)(\mu\otimes C\otimes C,\matholdcal M)\qquad}(A\otimes C,
\matholdcal M)\right)\\
\cong \left(Ker\left(A\otimes C\xrightarrow{\quad(A\otimes \Delta)-(((\mu\otimes C)(A\otimes \psi))\otimes C)(A\otimes C\otimes \Delta_A)(A\otimes C\otimes \eta)\quad}A\otimes C\otimes C\right),\matholdcal M\right)\cong ((A\otimes C)_0,\matholdcal M)\cong (A,\matholdcal M)\\
\end{array}
\end{equation} \emph{Frpm \eqref{442dh}, it follows that}
\begin{equation}\label{443xk}
\widetilde{Hom}_B(A,\widetilde{Cohom_C}(k,(C,A,
\matholdcal M)))\cong \widetilde{Hom}_B(A,(A,\matholdcal M))\cong (A\otimes_BA,\matholdcal M)\underset{\cong}{\xrightarrow{(can^{-1},\matholdcal M)}}(A\otimes C,\matholdcal M)\cong (C,A,
\matholdcal M)
\end{equation}  \emph{It follows from \eqref{443xk} that the unit map $\Psi_{(C,A,
\matholdcal M)}$ induced by the adjunction is an isomorphism.}
\end{eg}  
	\subsection{$\mathfrak S$-Galois measurings  and entwined comodule objects}
	We continue with $k$-algebras $A$, $A'$ and $k$-coalgebras $C$, $C'$. Let $(A,C,\psi)$ and $(A',C',\psi')$ be   entwining structures.  
	\begin{thm}\label{P4.8}
		Let $(\alpha, \gamma)$ be a measuring from $(A',C',\psi')$ to $(A,C,\psi)$. Then, we have the following statements.
		
		\smallskip
		(i) Let $(\mathcal M,\mu_{\mathcal M})\in \mathfrak S_A$. Then, $\mathcal M\otimes C'\in \mathfrak S_{A'}^{C'}(\psi')$ with structure maps
		\begin{equation}\label{441lk}
		\begin{array}{c} 
		\Delta_{\mathcal M\otimes C'}:\mathcal M\otimes C'\xrightarrow{\mathcal M\otimes \Delta'}\mathcal M\otimes C'\otimes C'\\
		\mu_{\mathcal M\otimes C'}: \mathcal M\otimes C'\otimes A'\xrightarrow{\mathcal M \otimes \Delta'\otimes A'}\mathcal M \otimes C'\otimes C'\otimes A'\xrightarrow{\mathcal M\otimes C'\otimes \psi'}\mathcal M\otimes C'\otimes A'\otimes C'\xrightarrow{\mathcal M\otimes \alpha \otimes C'}\mathcal M\otimes A\otimes C'\xrightarrow{\mu_{\mathcal M}\otimes C'}\mathcal M\otimes C'\\
		\end{array}
		\end{equation} 
		
		\smallskip
		(ii) Let $(\mathcal M',\Delta_{\mathcal M'})\in \mathfrak S^{C'}$. Then, $\mathcal M'\otimes A\in \mathfrak S_A^C(\psi)$ with structure maps
		\begin{equation}\label{442lk}
		\begin{array}{c}
		\mu_{\mathcal M'\otimes A}:\mathcal M'\otimes A\otimes A\xrightarrow{\mathcal M'\otimes \mu}\mathcal M'\otimes A\\
		\Delta_{\mathcal M'\otimes A}:\mathcal M'\otimes A\xrightarrow{\Delta_{\mathcal M'}\otimes A} \mathcal M'\otimes C'\otimes A\xrightarrow{\mathcal M'\otimes\gamma \otimes A}\mathcal M'\otimes A\otimes C \otimes A\xrightarrow{\mathcal M'\otimes A \otimes \psi} \mathcal M'\otimes A\otimes A\otimes C\xrightarrow{\mathcal M'\otimes \mu\otimes C}\mathcal M'\otimes A\otimes C
		\end{array}
		\end{equation}

	\end{thm}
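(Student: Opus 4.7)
My approach is to verify directly that the maps in \eqref{441lk} and \eqref{442lk} satisfy the three conditions in Definition \ref{D2.3}, mirroring the argument used in Proposition \ref{P4.2} for the contramodule case. Since the comodule and contramodule settings are formally dual under the hom-tensor adjunction, most of the work reduces to diagram chases with the measuring axioms \eqref{eq4.1}, \eqref{eq4.2} together with the entwining axioms \eqref{eq2.1}, \eqref{eq2.2}. Observe that neither half of the statement involves both $\alpha$ and $\gamma$ simultaneously in its structure maps, so the mixed compatibility \eqref{eq4.3} is not needed here (it will enter only when the resulting adjunction is studied in the sequel).

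For part (i), I first note that $(\mathcal M\otimes C', \mathcal M\otimes \Delta')$ is trivially a right $C'$-comodule object by coassociativity and counitality of $\Delta'$. Unitality of $\mu_{\mathcal M\otimes C'}$ then follows from $\psi'\circ (C'\otimes \eta') = \eta'\otimes C'$ and $\alpha\circ (C'\otimes \eta')=\eta\circ\epsilon'$, combined with counitality of $\Delta'$ and unitality of $\mu_{\mathcal M}$. Associativity of $\mu_{\mathcal M\otimes C'}$ is the main computation: I would expand both $\mu_{\mathcal M\otimes C'}\circ (\mu_{\mathcal M\otimes C'}\otimes A')$ and $\mu_{\mathcal M\otimes C'}\circ(\mathcal M\otimes C'\otimes \mu')$ and reduce them to a common form by applying the multiplicativity condition on $\alpha$ in \eqref{eq4.1} (which supplies the extra $\Delta'$, $\psi'$ and $\mu$ factors needed to match) together with the $\psi'$-pentagon from \eqref{eq2.1} and coassociativity of $\Delta'$. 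For the entwining diagram \eqref{Etm}, the two sides differ by whether the outer $\mathcal M\otimes \Delta'$ is applied before or after the twist $\mathcal M\otimes C'\otimes \psi'$; the coentwining axiom \eqref{eq2.2} for $\psi'$ combined with coassociativity of $\Delta'$ makes them coincide.

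Part (ii) is handled by the dual argument: $\mathcal M'\otimes A$ is manifestly a right $A$-module via $\mathcal M'\otimes \mu$, while counitality and coassociativity of $\Delta_{\mathcal M'\otimes A}$ are established using $(A\otimes \epsilon)\circ \gamma = \eta\circ\epsilon'$ and the comultiplicativity of $\gamma$ from \eqref{eq4.2}, together with \eqref{eq2.2} for $\psi$. The entwining compatibility \eqref{Etm} is then verified using \eqref{eq2.1} for $\psi$ to propagate $\mu$ through the structure. I expect the main obstacle in both parts to be precisely the entwining compatibility \eqref{Etm}, where two comultiplicative factors ($\Delta'$ applied twice in (i), and $\Delta_{\mathcal M'}$ with $\gamma$ in (ii)) must be coordinated with the twist map; however, once the diagrams are drawn in close analogy with \eqref{vidd2} and \eqref{416tf} from the proof of Proposition \ref{P4.2}, the required reductions are mechanical.
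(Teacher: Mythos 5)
Your proposal is correct and follows essentially the same route as the paper, which simply defers to the direct verification in Brzezi\'nski's Proposition 3.3 of \cite{Tb}: one checks the comodule, module, and entwining conditions of Definition \ref{D2.3} by diagram chases using \eqref{eq4.1}, \eqref{eq4.2} together with \eqref{eq2.1}, \eqref{eq2.2}, exactly as you outline (and your observation that \eqref{eq4.3} is not needed at this stage is accurate). No gaps.
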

	\begin{proof}
	The proof of this is similar to that of \cite[Proposition 3.3]{Tb}. 
	\end{proof}

	\begin{lem}\label{L4.9}
		Let $(\alpha, \gamma)$ be a measuring from $(A',C',\psi')$ to $(A,C,\psi)$. Let   $(\mathcal M,\Delta_{\mathcal M},\mu_{\mathcal M})\in \mathfrak S_A^C(\psi)$ and $(\mathcal M',\Delta_{\mathcal M'},\mu_{\mathcal M'})\in\mathfrak S_{A'}^{C'}(\psi')$. Then,  we have the following statements.
		
		\smallskip
		(i)  Let $t^{\mathcal M}:\mathcal M\otimes C'\longrightarrow \mathcal M\otimes C\otimes C'$ be the morphism given by setting 
		\begin{equation}\label{4411x} t^{\mathcal M}:=(\Delta_{\mathcal M}\otimes C')-(\mu_{\mathcal M}\otimes C\otimes C')\circ (\mathcal M\otimes\gamma\otimes C')\circ(\mathcal M\otimes \Delta')
		\end{equation} Then, $t^{\mathcal M}$ is a morphism in $\mathfrak S_{A'}^{C'}(\psi')$, where $\mathcal M\otimes C'$ and $\mathcal M\otimes C\otimes C'$ are treated as the objects in $\mathfrak S_{A'}^{C'}(\psi')$.

		\smallskip
		(ii) Let $t_{\mathcal M'}:\mathcal M'\otimes A'\otimes A\longrightarrow \mathcal M'\otimes A$ be a morphism given by setting 
		\begin{equation}\label{4412x} t_{\mathcal M'}:=(\mu_{\mathcal M'}\otimes A) -(\mathcal M'\otimes \mu)\circ (\mathcal M'\otimes\alpha\otimes A)\circ(\Delta_{\mathcal M'}\otimes A'\otimes A)
		\end{equation} Then, $t_{\mathcal M'}$ is a morphism in $\mathfrak S_A^C(\psi)$, where $\mathcal M'\otimes A$ and $\mathcal M'\otimes A'\otimes A$ are viewed as the objects in $\mathfrak S_A^C(\psi)$.

	\end{lem}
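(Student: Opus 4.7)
The plan is to verify, for part (i), that $t^{\mathcal M}$ is both $C'$-colinear and $A'$-linear, where the structures on $\mathcal M\otimes C'$ and $\mathcal M\otimes C\otimes C'$ as objects of $\mathfrak S_{A'}^{C'}(\psi')$ come from Proposition \ref{P4.8}(i): on $\mathcal M\otimes C'$ using the $A$-module structure of $\mathcal M$, and on $\mathcal M\otimes C\otimes C'$ using the $A$-module structure of $\mathcal M\otimes C$ given by $(\mu_{\mathcal M}\otimes C)\circ(\mathcal M\otimes\psi)$ (which is available because $\mathcal M\in\mathfrak S_A^C(\psi)$, as noted after Definition \ref{D2.3}).

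For $C'$-colinearity, the right coactions on $\mathcal M\otimes C'$ and $\mathcal M\otimes C\otimes C'$ are simply $\mathcal M\otimes\Delta'$ and $\mathcal M\otimes C\otimes\Delta'$ respectively, acting only on the trailing $C'$ factor. Since neither $\Delta_{\mathcal M}:\mathcal M\longrightarrow\mathcal M\otimes C$ nor $\gamma:C'\longrightarrow A\otimes C$ touches the trailing $C'$, both summands of $t^{\mathcal M}$ commute with $\Delta'$ by coassociativity and naturality. This step is essentially bookkeeping.

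The main work is in $A'$-linearity, that is, the identity $\mu_{\mathcal M\otimes C\otimes C'}\circ(t^{\mathcal M}\otimes A')=t^{\mathcal M}\circ\mu_{\mathcal M\otimes C'}$, checked summand by summand. For the $\Delta_{\mathcal M}\otimes C'$ summand, the needed equality reduces to the entwined comodule condition \eqref{Etm} on $\mathcal M$, which says that $\Delta_{\mathcal M}:\mathcal M\longrightarrow \mathcal M\otimes C$ is $A$-linear with respect to $(\mu_{\mathcal M}\otimes C)\circ(\mathcal M\otimes\psi)$; once this is fed into the definitions of $\mu_{\mathcal M\otimes C'}$ and $\mu_{\mathcal M\otimes C\otimes C'}$ from \eqref{441lk}, the coassociativity of $\Delta'$ finishes the term. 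For the second summand $(\mu_{\mathcal M}\otimes C\otimes C')\circ(\mathcal M\otimes\gamma\otimes C')\circ(\mathcal M\otimes\Delta')$, one expands both sides; the $\alpha$ coming from $\mu_{\mathcal M\otimes C'}$ and the $\gamma$ from $t^{\mathcal M}$ meet in the configuration of \eqref{eq4.3}, which lets one replace $(\mu\otimes C)\circ(\alpha\otimes\gamma)\circ(C'\otimes\psi')\circ(\Delta'\otimes A')$ by $(\mu\otimes C)\circ(A\otimes\psi)\circ(\gamma\otimes\alpha)\circ(\Delta'\otimes A')$. After this substitution, one uses associativity of $\mu$ on $\mathcal M$ and one further application of coassociativity of $\Delta'$ to match the two sides. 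This step --- arranging the many tensor factors so that \eqref{eq4.3} becomes directly applicable --- is the main obstacle, and I would manage it by writing the two compositions as diagrams in the spirit of \eqref{416tf} rather than as linear strings of symbols.

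Part (ii) is proved in a fully dual manner: $\mathcal M'\otimes A$ and $\mathcal M'\otimes A'\otimes A$ are objects of $\mathfrak S_A^C(\psi)$ by Proposition \ref{P4.8}(ii), applied to $\mathcal M'$ and to $\mathcal M'\otimes A'$ (the latter being a $C'$-comodule via $(\mathcal M'\otimes\psi')\circ(\Delta_{\mathcal M'}\otimes A')$, as in Lemma \ref{L2.4} for $(A',C',\psi')$). Then $A$-linearity of $t_{\mathcal M'}$ is immediate from associativity of $\mu$, while $C$-colinearity reduces to the entwined comodule condition on $\mathcal M'$ together with the same measuring identity \eqref{eq4.3} used in part (i), applied in its dual form. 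Since the construction is completely symmetric with part (i), I would simply say that the argument is dual rather than reproduce the diagram chase.
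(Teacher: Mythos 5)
Your proposal is correct and follows essentially the same route as the paper, which simply defers to the analogous computation in Brzezi\'nski's \cite[Proposition 3.5]{Tb}: the $C'$-colinearity (resp.\ $A$-linearity in part (ii)) is routine bookkeeping, and the substantive check is the $A'$-linearity (resp.\ $C$-colinearity), handled summand by summand via the entwined compatibility condition \eqref{Etm} for one summand and the measuring identity \eqref{eq4.3} for the other, with part (ii) dual. Your identification of where \eqref{Etm} and \eqref{eq4.3} enter matches the paper's (unpublished, fuller) computation exactly.
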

	\begin{proof}
	The proof of this is similar to that of \cite[Proposition 3.5]{Tb}. 
	\end{proof}

	Let $(\alpha, \gamma)$ be a measuring from $(A',C',\psi')$ to $(A,C,\psi)$. We now define a pair of adjoint functors between the categories of entwined comodule objects in $\mathfrak S$ over $(A,C,\psi)$ and $(A',C',\psi')$. For any $\mathcal M\in \mathfrak S_A^C(\psi)$, it follows by Proposition \ref{P4.8} and Lemma \ref{L4.9} that  $\mathcal M\otimes C'$ and $\mathcal M\otimes C\otimes C'$ are objects in $\mathfrak S_{A'}^{C'}(\psi')$ and that $t^{\mathcal M}$ is a morphism in $\mathfrak S_{A'}^{C'}(\psi')$. Then, we set $\mathcal M \hat{\square}_C C'\subseteq \mathcal M\otimes C'$ in $\mathfrak S_{A'}^{C'}(\psi')$ to be the kernel
	\begin{equation}\label{442x}
		0\longrightarrow \mathcal M \hat{\square}_C C'\xrightarrow {\iota^{\mathcal M}} \mathcal M\otimes C'\xrightarrow{t^{\mathcal M}} \mathcal M\otimes C\otimes C'
	\end{equation}
	which induces a functor \begin{equation}\label{4425x} -\hat{\square}_C C':\mathfrak S_A^C(\psi)\longrightarrow \mathfrak S_{A'}^{C'}(\psi'),\qquad \mathcal M\mapsto \mathcal M \hat{\square}_C C'
	\end{equation} On the other hand, for any $\mathcal M'\in \mathfrak S_{A'}^{C'}(\psi')$, it follows  from Proposition \ref{P4.8} and Lemma \ref{L4.9} that $\mathcal M'\otimes A$ and $\mathcal M'\otimes A'\otimes A$ are objects in $\mathfrak S_A^C(\psi)$ and $t_{\mathcal M'}$ is a morphism in $\mathfrak S_A^C(\psi)$. Then, we set $\mathcal M'\hat{\otimes}_{A'} A\in \mathfrak S_A^C(\psi)$ to be the cokernel
	\begin{equation}\label{443x}
		\mathcal M'\otimes  A'\otimes A\xrightarrow{t_{\mathcal M'}} \mathcal M'\otimes A\xrightarrow{p_{\mathcal M'}} \mathcal M'\hat{\otimes}_{A'} A\longrightarrow 0
	\end{equation}
	This induces a functor 
	\begin{equation}\label{444x}
		-\hat{\otimes}_{A'}A:\mathfrak S_{A'}^{C'}(\psi')\longrightarrow\mathfrak S_A^C(\psi),\qquad \mathcal M'\mapsto \mathcal M'\hat{\otimes}_{A'} A
	\end{equation}
	\begin{thm}\label{P4.10}
		Let $(\alpha, \gamma)$ be a measuring from $(A',C',\psi')$ to $(A,C,\psi)$ and let $\mathfrak S$ be a $k$-linear Grothendieck category. Then, the functor $-\hat{\otimes}_{A'}A:\mathfrak S_{A'}^{C'}(\psi')\longrightarrow\mathfrak S_A^C(\psi)$ is   left adjoint to $-\hat{\square}_C C':\mathfrak S_A^C(\psi)\longrightarrow \mathfrak S_{A'}^{C'}(\psi')$. In other words,  for any $(\mathcal M,\Delta_{\mathcal M},\mu_{\mathcal M})\in\mathfrak S_A^C(\psi)$ and $(\mathcal M',\Delta_{\mathcal M'},\mu_{\mathcal M'})\in \mathfrak S_{A'}^{C'}(\psi')$ we have natural isomorphisms
		\begin{equation}\label{445fc}
			\mathfrak S_A^C(\mathcal M'\hat{\otimes}_{A'}A, \mathcal M)\cong \mathfrak S_{A'}^{C'}(\psi')(\mathcal M', \mathcal M\hat{\square}_CC')
		\end{equation}
		
	\end{thm}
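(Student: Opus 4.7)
The plan is to mirror Proposition~\ref{P4.4} in the dual comodule setting, constructing explicit unit/counit transformations and then invoking universal properties to obtain the bijection in \eqref{445fc}. Given a morphism $\zeta:\mathcal M'\hat{\otimes}_{A'}A\longrightarrow \mathcal M$ in $\mathfrak S_A^C(\psi)$, I would first form the composition
\begin{equation*}
\tilde{\xi}:\mathcal M'\xrightarrow{\Delta_{\mathcal M'}}\mathcal M'\otimes C'\xrightarrow{\mathcal M'\otimes \eta\otimes C'}\mathcal M'\otimes A\otimes C'\xrightarrow{p_{\mathcal M'}\otimes C'}(\mathcal M'\hat{\otimes}_{A'}A)\otimes C'\xrightarrow{\zeta\otimes C'}\mathcal M\otimes C'
\end{equation*}
and show that $t^{\mathcal M}\circ\tilde{\xi}=0$, so that by \eqref{442x} it descends to a morphism $\xi:\mathcal M'\longrightarrow \mathcal M\hat{\square}_C C'$. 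Conversely, given $\xi:\mathcal M'\longrightarrow \mathcal M\hat{\square}_C C'$ in $\mathfrak S_{A'}^{C'}(\psi')$, the composition
\begin{equation*}
\tilde\zeta:\mathcal M'\otimes A\xrightarrow{\xi\otimes A}(\mathcal M\hat{\square}_C C')\otimes A\xrightarrow{\iota^{\mathcal M}\otimes A}\mathcal M\otimes C'\otimes A\xrightarrow{\mathcal M\otimes \alpha}\mathcal M\otimes A\xrightarrow{\mu_{\mathcal M}}\mathcal M
\end{equation*}
is shown to satisfy $\tilde\zeta\circ t_{\mathcal M'}=0$, so by \eqref{443x} it descends to $\zeta:\mathcal M'\hat{\otimes}_{A'}A\longrightarrow \mathcal M$.

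The factorization $t^{\mathcal M}\circ \tilde\xi=0$ follows from writing out $t^{\mathcal M}$ as in \eqref{4411x}, substituting the defining relation of $\mathcal M'\hat{\otimes}_{A'}A$ as the cokernel \eqref{443x}, and then using the entwined comodule axiom \eqref{Etm} for $\mathcal M$ together with the counit identity $(\mathcal M\otimes \epsilon)\circ \Delta_{\mathcal M}=\mathrm{id}_{\mathcal M}$ to cancel the two summands of $t^{\mathcal M}$. The dual vanishing $\tilde\zeta\circ t_{\mathcal M'}=0$ is verified analogously using \eqref{4412x}, the axiom for $\mathcal M'$, and the kernel relation \eqref{442x}. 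That $\xi$ is $C'$-colinear comes from coassociativity of $\Delta_{\mathcal M'}$, and that $\zeta$ is $A$-linear comes from the associativity of $\mu_{\mathcal M}$ combined with the equality $\mathcal M\otimes A\mu=\mu\circ (\alpha\otimes A)$ implicit in \eqref{eq4.1}.

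The main obstacle will be verifying that $\xi$ is $A'$-linear (equivalently, that $\zeta$ is $C$-colinear). This is the step where the measuring condition \eqref{eq4.3} becomes essential, exactly as the analogous step in Proposition~\ref{P4.4} relied on the diagram \eqref{eq4.6}. Concretely, to check $\xi\circ \mu_{\mathcal M'}=\mu_{\mathcal M\hat\square_C C'}\circ(\xi\otimes A')$, one expands both sides using \eqref{441lk} and the formula for $\tilde\xi$, and then reduces everything to the equality
\begin{equation*}
(\mu\otimes C)\circ(\alpha\otimes\gamma)\circ(C'\otimes\psi')\circ(\Delta'\otimes A')=(\mu\otimes C)\circ(A\otimes\psi)\circ(\gamma\otimes\alpha)\circ(\Delta'\otimes A'),
\end{equation*}
which is precisely \eqref{eq4.3}. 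The dual calculation for $\zeta$ proceeds symmetrically. Once these compatibilities are established, a routine diagram chase, using the counit $(\mathcal M'\otimes\epsilon')\circ\Delta_{\mathcal M'}=\mathrm{id}$ and the unit relation $\mu_{\mathcal M}\circ(\mathcal M\otimes \eta)=\mathrm{id}$, shows that the two associations $\zeta\mapsto\xi$ and $\xi\mapsto\zeta$ are mutually inverse and natural in $\mathcal M$ and $\mathcal M'$, completing the proof.
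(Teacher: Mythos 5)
Your overall strategy coincides with the paper's: the same unit map $\mathcal M'\rightarrow\mathcal M\otimes C'$ built from $\Delta_{\mathcal M'}$, $\mathcal M'\otimes\eta\otimes C'$, $p_{\mathcal M'}\otimes C'$ and $\zeta\otimes C'$, the same factorization arguments through $Ker(t^{\mathcal M})$ and $Coker(t_{\mathcal M'})$, and the same reliance on Brzezi\'nski-style diagram chases. However, your formula for the backward association contains a concrete error: the arrow you label $\mathcal M\otimes\alpha:\mathcal M\otimes C'\otimes A\longrightarrow\mathcal M\otimes A$ does not typecheck, since $\alpha$ has domain $C'\otimes A'$, not $C'\otimes A$. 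The correct third arrow is $\mathcal M\otimes\epsilon'\otimes A$, i.e.\ the counit of $C'$ applied to the middle factor; the composite $\mu_{\mathcal M}\circ(\mathcal M\otimes\epsilon'\otimes A)\circ(\iota^{\mathcal M}\otimes A)\circ(\xi\otimes A)$ is the standard counit of this tensor/cotensor adjunction and is what the paper uses in \eqref{446yx}. With $\alpha$ in that slot the two associations would not be mutually inverse: the verification that $\xi\mapsto\zeta\mapsto\xi$ is the identity rests precisely on $(\mathcal M'\otimes\epsilon')\circ\Delta_{\mathcal M'}=\mathrm{id}$ and $\mu_{\mathcal M}\circ(\mathcal M\otimes\eta)=\mathrm{id}$, which your substitute destroys. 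Relatedly, the identity you invoke for the $A$-linearity of $\zeta$, written as $\mathcal M\otimes A\mu=\mu\circ(\alpha\otimes A)$, is not an identity appearing in, or implied by, \eqref{eq4.1}.

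A secondary point: your accounting of where the measuring axioms enter is somewhat off. The compatibility \eqref{eq4.3} is what makes $t^{\mathcal M}$ and $t_{\mathcal M'}$ morphisms in the respective entwined categories, which is the content of Lemma \ref{L4.9} and is already assumed before the adjunction argument begins. In the adjunction proof itself, the $A'$-linearity of $\xi$ rests on the unit condition $\alpha\circ(C'\otimes\eta')=\eta\circ\epsilon'$ from \eqref{eq4.1}, while the vanishing $t^{\mathcal M}\circ\xi=0$ needs the counit condition $(A\otimes\epsilon)\circ\gamma=\eta\circ\epsilon'$ from \eqref{eq4.2} in addition to the entwined comodule axiom \eqref{Etm}. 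None of this changes the architecture of your argument, which matches the paper's, but as written the proof would not close without correcting the counit map.
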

	\begin{proof} We outline the proof of this result, which is similar to that of \cite[Proposition 3.6]{Tb}. Let $\zeta:\mathcal M'\hat{\otimes}_{A'}A\longrightarrow\mathcal M$ be a morphism in $\mathfrak S_A^C(\psi)$. We define a morphism  $\xi:\mathcal M'\longrightarrow \mathcal M\otimes C'$ by
		\begin{equation}\label{445yx}
			\xi:\mathcal M'\xrightarrow{\Delta_{\mathcal M'}} \mathcal M'\otimes C'\xrightarrow{\mathcal M'\otimes\eta\otimes C'}\mathcal M'\otimes A\otimes C'\xrightarrow{p_{\mathcal M'}\otimes C'}\mathcal M'\hat{\otimes}_{A'}A\otimes C'\xrightarrow{\zeta\otimes C'}\mathcal M\otimes C'
		\end{equation} Then, it may be shown that  $\xi$ is a morphism in $\mathfrak S_{A'}^{C'}(\psi')$ and that 
		$t^{\mathcal M}\circ \xi=0$, i.e., $Im(\xi)\subseteq Ker(t^{\mathcal M})= \mathcal M\hat{\square}_C C'$ in $\mathfrak S_{A'}^{C'}(\psi')$. 
		
		\smallskip Conversely, let $\xi:\mathcal M'\longrightarrow \mathcal M\hat{\square}_CC'$ be a morphism in $\mathfrak S_{A'}^{C'}(\psi')$. Then, we can define a morphism $\zeta :\mathcal M'\otimes A\longrightarrow \mathcal M$ by
		\begin{equation}\label{446yx}
			\zeta:\mathcal M'\otimes A\xrightarrow{\xi\otimes A}\mathcal M\hat{\square}_CC'\otimes A\xrightarrow{\iota^{\mathcal M}\otimes A}\mathcal M\otimes C'\otimes A\xrightarrow{\mathcal M\otimes\epsilon'\otimes A}\mathcal M\otimes A\xrightarrow{\mu_{\mathcal M}}\mathcal M
		\end{equation}
		Then, we can show that $\zeta\circ t_{\mathcal M'}=0$, whence it follows that $\zeta$ factors through $\mathcal M'\hat{\otimes}_{A'}A=Coker(t_{\mathcal M'})$. Further, it may be verified that these two associations are inverse to each other.  This proves the result. 
	\end{proof}

	We will now introduce $\mathfrak S$-co-Galois measurings, which we will use to study conditions for the pair $(-\hat{\otimes}_{A'}A,-\hat{\square}_C C')$ 
	of functors in 
	Proposition \ref{P4.10}  to be an adjoint equivalence of categories. First, we note that for any  $(\mathcal M',\Delta_{\mathcal M'},\mu_{\mathcal M'})\in\mathfrak S_{A'}^{C'}(\psi')$, 
	the unit $\Omega_{\mathcal M'}:\mathcal M'\longrightarrow (\mathcal M'\hat{\otimes}_{A'}A)\hat{\square}_{C}C'$  of the adjunction in \eqref{445fc} is given by 	
	\begin{equation}\label{450h}
		\Omega_{\mathcal M'}:\mathcal M'\xrightarrow{\Delta_{\mathcal M'}}\mathcal M'\otimes C'\xrightarrow{\mathcal M'\otimes \eta\otimes C'}\mathcal M'\otimes A\otimes C'\xrightarrow{p_{\mathcal M'}\otimes C'}(\mathcal M'\hat{\otimes}_{A'}A)\hat{\square}_{C}C',
	\end{equation}  By abuse of notation, we have written the last map  in \eqref{450h} as $p_{\mathcal M'}\otimes C'$, since we know from the definition in \eqref{445yx} that the composition 
	in \eqref{450h} factors through the kernel $ (\mathcal M'\hat{\otimes}_{A'}A)\hat{\square}_{C}C'$.  Similarly, for any  $(\mathcal M,\Delta_{\mathcal M},\mu_{\mathcal M})\in\mathfrak S_A^C(\psi)$, the counit $\Upsilon_{\mathcal M}:(\mathcal M\hat{\square}_{C}C')\hat{\otimes}_{A'}A\longrightarrow \mathcal M$  of the adjunction in \eqref{445fc} is given by 
	\begin{equation}\label{451h}
			\Upsilon_{\mathcal M}:(\mathcal M\hat{\square}_CC')\hat\otimes_{A'} A\xrightarrow{\iota^{\mathcal M}\otimes A}\mathcal M\otimes C'\otimes A\xrightarrow{\mathcal M\otimes\epsilon'\otimes A}\mathcal M\otimes A\xrightarrow{\mu_{\mathcal M}}\mathcal M
		\end{equation} Again by abuse of notation, we have written the first map in \eqref{451h} as $\iota^{\mathcal M}\otimes A$, since we know from the definition 
		in \eqref{446yx} that the composition in \eqref{451h} factors through the cokernel $(\mathcal M\hat{\square}_{C}C')\hat{\otimes}_{A'}A$.

	\begin{defn}\label{D4.11}
		Let $\mathfrak S$ be a $k$-linear Grothendieck category and let $(\alpha, \gamma)$ be a measuring from $(A',C',\psi')$ to $(A,C,\psi)$. Then, we say that $(\alpha, \gamma)$ is an $\mathfrak S$-co-Galois measuring if for every $\mathcal M\in\mathfrak S$, the unit $\Omega_{\mathcal M\otimes C'\otimes A'}$ and counit $\Upsilon_{\mathcal M\otimes A\otimes C}$ are isomorphisms.
	\end{defn}
	\begin{thm}\label{P4.12}
		Let $\mathfrak S$ be a $k$-linear Grothendieck category. Let $(\alpha, \gamma)$ be an $\mathfrak S$-co-Galois measuring from $(A',C',\psi')$ to $(A,C,\psi)$ and suppose that
		the functors $-\hat{\otimes}_{A'}A:\mathfrak S_{A'}^{C'}(\psi')\longrightarrow\mathfrak S_A^C(\psi)$ and $-\hat{\square}_C C':\mathfrak S_A^C(\psi)\longrightarrow \mathfrak S_{A'}^{C'}(\psi')$ are exact. Then, the pair $(-\hat{\otimes}_{A'}A,-\hat{\square}_C C')$  of functors  forms an adjoint equivalence of categories.
	\end{thm}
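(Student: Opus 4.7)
The strategy mirrors the proof of Proposition~\ref{P4.6}: we show that the counit $\Upsilon_{\mathcal M}$ is an isomorphism in $\mathfrak S_A^C(\psi)$ for every $\mathcal M\in\mathfrak S_A^C(\psi)$, and the argument for the unit $\Omega_{\mathcal M'}$ is dual. The whole proof is driven by the naturality of $\Upsilon$ and the exactness hypotheses on $-\hat\otimes_{A'}A$ and $-\hat\square_CC'$, and proceeds by two successive reductions, each accomplished by a five-lemma chase.

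In the first reduction, we handle objects of the form $\mathcal N\otimes A$ with $\mathcal N\in\mathfrak S^C$. Any such $\mathcal N$ sits at the left of the canonical cobar-type exact sequence
\[
0\longrightarrow \mathcal N\xrightarrow{\Delta_{\mathcal N}}\mathcal N\otimes C\xrightarrow{\Delta_{\mathcal N}\otimes C-\mathcal N\otimes\Delta}\mathcal N\otimes C\otimes C
\]
in $\mathfrak S^C$. Applying the exact functor $-\otimes A\colon\mathfrak S^C\to\mathfrak S_A^C(\psi)$ of Lemma~\ref{L2.4} produces an exact sequence in $\mathfrak S_A^C(\psi)$ whose second and third terms are ``cofree-then-free'' objects. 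By the $\mathfrak S$-co-Galois hypothesis of Definition~\ref{D4.11}, after the required identification with the shape $\mathcal M\otimes A\otimes C$, the counit $\Upsilon$ is an isomorphism at these two terms. Applying the exact composite $(-\hat\square_CC')\hat\otimes_{A'}A$ and chasing the resulting commutative ladder then produces the isomorphism $\Upsilon_{\mathcal N\otimes A}$.

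For the second reduction, take an arbitrary $\mathcal M\in \mathfrak S_A^C(\psi)$ and exhibit it as the cokernel of its own module structure:
\[
\mathcal M\otimes A\otimes A\xrightarrow{(\mu_{\mathcal M}\otimes A)-(\mathcal M\otimes\mu)}\mathcal M\otimes A\xrightarrow{\mu_{\mathcal M}}\mathcal M\longrightarrow 0,
\]
which is exact in $\mathfrak S_A^C(\psi)$. Since $\mathcal M$ and $\mathcal M\otimes A$ both lie in $\mathfrak S^C$ via the forgetful functor $\mathcal F^C$, the first reduction forces $\Upsilon$ to be an isomorphism at $\mathcal M\otimes A$ (taking $\mathcal N=\mathcal M$) and at $\mathcal M\otimes A\otimes A$ (taking $\mathcal N=\mathcal M\otimes A$). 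A final five-lemma chase on the displayed presentation then gives $\Upsilon_{\mathcal M}$ as an isomorphism. The unit $\Omega_{\mathcal M'}$ is treated dually, using the bar-type presentation of $\mathcal M'$ by its $A'$-module structure and exchanging kernels with cokernels throughout.

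The main delicate point, concentrated in the first reduction, is verifying that the ``cofree-then-free'' triple tensor products $\mathcal N\otimes C\otimes A$ and $\mathcal N\otimes C\otimes C\otimes A$, equipped with the $\mathfrak S_A^C(\psi)$-structure supplied by Lemma~\ref{L2.4}, can be identified with objects of the form $\mathcal M\otimes A\otimes C$ used in Definition~\ref{D4.11}. This identification is provided by the entwining $\psi$ together with the compatibility between the Lemma~\ref{L2.4} and Proposition~\ref{P4.8}(i) constructions; once it is in hand, the remainder of the argument is a routine five-lemma computation.
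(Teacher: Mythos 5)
Your overall architecture --- two successive reductions, each settled by a five-lemma chase on a ladder obtained by applying the exact composite $(-\hat{\square}_CC')\hat{\otimes}_{A'}A$ to a canonical (co)presentation --- is exactly the paper's, and your second reduction (presenting an arbitrary $\mathcal M\in\mathfrak S_A^C(\psi)$ as the cokernel of $(\mu_{\mathcal M}\otimes A)-(\mathcal M\otimes \mu)$) is sound as far as it goes. The problem is your first reduction, and it is precisely the point you flag as ``delicate.'' The objects your cobar copresentation produces are $(\mathcal N\otimes C)\otimes A$ and $(\mathcal N\otimes C\otimes C)\otimes A$, i.e.\ the image under $\mathcal T^C=-\otimes A$ of a \emph{cofree comodule}: the $A$-action is $\mathcal N\otimes C\otimes\mu$ and the coaction is $(\mathcal N\otimes C\otimes\psi)\circ(\mathcal N\otimes\Delta\otimes A)$. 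Definition \ref{D4.11}, by contrast, only asserts that $\Upsilon$ is an isomorphism on objects $\mathcal M\otimes A\otimes C$, i.e.\ on $(\mathcal M\otimes A)\otimes C$ with $\mathcal M\otimes A$ the free $A$-module object, where the $A$-action is $(\mathcal M\otimes\mu\otimes C)\circ(\mathcal M\otimes A\otimes\psi)$ and the coaction is $\mathcal M\otimes A\otimes\Delta$. The only candidate for your ``required identification'' is $\mathcal M\otimes\psi:\mathcal M\otimes C\otimes A\longrightarrow\mathcal M\otimes A\otimes C$, which is indeed a morphism in $\mathfrak S_A^C(\psi)$ by the entwining axioms, but an entwining map $\psi$ is \emph{not} assumed bijective (Definition \ref{D2.2}), so ``free on cofree'' and ``cofree on free'' are genuinely non-isomorphic objects in general. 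Proposition \ref{P4.8}(i), which you invoke, concerns the $\mathfrak S_{A'}^{C'}(\psi')$-structure induced by the measuring on $\mathcal M\otimes C'$ for $\mathcal M\in\mathfrak S_A$; it supplies no such identification. So the base case of your induction is not covered by the hypothesis, and since your second reduction feeds on the first, the whole counit argument collapses.

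The fix is to transpose the order of your two resolutions, which is what the paper does. For the counit, first take $\mathcal N\in\mathfrak S_A$ and apply $-\otimes C$ to the right-exact module presentation
\begin{equation*}
\mathcal N\otimes A\otimes A\xrightarrow{(\mu_{\mathcal N}\otimes A)-(\mathcal N\otimes\mu)}\mathcal N\otimes A\xrightarrow{\mu_{\mathcal N}}\mathcal N\longrightarrow 0;
\end{equation*}
the two left-hand terms become $\mathcal N\otimes A\otimes C$ and $\mathcal N\otimes A\otimes A\otimes C$, which are \emph{literally} of the shape $\mathcal M\otimes A\otimes C$ covered by Definition \ref{D4.11} (with $\mathcal M=\mathcal N$ and $\mathcal M=\mathcal N\otimes A$), no identification needed; the five lemma then gives $\Upsilon_{\mathcal N\otimes C}$. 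Only afterwards does one use the left-exact comodule copresentation $0\to\mathcal M\to\mathcal M\otimes C\to\mathcal M\otimes C\otimes C$ of a general $\mathcal M\in\mathfrak S_A^C(\psi)$, whose outer terms $(\mathcal M)\otimes C$ and $(\mathcal M\otimes C)\otimes C$ are handled by the first step because $\mathcal M$ and $\mathcal M\otimes C$ lie in $\mathfrak S_A$. The unit is treated by the mirror-image scheme, where the roles are reversed: there the comodule copresentation comes first, since Definition \ref{D4.11} covers $\Omega$ on objects $\mathcal M\otimes C'\otimes A'$, i.e.\ cofree-then-free.
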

	\begin{proof}
		 Let $(\mathcal N,\mu_{\mathcal N})$ be an object in $\mathfrak S_A$. We first claim that $\Upsilon_{\mathcal N\otimes C}$ is a natural isomorphism in $\mathfrak S_A^C(\psi)$. Since $\mathfrak S_A$ is an abelian category with cokernels computed in $\mathfrak S$, we see that  the following sequence is exact in $\mathfrak S_A$. 
		\begin{equation}\label{453e1}
\mathcal N\otimes A\otimes A\xrightarrow{(\mu_{\mathcal  N}\otimes A)-(\mathcal N\otimes \mu)}\mathcal N\otimes A\xrightarrow{\mu_{\mathcal N}}\mathcal N\longrightarrow 0			
		\end{equation}
Further, we know  that $\mathfrak S_A^C(\psi)$ is an abelian category with kernels and cokernels computed in $\mathfrak S$. Applying the exact functor $-\otimes C$ to the sequence in \eqref{453e1} gives the following exact sequence 
		\begin{equation}\label{453e2}
			\mathcal N\otimes A\otimes A\otimes C\xrightarrow{((\mu_{\mathcal  N}\otimes A) -(\mathcal N\otimes \mu))\otimes C}\mathcal N\otimes A\otimes C\xrightarrow{(\mu_{\mathcal N}\otimes C)}\mathcal N\otimes C\longrightarrow 0
		\end{equation}
		in $\mathfrak S_A^C(\psi)$. By assumption, the functors $-\hat{\otimes}_{A'}A$ and  $-\hat{\square}_C C'$ are exact. Applying these to  the sequence in \eqref{453e2}, we obtain the following  commutative diagram with exact rows in $\mathfrak S_A^C(\psi)$
		\begin{equation}\label{eq4.35}
			\begin{tikzcd}[row sep=1.8em, column sep = 3.8em]
				((\mathcal N\otimes A \otimes A\otimes  C)\hat{\square}_{C}C')\hat{\otimes}_{A'} A \arrow{r}{} \arrow{d}{ \Upsilon_{\mathcal N\otimes A\otimes A\otimes C} }
				& ((\mathcal N\otimes A\otimes C)\hat{\square}_{C}C')\hat{\otimes}_{A'} A \arrow{r}{}\arrow{d}{\Upsilon_{\mathcal N\otimes A\otimes C}}&((\mathcal N\otimes C)\hat{\square}_{C}C')\hat{\otimes}_{A'}A\arrow{d}{\Upsilon_{\mathcal N\otimes C}}\arrow{r}{}&0\\ 
				\mathcal N\otimes A\otimes A\otimes C\arrow{r}
				{(\mu_{\mathcal  N}\otimes A -\mathcal N\otimes \mu)\otimes C} &\mathcal N\otimes  A\otimes C\arrow{r}{\mu_{\mathcal N}\otimes C}&\mathcal N\otimes C\arrow{r}{}&0
			\end{tikzcd}
		\end{equation}
	 Given that $(\alpha, \gamma)$ is an $\mathfrak S$-co-Galois measuring, both $\Upsilon_{\mathcal N\otimes A\otimes C}$ and $\Upsilon_{\mathcal N\otimes A\otimes A\otimes C}$ are  isomorphisms in $\mathfrak S_A^C(\psi)$. Therefore, it follows from the diagram in \eqref{eq4.35} that $\Upsilon_{\mathcal N\otimes C}$ is an isomorphism in $\mathfrak S_A^C(\psi)$.
		
		\smallskip
		We now take any object $(\mathcal M,\Delta_{\mathcal M},\mu_{\mathcal M})\in \mathfrak S_A^C(\psi)$.  Since $\mathfrak S_A^C(\psi)$ is an abelian category with kernels computed in $\mathfrak S$, the following sequence 
		\begin{equation}\label{457e1}
			0\longrightarrow \mathcal M\xrightarrow{\Delta_{\mathcal M}}\mathcal M\otimes C\xrightarrow{(\Delta_{\mathcal M}\otimes C)-(\mathcal M\otimes \Delta)}\mathcal M\otimes C\otimes C
		\end{equation}
		is exact in $\mathfrak S_A^C(\psi)$. Applying the exact functors  $-\hat{\square}_C C'$ and $-\hat{\otimes}_{A'}A$ to the sequence in \eqref{457e1} gives the following  commutative diagram with exact rows in $\mathfrak S_A^C(\psi)$
		\begin{equation}\label{eq4.36}
			\begin{tikzcd}[row sep=1.8em, column sep = 3.8em]
				0\arrow{r}{}&(\mathcal M\hat{\square}_{C}C')\hat{\otimes}_{A'}A \arrow{r}{} \arrow{d}{ \Upsilon_{\mathcal M}}
				&((\mathcal M\otimes C)\hat{\square}_{C}C')\hat{\otimes}_{A'}A \arrow{r}{}\arrow{d}{\Upsilon_{\mathcal M\otimes C}}& ((\mathcal M\otimes C\otimes  C)\hat{\square}_{C}C')\hat{\otimes}_{A'}A \arrow{d}{\Upsilon_{\mathcal M\otimes C\otimes C}}\\ 
				0\arrow{r}{}&\mathcal M \arrow{r}
				{\Delta_{\mathcal M}} &\mathcal M\otimes C\arrow{r}{(\Delta_{\mathcal M}\otimes C)-(\mathcal M\otimes \Delta)}&\mathcal M\otimes C\otimes C
			\end{tikzcd}
		\end{equation}
	 Since $\mathcal M$, $\mathcal M\otimes C\in \mathfrak S_A$,  it follows from the above that both $\Upsilon_{\mathcal M\otimes C}$ and $\Upsilon_{\mathcal M\otimes C\otimes C}$ are isomorphisms in $\mathfrak S_A^C(\psi)$. It follows from \eqref{eq4.36} that $\Upsilon_{\mathcal M}$ is an  isomorphism in $\mathfrak S_A^C(\psi)$.

	 \smallskip
	 We now outline the proof of the fact  that if the functors $-\hat{\square}_CC'$ and $-\hat{\otimes}_{A'}A$ are exact, then for any $\mathcal M'\in\mathfrak S_{A'}^{C'}(\psi')$, the unit $\Omega_{\mathcal M'}$ is an isomorphism in $\mathfrak S_{A'}^{C'}(\psi')$. For this, we first consider any object $(\mathcal P',\Delta_{\mathcal P'})\in \mathfrak S^{C'}$. As the functors $-\hat{\otimes}_{A'}A$ and $-\hat{\square}_CC'$ are exact, we have the following commutative diagram with exact rows in $\mathfrak S_{A'}^{C'}(\psi')$
	 \begin{equation}\label{460d1}
	 	\begin{tikzcd}
	 		0\arrow{r}{}&\mathcal P'\otimes A'\arrow{r}{\Delta_{\mathcal P'}\otimes A'}\arrow{d}{\Omega_{\mathcal P'\otimes A'}}&\mathcal P'\otimes C'\otimes  A'\arrow{r}{((\Delta_{\mathcal P'}\otimes C')-(\mathcal P'\otimes \Delta'))\otimes A'}\arrow{d}{\Omega_{\mathcal P'\otimes C'\otimes A'}}&\mathcal P'\otimes C'\otimes C'\otimes A'\arrow{d}{\Omega_{\mathcal P'\otimes C'\otimes C'\otimes A'}}\\
	 		0\arrow{r}{}&((\mathcal P'\otimes A')\hat{\otimes}_{A'}A)\hat{\square}_{C}C'\arrow{r}{}&((\mathcal P'\otimes C'\otimes  A')\hat{\otimes}_{A'}A)\hat{\square}_{C}C'\arrow{r}{}&((\mathcal P'\otimes C'\otimes C'\otimes  A')\hat{\otimes}_{A'}A)\hat{\square}_{C}C'
	 	\end{tikzcd}
	 \end{equation}
	 Since $(\alpha,\gamma)$ is an $\mathfrak S$-co-Galois measuring, both $\Omega_{\mathcal P'\otimes C'\otimes A'}$ and $\Omega_{\mathcal P'\otimes C'\otimes C'\otimes A'}$ are isomorphisms in $\mathfrak S_{A'}^{C'}(\psi')$. It follows from the diagram in \eqref{460d1} that $\Omega_{\mathcal P'\otimes A'}$ is an  isomorphism in $\mathfrak S_{A'}^{C'}(\psi')$. We now consider any object $(\mathcal M',\Delta_{\mathcal M'},\mu_{\mathcal M'})\in \mathfrak S_{A'}^{C'}(\psi')$. Since the functors $-\hat{\otimes}_{A'}A$ and $-\hat{\square}_CC'$ are exact, we obtain the following commutative diagram with exact rows in $\mathfrak S_{A'}^{C'}(\psi')$
	 \begin{equation}\label{460d2}
	 	\begin{tikzcd}
	 		\mathcal M'\otimes A'\otimes A'\arrow{r}{(\mu_{\mathcal M'}\otimes A'-\mathcal M'\otimes \mu')}\arrow{d}{\Omega_{\mathcal M'\otimes A'\otimes A'}}&\mathcal M'\otimes A'\arrow{r}{\mu_{\mathcal M'}}\arrow{d}{\Omega_{\mathcal M'\otimes A'}}&\mathcal M'\arrow{d}{\Omega_{\mathcal M'}}\arrow{r}{}&0\\
	 	((\mathcal M'\otimes A'\otimes A')\hat{\otimes}_{A'}A)\hat{\square}_{C}C'\arrow{r}{}&((\mathcal M'\otimes A')\hat{\otimes}_{A'}A)\hat{\square}_{C}C'\arrow{r}{}&(\mathcal M'\hat{\otimes}_{A'}A)\hat{\square}_{C}C'\arrow{r}{}&0
	 	\end{tikzcd}
	 \end{equation}
	 Since $\mathcal M'$, $\mathcal M'\otimes A'\in \mathfrak S^{C'}$, it follows from the above that $\Omega_{\mathcal M'\otimes A'}$ and $\Omega_{\mathcal M'\otimes A'\otimes A'}$ are isomorphisms. It now follows from the diagram \eqref{460d2} that $\Omega_{\mathcal M'}:\mathcal M'\longrightarrow (\mathcal M'\hat{\otimes}_{A'}A)\hat{\square}_{C}C'$ is an isomorphism in $\mathfrak S_{A'}^{C'}(\psi')$.
	\end{proof}
	\begin{Thm}\label{T4.13}
	Let $\mathfrak S$ be a $k$-linear Grothendieck category.	Let $(\alpha, \gamma)$ be a measuring from $(A',C',\psi')$ to $(A,C,\psi)$. Then, the following are equivalent:
		
		\smallskip
		(i) The functors $-\hat{\otimes}_{A'}A$ and $-\hat{\square}_C C'$ form an adjoint equivalence of categories between $\mathfrak S_{A}^{C}(\psi)$ and $\mathfrak S_{A'}^{C'}(\psi')$.
		
		\smallskip
		(ii) $(\alpha, \gamma)$ is an $\mathfrak S$-co-Galois measuring and the functors $-\hat{\otimes}_{A'}A$ and $-\hat{\square}_C C'$ are exact.
		
		\smallskip
		(iii)  $(\alpha, \gamma)$ is an $\mathfrak S$-co-Galois measuring, the functor $-\hat{\square}_C C'$ is exact and the functor $-\hat{\otimes}_{A'}A$ is faithfully exact.
		
		\smallskip
		(iv) $(\alpha, \gamma)$ is an $\mathfrak S$-co-Galois measuring, the functor  $-\hat{\otimes}_{A'}A$ is exact and the functor $-\hat{\square}_C C'$ is faithfully exact.
	\end{Thm}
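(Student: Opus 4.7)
The plan is to follow exactly the pattern used in the proof of Theorem \ref{T4.7}, leveraging Proposition \ref{P4.12} as the hard implication.

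First I would treat the implication $(i)\Rightarrow(ii),(iii),(iv)$. If $(-\hat{\otimes}_{A'}A,-\hat{\square}_C C')$ is an adjoint equivalence, then the unit $\Omega_{\mathcal M'}$ and counit $\Upsilon_{\mathcal M}$ are natural isomorphisms for every object in the respective categories. Specialising to $\mathcal M'=\mathcal M\otimes C'\otimes A'$ and $\mathcal M=\mathcal M\otimes A\otimes C$ as in Definition \ref{D4.11}, we immediately deduce that $(\alpha,\gamma)$ is an $\mathfrak S$-co-Galois measuring. Moreover, for an adjoint equivalence, the opposite pair $(-\hat{\square}_C C',-\hat{\otimes}_{A'}A)$ is also an adjoint pair (see \cite[p.~93]{Mac}), so both functors are simultaneously left and right adjoints. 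In particular, both $-\hat{\otimes}_{A'}A$ and $-\hat{\square}_C C'$ are exact. Since each is an equivalence, each preserves and reflects exact sequences: for a sequence $0\to\mathcal M_1\to \mathcal M_2\to \mathcal M_3\to 0$ in $\mathfrak S_A^C(\psi)$ such that $0\to \mathcal M_1\hat{\square}_C C'\to \mathcal M_2\hat{\square}_C C'\to \mathcal M_3\hat{\square}_C C'\to 0$ is exact in $\mathfrak S_{A'}^{C'}(\psi')$, applying the exact functor $-\hat{\otimes}_{A'}A$ and using the natural isomorphism $\Upsilon$ recovers the original sequence, which must therefore be exact. The same argument with the roles reversed shows $-\hat{\otimes}_{A'}A$ is faithfully exact. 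This establishes $(ii)$, $(iii)$ and $(iv)$ simultaneously.

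Next, $(ii)\Rightarrow(i)$ is exactly the content of Proposition \ref{P4.12}: the co-Galois hypothesis together with exactness of both functors forces $\Omega_{\mathcal M'}$ and $\Upsilon_{\mathcal M}$ to be isomorphisms for every $\mathcal M'\in\mathfrak S_{A'}^{C'}(\psi')$ and every $\mathcal M\in \mathfrak S_A^C(\psi)$, which is the definition of an adjoint equivalence.

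Finally, the implications $(iii)\Rightarrow(ii)$ and $(iv)\Rightarrow(ii)$ are immediate, since a faithfully exact functor is in particular exact. The main (and only) non-formal step is $(ii)\Rightarrow(i)$, and this is already carried out in Proposition \ref{P4.12} by the standard five-lemma-style comparison: one resolves an arbitrary $\mathcal M\in \mathfrak S_A^C(\psi)$ (resp.\ $\mathcal M'\in\mathfrak S_{A'}^{C'}(\psi')$) by objects of the form $\mathcal N\otimes A\otimes C$ (resp.\ $\mathcal P'\otimes C'\otimes A'$) coming from $\mathfrak S_A$ and $\mathfrak S^{C'}$, respectively, and propagates the isomorphism from the special case guaranteed by the co-Galois condition via exactness of the two functors. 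Hence no obstacle remains beyond what has been handled, and the four conditions are equivalent.
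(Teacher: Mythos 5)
Your proposal matches the paper's own proof essentially step for step: $(i)\Rightarrow(ii),(iii),(iv)$ via the unit/counit being isomorphisms plus the standard argument that an equivalence is faithfully exact (the paper defers this to the analogous computation in Theorem \ref{T4.7}), $(ii)\Rightarrow(i)$ by invoking Proposition \ref{P4.12}, and the remaining implications because faithfully exact implies exact. No gaps; this is the same route the paper takes.
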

	\begin{proof}
		%We will prove this in a manner similar to \cite[Theorem 3.10]{Tb}. 
		$(i) \Rightarrow (ii), (iii)~\textup{and}~(iv)$. Since the functors $-\hat{\otimes}_{A'}A$ and $-\hat{\square}_C C'$ form an adjoint equivalence, the corresponding unit and counit maps are always isomorphisms. By Definition \ref{D4.11}, it is now clear that $(\alpha, \gamma)$ is an $\mathfrak S$-co-Galois measuring. Again since $(-\hat{\otimes}_{A'}A,-\hat{\square}_C C')$ is an adjoint equivalence, we know that $(-\hat{\square}_C C',-\hat{\otimes}_{A'}A)$ is also an adjoint pair. Hence, the functors $-\hat{\otimes}_{A'}A$ and $-\hat{\square}_C C'$ are both exact. Just as in the proof of Theorem 
\ref{T4.7}, it now follows that   the functors $-\hat{\otimes}_{A'}A$ and $-\hat{\square}_C C'$ reflect exact sequences, i.e., they are faithfully exact.

		\smallskip
		$(ii) \Rightarrow (i).$ This follows from Proposition \ref{P4.12}.
		
		\smallskip
		$(iii) \Rightarrow (ii)$, $(iv) \Rightarrow (ii)$. Since every faithfully exact functor is exact, this is clear. 

	\end{proof}
	\begin{eg} 

\emph{Let $A$ be a $C$-Galois extension as in Example \ref{Example4.8}. Then,  $(A,\Delta_A:A\longrightarrow A\otimes C)$  is a right $C$-comodule  and the  canonical map}
\begin{equation} can: A\otimes_B A\xrightarrow{A\otimes_B \Delta_A} A\otimes_B A\otimes C\xrightarrow{\mu\otimes C} A\otimes C
\end{equation} \emph{is bijective, where $B\overset{\iota_{B}}{\hookrightarrow} A$ is the subalgebra of $A$  given by $B:=\{b\in A~|~\forall a\in A, \Delta_A(ba)=b\Delta_A(a)\}.$  Let $(A,C,\psi)$ be the entwining structure   canonically associated to the $C$-Galois extension  and let $(\alpha,\gamma)=(\iota_B,\Delta_A\circ \eta)$ be the measuring from $(B,k,id_B)$ to $(A,C,\psi)$ as in Example \ref{Example4.8}. Then, it may be verified directly 
that the functor $-\hat{\otimes}_BA=-\otimes_BA:\mathfrak S_B\longrightarrow \mathfrak S^C_A(\psi)$ has a right adjoint that takes} 
\begin{equation}
\mathcal N\mapsto \mathcal N_0:=Ker\left(\mathcal N\xrightarrow{\quad\Delta_{\mathcal N}-(\mu_{\mathcal N}\otimes C)(\mathcal N\otimes \Delta_A)(\mathcal N\otimes \eta)\quad}\mathcal N\otimes C\right)
\end{equation}\emph{for each $(\mathcal N,\Delta_{\mathcal N},\mu_{\mathcal N})\in \mathfrak S^C_A(\psi)$. Using the uniqueness of adjoints, it now follows from Proposition \ref{P4.10} that $\mathcal N\hat{\square}_Ck=\mathcal N_0$ for each  $(\mathcal N,\Delta_{\mathcal N},\mu_{\mathcal N})\in \mathfrak S^C_A(\psi)$.   We now consider an object $\mathcal M\in\mathfrak S$. Then, $\mathcal M\otimes B\otimes k\in \mathfrak S_B.$ From 
\cite[Example 3.9]{Tb}, we  know that }
\begin{equation}\label{4.73m} A_0:=Ker\left(A\xrightarrow{\quad\Delta_A-(\mu \otimes C)(A\otimes \Delta_A)(A\otimes \eta)\quad}A\otimes C\right)=B
\end{equation} \emph{It now follows that }
		\begin{equation}
			 ((\mathcal M\otimes B)\hat{\otimes}_B A)\hat{\square}_C k 
= ((\mathcal M\otimes B){\otimes}_B A)\hat{\square}_C k = (\mathcal M\otimes  A)_0=\mathcal M\otimes A_0\cong \mathcal M\otimes B
		\end{equation}\emph{and hence the map $\Omega_{\mathcal M\otimes B}$ induced by the unit of the adjunction
	is an isomorphism. Further, if $\mathcal M\in \mathfrak S$, we have $\mathcal M\otimes A\otimes C\in \mathfrak S^C_A(\psi)$.  From 
\cite[Example 3.9]{Tb}, we also know that } 
\begin{equation}\label{4.75m}
 (A\otimes C)_0:=Ker\left(A\otimes C\xrightarrow{\quad(A\otimes \Delta)-(((\mu\otimes C)(A\otimes \psi))\otimes C)(A\otimes C\otimes \Delta_A)(A\otimes C\otimes \eta)\quad}A\otimes C\otimes C\right)=A
\end{equation}\emph{It now follows that}
		\begin{equation}
		((\mathcal M\otimes A\otimes C)\hat{\square}_C k)\hat{\otimes}_B A =(\mathcal M\otimes A\otimes C)_0\otimes_BA =
\mathcal M\otimes (A\otimes C)_0\otimes_BA\cong \mathcal M\otimes A\otimes_BA\underset{\cong}{\xrightarrow{\mathcal M\otimes~can}} \mathcal M\otimes A\otimes C
		\end{equation}\emph{and hence the  map $\Upsilon_{\mathcal M\otimes A\otimes C}$ induced by the counit of the adjunction is an isomorphism. It now follows from Definition \ref{D4.11} that $(\alpha,\gamma)$ is an $\mathfrak S$-co-Galois measuring.}
		
	\end{eg}
	
		\section{Separability properties }
Given an entwining structure $(A,C,\psi)$, we know from Proposition \ref{P2.5} that there exists a pair of adjoint functors $$\mathcal T^C:\mathfrak S^C\longrightarrow \mathfrak S_A^C(\psi)~\qquad \qquad~\mathcal F^C:\mathfrak S_A^C(\psi)\longrightarrow \mathfrak S^C$$ such that $\mathcal T^C$ is left adjoint to the functor $\mathcal F^C$. Further in the case of entwined contramodule objects in $\mathfrak S$ over $(A,C,\psi)$, we have shown in Proposition \ref{L3.10} that there is a pair of adjoint functors $$^{[C,-]}\mathcal F:~_{\hspace{1em}A}^{[C,-]}\mathfrak S(\psi)\longrightarrow~ ^{[C,-]}\mathfrak S~\qquad\qquad~^{[C,-]}\mathcal T:~^{[C,-]}\mathfrak S\longrightarrow~ _{\hspace{1em}A}^{[C,-]}\mathfrak S(\psi)$$ where $^{[C,-]}\mathcal F$ is left adjoint to the functor $^{[C,-]}\mathcal T$. In this section we will study separability   conditions for the functors $\mathcal T^C$ and $\mathcal F^C$,  as well as for the functors $^{[C,-]}\mathcal F$ and $^{[C,-]}\mathcal T$.
\subsection{Separability of the functors $^{[C,-]}\mathcal T$ and $\mathcal T^C$}
We first consider the functor $^{[C,-]}\mathcal T:~^{[C,-]}\mathfrak S\longrightarrow~ _{\hspace{1em}A}^{[C,-]}\mathfrak S(\psi)$. We set  $V:=Nat(id_{^{[C,-]}\mathfrak S}, ^{[C,-]}\mathcal F~^{[C,-]}\mathcal T)$. We now set $V_1$ to be the $k$-vector space of natural transformations $\sigma:id_{\mathfrak S}\longrightarrow (A,C,-)$ satisfying the following condition
\begin{equation}\label{eq5.1}
	(A,\Delta,\matholdcal M)\circ (\psi,C,\matholdcal M)\circ (C,\sigma_{\matholdcal M})= (A,\Delta,\matholdcal M)\circ \sigma_{(C,\matholdcal M)}:(C,\matholdcal M)
	\longrightarrow (A,C,\matholdcal M)
\end{equation}
for any $\matholdcal M\in\mathfrak S$.
\begin{thm}\label{P5.1} Let $(A,C,\psi)$ be an entwining structure  and $\mathfrak S$ be a $k$-linear Grothendieck category. Then,
	
	\smallskip
	(i) There exists a map $\chi:V_1\longrightarrow V$ that associates $\sigma\in V_1$ to  a natural transformation $\tau\in V,$ defined as follows: for any $(\matholdcal M,\pi_{\matholdcal M})\in~^{[C,-]}\mathfrak S$, the map $	\tau_{\matholdcal M}:\matholdcal M\longrightarrow (A,\matholdcal M)$ is given by the composition
	\begin{equation}\label{eq5.2}
		\matholdcal M\xrightarrow{\sigma_{\matholdcal M}}(A,C,\matholdcal M)\xrightarrow{(A,\pi_{\matholdcal M})}(A,\matholdcal M)
	\end{equation}
	
	\smallskip
	(ii) There exists a map $\vartheta:V\longrightarrow V_1$ that assigns to $\tau\in V$ the natural transformation $\sigma\in V_1,$ defined as follows: for any $\matholdcal M\in\mathfrak S$, the map $\sigma_{\matholdcal M}:\matholdcal M\longrightarrow (A,C,\matholdcal M)$ is given by the composition
	\begin{equation}\label{eq5.3}
		\matholdcal M\xrightarrow{(\epsilon,\matholdcal M)}(C,\matholdcal M)\xrightarrow{\tau_{(C,\matholdcal M)}}(A,C,\matholdcal M)
	\end{equation}
	
	\smallskip
	(iii) $\chi:V_1\longrightarrow V$ and $\vartheta:V\longrightarrow V_1$ are mutually inverse isomorphisms.
\end{thm}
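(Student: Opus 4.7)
The proof will verify successively that (i) the recipe $\chi$ in \eqref{eq5.2} lands in $V$, (ii) the recipe $\vartheta$ in \eqref{eq5.3} lands in $V_1$, and (iii) the two maps are mutually inverse. The tools are standard: naturality of $\sigma$ or $\tau$, the contramodule unit identity $\pi_{\matholdcal M}\circ (\epsilon,\matholdcal M)=id_{\matholdcal M}$ and associativity $\pi_{\matholdcal M}\circ (C,\pi_{\matholdcal M})=\pi_{\matholdcal M}\circ (\Delta,\matholdcal M)$, the formula $\pi_{(A,\matholdcal N)}=(A,\pi_{\matholdcal N})\circ (\psi,\matholdcal N)$ of Lemma \ref{L3.9} for the induced contramodule structure on $(A,\matholdcal N)$, and the identities \eqref{eq3.11}--\eqref{eq3.12} coming from the entwining axioms. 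Throughout I will freely use that every morphism $f$ in $\mathfrak S$ induces a contramodule morphism $(C,f)$, and that every contramodule structure map $\pi_{\matholdcal N}$ is itself a morphism in $^{[C,-]}\mathfrak S$.

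For (i), the naturality of $\tau_{\matholdcal M}=(A,\pi_{\matholdcal M})\circ\sigma_{\matholdcal M}$ over $^{[C,-]}\mathfrak S$ follows at once from naturality of $\sigma$ together with the definition of a contramodule morphism. The point requiring \eqref{eq5.1} is that $\tau_{\matholdcal M}$ commutes with the contramodule actions, i.e.\ $\tau_{\matholdcal M}\circ\pi_{\matholdcal M}=\pi_{(A,\matholdcal M)}\circ (C,\tau_{\matholdcal M})$. I will transform the left-hand side by first using naturality of $\sigma$ on $\pi_{\matholdcal M}$, then the contramodule associativity of $\matholdcal M$, and finally \eqref{eq5.1}; I will treat the right-hand side by expanding $\pi_{(A,\matholdcal M)}$ via Lemma \ref{L3.9}, pushing $(\psi,\matholdcal M)$ past $(C,A,\pi_{\matholdcal M})$ by naturality of $(\psi,-)$, and again invoking contramodule associativity. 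Both expressions should reduce to $(A,\pi_{\matholdcal M})\circ (A,\Delta,\matholdcal M)\circ(\psi,C,\matholdcal M)\circ (C,\sigma_{\matholdcal M})$.

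For (ii), the naturality of $\sigma_{\matholdcal M}=\tau_{(C,\matholdcal M)}\circ (\epsilon,\matholdcal M)$ over $\mathfrak S$ combines the naturality of $\tau$ applied to the contramodule morphism $(C,g)$ associated to any $g$ in $\mathfrak S$, with the naturality of $(\epsilon,-)\colon id_{\mathfrak S}\longrightarrow (C,-)$. To verify \eqref{eq5.1} I will rewrite the right-hand side as $(A,\Delta,\matholdcal M)\circ\tau_{(C,C,\matholdcal M)}\circ(\epsilon,(C,\matholdcal M))$, apply naturality of $\tau$ on the contramodule morphism $\pi_{(C,\matholdcal M)}=(\Delta,\matholdcal M)$ to obtain $\tau_{(C,\matholdcal M)}\circ(\Delta,\matholdcal M)\circ (\epsilon,(C,\matholdcal M))$, and use the counit axiom $(\Delta,\matholdcal M)\circ (\epsilon,(C,\matholdcal M))=id_{(C,\matholdcal M)}$. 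For the left-hand side I will decompose $(C,\sigma_{\matholdcal M})=(C,\tau_{(C,\matholdcal M)})\circ (C,\epsilon,\matholdcal M)$, then use that $\tau_{(C,\matholdcal M)}$ is a contramodule morphism into $(A,C,\matholdcal M)$, whose contramodule structure $(A,\Delta,\matholdcal M)\circ(\psi,C,\matholdcal M)$ is the one described in Lemma \ref{L3.9}, to push the prefix through and obtain $\tau_{(C,\matholdcal M)}\circ(\Delta,\matholdcal M)\circ (C,\epsilon,\matholdcal M)$; once more, the counit axiom collapses the tail to $\tau_{(C,\matholdcal M)}$.

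For (iii), evaluating $\chi\circ\vartheta$ on $\tau\in V$ at $\matholdcal M$ gives $(A,\pi_{\matholdcal M})\circ\tau_{(C,\matholdcal M)}\circ(\epsilon,\matholdcal M)$, which by naturality of $\tau$ on the contramodule morphism $\pi_{\matholdcal M}$ becomes $\tau_{\matholdcal M}\circ\pi_{\matholdcal M}\circ (\epsilon,\matholdcal M)=\tau_{\matholdcal M}$. Evaluating $\vartheta\circ\chi$ on $\sigma\in V_1$ at $\matholdcal M$ gives $(A,\Delta,\matholdcal M)\circ\sigma_{(C,\matholdcal M)}\circ(\epsilon,\matholdcal M)$; applying \eqref{eq5.1} replaces the prefix by $(A,\Delta,\matholdcal M)\circ(\psi,C,\matholdcal M)\circ (C,\sigma_{\matholdcal M})$, naturality of $(\epsilon,-)$ then produces $(A,\Delta,\matholdcal M)\circ(\psi,C,\matholdcal M)\circ(\epsilon,A,C,\matholdcal M)\circ\sigma_{\matholdcal M}$, the identity from \eqref{eq3.12} (namely $(\psi,\matholdcal N)\circ(\epsilon,A,\matholdcal N)=(A,\epsilon,\matholdcal N)$ with $\matholdcal N=(C,\matholdcal M)$) reduces this to $(A,\Delta,\matholdcal M)\circ(A,\epsilon,C,\matholdcal M)\circ\sigma_{\matholdcal M}$, and the coalgebra counit axiom finally collapses it to $\sigma_{\matholdcal M}$. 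The main obstacle, though entirely formal, will be the bookkeeping in the $(V_1,\ldots,V_n,\matholdcal M)$ notation: one must identify at each step which insertion is acting as a contramodule structure map, which is a ``free'' coprojection, and which of the entwining identities from \eqref{eq3.11}--\eqref{eq3.12} should be invoked so that the two sides can be brought into a common form.
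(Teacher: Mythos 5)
Your proposal is correct and follows essentially the same route as the paper: the same use of naturality, the contramodule axioms, Lemma \ref{L3.9}, and the identities \eqref{eq3.11}--\eqref{eq3.12}, with parts (i) and (ii) reducing both sides of the required equalities to the same common expressions. The only (cosmetic) difference is in (iii), where you verify $\chi\circ\vartheta=id$ by a direct computation using naturality of $\tau$ on $\pi_{\matholdcal M}$, while the paper reaches the same conclusion by first proving $\vartheta\circ\chi=id$ and then showing $\vartheta$ is injective via exactly that identity; your version is marginally more direct, and your longer derivation of $\vartheta\circ\chi=id$ via \eqref{eq5.1} and \eqref{eq3.12} is also valid, though the paper gets it straight from naturality of $\sigma$ and the counit axiom.
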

\begin{proof}
	(i) For $(\matholdcal M,\pi_{\matholdcal M})\in~^{[C,-]}\mathfrak S$, we first show that $\tau_{\matholdcal M}$ is a morphism in $^{[C,-]}\mathfrak S$. To do this, we must check that
	\begin{equation*}
		\begin{array}{lll}
			\pi_{(A,\matholdcal M)}\circ(C,\tau_{\matholdcal M})&=\pi_{(A,\matholdcal M)}\circ (C,A,\pi_{\matholdcal M})\circ(C,\sigma_{\matholdcal M})\\ &=(A,\pi_{\matholdcal M})\circ (\psi,\matholdcal M)\circ (C,A,\pi_{\matholdcal M})\circ(C,\sigma_{\matholdcal M})\\ 
			&=(A,\pi_{\matholdcal M})\circ(A,C,\pi_{\matholdcal M})\circ(\psi,C,\matholdcal M)\circ(C,\sigma_{\matholdcal M})\\ 
			&=(A,\pi_{\matholdcal M})\circ(A,\Delta, \matholdcal M)\circ(\psi,C,\matholdcal M)\circ(C,\sigma_{\matholdcal M})
			& \mbox{(as $\matholdcal M\in~^{[C,-]}\mathfrak S$ )} \\
			&=(A,\pi_{\matholdcal M})\circ(A,\Delta, \matholdcal M)\circ\sigma_{(C,\matholdcal M)}& \mbox{(by \eqref{eq5.1})} \\ 
			&=(A,\pi_{\matholdcal M})\circ(A,C,\pi_{ \matholdcal M})\circ\sigma_{(C,\matholdcal M)}\\ 
			&=(A,\pi_{\matholdcal M})\circ \sigma_{\matholdcal M}\circ \pi_{\matholdcal M}\\ 
			&=\tau_{\matholdcal M}\circ \pi_{\matholdcal M}\\
		\end{array}
	\end{equation*}  
	Further, if $\phi:\matholdcal M\longrightarrow \matholdcal N$is a morphism  in $^{[C,-]}\mathfrak S$,  we see that
	\begin{equation*}
		\begin{array}{ll}
			(A,\phi)\circ \tau_{\matholdcal M}=(A,\phi)\circ (A,\pi_{\matholdcal M})\circ \sigma_{\matholdcal M}&=(A,\pi_{\matholdcal  N})\circ (A,C,\phi)\circ\sigma_{\matholdcal M}\\ 
			&=(A,\pi_{\matholdcal  N})\circ\sigma_{\matholdcal N}\circ \phi\\\notag&=\tau_{\matholdcal N}\circ \phi
		\end{array}
	\end{equation*}
	This shows that $\tau$ is a natural transformation in $V$.
	
	\smallskip
	(ii) For each $\matholdcal M\in\mathfrak S$, we need to show that $\sigma$ satisfies the condition in \eqref{eq5.1}. Since $\tau_{(C,\matholdcal M)}:(C,\matholdcal M)\longrightarrow (A,C,\matholdcal M)$ is a morphism in $^{[C,-]}\mathfrak S$, we have the following commutative diagram
	\begin{equation*}\begin{tikzcd}[row sep=1.8em, column sep = 3.8em]
			(C,\matholdcal M)  \arrow{r}{(C,\epsilon,\matholdcal M)} \arrow{dr}[swap]{id}& (C,C,\matholdcal M) \arrow{r} {(C,\tau_{(C,\matholdcal M)})} \arrow{d}{(\Delta,\matholdcal M)}&(C,A,C,\matholdcal M)\arrow{d}{\pi_{(A,C,\matholdcal M)}} \\%
			&(C,\matholdcal M) \arrow{r}{\tau_{(C,\matholdcal M)}}& (A, C, \matholdcal M)
		\end{tikzcd}
	\end{equation*}
	where we know from \eqref{eq3.2} and Lemma \ref{L3.9} that $\pi_{(A,C,\matholdcal M)}:(C,A,C,\matholdcal M)\xrightarrow{(\psi, C,\matholdcal M)}(A,C,C,\matholdcal M)\xrightarrow{(A,\Delta,\matholdcal M)}(A,C,\matholdcal M)$.
	Then, we have
	\begin{equation}\label{eq5.4}
		\begin{array}{lll}
			\tau_{(C,\matholdcal M)}&=\pi_{(A,C,\matholdcal M)}\circ (C,\tau_{(C,\matholdcal M)})\circ (C,\epsilon,\matholdcal M)
			\\&=(A,\Delta,\matholdcal M)\circ (\psi,C,\matholdcal M)\circ (C,\tau_{(C,\matholdcal M)})\circ (C,\epsilon,\matholdcal M)\\&=(A,\Delta,\matholdcal M)\circ (\psi,C,\matholdcal M)\circ(C,\sigma_{\matholdcal M})& \mbox{(by (\ref{eq5.3}))}
		\end{array}
	\end{equation}
	Further, as $(C\otimes \epsilon)\circ \Delta=id=(\epsilon\otimes C)\circ \Delta$ and $\tau$ is a natural transformation, we obtain the following commutative diagram
	\begin{equation*}\begin{tikzcd}[row sep=1.8em, column sep = 3.8em]
			(C,\matholdcal M)  \arrow{r}{(\epsilon,C,\matholdcal M)} \arrow{dr}[swap]{id}& (C,C,\matholdcal M) \arrow{r} {\tau_{(C,C,\matholdcal M)}} \arrow{d}{(\Delta,\matholdcal M)}&(A,C,C,\matholdcal M)\arrow{d}{(A,\Delta,\matholdcal M)} \\%
			&(C,\matholdcal M) \arrow{r}{\tau_{(C,\matholdcal M)}}& (A, C, \matholdcal M)
		\end{tikzcd}
	\end{equation*}
	Therefore,
	\begin{equation}\label{eq5.5}
		\begin{array}{lll}
			\tau_{(C,\matholdcal M)}&=(A,\Delta,\matholdcal M)\circ \tau_{(C,C,\matholdcal M)}\circ (\epsilon,C,\matholdcal M)\\&=(A,\Delta,\matholdcal M)\circ \sigma_{(C,\matholdcal M)}&\mbox{(\textup{by} (\ref{eq5.3}))}
		\end{array}
	\end{equation}
	The result now follows from (\ref{eq5.4}) and (\ref{eq5.5}).
	
	\smallskip
	(iii) We consider a natural transformation $\sigma\in V_1$. Set $\chi(\sigma):=\tau$ and $\vartheta(\tau):=\sigma'$. Then, for each $\matholdcal M \in \mathfrak S$, we see that
	\begin{equation*}
		\begin{array}{lll}
			\sigma'_{\matholdcal M}&=\tau_{(C,\matholdcal M)}\circ (\epsilon,\matholdcal M)&\mbox{(\textup{by} (\ref{eq5.3}))}\\\notag&=(A,\pi_{(C,\matholdcal M)})\circ \sigma_{(C,\matholdcal M)}\circ (\epsilon,\matholdcal M)&\mbox{(\textup{by} (\ref{eq5.2}))}\\\notag&=(A,\pi_{(C,\matholdcal M)})\circ(A,C,\epsilon,\matholdcal M)\circ \sigma_{\matholdcal M}\\\notag
			&=(A,\Delta,\matholdcal M)\circ(A,C,\epsilon,\matholdcal M)\circ \sigma_{\matholdcal M}&\mbox{(\textup{as}~$\pi_{(C,\matholdcal M)}=(\Delta,\matholdcal M)$)}\\\notag
			&=\sigma_{\matholdcal M}&\mbox{(\textup{by} (\ref{eq3.1}))}
		\end{array}
	\end{equation*}
	Hence, we have $\vartheta \circ \chi = id$.  To prove that $\chi \circ \vartheta = id$, it suffices to show that $\vartheta$ is a monomorphism. We suppose that there exists a natural transformation $0\neq \tau\in V$ such that $\vartheta(\tau):=\sigma=0$. Let $(\matholdcal N,\pi_{\matholdcal N})\in~^{[C,-]}\mathfrak S$ be an object such that $0\neq\tau_{\matholdcal N}:\matholdcal N\longrightarrow (A,\matholdcal N)$. Then, we see that
	\begin{equation*}
		\begin{array}{lll}
			\tau_{\matholdcal N}&=\tau_{\matholdcal N}\circ \pi_{\matholdcal N}\circ (\epsilon,\matholdcal N)&\mbox{(\textup{by} (\ref{eq3.1}))}\\\notag&=(A,\pi_{\matholdcal N})\circ\tau_{(C,\matholdcal N)}\circ (\epsilon,\matholdcal N)\\\notag&=(A,\pi_{\matholdcal N})\circ\sigma_{\matholdcal N}&\mbox{(\textup{by}~(\ref{eq5.3}))}\\\notag&=0
		\end{array}
	\end{equation*}
	This leads to a contradiction, and the result follows.
\end{proof}
\begin{Thm}\label{T5.2}
	Let $(A,C,\psi)$ be an entwining structure  and $\mathfrak S$ be a $k$-linear Grothendieck category. Then, the following are equivalent.
	
	\smallskip
	(i) The functor $^{[C,-]}\mathcal T:~^{[C,-]}\mathfrak S \longrightarrow~ _{\hspace{1em}A}^{[C,-]}\mathfrak S(\psi)$ is separable. 
	
	\smallskip
	(ii) There exists $\sigma\in V_1$ such that
	\begin{equation}\label{eq5.6}
		(\eta,C,\matholdcal M)\circ \sigma_{\matholdcal M}=(\epsilon,\matholdcal M)
	\end{equation}
	for each $\matholdcal M\in\mathfrak S$.
\end{Thm}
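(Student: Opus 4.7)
The plan is to combine the classical Rafael criterion for separability of a right adjoint with the isomorphism $V \cong V_1$ established in Proposition \ref{P5.1}. Recall that since $^{[C,-]}\mathcal F$ is left adjoint to $^{[C,-]}\mathcal T$ (Proposition \ref{L3.10}), the counit of this adjunction at $\matholdcal N \in {^{[C,-]}\mathfrak S}$ is the map $(\eta,\matholdcal N): (A,\matholdcal N) \longrightarrow \matholdcal N$ obtained from the proof of Proposition \ref{L3.10} by applying $(-,\matholdcal N)$ to the unit $\eta: k \longrightarrow A$. By Rafael's criterion, the right adjoint $^{[C,-]}\mathcal T$ is separable if and only if this counit admits a natural section, i.e., there exists $\tau \in V = Nat(id_{^{[C,-]}\mathfrak S}, {^{[C,-]}\mathcal F}\,{^{[C,-]}\mathcal T})$ such that $(\eta, \matholdcal N) \circ \tau_\matholdcal N = id_\matholdcal N$ for every $\matholdcal N \in {^{[C,-]}\mathfrak S}$. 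The remainder of the proof will transport this condition across the isomorphism $V \cong V_1$ of Proposition \ref{P5.1}.

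For $(ii) \Rightarrow (i)$, given $\sigma \in V_1$ satisfying $(\eta,C,\matholdcal M) \circ \sigma_\matholdcal M = (\epsilon,\matholdcal M)$ for every $\matholdcal M \in \mathfrak S$, I define $\tau := \chi(\sigma)$, so $\tau_\matholdcal N = (A,\pi_\matholdcal N) \circ \sigma_\matholdcal N$ for any contramodule $(\matholdcal N,\pi_\matholdcal N)$. Using the naturality of $(\eta,-): (A,-) \longrightarrow id_{\mathfrak S}$ applied to $\pi_\matholdcal N:(C,\matholdcal N)\to \matholdcal N$, which rewrites $(\eta,\matholdcal N) \circ (A,\pi_\matholdcal N)$ as $\pi_\matholdcal N \circ (\eta,C,\matholdcal N)$, together with the hypothesis on $\sigma$ and the contramodule counit axiom from Definition \ref{D3.1}, one computes
\[
(\eta,\matholdcal N) \circ \tau_\matholdcal N \;=\; \pi_\matholdcal N \circ (\eta,C,\matholdcal N) \circ \sigma_\matholdcal N \;=\; \pi_\matholdcal N \circ (\epsilon,\matholdcal N) \;=\; id_\matholdcal N.
\]
Hence $\tau$ provides the required natural section of the counit, and Rafael's criterion yields separability of $^{[C,-]}\mathcal T$.

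For the converse $(i) \Rightarrow (ii)$, I begin with a natural transformation $\tau \in V$ satisfying $(\eta,\matholdcal N) \circ \tau_\matholdcal N = id_\matholdcal N$ for every $\matholdcal N \in {^{[C,-]}\mathfrak S}$, and set $\sigma := \vartheta(\tau) \in V_1$, so that $\sigma_\matholdcal M = \tau_{(C,\matholdcal M)} \circ (\epsilon,\matholdcal M)$ for $\matholdcal M \in \mathfrak S$. Applying the separability identity to the contramodule $(C,\matholdcal M) \in {^{[C,-]}\mathfrak S}$ gives $(\eta,C,\matholdcal M) \circ \tau_{(C,\matholdcal M)} = id_{(C,\matholdcal M)}$, and composing on the right by $(\epsilon,\matholdcal M)$ immediately yields $(\eta,C,\matholdcal M) \circ \sigma_\matholdcal M = (\epsilon,\matholdcal M)$, establishing condition (ii).

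The main conceptual step is identifying the counit of the adjunction $({^{[C,-]}\mathcal F}, {^{[C,-]}\mathcal T})$ as $(\eta,-)$; once this is done, the only technical ingredients are the naturality of $(\eta,-)$ and the contramodule counit axiom, and these are precisely what collapse the separability identity into the simpler form in (ii). No real obstacle is expected, as both implications reduce to a short diagram chase via the bijection $V \cong V_1$.
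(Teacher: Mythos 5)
Your proposal is correct and follows essentially the same route as the paper: the paper also invokes the Rafael-type criterion (cited there as \cite[Proposition 1.1]{BCMZ}) for the right adjoint $^{[C,-]}\mathcal T$, identifies the counit of $({^{[C,-]}\mathcal F},{^{[C,-]}\mathcal T})$ as $\omega_{\matholdcal N}=(\eta,\matholdcal N)$, and transports the splitting condition across the isomorphism $V\cong V_1$ via $\chi$ and $\vartheta$ with exactly the two computations you give. No discrepancies to report.
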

\begin{proof}
	$(i) \Rightarrow (ii).$	Let $\omega:~^{[C,-]}\mathcal F^{[C,-]}\mathcal T\longrightarrow id_{^{[C,-]}\mathfrak S}$ denote the counit of adjunction $(^{[C,-]}\mathcal F,^{[C,-]}\mathcal T)$, which is  given by $\omega_{\matholdcal N}:(A,\matholdcal N)\xrightarrow{(\eta,\matholdcal N)}\matholdcal N$ for any $(\matholdcal N,\pi_{\matholdcal N})\in~^{[C,-]}\mathfrak S$. Since $^{[C,-]}\mathcal T$ is separable, we know from \cite[Proposition 1.1]{BCMZ} that there exists $\tau\in V=Nat(id_{^{[C,-]}\mathfrak S}, ^{[C,-]}\mathcal F~^{[C,-]}\mathcal T)$ such that $\omega\circ \tau=id_{^{[C,-]}\mathfrak S}$. We set $\sigma:=\vartheta(\tau)$. Then, it is clear that $\sigma\in V_1$ and for any $\matholdcal M\in\mathfrak S$, we have
	\begin{equation*}
		\begin{array}{lll}
			(\eta,C,\matholdcal M)\circ \sigma_{\matholdcal M}&=	(\eta,C,\matholdcal M)\circ\tau_{(C,\matholdcal M)}\circ (\epsilon,\matholdcal M) &\mbox{ (\textup{by}~(\ref{eq5.3}))}\\\notag
			&=\omega_{(C,\matholdcal M)}\circ \tau_{(C,\matholdcal M)}\circ (\epsilon,\matholdcal M)\\\notag&= (\epsilon,\matholdcal M)
		\end{array}
	\end{equation*}
	Therefore, $\sigma$ satisfies (\ref{eq5.6}).
	
	\smallskip
	$(ii) \Rightarrow (i).$ Suppose we have $\sigma\in V_1$ that satisfies \eqref{eq5.6}. We set $\tau:=\chi(\sigma)$. Then, for any $(\matholdcal N,\pi_{\matholdcal N})\in~^{[C,-]}\mathfrak S$ we have 
	\begin{equation*}
		\begin{array}{lll}
			\omega_{\matholdcal N}\circ \tau_{\matholdcal N}&=\omega_{\matholdcal N}\circ (A,\pi_{\matholdcal N})\circ \sigma_{\matholdcal N} &\mbox{(\textup{by}~(\ref{eq5.2}))}\\\notag&=(\eta,\matholdcal N)\circ(A,\pi_{\matholdcal N})\circ \sigma_{\matholdcal N}\\\notag&=\pi_{\matholdcal N}\circ (\eta,C,\matholdcal N)\circ \sigma_{\matholdcal N}\\\notag&=\pi_{\matholdcal N}\circ(\epsilon,\matholdcal N)\\\notag
			&=id_{\matholdcal N} &\mbox{ (\textup{by}~(\ref{eq3.1}))}
		\end{array}
	\end{equation*}
	Since we know from (\cite[Proposition 1.1]{BCMZ}) that  $^{[C,-]}\mathcal T$ is separable if and only if there exists $\tau\in V=Nat(id_{^{[C,-]}\mathfrak S}, ^{[C,-]}\mathcal F~^{[C,-]}\mathcal T)$ such that $\omega\circ \tau=id_{^{[C,-]}\mathfrak S}$, the result follows.
\end{proof}
\smallskip
We now  give an outline of the condition for the separability of the functor $\mathcal T^C:\mathfrak S^C\longrightarrow \mathfrak S_A^C(\psi)$ in the case of entwined comodule objects in 
$\mathfrak S$ over $(A,C,\psi)$.
We set $V':=Nat(\mathcal F^C\mathcal T^C, id_{\mathfrak S^C})$. In this case, we consider the $k$-vector space $V_1'$ of natural transformations $\sigma:(-\otimes C\otimes A)\longrightarrow id_{\mathfrak S}$ such that
\begin{equation}\label{eq5.7}
	(\sigma_{\mathcal M}\otimes C)\circ(\mathcal M\otimes C\otimes \psi)\circ (\mathcal M\otimes \Delta\otimes A)=\sigma_{(\mathcal M\otimes C)}\circ (\mathcal M\otimes \Delta\otimes A) 
\end{equation}
for any $\mathcal M\in \mathfrak S$.
In a manner similar to Proposition \ref{P5.1}, it can be shown that there exists a map $\chi:V_1'\longrightarrow V'$ that assigns to each $\sigma \in V_1'$ the natural transformation $\chi(\sigma) := \tau \in V'$, determined by setting 
\begin{equation}\label{eq5.8}
	\tau_{\mathcal N}:\mathcal N\otimes A\xrightarrow{(\Delta_{\mathcal N}\otimes A)}\mathcal N\otimes C\otimes A\xrightarrow{\sigma_{\mathcal N}} \mathcal N
\end{equation}
for each $(\mathcal N, \Delta_{\mathcal N})\in\mathfrak S^C$. The inverse of $\chi:V_1'\longrightarrow V'$ is given by the map $\vartheta:V'\longrightarrow V_1'$ that sends each element $\tau \in V'$ to a natural transformation $\vartheta(\tau) := \sigma \in V_1'$ given by setting
\begin{equation}\label{eq5.9}
	\sigma_{\mathcal M}:\mathcal M\otimes C\otimes A\xrightarrow{\tau_{(\mathcal M\otimes C)}}\mathcal M\otimes C\xrightarrow{(\mathcal M\otimes \epsilon)}\mathcal M
\end{equation}
for any $\mathcal M\in\mathfrak S$. 
\begin{Thm}\label{T5.3}
	Let $(A,C,\psi)$ be an entwining structure  and $\mathfrak S$ be a $k$-linear Grothendieck category. Then, the following statements are equivalent.
	
	\smallskip
	(i) The functor $\mathcal T^C:\mathfrak S^C\longrightarrow \mathfrak S^C_A(\psi)$ is separable.
	
	\smallskip
	(ii) There exists $\sigma\in V_1'$ such that
	\begin{equation}\label{eq5.10}
		\sigma_{\mathcal M}\circ (\mathcal M\otimes C\otimes \eta)=\mathcal M\otimes \epsilon 
	\end{equation}
	for each $\mathcal M\in\mathfrak S$.
\end{Thm}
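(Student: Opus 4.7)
The plan is to mimic the strategy of Theorem \ref{T5.2}, but adapted to the fact that in the adjunction $(\mathcal T^C, \mathcal F^C)$ from Proposition \ref{P2.5} it is the \emph{left} adjoint $\mathcal T^C$ whose separability is being characterised. By Rafael's criterion (see \cite[Proposition 1.1]{BCMZ}), $\mathcal T^C$ is separable if and only if the unit $\eta':id_{\mathfrak S^C}\longrightarrow \mathcal F^C\mathcal T^C$ admits a natural retraction $\tau\in V'$. Inspecting the proof of Proposition \ref{P2.5}, one reads off that $\eta'_{\mathcal N}=\mathcal N\otimes \eta:\mathcal N\longrightarrow \mathcal N\otimes A$ for each $(\mathcal N,\Delta_{\mathcal N})\in\mathfrak S^C$. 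Hence the first step is to reformulate (i) as the existence of $\tau\in V'$ with $\tau_{\mathcal N}\circ (\mathcal N\otimes \eta)=id_{\mathcal N}$ for every $(\mathcal N,\Delta_{\mathcal N})\in\mathfrak S^C$.

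The second step is to transport this identity across the isomorphism $V_1'\cong V'$ furnished by $\chi$ and $\vartheta$ in \eqref{eq5.8}--\eqref{eq5.9}. Writing $\chi(\sigma)=\tau$, so that $\tau_{\mathcal N}=\sigma_{\mathcal N}\circ (\Delta_{\mathcal N}\otimes A)$, and using naturality of $\eta$ to rewrite $(\Delta_{\mathcal N}\otimes A)\circ (\mathcal N\otimes \eta)=(\mathcal N\otimes C\otimes \eta)\circ \Delta_{\mathcal N}$, the forward direction (ii) $\Rightarrow$ (i) follows by a one-line computation:
\begin{equation*}
\tau_{\mathcal N}\circ (\mathcal N\otimes \eta)=\sigma_{\mathcal N}\circ (\mathcal N\otimes C\otimes \eta)\circ \Delta_{\mathcal N}=(\mathcal N\otimes \epsilon)\circ \Delta_{\mathcal N}=id_{\mathcal N}.
\end{equation*}

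The converse is the main obstacle, since the retraction identity lives over $\mathfrak S^C$ whereas (ii) is a statement over the ambient category $\mathfrak S$. To bridge this, I would specialise at $\mathcal N=\mathcal M\otimes C$, equipped with the cofree comodule structure $\Delta_{\mathcal N}=\mathcal M\otimes \Delta$, for an arbitrary $\mathcal M\in\mathfrak S$. Substituting this into $\sigma_{\mathcal N}\circ (\Delta_{\mathcal N}\otimes A)\circ (\mathcal N\otimes \eta)=id_{\mathcal N}$, one rewrites the left hand side using the defining condition \eqref{eq5.7} of $V_1'$ to replace $\sigma_{(\mathcal M\otimes C)}\circ (\mathcal M\otimes \Delta\otimes A)$ by $(\sigma_{\mathcal M}\otimes C)\circ (\mathcal M\otimes C\otimes \psi)\circ (\mathcal M\otimes \Delta\otimes A)$, and then invokes the unital entwining axiom $\psi\circ (C\otimes \eta)=\eta\otimes C$ from \eqref{eq2.1} to collapse $(\mathcal M\otimes C\otimes \psi)\circ (\mathcal M\otimes C\otimes C\otimes \eta)$ to $\mathcal M\otimes C\otimes \eta\otimes C$. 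Setting $\beta_{\mathcal M}:=\sigma_{\mathcal M}\circ (\mathcal M\otimes C\otimes \eta)$, the identity reduces to $(\beta_{\mathcal M}\otimes C)\circ (\mathcal M\otimes \Delta)=id_{\mathcal M\otimes C}$. Post-composing with $\mathcal M\otimes \epsilon$ and using the counit law $(\mathcal M\otimes C\otimes \epsilon)\circ (\mathcal M\otimes \Delta)=id_{\mathcal M\otimes C}$ then yields $\beta_{\mathcal M}=\mathcal M\otimes \epsilon$, which is precisely (ii). The delicate point is the interaction of the naturality-type condition \eqref{eq5.7} on $\sigma$ with the unital compatibility of $\psi$; modulo that bookkeeping, the argument is wholly dual to the one used for Theorem \ref{T5.2}.
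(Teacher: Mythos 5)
Your proposal is correct and follows essentially the route the paper intends (the paper only records that the proof is ``similar to that of Theorem \ref{T5.2}''): Rafael's criterion identifies separability of the left adjoint $\mathcal T^C$ with a natural retraction $\tau\in V'$ of the unit $\mathcal N\otimes\eta$, and the dictionary $\chi,\vartheta$ between $V'$ and $V_1'$ transports this to condition \eqref{eq5.10}. The only inefficiency is in your converse: setting $\sigma:=\vartheta(\tau)$ and using that $\mathcal M\otimes C\otimes\eta$ is precisely the unit of the adjunction at the cofree comodule $\mathcal M\otimes C$ gives $\sigma_{\mathcal M}\circ(\mathcal M\otimes C\otimes \eta)=(\mathcal M\otimes\epsilon)\circ\tau_{(\mathcal M\otimes C)}\circ(\mathcal M\otimes C\otimes\eta)=\mathcal M\otimes\epsilon$ in one line, with no appeal to \eqref{eq5.7} or to the entwining axiom, exactly mirroring the $(i)\Rightarrow(ii)$ step of Theorem \ref{T5.2}.
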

\begin{proof}
	The proof is similar to that of Theorem \ref{T5.2}
\end{proof}
\subsection{Separability of the functors $^{[C,-]}\mathcal F$ and $\mathcal F^C$}
We begin with the functor $^{[C,-]}\mathcal F:~_{\hspace{1em}A}^{[C,-]}\mathfrak S(\psi)\longrightarrow~ ^{[C,-]}\mathfrak S$ and we set $W:=Nat(^{[C,-]}\mathcal T^{[C,-]}\mathcal F, id_{~ _{\hspace{1em}A}^{[C,-]}\mathfrak S(\psi)})$. Let $W_1$ be the $k$-vector space of natural transformations $\rho:(A,A,-)\longrightarrow (C,-)$ 
between endofunctors on $\mathfrak S$ satisfying the commutative diagrams
\begin{equation}\label{eq5.11}
	\begin{tikzcd}[row sep=1.8em, column sep = 3.8em]
		(A,A,A,\matholdcal M) \arrow{rr}{\rho_{(A,\matholdcal M)}} 
		&&  (C,A,\matholdcal M)\arrow{d}{(\psi,\matholdcal M)}	\\
		(A,A,\matholdcal M) \arrow{r}
		{(\mu,A,\matholdcal M)}\arrow{u}{(A,\mu,\matholdcal M)} &(A,A,A,\matholdcal M)\arrow{r}{(A,\rho_{\matholdcal M})}&(A,C,\matholdcal M)		 			
	\end{tikzcd}
\end{equation}
and
\begin{equation}\label{eq5.12}
	\begin{tikzcd}[row sep=1.8em, column sep = 3.8em]
		(A,C,A,\matholdcal M) \arrow{r}{(A,\psi,\matholdcal M)}
		&  (A,A,C,\matholdcal M)\arrow{r}{\rho_{(C,\matholdcal M)}} &(C,C,\matholdcal M)\arrow{d}{(\Delta,\matholdcal M)}\\
		(C,A,A,\matholdcal M )\arrow{r}
		{(C,\rho_{\matholdcal M})} \arrow{u}{(\psi,A,\matholdcal M)}&(C,C,\matholdcal M)\arrow{r}{(\Delta,\matholdcal M)}&(C,\matholdcal M)
	\end{tikzcd}
\end{equation}
for any $\matholdcal M\in\mathfrak S$.
\begin{thm}\label{P5.4}
	Let $(A,C,\psi)$ be an entwining structure  and $\mathfrak S$ be a $k$-linear Grothendieck category. Then,
	
	\smallskip
	(i)	There exists a map $\chi':W_1\longrightarrow W$ that assigns to $\rho\in W_1$ the natural transformation $\kappa\in W$ given as follows: for any $(\matholdcal M,\pi_{\matholdcal M},\mu_{\matholdcal M})\in~ _{\hspace{1em}A}^{[C,-]}\mathfrak S(\psi)$, $\kappa_{\matholdcal M}:(A,\matholdcal M)\longrightarrow \matholdcal M$ is the composition given by
	\begin{equation}\label{eq5.13}
		(A,\matholdcal M)\xrightarrow{(A,\mu_{\matholdcal M})}(A,A,\matholdcal M)\xrightarrow{\rho_{\matholdcal M}}(C,\matholdcal M)\xrightarrow{\pi_{\matholdcal M}}\matholdcal M
	\end{equation}
	
	\smallskip
	(ii) There exists a map $\vartheta':W\longrightarrow W_1$ that assigns to $\kappa\in W$ the natural transformation $\rho\in W_1$ given as follows: for any $\matholdcal M\in\mathfrak S$, $\rho_{\matholdcal M}:(A,A,\matholdcal M)\longrightarrow (C,\matholdcal M)$ is the composition
	\begin{equation}\label{eq5.14}
		(A,A,\matholdcal M)\xrightarrow{(A,A,\epsilon,\matholdcal M)}(A,A,C,\matholdcal M)\xrightarrow{\kappa_{(A,C,\matholdcal M)}}(A,C,\matholdcal M)\xrightarrow{(\eta,C,\matholdcal M)}(C,\matholdcal M)
	\end{equation}
	
	\smallskip
	(iii) The morphisms $\chi':W_1\longrightarrow W$ and $\vartheta':W\longrightarrow W_1$ are mutually inverse isomorphisms.
\end{thm}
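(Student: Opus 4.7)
The plan is to follow the same pattern as the proof of Proposition \ref{P5.1}, now interchanging the roles played there by transformations $\sigma: id_{\mathfrak S} \to (A, C, -)$ with transformations $\rho: (A, A, -) \to (C, -)$, and replacing the single compatibility (\ref{eq5.1}) by the two diagrams (\ref{eq5.11}) and (\ref{eq5.12}).

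For part (i), given $\rho \in W_1$, the two items to verify for $\kappa := \chi'(\rho)$ are that each $\kappa_{\matholdcal M}$ is a morphism in $_{\hspace{1em}A}^{[C,-]}\mathfrak S(\psi)$ and that the family $\{\kappa_{\matholdcal M}\}$ is natural. For the $A$-linearity $(A, \kappa_{\matholdcal M}) \circ \mu_{(A, \matholdcal M)} = \mu_{\matholdcal M} \circ \kappa_{\matholdcal M}$, I would expand both sides using the definition of $\kappa_{\matholdcal M}$ and the $A$-action on $(A, \matholdcal M)$ from Lemma \ref{L3.9}, apply the entwined contramodule condition (\ref{eq3.9}) to rewrite $\mu_{\matholdcal M} \circ \pi_{\matholdcal M}$, and use naturality of $\rho$ with respect to $\mu_{\matholdcal M}: \matholdcal M \to (A, \matholdcal M)$; condition (\ref{eq5.11}) then closes the gap together with the $A$-module associativity $(A, \mu_{\matholdcal M}) \circ \mu_{\matholdcal M} = (\mu, \matholdcal M) \circ \mu_{\matholdcal M}$. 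The $C$-contramodule linearity $\kappa_{\matholdcal M} \circ \pi_{(A, \matholdcal M)} = \pi_{\matholdcal M} \circ (C, \kappa_{\matholdcal M})$ reduces by a dual expansion to condition (\ref{eq5.12}) combined with the associativity diagram (\ref{eq3.1}) for $\pi_{\matholdcal M}$; naturality of $\kappa$ itself is immediate from that of $\rho$.

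For part (ii), given $\kappa \in W$, to see that $\rho := \vartheta'(\kappa)$ lies in $W_1$ I would apply $\kappa$ in turn to the objects $(A, \matholdcal M)$ and $(C, \matholdcal M)$, viewed as elements of $_{\hspace{1em}A}^{[C,-]}\mathfrak S(\psi)$ via Lemma \ref{L3.9} and Lemma \ref{L3.11} respectively. The requirement that $\kappa_{(A, \matholdcal M)}$ commutes with the $A$-action and with $\pi$, once the explicit structure maps from Lemma \ref{L3.9} are substituted and naturality of $\kappa$ is applied, translates into precisely diagram (\ref{eq5.11}); the analogous computation for $\kappa_{(C, \matholdcal M)}$ yields (\ref{eq5.12}), with the entwining relations from (\ref{eq3.11}) and (\ref{eq3.12}) mediating the translation.

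For part (iii), the identity $\vartheta' \circ \chi' = id_{W_1}$ is a direct calculation: tracing $\rho \mapsto \kappa \mapsto \rho'$ and simplifying using the counit axiom for $\epsilon$, the $A$-module unit identity $(\eta, \matholdcal M) \circ \mu_{\matholdcal M} = id$, and the relation $(\eta, C, \matholdcal M) \circ (\psi, \matholdcal M) = (C, \eta, \matholdcal M)$ from (\ref{eq3.11}), one recovers $\rho$. For the opposite equality $\chi' \circ \vartheta' = id_W$ I plan, as in Proposition \ref{P5.1}(iii), to show injectivity of $\vartheta'$ and then appeal to the already-established $\vartheta' \circ \chi' = id_{W_1}$. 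Concretely, for any $(\matholdcal N, \pi_{\matholdcal N}, \mu_{\matholdcal N}) \in~_{\hspace{1em}A}^{[C,-]}\mathfrak S(\psi)$, I would reconstruct $\kappa_{\matholdcal N}$ in terms of $\rho = \vartheta'(\kappa)$ by inserting the factorizations $\pi_{\matholdcal N} \circ (\epsilon, \matholdcal N) = id$ and $(\eta, \matholdcal N) \circ \mu_{\matholdcal N} = id$ and invoking naturality of $\kappa$ at $\mu_{\matholdcal N}$ and $\pi_{\matholdcal N}$. I expect this reconstruction to be the main technical obstacle: ensuring that all intermediate applications of $\kappa$ land on objects whose $A$- and $C$-structures are compatible with $\psi$ is precisely what is encoded in the defining conditions (\ref{eq5.11}) and (\ref{eq5.12}) of $W_1$, and these must be threaded through the computation carefully so that, if $\vartheta'(\kappa) = 0$, the reconstruction forces $\kappa_{\matholdcal N} = 0$ for every $\matholdcal N$.
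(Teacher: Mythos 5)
Your plan reproduces the paper's argument in all essentials: part (i) is verified exactly as you describe, by checking $A$-linearity via \eqref{eq5.11} together with \eqref{eq3.9} and $C$-contramodule linearity via \eqref{eq5.12}; part (ii) is obtained from the morphism properties and naturality of $\kappa$ at the induced entwined contramodules $(C,A,\matholdcal M)$ and $(A,C,\matholdcal M)$ mediated by the entwining relations; and part (iii) proceeds by the direct computation of $\vartheta'\circ\chi'=id$ followed by injectivity of $\vartheta'$ (in the paper this last step is carried out by bootstrapping $\kappa_{(A,C,\matholdcal M)}=0$ to $\kappa_{(A,\matholdcal N)}=0$ using that $(A,A,\pi_{\matholdcal N})$ is an epimorphism, and then to $\kappa_{\matholdcal P}=0$ using that $\mu_{\matholdcal P}$ is a monomorphism, which is exactly the reconstruction you anticipate as the main technical point). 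The proposal is correct and takes essentially the same route as the paper.
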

\begin{proof}
	(i) For any $(\matholdcal M,\pi_{\matholdcal M},\mu_{\matholdcal M})\in~ _{\hspace{1em}A}^{[C,-]}\mathfrak S(\psi)$, we first show that $\kappa_{\matholdcal M}$ is a morphism in $_A\mathfrak S$, i.e., the following diagram commutes
	\begin{equation}\label{eq5.15}
		\begin{tikzcd}[row sep=1.8em, column sep = 3.8em]
			(A,\matholdcal M) \arrow{r}{\kappa_{\matholdcal M}} \arrow{d}{\mu_{(A,\matholdcal M)}}
			&  \matholdcal M\arrow{d}{\mu_{\matholdcal M}}	\\
			(A,A,\matholdcal M) \arrow{r}
			{(A,\kappa_{\matholdcal M})} &(A,\matholdcal M) 		 			
		\end{tikzcd}
	\end{equation}
	where $\mu_{(A,\matholdcal M)}:=(\mu,\matholdcal M):(A,\matholdcal M)\longrightarrow(A,A,\matholdcal M)$.
	We note that
	\begin{equation}\label{eq5.16}
		\begin{array}{lll}
			(A,\kappa_{\matholdcal M})\circ \mu_{(A,\matholdcal M)}&=(A,\pi_{\matholdcal M})\circ(A,\rho_{\matholdcal M})\circ (A,A,\mu_{\matholdcal M})\circ (\mu,\matholdcal M)\\
			&=(A,\pi_{\matholdcal M})\circ(A,\rho_{\matholdcal M})\circ (\mu,A,\matholdcal M)\circ (A,\mu_{\matholdcal M})\\
			&=(A,\pi_{\matholdcal M})\circ(\psi,\matholdcal M)\circ \rho_{(A,\matholdcal M)}\circ (A,\mu,\matholdcal M)\circ(A,\mu_{\matholdcal M})&\mbox{(\textup{by}~(\ref{eq5.11}))}\\
			&=(A,\pi_{\matholdcal M})\circ(\psi,\matholdcal M)\circ \rho_{(A,\matholdcal M)}\circ(A,A,\mu_{\matholdcal M})\circ (A,\mu_{\matholdcal M})\\
			&=(A,\pi_{\matholdcal M})\circ(\psi,\matholdcal M)\circ (C,\mu_{\matholdcal M})\circ \rho_{\matholdcal M}\circ(A,\mu_{\matholdcal M})\\
			&=\mu_{\matholdcal M}\circ \pi_{\matholdcal M}\circ \rho_{\matholdcal M}\circ(A,\mu_{\matholdcal M})&\mbox{(\textup{by}~(\ref{eq3.9}))}\\
			&=\mu_{\matholdcal M}\circ \kappa_{\matholdcal M}&\mbox{(\textup{by}~(\ref{eq5.13}))}
		\end{array}
	\end{equation}
	Therefore diagram in (\ref{eq5.15}) commutes.
	To show that $\kappa_{\matholdcal M}$ is a morphism of $C$-contramodule objects in $\mathfrak S$, we now check that
	the following diagram commutes
	\begin{equation*}
		\begin{tikzcd}[row sep=1.8em, column sep = 3.8em]
			(C,A,\matholdcal M) \arrow{r}{(C,\kappa_{\matholdcal M})} \arrow{d}{\pi_{(A,\matholdcal M)}}
			& (C,\matholdcal M)\arrow{d}{\pi_{\matholdcal M}}	\\
			(A,\matholdcal M) \arrow{r}
			{\kappa_{\matholdcal M}} &\matholdcal M	 			
		\end{tikzcd}
	\end{equation*}
	where $\pi_{(A,\matholdcal M)}:(C.A,\matholdcal M)\xrightarrow{(\psi,\matholdcal M)}(A,C,\matholdcal M)\xrightarrow{(A,\pi_{\matholdcal M})}(A,\matholdcal M)$.
	We note that
	\begin{equation}\label{eq5.17}
		\begin{array}{lll}
			\kappa_{\matholdcal M}\circ \pi_{(A,\matholdcal M)}&=\pi_{\matholdcal M}\circ \rho_{\matholdcal M}\circ (A,\mu_{\matholdcal M})\circ (A,\pi_{\matholdcal M})\circ (\psi,\matholdcal M)\\
			&=\pi_{\matholdcal M}\circ \rho_{\matholdcal M}\circ (A,A,\pi_{\matholdcal M})\circ (A,\psi,\matholdcal M)\circ (A,C,\mu_{\matholdcal M})\circ (\psi,\matholdcal M)&\mbox{(\textup{by}~(\ref{eq3.9}))}\\
			&=\pi_{\matholdcal M}\circ \rho_{\matholdcal M}\circ (A,A,\pi_{\matholdcal M})\circ (A,\psi,\matholdcal M)\circ (\psi, A,\matholdcal M)\circ (C,A,\mu_{\matholdcal M})\\
			&=\pi_{\matholdcal M}\circ (C,\pi_{\matholdcal M})\circ \rho_{(C,\matholdcal M)}\circ (A,\psi,\matholdcal M)\circ (\psi, A,\matholdcal M)\circ (C,A,\mu_{\matholdcal M})\\
			&=\pi_{\matholdcal M}\circ (\Delta,\matholdcal M)\circ \rho_{(C,\matholdcal M)}\circ (A,\psi,\matholdcal M)\circ (\psi, A,\matholdcal M)\circ (C,A,\mu_{\matholdcal M})\\
			&=\pi_{\matholdcal M}\circ (\Delta,\matholdcal M)\circ(C,\rho_{\matholdcal M})\circ (C,A,\mu_{\matholdcal M})&\mbox{(\textup{by}~(\ref{eq5.12}))}\\
			&=\pi_{\matholdcal M}\circ (C,\pi_{\matholdcal M})\circ(C,\rho_{\matholdcal M})\circ (C,A,\mu_{\matholdcal M})\\
			&=\pi_{\matholdcal M}\circ (C,\kappa_{\matholdcal M})
		\end{array}
	\end{equation}
	It follows  that $\kappa_{\matholdcal M}$ is a morphism in $_{\hspace{1em}A}^{[C,-]}\mathfrak S(\psi)$. Further, for any morphism $\phi:\matholdcal M\longrightarrow \matholdcal N$ in $_{\hspace{1em}A}^{[C,-]}\mathfrak S(\psi)$, we can verify that $\kappa_{\matholdcal N}\circ (A,\phi)=\phi\circ \kappa_{\matholdcal M}$. Therefore $\kappa$ is a natural transformation in $W$. 
	
	\smallskip
	(ii) Let $\matholdcal M\in\mathfrak S$. We have to show that $\rho_{\matholdcal M}$ satisfies (\ref{eq5.11}) and (\ref{eq5.12}). Since $\kappa_{(C,A,\matholdcal M)}$ is a morphism in $_A\mathfrak S$, we have the following commutative diagram
	\begin{equation}\label{eq5.18}\small
		\begin{tikzcd}[row sep=1.8em, column sep = 3.8em]
			(A,A,\matholdcal M) \arrow{r}{(A,\epsilon,A,\matholdcal M)} \arrow{d} {\mu_{(A,A,\matholdcal M)}}
			&  (A,C,A,\matholdcal M)\arrow{r}{\kappa_{(C,A,\matholdcal M})}\arrow{d}{\mu_{(A,C,A,\matholdcal M)}} &(C,A,\matholdcal M)\arrow{d}{\mu_{(C,A,\matholdcal M)}}\\
			(A,A,A,\matholdcal M) \arrow{r}
			{(A,A,\epsilon,A,\matholdcal M)} &(A,A,C,A,\matholdcal M)\arrow{r}{(A,\kappa_{(C,A,\matholdcal M)})}&(A,C,A,\matholdcal M)
		\end{tikzcd}
	\end{equation}
	Here, we note from Lemma \ref{L3.11} that $\mu_{(C,A,\matholdcal M)}:(C,A,\matholdcal M)\xrightarrow{(C,\mu,\matholdcal M)}(C,A,A,\matholdcal M)\xrightarrow{(\psi,A,\matholdcal M)}(A,C,A,\matholdcal M)$. Further as $\mu_{(A,A,\matholdcal M)}=(\mu,A,\matholdcal M)$, it follows from \eqref{eq5.18} that
	\begin{equation}\label{eq5.19}
		(A,\kappa_{(C,A,\matholdcal M)})\circ (A,A,\epsilon,A,\matholdcal M)\circ (\mu,A,\matholdcal M)=\mu_{(C,A,\matholdcal M)}\circ \kappa_{(C,A,\matholdcal M)}\circ(A,\epsilon,A,\matholdcal M)
	\end{equation}
	Since $\kappa$ is a natural transformation, we have the following commutative diagram
	\begin{equation}\label{eq5.20}\small
		\begin{tikzcd}[row sep=1.8em, column sep = 3.8em]
			(A,A,\matholdcal M) \arrow{r}{(A,\epsilon,A,\matholdcal M)} \arrow{d} {(A,\mu,\matholdcal M)}
			&  (A,C,A,\matholdcal M)\arrow{r}{\kappa_{(C,A,\matholdcal M})}\arrow{d}{(A,\mu_{(C,A,\matholdcal M)})} &(C,A,\matholdcal M)\arrow{d}{\mu_{(C,A,\matholdcal M)}}\\
			(A,A,A,\matholdcal M) \arrow{r}
			{(A,A,\epsilon,A,\matholdcal M)} &(A,A,C,A,\matholdcal M)\arrow{r}{\kappa_{(A,C,A,\matholdcal M)}}&(A,C,A,\matholdcal M)
		\end{tikzcd}
	\end{equation}
	It follows from \eqref{eq5.20} that 
	\begin{align}\label{eq5.21}
		 \kappa_{(A,C,A,\matholdcal M)}\circ (A,A,\epsilon,A,\matholdcal M)\circ (A,\mu,\matholdcal M)
		=\mu_{(C,A,\matholdcal M)}\circ \kappa_{(C,A,\matholdcal M)}\circ(A,\epsilon,A,\matholdcal M)
	\end{align}
	Again as $\kappa$ is a natural transformation and $(\psi,\matholdcal M)$ is a morphism in $ _{\hspace{1em}A}^{[C,-]}\mathfrak S(\psi)$, we note that $\kappa_{(A,C,\matholdcal M)}\circ (A,\psi,\matholdcal M)=(\psi,\matholdcal M)\circ \kappa_{(C,A,\matholdcal M)}$. Then,
	\begin{equation*}
		\begin{array}{lll}
			(A,\rho_{\matholdcal M})\circ (\mu,A,\matholdcal M)&=(A,\eta,C,\matholdcal M)\circ (A,\kappa_{(A,C,\matholdcal M)})\circ (A,A,A,\epsilon,\matholdcal M)\circ (\mu,A,\matholdcal M)\\\notag
			&=(A,\eta,C,\matholdcal M)\circ (A,\kappa_{(A,C,\matholdcal M)})\circ(A,A,\psi,\matholdcal M)\circ (A,A,\epsilon,A,\matholdcal M)\circ (\mu,A,\matholdcal M)&\mbox{(\textup{by}~(\ref{eq2.2}))}\\\notag
			&=(A,\eta,C,\matholdcal M)\circ(A,\psi,\matholdcal M)\circ (A,\kappa_{(C,A,\matholdcal M)})\circ (A,A,\epsilon,A,\matholdcal M)\circ (\mu,A,\matholdcal M)\\\notag
			&= (A,\eta,C,\matholdcal M)\circ(A,\psi,\matholdcal M)\circ\mu_{(C,A,\matholdcal M)}\circ \kappa_{(C,A,\matholdcal M)}\circ(A,\epsilon,A,\matholdcal M)&\mbox{(\textup{by}~(\ref{eq5.19}))}\\\notag
			&=(A,\eta,C,\matholdcal M)\circ(A,\psi,\matholdcal M)\circ(\psi,A,\matholdcal M)\circ (C,\mu,\matholdcal M)\circ \kappa_{(C,A,\matholdcal M)}\circ(A,\epsilon,A,\matholdcal M)\\
			&=(A,C,\eta,\matholdcal M)\circ (\psi,A,\matholdcal M)\circ (C,\mu,\matholdcal M)\circ \kappa_{(C,A,\matholdcal M)}\circ(A,\epsilon,A,\matholdcal M)&\mbox{(\textup{by}~(\ref{eq2.1}))}\\\notag
			&=(\psi,\matholdcal M)\circ (C,A,\eta,\matholdcal M)\circ (C,\mu,\matholdcal M)\circ \kappa_{(C,A,\matholdcal M)}\circ(A,\epsilon,A,\matholdcal M)\\
			&=(\psi,\matholdcal M)\circ (C,\eta,A,\matholdcal M)\circ (C,\mu,\matholdcal M)\circ \kappa_{(C,A,\matholdcal M)}\circ(A,\epsilon,A,\matholdcal M)\\
			&=(\psi,\matholdcal M)\circ (\eta,C,A,\matholdcal M)\circ (\psi,A,\matholdcal M)\circ (C,\mu,\matholdcal M)\circ \kappa_{(C,A,\matholdcal M)}\circ(A,\epsilon,A,\matholdcal M)&\mbox{(\textup{by}~(\ref{eq2.1}))}\\\notag
			&=(\psi,\matholdcal M)\circ (\eta,C,A,\matholdcal M)\circ\mu_{(C,A,\matholdcal M)}\circ \kappa_{(C,A,\matholdcal M)}\circ(A,\epsilon,A,\matholdcal M)\\
			&=(\psi,\matholdcal M)\circ (\eta,C,A,\matholdcal M)\circ \kappa_{(A,C,A,\matholdcal M)}\circ (A,A,\epsilon,A,\matholdcal M)\circ (A,\mu,\matholdcal M)&\mbox{(\textup{by}~(\ref{eq5.21}))}\\\notag
			&=(\psi,\matholdcal M)\circ \rho_{(A,\matholdcal M)}\circ (A,\mu,\matholdcal M)&\mbox{(\textup{by}~(\ref{eq5.14}))}
		\end{array}
	\end{equation*} This shows that $\rho_{\matholdcal M}$ satisfies (\ref{eq5.11}).
	
	\smallskip
	Further, as 
	$\kappa_{(A,C,\matholdcal M)}$ is a morphism in $^{[C,-]}\mathfrak S$, we consider the following commutative diagram
	\begin{equation}\label{eq5.22}
		\begin{tikzcd}[row sep=1.8em, column sep = 3.8em]
			(C,A,A,\matholdcal M) \arrow{r}{(C,A,A,\epsilon,\matholdcal M)} \arrow{dr}[swap]{(A,\psi,\matholdcal M)\circ (\psi,A,\matholdcal M)} 
			&  (C,A,A,C,\matholdcal M)\arrow{r}{(C,\kappa_{(A,C,\matholdcal M)})}\arrow{d}{\pi_{(A,A,C,\matholdcal M)}} &(C,A,C,\matholdcal M)\arrow{d}{\pi_{(A,C,\matholdcal M)}}\\
			&(A,A,C,\matholdcal M)\arrow{r}{\kappa_{(A,C,\matholdcal M)}}&(A,C,\matholdcal M)
		\end{tikzcd}
	\end{equation}
	where we know from Lemma \ref{L3.9} that $\pi_{(A,C,\matholdcal M)}:(C,A,C,\matholdcal M)\xrightarrow{(\psi,C,\matholdcal M)}(A,C,C,\matholdcal M)\xrightarrow{(A,\Delta,\matholdcal M)}(A,C,\matholdcal M)$ and that $\pi_{(A,A,C,\matholdcal M)}:(C,A,A,C,\matholdcal M)\xrightarrow{(\psi,A,C,\matholdcal M)}(A,C,A,C,\matholdcal M)\xrightarrow{(A,\pi_{(A,C,\matholdcal M)})}(A,A,C,\matholdcal M)$. Here, $\pi_{(A,A,C,\matholdcal M)}\circ (C,A,A,\epsilon,\matholdcal M)=(A,\psi,\matholdcal M)\circ (\psi,A,\matholdcal M)$ as 
	\begin{equation*}
		\begin{array}{lll}
			\pi_{(A,A,C,\matholdcal M)}\circ (C,A,A,\epsilon,\matholdcal M)&=(A,A,\Delta,\matholdcal M)\circ (A,\psi,C,\matholdcal M)\circ (\psi,A,C,\matholdcal M)\circ (C,A,A,\epsilon,\matholdcal M)\\\notag&=(A,A,\Delta,\matholdcal M)\circ (A,\psi,C,\matholdcal M)\circ(A,C,A,\epsilon,\matholdcal M)\circ (\psi,A,\matholdcal M)\\\notag&=(A,A,\Delta,\matholdcal M)\circ(A,A,C,\epsilon,\matholdcal M)\circ (A,\psi,\matholdcal M)\circ (\psi,A,\matholdcal M)
			\\\notag&=(A,\psi,\matholdcal M)\circ (\psi,A,\matholdcal M)
		\end{array}
	\end{equation*}
	Therefore, from (\ref{eq5.22}), we get
	\begin{equation}\label{eq5.23}
		\kappa_{(A,C,\matholdcal M)}\circ (A,\psi,\matholdcal M)\circ (\psi,A,\matholdcal M)=\pi_{(A,C,\matholdcal M)}\circ (C,\kappa_{(A,C,\matholdcal M)})\circ (C,A,A,\epsilon,\matholdcal M)
	\end{equation}
	Again as $\kappa$ is a natural transformation and $(A,\Delta,\matholdcal M)$  is a morphism in $ _{\hspace{1em}A}^{[C,-]}\mathfrak S(\psi)$, we have the following commutative diagram
	\begin{equation}\label{eq5.24}
		\begin{tikzcd}[row sep=1.8em, column sep = 5.2em]
			(C,A,A,\matholdcal M) \arrow{r}{(A,\psi,\matholdcal M)\circ (\psi,A,\matholdcal M)} 
			&  (A,A,C,\matholdcal M)\arrow{r}{(A,A,\epsilon,C,\matholdcal M)}\arrow{dr}{id} &(A,A,C,C,\matholdcal M)\arrow{d}{(A,A,\Delta,\matholdcal M)}\arrow{r}{\kappa_{(A,C,C,\matholdcal M)}}&(A,C,C,\matholdcal M)\arrow{d}{(A,\Delta,\matholdcal M)}\\
			&&(A,A,C,\matholdcal M) \arrow{r}
			{\kappa_{(A,C,\matholdcal M)}} &(A,C,\matholdcal M)
		\end{tikzcd}
	\end{equation}
	Therefore, we get 
	\begin{equation}\label{eq5.25}
		\kappa_{(A,C,\matholdcal M)}\circ (A,\psi,\matholdcal M)\circ (\psi,A,\matholdcal M)=(A,\Delta,\matholdcal M)\circ \kappa_{(A,C,C,\matholdcal M)}\circ (A,A,\epsilon,C,\matholdcal M)\circ (A,\psi,\matholdcal M)\circ (\psi,A,\matholdcal M)
	\end{equation}
	We now see
	\begin{equation}\label{eq5.26}
		\begin{array}{lll}
			(\Delta,\matholdcal M)\circ (C,\rho_{\matholdcal M}) &=(\Delta,\matholdcal M)\circ (C,\eta,C,\matholdcal M)\circ (C,\kappa_{(A,C,\matholdcal M)})\circ (C,A,A,\epsilon,\matholdcal M)\\\notag
			&=(\eta,C,\matholdcal M)\circ \pi_{(A,C,\matholdcal M)}\circ (C,\kappa_{(A,C,\matholdcal M)})\circ (C,A,A,\epsilon,\matholdcal M)\\\notag
			&=(\eta,C,\matholdcal M)\circ	\kappa_{(A,C,\matholdcal M)}\circ (A,\psi,\matholdcal M)\circ (\psi,A,\matholdcal M)&\mbox{(\textup{by}~(\ref{eq5.23}))}\\\notag
			&=(\eta,C,\matholdcal M)\circ (A,\Delta,\matholdcal M)\circ \kappa_{(A,C,C,\matholdcal M)}\circ (A,A,\epsilon,C,\matholdcal M)\circ (A,\psi,\matholdcal M)\circ (\psi,A,\matholdcal M)&\mbox{(\textup{by}~(\ref{eq5.25}))}\\\notag
			&=(\Delta,\matholdcal M)\circ (\eta,C,C,\matholdcal M)\circ \kappa_{(A,C,C,\matholdcal M)}\circ (A,A,\epsilon,C,\matholdcal M)\circ (A,\psi,\matholdcal M)\circ (\psi,A,\matholdcal M)\\\notag
			&=(\Delta,\matholdcal M)\circ \rho_{(C,\matholdcal M)}\circ (A,\psi,\matholdcal M)\circ (\psi,A,\matholdcal M)&\mbox{(\textup{by}~\eqref{eq5.14})}
		\end{array}
	\end{equation}
	This shows that $\rho_{\matholdcal M}$ satisfies (\ref{eq5.12}). Hence $\rho\in W_1$. 
	
	\smallskip
	(iii) We start by showing that $\vartheta'\circ \chi'=id$ We consider a natural transformation  $\rho\in W_1$ and set $\chi'(\rho):=\kappa$, $\vartheta'(\kappa):=\rho'$. Then, for any $\matholdcal M\in\mathfrak S$ we have
	\begin{equation*}
		\begin{array}{lll}
			\rho'_{\matholdcal M}&=(\eta,C,\matholdcal M)\circ \kappa_{(A,C,\matholdcal M)}\circ (A,A,\epsilon,\matholdcal M)&\mbox{(\textup{by}~(\ref{eq5.14}))}\\
			&=(\eta,C,\matholdcal M)\circ \pi_{(A,C,\matholdcal M)}\circ(\rho_{(A,C,\matholdcal M)})\circ(A,\mu_{(A,C\matholdcal M)})\circ (A,A,\epsilon,\matholdcal M)&\mbox{(\textup{by}~(\ref{eq5.13}))}\\
			&=(\eta,C,\matholdcal M)\circ (A,\Delta,\matholdcal M)\circ (\psi,C,\matholdcal M)\circ(\rho_{(A,C,\matholdcal M)})\circ(A,\mu,C,\matholdcal M)\circ (A,A,\epsilon,\matholdcal M)&\mbox{\((\mu_{(A,C\matholdcal M)}=(\mu,C,\matholdcal M))\)}\\
			&=(\eta,C,\matholdcal M)\circ (A,\Delta,\matholdcal M)\circ(A,\rho_{(C,\matholdcal M)})\circ (\mu,A,C,\matholdcal M)\circ(A,A,\epsilon,\matholdcal M)&\mbox{(\textup{by}~(\ref{eq5.11}))}\\
			&=(\Delta,\matholdcal M)\circ(\eta, C,C,\matholdcal M)\circ (A,\rho_{(C,\matholdcal M)})\circ (\mu,A,C,\matholdcal M)\circ(A,A,\epsilon,\matholdcal M)\\
			&=(\Delta,\matholdcal M)\circ \rho_{(C,\matholdcal M)}\circ (\eta,A,A,C,\matholdcal M)\circ (\mu,A,C,\matholdcal M)\circ(A,A,\epsilon,\matholdcal M)\\
			&=(\Delta,\matholdcal M)\circ \rho_{(C,\matholdcal M)}\circ(A,A,\epsilon,\matholdcal M)\\
			&=(\Delta,\matholdcal M)\circ (C,\epsilon,\matholdcal M)\circ \rho_{\matholdcal M}\\
			&=\rho_{\matholdcal M}
		\end{array}
	\end{equation*}
	It follows that $\vartheta'\circ \chi'=id.$ 
	As in the proof of Proposition \ref{P5.1}(iii), it is now enough to show that $\vartheta'$ is a monomorphism. For this, we assume that there exists a natural transformation $ 0\neq\kappa\in W$ such that $\vartheta'(\kappa)=\rho=0$. Let $\matholdcal M\in\mathfrak S$. Since $\kappa$ is a natural transformation and $(A,\Delta,\matholdcal M)$ is a morphism in
	 $ _{\hspace{1em}A}^{[C,-]}\mathfrak S(\psi)$, we obtain the following commutative diagram
	\begin{equation}
		\begin{tikzcd}[row sep=1.8em, column sep = 3.8em]
			(A,A,C,\matholdcal M)\arrow{r}{(A,A,\epsilon,C,\matholdcal M)}\arrow{dr}{id} &(A,A,C,C,\matholdcal M)\arrow{d}{(A,A,\Delta,\matholdcal M)}\arrow{r}{\kappa_{(A,C,C,\matholdcal M)}}&(A,C,C,\matholdcal M)\arrow{d}{(A,\Delta,\matholdcal M)}\\
			&(A,A,C,\matholdcal M) \arrow{r}
			{\kappa_{(A,C,\matholdcal M)}} &(A,C,\matholdcal M)\arrow{r}{(\eta, C,\matholdcal M)}&(C,\matholdcal M)
		\end{tikzcd}
	\end{equation}
	Then, we see that
	\begin{equation}\label{527tg}\small
		\begin{array}{lll}
			(\eta,C,\matholdcal M)\circ \kappa_{(A,C,\matholdcal M)}&=	(\eta,C,\matholdcal M)\circ(A,\Delta,\matholdcal M)\circ \kappa_{(A,C,C,\matholdcal M)}\circ (A,A,\epsilon,C,\matholdcal M)\\
			&=(\Delta,\matholdcal M)\circ (\eta,C,C,\matholdcal M)\circ \kappa_{(A,C,C,\matholdcal M)}\circ (A,A,\epsilon,C,\matholdcal M)\\
			&=(\Delta,\matholdcal M)\circ\rho_{(C,\matholdcal M)}=0\\
		\end{array}
	\end{equation}
	It follows from \eqref{527tg} that $(A,\eta,C,\matholdcal M)\circ (A,\kappa_{(A,C,\matholdcal M)})=0$. Further as $\kappa_{(A,C,\matholdcal M)}$ is a morphishm in $_A\mathfrak S$ and $\mu\circ (\eta\otimes A)=id=\mu\circ (A\otimes\eta)$, we have the following commutative diagram
	\begin{equation}\label{528tg}\small
		\begin{tikzcd}[row sep=1.8em, column sep = 3.8em]
			(A,A,C,\matholdcal M)\arrow{d}{\mu_{(A,A,C,\matholdcal  M)}}\arrow{r}{\kappa_{(A,C,\matholdcal M)}}&(A,C,\matholdcal M)\arrow{d}{(\mu,C,\matholdcal  M)}\arrow{r}{id}&(A,C,\matholdcal M)\\
			(A,A,A,C,\matholdcal M) \arrow{r}
			{(A,\kappa_{(A,C,\matholdcal M)})} &(A,A,C,\matholdcal M)\arrow{ur}[swap]{(A,\eta,C,\matholdcal M)}
		\end{tikzcd}
	\end{equation}
	It follows that $\kappa_{(A,C,\matholdcal M)}=(A,\eta,C,\matholdcal M)\circ (A,\kappa_{(A,C,\matholdcal M)})\circ\mu_{(A,A,C,\matholdcal  M)}=0$ for any $\matholdcal M\in\mathfrak S$. We now consider any $(\matholdcal N,\pi_{\matholdcal N})\in~^{[C,-]}\mathfrak S$. Since $\kappa$ is a natural transformation and $(A,\pi_{\matholdcal N})$ is a morphism in
	 $ _{\hspace{1em}A}^{[C,-]}\mathfrak S(\psi)$, we have the following commutative diagram 
	\begin{equation}\label{529tg}\small
		\begin{tikzcd}[row sep=1.8em, column sep = 3.8em]
			(A,A,C,\matholdcal N)\arrow{d}{(A,A,\pi_{\matholdcal  N})}\arrow{r}{\kappa_{(A,C,\matholdcal N)}}&(A,C,\matholdcal N)\arrow{d}{(A,\pi_{\matholdcal N})}\\
			(A,A,\matholdcal N) \arrow{r}
			{\kappa_{(A,\matholdcal N)}} &(A,\matholdcal N)
		\end{tikzcd}
	\end{equation}
	Since $\kappa_{(A,C,\matholdcal M)}=0$ for all $\matholdcal M\in\mathfrak S$, it follows from \eqref{529tg} that $\kappa_{(A,\matholdcal N)}\circ (A,A,\pi_{\matholdcal N})=(A,\pi_{\matholdcal N})\circ \kappa_{(A,C,\matholdcal N)}=0$. Since $(A,A,\pi_{\matholdcal N})\circ (A,A,\epsilon,\matholdcal N)=id$ in $\mathfrak S$ and cokernels in $_{\hspace{1em}A}^{[C,-]}\mathfrak S(\psi)$ are computed in $\mathfrak S$, it follows that 
	  $(A,A,\pi_{\matholdcal N})$ is also an epimorphism in 
	$_{\hspace{1em}A}^{[C,-]}\mathfrak S(\psi)$. Hence, $\kappa_{(A,\matholdcal N)}=0$. Now for any object $(\matholdcal P,\pi_{\matholdcal P},\mu_{\matholdcal P})\in~_{\hspace{1em}A}^{[C,-]}\mathfrak S(\psi)$, we have
	$\mu_{\matholdcal P}\circ \kappa_{\matholdcal P}=\kappa_{(A,\matholdcal P)}\circ (A,\mu_{\matholdcal P})=0.$ Since $\mu_{\matholdcal P}$ is a monomorphism in $_{\hspace{1em}A}^{[C,-]}\mathfrak S(\psi)$, it follows that $\kappa_{\matholdcal P}=0$ for every $\matholdcal P\in {_{\hspace{1em}A}^{[C,-]}\mathfrak S(\psi)}$. This shows $\kappa=0$, which is a contradiction.
\end{proof}
\begin{Thm}\label{T5.5}
	Let $(A,C,\psi)$ be an entwining structure  and $\mathfrak S$ be a $k$-linear Grothendieck category. Then, the following are equivalent.
	
	\smallskip
	(i) The functor $^{[C,-]}\mathcal F:~_{\hspace{1em}A}^{[C,-]}\mathfrak S(\psi)\longrightarrow~ ^{[C,-]}\mathfrak S$ is separable 
	
	\smallskip
	(ii) There exists $\rho\in W_1$ such that
	\begin{equation}\label{eq5.27}
		\rho_{\matholdcal M}\circ (\mu,\matholdcal M)=(\epsilon,\matholdcal M)\circ (\eta,\matholdcal M)
	\end{equation}
	for each $\matholdcal M\in\mathfrak S$.
\end{Thm}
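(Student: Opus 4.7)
The strategy is to recognize both conditions as reformulations of the same splitting condition via the bijection $W\cong W_1$ from Proposition \ref{P5.4}. By the Rafael-style criterion (\cite[Proposition 1.1]{BCMZ}), the left adjoint $^{[C,-]}\mathcal F$ is separable if and only if the unit of the adjunction $(^{[C,-]}\mathcal F,{}^{[C,-]}\mathcal T)$ admits a natural retraction. Inspecting Proposition \ref{L3.10}, one sees that the unit at an object $(\matholdcal N,\pi_{\matholdcal N},\mu_{\matholdcal N})\in~_{\hspace{1em}A}^{[C,-]}\mathfrak S(\psi)$ is precisely the structure map $\mu_{\matholdcal N}:\matholdcal N\longrightarrow (A,\matholdcal N)={}^{[C,-]}\mathcal T\,{}^{[C,-]}\mathcal F(\matholdcal N)$, so condition $(i)$ amounts to the existence of $\kappa\in W$ with $\kappa_{\matholdcal N}\circ \mu_{\matholdcal N}=id_{\matholdcal N}$ for every $\matholdcal N$.

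For $(ii)\Rightarrow (i)$, I would set $\kappa:=\chi'(\rho)\in W$, so that $\kappa_{\matholdcal N}=\pi_{\matholdcal N}\circ\rho_{\matholdcal N}\circ (A,\mu_{\matholdcal N})$ by \eqref{eq5.13}. Using the associativity of the $A$-action on $\matholdcal N$ in the form $(A,\mu_{\matholdcal N})\circ \mu_{\matholdcal N}=(\mu,\matholdcal N)\circ \mu_{\matholdcal N}$, then the hypothesis $\rho_{\matholdcal N}\circ (\mu,\matholdcal N)=(\epsilon,\matholdcal N)\circ (\eta,\matholdcal N)$, the $A$-module unit axiom $(\eta,\matholdcal N)\circ \mu_{\matholdcal N}=id_{\matholdcal N}$, and the $C$-contramodule counit axiom $\pi_{\matholdcal N}\circ (\epsilon,\matholdcal N)=id_{\matholdcal N}$ from Definition \ref{D3.1}, I would assemble the chain
\begin{equation*}
\kappa_{\matholdcal N}\circ \mu_{\matholdcal N}=\pi_{\matholdcal N}\circ\rho_{\matholdcal N}\circ (\mu,\matholdcal N)\circ \mu_{\matholdcal N}=\pi_{\matholdcal N}\circ (\epsilon,\matholdcal N)\circ (\eta,\matholdcal N)\circ \mu_{\matholdcal N}=id_{\matholdcal N},
\end{equation*}
which is precisely the Rafael splitting condition.

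For the converse $(i)\Rightarrow (ii)$, I would set $\rho:=\vartheta'(\kappa)\in W_1$, so that $\rho_{\matholdcal M}=(\eta,C,\matholdcal M)\circ \kappa_{(A,C,\matholdcal M)}\circ (A,A,\epsilon,\matholdcal M)$ by \eqref{eq5.14}. The key observation is that, for any $\matholdcal M\in\mathfrak S$, the object $(A,C,\matholdcal M)\in~_{\hspace{1em}A}^{[C,-]}\mathfrak S(\psi)$ from Lemma \ref{L3.9} carries the $A$-module structure $\mu_{(A,C,\matholdcal M)}=(\mu,C,\matholdcal M)$, so the splitting $\kappa\circ\mu=id$ evaluated at this particular object yields $\kappa_{(A,C,\matholdcal M)}\circ (\mu,C,\matholdcal M)=id$. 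Combining this with the bifunctorial commutation $(A,A,\epsilon,\matholdcal M)\circ (\mu,\matholdcal M)=(\mu,C,\matholdcal M)\circ (A,\epsilon,\matholdcal M)$ (which holds because $\mu$ and $\epsilon$ act on disjoint tensor slots), I obtain
\begin{equation*}
\rho_{\matholdcal M}\circ (\mu,\matholdcal M)=(\eta,C,\matholdcal M)\circ (A,\epsilon,\matholdcal M).
\end{equation*}
A final application of naturality of $(\epsilon,-)$ to the morphism $(\eta,\matholdcal M):(A,\matholdcal M)\longrightarrow \matholdcal M$ identifies the right-hand side with $(\epsilon,\matholdcal M)\circ (\eta,\matholdcal M)$, completing the proof.

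The principal difficulty is purely notational: keeping track of the multi-variable $(V_1,\ldots,V_n,\matholdcal M)$ conventions and recognizing which commutations are instances of bifunctorial naturality versus the entwining compatibilities \eqref{eq5.11}--\eqref{eq5.12} already encoded in the definition of $W_1$. Once this bookkeeping is managed, both directions reduce to short computations driven only by the module/contramodule axioms and the explicit formulas for $\chi'$ and $\vartheta'$ provided in Proposition \ref{P5.4}.
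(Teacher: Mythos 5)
Your proposal is correct and follows essentially the same route as the paper: both directions reduce to the Rafael criterion for the unit $\mu_{\matholdcal N}$ of the adjunction $(^{[C,-]}\mathcal F, {}^{[C,-]}\mathcal T)$, with $(ii)\Rightarrow(i)$ obtained by the chain $\pi_{\matholdcal M}\circ\rho_{\matholdcal M}\circ(\mu,\matholdcal M)\circ\mu_{\matholdcal M}=id$ and $(i)\Rightarrow(ii)$ by evaluating the splitting at the object $(A,C,\matholdcal M)$ and commuting $(A,A,\epsilon,\matholdcal M)$ past $(\mu,\matholdcal M)$, exactly as in the paper's proof.
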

\begin{proof}
	$(i) \Rightarrow (ii).$	We know that $(^{[C,-]}\mathcal F, ^{[C,-]}\mathcal T)$ is an adjoint pair. Let $\omega':id_{_{\hspace{1em}A}^{[C,-]}\mathfrak S(\psi)}\longrightarrow {^{[C,-]}\mathcal T}{^{[C,-]}\mathcal F}$ denote the unit of this adjunction, which is given by $\omega'_{\matholdcal N}:\matholdcal N\xrightarrow{\mu_{\matholdcal N}} (A,\matholdcal N)$ for every $(\matholdcal N,\pi_{\matholdcal N},\mu_{\matholdcal N})\in ~_{\hspace{1em}A}^{[C,-]}\mathfrak S(\psi)$. Since $^{[C,-]}\mathcal F$ is separable, by \cite[Proposition 1.1]{BCMZ}, there exists a natural transformation $\kappa\in W_1$ such that $\kappa\circ \omega'=id$. We set $\rho:=\vartheta'(\kappa)$. Then $\rho\in W_1$ and for every $\matholdcal M\in\mathfrak S$, we have
	\begin{equation*}
		\begin{array}{lll}
			\rho_{\matholdcal M}\circ (\mu,\matholdcal M)&=(\eta,C,\matholdcal M)\circ \kappa_{(A,C,\matholdcal M)}\circ (A,A,\epsilon,\matholdcal M)\circ (\mu,\matholdcal M)&\mbox{(\textup{by}~(\ref{eq5.14}))}\\
			&=(\eta,C,\matholdcal M)\circ \kappa_{(A,C,\matholdcal M)}\circ\mu_{(A,C,\matholdcal M)}\circ (A,\epsilon,\matholdcal M)\\
			&=(\eta,C,\matholdcal M)\circ \kappa_{(A,C,\matholdcal M)}\circ(\omega'_{(A,C,\matholdcal M)})\circ (A,\epsilon,\matholdcal M)\\
			&=(\eta,C,\matholdcal M)\circ(A,\epsilon,\matholdcal M)&\mbox{\((\textup{as}~(\kappa\circ \omega'=id))\)}\\\notag
			&=(\epsilon,\matholdcal M)\circ (\eta,\matholdcal M)
		\end{array}
	\end{equation*}
	This shows that $\rho$ satisfies (\ref{eq5.27}) for every $\matholdcal M\in\mathfrak S$. 
	
	\smallskip
	$(ii) \Rightarrow (i).$ We consider a natural transformation $\rho\in W_1$ that satisfies (\ref{eq5.27}).  We set $\kappa:=\chi(\rho) \in W$. For any $\matholdcal M\in~{_{\hspace{1em}A}^{[C,-]}\mathfrak S(\psi)}$ we see that
	\begin{equation*}
		\begin{array}{lll}
			\kappa_{\matholdcal M}\circ \omega'_{\matholdcal M}=\kappa_{\matholdcal M}\circ \mu_{\matholdcal M}&=\pi_{\matholdcal M}\circ \rho_{\matholdcal M}\circ (A,\mu_{\matholdcal M})\circ \mu_{\matholdcal M}&\mbox{\textup{by}~\eqref{eq5.13}}\\&=\pi_{\matholdcal M}\circ \rho_{\matholdcal M}\circ(\mu,\matholdcal M)\circ \mu_{\matholdcal M}\\
			&=\pi_{\matholdcal M}\circ(\epsilon,\matholdcal M)\circ(\eta,\matholdcal M)\circ\mu_{\matholdcal M}\\
			&=id_\matholdcal M
		\end{array}
	\end{equation*}
	Hence, $\kappa\circ \omega'=id$, and by \cite[Proposition 1.1]{BCMZ}, it follows that $^{[C,-]}\mathcal F$ is separable. This completes the proof.
\end{proof}

\smallskip
We now turn to the case of entwined comodule objects and consider the forgetful functor $\mathcal F^C:\mathfrak S_A^C(\psi)\longrightarrow \mathfrak S^C$. For this, we set $W':=Nat(id_{\mathfrak S_A^C(\psi)}, \mathcal T^C\mathcal F^C)$. Let $W_1'$ be the $k$-vector space of natural transformations $\rho:(-\otimes C)\longrightarrow (-\otimes A\otimes A)$ between endofunctors on $\mathfrak S$ such that for each $\mathcal M\in\mathfrak S$, we have commutative diagrams
\begin{equation}\label{eq5.28}
	\begin{tikzcd}[row sep=1.8em, column sep = 3.8em]
		\mathcal M\otimes C \arrow{r}{(\mathcal M\otimes \Delta)} \arrow{d} {(\mathcal M\otimes \Delta)}
		&  (\mathcal M\otimes C\otimes C)\arrow{r}{(\rho_{\mathcal M}\otimes C)} &(\mathcal M\otimes A\otimes A\otimes C)\\
		(\mathcal M\otimes C\otimes C) \arrow{r}
		{\rho_{(\mathcal M\otimes C)}} &(\mathcal M\otimes C\otimes A\otimes A)\arrow{r}{(\mathcal M\otimes \psi\otimes A)}&(\mathcal M\otimes A\otimes C\otimes A)\arrow{u}{(\mathcal M\otimes A\otimes \psi)}
	\end{tikzcd}
\end{equation} and 
\begin{equation}\label{eq5.29}
	\begin{tikzcd}[row sep=1.8em, column sep = 3.8em]
		(\mathcal M\otimes C\otimes A) \arrow{r}{(\rho_{\mathcal M}\otimes A)} \arrow{d} {(\mathcal M\otimes \psi)}
		&  (\mathcal M\otimes A\otimes A\otimes A)\arrow{r}{(\mathcal M\otimes A\otimes \mu)} &(\mathcal M\otimes A\otimes A)\\
		(\mathcal M\otimes A\otimes C) \arrow{rr}
		{\rho_{(\mathcal M\otimes A)}} &&(\mathcal M\otimes A\otimes A\otimes A)\arrow{u}{(\mathcal M\otimes \mu\otimes A)}
	\end{tikzcd}
\end{equation}
In that case, we define a map $\chi':W_1'\longrightarrow W'$, $\rho\mapsto \kappa$, where $\kappa$ is a natural transformation given by setting
\begin{equation}
	\kappa_{\mathcal N}:\mathcal N\xrightarrow{\Delta_{\mathcal N}}\mathcal N\otimes C\xrightarrow{\rho_{\mathcal N}}\mathcal N\otimes A\otimes A\xrightarrow{\mu_{\mathcal N}\otimes A}\mathcal N\otimes A
\end{equation}
for any $(\mathcal N,\Delta_{\mathcal N},\mu_{\mathcal N})\in\mathfrak S^C_A(\psi).$  Further,  we define a map $\vartheta':W'\longrightarrow W_1'$ $\kappa\mapsto \rho$   by setting
\begin{equation}
	\rho_{\mathcal M}:\mathcal M\otimes C\xrightarrow{\mathcal M\otimes C\otimes \eta}\mathcal M\otimes C\otimes A\xrightarrow{\kappa_{(\mathcal M \otimes C\otimes A)}}\mathcal M\otimes C\otimes A\otimes  A\xrightarrow{\mathcal M\otimes \epsilon\otimes A\otimes A}\mathcal M\otimes A\otimes A
\end{equation} for any $\mathcal M\in \mathfrak S$. 
Then, it may be verified that $\chi'$ and $\vartheta'$ are mutually inverse isomorphisms.
\begin{Thm}\label{Tlst}
	Let $(A,C,\psi)$ be an entwining structure and $\mathfrak S$ be a $k$-linear Grothendieck category. Then, the following are equivalent.
	
	\smallskip
	(i) The functor $\mathcal F^C:\mathfrak S^C_A(\psi)\longrightarrow \mathfrak S^C$ is separable.
	
	\smallskip
	(ii) There exists $\rho\in W_1'$ such that
	\begin{equation}
		(\mu_{\mathcal M}\otimes A)\circ \rho_{\mathcal M}=(\mathcal M\otimes \eta)\circ (\mathcal M\otimes \epsilon)
	\end{equation}
	for each $\mathcal M\in\mathfrak S$.
\end{Thm}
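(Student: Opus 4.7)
The plan is to adapt the argument of Theorem \ref{T5.5} to the comodule setting, using the characterization of separability for the right adjoint in the pair $(\mathcal T^C,\mathcal F^C)$ of Proposition \ref{P2.5}. By \cite[Proposition 1.1]{BCMZ}, $\mathcal F^C$ is separable if and only if the counit $\omega:\mathcal T^C\mathcal F^C\longrightarrow id_{\mathfrak S_A^C(\psi)}$ of this adjunction admits a natural section, i.e., there exists $\kappa\in W'=Nat(id_{\mathfrak S_A^C(\psi)},\mathcal T^C\mathcal F^C)$ with $\omega\circ\kappa=id$. For each $(\mathcal N,\Delta_{\mathcal N},\mu_{\mathcal N})\in\mathfrak S_A^C(\psi)$, the counit is given by $\omega_{\mathcal N}=\mu_{\mathcal N}:\mathcal N\otimes A\longrightarrow\mathcal N$. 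I will transport this splitting condition across the bijection $\chi':W_1'\xrightarrow{\sim}W'$ with inverse $\vartheta'$, whose construction has already been carried out just before the statement of the theorem.

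For $(i)\Rightarrow(ii)$, I would fix $\kappa\in W'$ with $\mu_{\mathcal N}\circ\kappa_{\mathcal N}=id_{\mathcal N}$ for every $\mathcal N\in\mathfrak S_A^C(\psi)$, and set $\rho:=\vartheta'(\kappa)\in W_1'$, so that for $\mathcal M\in\mathfrak S$ one has $\rho_{\mathcal M}=(\mathcal M\otimes\epsilon\otimes A\otimes A)\circ\kappa_{(\mathcal M\otimes C\otimes A)}\circ(\mathcal M\otimes C\otimes\eta)$. By Lemma \ref{L2.4} applied to $\mathcal M\otimes C\in\mathfrak S^C$, the object $\mathcal M\otimes C\otimes A$ lies in $\mathfrak S_A^C(\psi)$ with module action $\mu_{(\mathcal M\otimes C\otimes A)}=\mathcal M\otimes C\otimes\mu$, so the hypothesis on $\kappa$ specialises to $(\mathcal M\otimes C\otimes\mu)\circ\kappa_{(\mathcal M\otimes C\otimes A)}=id$. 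Substituting into the formula for $\rho_{\mathcal M}$ and using $(\mathcal M\otimes\mu)\circ(\mathcal M\otimes\epsilon\otimes A\otimes A)=(\mathcal M\otimes\epsilon\otimes A)\circ(\mathcal M\otimes C\otimes\mu)$ yields
\begin{equation*}
(\mathcal M\otimes\mu)\circ\rho_{\mathcal M}=(\mathcal M\otimes\epsilon\otimes A)\circ(\mathcal M\otimes C\otimes\mu)\circ\kappa_{(\mathcal M\otimes C\otimes A)}\circ(\mathcal M\otimes C\otimes\eta)=(\mathcal M\otimes\epsilon\otimes A)\circ(\mathcal M\otimes C\otimes\eta)=(\mathcal M\otimes\eta)\circ(\mathcal M\otimes\epsilon),
\end{equation*}
which is the required identity in (ii).

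For $(ii)\Rightarrow(i)$, given $\rho\in W_1'$ satisfying the identity in (ii), I would set $\kappa:=\chi'(\rho)\in W'$, so that $\kappa_{\mathcal N}=(\mu_{\mathcal N}\otimes A)\circ\rho_{\mathcal N}\circ\Delta_{\mathcal N}$, and verify $\mu_{\mathcal N}\circ\kappa_{\mathcal N}=id_{\mathcal N}$ through a short chain: associativity of $\mu_{\mathcal N}$ rewrites $\mu_{\mathcal N}\circ(\mu_{\mathcal N}\otimes A)$ as $\mu_{\mathcal N}\circ(\mathcal N\otimes\mu)$; the assumed identity applied to the underlying object $\mathcal N\in\mathfrak S$ replaces $(\mathcal N\otimes\mu)\circ\rho_{\mathcal N}$ by $(\mathcal N\otimes\eta)\circ(\mathcal N\otimes\epsilon)$; counitality of $\Delta_{\mathcal N}$ cancels $(\mathcal N\otimes\epsilon)\circ\Delta_{\mathcal N}$ to the identity; and unitality of $\mu_{\mathcal N}$ finishes with $\mu_{\mathcal N}\circ(\mathcal N\otimes\eta)=id_{\mathcal N}$. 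This shows $\omega\circ\kappa=id$, and hence $\mathcal F^C$ is separable by \cite[Proposition 1.1]{BCMZ}. I do not anticipate any serious obstacle, as the categorical bookkeeping behind the bijection $\chi'\leftrightarrow\vartheta'$ has already been dealt with before the theorem; the only point requiring any care is that the identity in (ii), a priori stated only on objects of $\mathfrak S$, is specialised to the underlying $\mathfrak S$-object of $\mathcal N\in\mathfrak S_A^C(\psi)$ when needed.
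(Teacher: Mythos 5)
Your proof is correct and is exactly the dualization of Theorem \ref{T5.5} that the paper intends: the paper's own proof consists only of the remark that it is ``similar to that of Theorem \ref{T5.5}'', and you have supplied precisely those details — Rafael's criterion from \cite[Proposition 1.1]{BCMZ} applied to the counit $\mu_{\mathcal N}$ of the adjunction $(\mathcal T^C,\mathcal F^C)$, transported across the bijection $\chi'\leftrightarrow\vartheta'$, with the splitting hypothesis specialised to the objects $\mathcal M\otimes C\otimes A$. Note that you have (correctly) interpreted the left-hand side of condition (ii) as $(\mathcal M\otimes\mu)\circ\rho_{\mathcal M}$; as printed, $(\mu_{\mathcal M}\otimes A)$ does not typecheck for a bare object $\mathcal M\in\mathfrak S$, and the dual identity $\rho_{\matholdcal M}\circ(\mu,\matholdcal M)=(\epsilon,\matholdcal M)\circ(\eta,\matholdcal M)$ in Theorem \ref{T5.5} confirms your reading.
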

\begin{proof}
	The proof is similar to that  of Theorem \ref{T5.5}.
\end{proof}

	\section{Frobenius properties}
	%$\mathcal C \overset{L}{\underset{R}\rightleftarrows}\mathcal D$ be functors between the categories $\mathcal C$ and $\mathcal D$. Recall that a pair of adjoint functors $(L,R)$ is said to be a Frobenius pair if $(R,L)$ is also an adjoint pair. Given an entwined structure 
	We continue with $(A, C, \psi)$ being an entwining structure. In this section, we give conditions under which the adjoint pairs $(^{[C,-]}\mathcal F, ^{[C,-]}\mathcal T)$ and $(\mathcal T^C, \mathcal F^C)$ are Frobenius, i.e, when $(^{[C,-]}\mathcal T, ^{[C,-]}\mathcal F)$ and $(\mathcal F^C, \mathcal T^C)$ are also adjoint pairs. First, we consider the adjunction  $(^{[C,-]}\mathcal F, ^{[C,-]}\mathcal T)$ for entwined contramodule objects in 
$\mathfrak S$ over $(A,C,\psi)$. 
	\begin{lem}\label{L6.1}
		Suppose that the adjoint pair $(^{[C,-]}\mathcal F, ^{[C,-]}\mathcal T)$ is Frobenius. Then, there exist natural transformations $\sigma\in V_1$ and $\rho\in W_1$ such that 
		\begin{align}
			&(\epsilon,\matholdcal M)\circ (\eta,\matholdcal M)=(\Delta,\matholdcal M)\circ \rho_{(C,\matholdcal M)}\circ(A,\psi,\matholdcal M)\circ \sigma_{(A,\matholdcal M)}\label{eq6.1}\\
			&(\epsilon,\matholdcal M)\circ (\eta,\matholdcal M)=(\Delta,\matholdcal M)\circ \rho_{(C,\matholdcal M)}\circ (A,\sigma_{\matholdcal M})\label{eq6.2}
		\end{align}
		for any $\matholdcal M\in \mathfrak S$.
	\end{lem}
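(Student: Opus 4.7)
The Frobenius hypothesis supplies a second adjunction $(^{[C,-]}\mathcal T, ^{[C,-]}\mathcal F)$ whose unit $\tau$ lies in $V$ and whose counit $\kappa$ lies in $W$. Via the isomorphisms of Propositions \ref{P5.1} and \ref{P5.4}, I set $\sigma := \vartheta(\tau) \in V_1$ and $\rho := \vartheta'(\kappa) \in W_1$; these will be the required natural transformations. Both equations \eqref{eq6.1} and \eqref{eq6.2} will be derived from the two triangle identities of the new adjunction, evaluating each at a carefully chosen test object and then composing with appropriate instances of $(\eta,-)$ and $(\epsilon,-)$ to project to a morphism $(A,\matholdcal M) \to (C,\matholdcal M)$.

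For \eqref{eq6.2}, the plan is to apply the triangle identity $\kappa_{^{[C,-]}\mathcal T(\matholdcal M_0)} \circ {^{[C,-]}\mathcal T}(\tau_{\matholdcal M_0}) = id_{(A,\matholdcal M_0)}$ at the free contramodule $\matholdcal M_0 = (C,\matholdcal M) \in {^{[C,-]}\mathfrak S}$, for an arbitrary $\matholdcal M \in \mathfrak S$. Substituting the formulas \eqref{eq5.2} and \eqref{eq5.13} for $\tau$ and $\kappa$, together with the alternative expression \eqref{eq5.5} for $\tau_{(C,\matholdcal M)}$, produces an identity between endomorphisms of $(A,C,\matholdcal M)$. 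Post--composing with $(\eta,C,\matholdcal M)$ and pre--composing with $(A,\epsilon,\matholdcal M)$ turns the left hand side into $(\epsilon,\matholdcal M)\circ(\eta,\matholdcal M)$ by naturality. The right hand side is simplified by successive rewrites: naturality of $(\eta,-)$ past $(A,\Delta,\matholdcal M)$; the entwining identity $(\eta,C,C,\matholdcal M)\circ(\psi,C,\matholdcal M) = (C,\eta,C,\matholdcal M)$ from \eqref{eq3.11}; naturality of $\rho$ at $(\eta,C,\matholdcal M)$; the algebra--unit identity $(A,\eta,X)\circ(\mu,X) = id_{(A,X)}$; naturality of $\sigma$ at $(\epsilon,\matholdcal M)$; and finally the coalgebra--counit identity $(\Delta,X)\circ(C,\epsilon,X) = id_{(C,X)}$. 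What remains is precisely $(\Delta,\matholdcal M)\circ\rho_{(C,\matholdcal M)}\circ(A,\sigma_{\matholdcal M})$.

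For \eqref{eq6.1}, the strategy is symmetric: apply the other triangle identity $^{[C,-]}\mathcal F(\kappa_\matholdcal N)\circ\tau_{^{[C,-]}\mathcal F(\matholdcal N)} = id_\matholdcal N$ at the cofree entwined contramodule $\matholdcal N := (C,(A,\matholdcal M)) \in {_{\hspace{1em}A}^{[C,-]}\mathfrak S(\psi)}$, where $(A,\matholdcal M)$ carries its canonical $A$--module structure $(\mu,\matholdcal M)$ and $\matholdcal N$ is endowed with the structure furnished by Lemma \ref{L3.11}: $\pi_\matholdcal N = (\Delta,A,\matholdcal M)$ and $\mu_\matholdcal N = (\psi,A,\matholdcal M)\circ(C,\mu,\matholdcal M)$. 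Post--composing with $(C,\eta,\matholdcal M)$ and pre--composing with $(\epsilon,A,\matholdcal M)$ again turns the left hand side into $(\epsilon,\matholdcal M)\circ(\eta,\matholdcal M)$. The right hand side is collapsed by the same toolkit, but this time one also uses the naturality identity $(A,C,\eta,\matholdcal M)\circ(\psi,A,\matholdcal M) = (\psi,\matholdcal M)\circ(C,A,\eta,\matholdcal M)$, obtained by applying naturality of $(\psi,-)$ at $(\eta,\matholdcal M)$; this is responsible for producing the extra factor $(A,\psi,\matholdcal M)$ that distinguishes \eqref{eq6.1} from \eqref{eq6.2}. After the cancellations the expression reduces to $(\Delta,\matholdcal M)\circ\rho_{(C,\matholdcal M)}\circ(A,\psi,\matholdcal M)\circ\sigma_{(A,\matholdcal M)}$.

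The main obstacle is purely bookkeeping: tracking which slot of the tower $(V_1,V_2,\ldots,V_n,\matholdcal M)$ is acted upon by each natural transformation or structure map, and sequencing the rewrites so that the correct axiom applies at the correct position. Once the two test objects are identified, the computations are essentially forced by the entwining axioms \eqref{eq2.1}--\eqref{eq2.2}, the algebra and coalgebra axioms, and the defining relations of $V_1$ and $W_1$.
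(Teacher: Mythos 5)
Your proposal is correct and follows essentially the same route as the paper: extract $\tau\in V$ and $\kappa\in W$ from the Frobenius hypothesis via the two triangle identities, pass to $\sigma=\vartheta(\tau)\in V_1$ and $\rho=\vartheta'(\kappa)\in W_1$ using the isomorphisms of Propositions \ref{P5.1} and \ref{P5.4}, evaluate the triangle identities at test objects built from an arbitrary $\matholdcal M\in\mathfrak S$ (the paper likewise uses $(C,\matholdcal M)$ for \eqref{eq6.2}), and project with instances of $(\eta,-)$ and $(\epsilon,-)$. The only divergence is your test object for \eqref{eq6.1} --- you use $(C,A,\matholdcal M)$ with the structure of Lemma \ref{L3.11}, where the paper uses $(A,C,\matholdcal M)={^{[C,-]}\mathcal T}((C,\matholdcal M))$ with the structure of Lemma \ref{L3.9} --- but your choice also collapses correctly to $(\Delta,\matholdcal M)\circ\rho_{(C,\matholdcal M)}\circ(A,\psi,\matholdcal M)\circ\sigma_{(A,\matholdcal M)}$ via the rewrites you list, so this is an immaterial variation.
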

	\begin{proof}
		Since $(^{[C,-]}\mathcal F, ^{[C,-]}\mathcal T)$ is Frobenius, by \cite[\S 1]{BCMZ}, there exist natural transformations $\tau\in V=Nat(id_{^{[C,-]}\mathfrak S}, ^{[C,-]}\mathcal F~^{[C,-]}\mathcal T)$ and $\kappa\in W=Nat(^{[C,-]}\mathcal T^{[C,-]}\mathcal F, id_{~ _{\hspace{1em}A}^{[C,-]}\mathfrak S(\psi)})$ satisfying the following conditions:
		\begin{align}
			&^{[C,-]}\mathcal F(\kappa_{\matholdcal M})\circ \tau_{^{[C,-]}\mathcal F(\matholdcal M)}=id_{^{[C,-]}\mathcal F(\matholdcal M)}:\matholdcal M\xrightarrow{\tau_{^{[C,-]}\mathcal F(\matholdcal M)}}(A,\matholdcal M)\xrightarrow{^{[C,-]}\mathcal F(\kappa_{\matholdcal M})}\matholdcal M\label{eq6.3}\\
			&\kappa_{^{[C,-]}\mathcal T(\matholdcal N)}\circ ^{[C,-]}\mathcal T(\tau_{\matholdcal N})=id_{^{[C,-]}\mathcal T(\matholdcal N)}:(A,\matholdcal N)\xrightarrow{^{[C,-]}\mathcal T(\tau_{\matholdcal N})}(A,A,\matholdcal N)\xrightarrow{\kappa_{^{[C,-]}\mathcal T(\matholdcal N)}}(A,\matholdcal N)\label{eq6.4} 
		\end{align} 
		for any $(\matholdcal M,\pi_{\matholdcal M},\mu_{\matholdcal M})\in~ _{\hspace{1em}A}^{[C,-]}\mathfrak S(\psi)$ and $(\matholdcal N,\pi_{\matholdcal N})\in~^{[C,-]}\mathfrak S.$ 
		We know from Proposition \ref{P5.1} and Proposition \ref{P5.4} respectively that $V_1\cong V$ and $W_1\cong W$. Therefore, we can choose $\sigma\in V_1$ and $\rho\in W_1$ corresponding to $\tau$ and $\kappa$ respectively. For any object $\matholdcal M\in\mathfrak S$, we have $(A,C,\matholdcal M)\in~_{\hspace{1em}A}^{[C,-]}\mathfrak S(\psi)$ with $\pi_{(A,C,\matholdcal M)}=(A,\Delta,\matholdcal M)\circ (\psi,C,\matholdcal M)$ and $\mu_{(A,C\matholdcal M)}=(\mu,C,\matholdcal M)$. On applying (\ref{eq6.3}) to $(A,C,\matholdcal M)\in~_{\hspace{1em}A}^{[C,-]}\mathfrak S(\psi)$, we get 
		%the following commutative diagram
		%			 			\begin{equation}\label{eq6.5}
			%			 			\begin{tikzcd}[row sep=1.8em, column sep = 3.8em]
				%			 				(A,C,\matholdcal M) \arrow{r}{\sigma_{(A,C,\matholdcal M)}} \arrow{d} {id}
				%			 				&  (A,C,A,C\matholdcal M)\arrow{r}{(A,\pi_{(A,C\matholdcal M)})} &(A,A,C,\matholdcal M)\arrow{d}{(A,\mu_A,C,\matholdcal M)}\\
				%			 				(A,C,\matholdcal M) \arrow{r}
				%			 				{\pi_{(A,C,\matholdcal M)}} &(A,A,C,A,\matholdcal M)\arrow{r}{(A,\kappa_{(C,A,\matholdcal M)})}&(A,C,A,\matholdcal M)
				%			 			\end{tikzcd}
			%			 		\end{equation}
		\begin{equation}\label{eq6.5}
			\begin{array}{lll}
			id_{(A,C,\matholdcal M)}&=^{[C,-]}\mathcal F(\kappa_{(
				A,C,\matholdcal M)})\circ \tau_{^{[C,-]}\mathcal F(A,C,\matholdcal M)}\\
			&=\pi_{(A,C,\matholdcal M)}\circ \rho_{(A,C,\matholdcal M)}\circ (A,\mu,C,\matholdcal M)\circ (A,\pi_{(A,C,\matholdcal M)})\circ \sigma_{(A,C,\matholdcal M)}\\
			&=(A,\Delta,\matholdcal M)\circ (\psi,C,\matholdcal M)\circ \rho_{(A,C,\matholdcal M)}\circ (A,\mu,C,\matholdcal M)\circ (A,A,\Delta,\matholdcal M)\circ (A,\psi,C,\matholdcal M)\circ \sigma_{(A,C,\matholdcal M)}\\
			&=(A,\Delta,\matholdcal M)\circ (A,\rho_{(C,\matholdcal M)})\circ (\mu,A,C,\matholdcal M)\circ (A,A,\Delta,\matholdcal M)\circ (A,\psi,C,\matholdcal M)\circ \sigma_{(A,C,\matholdcal M)}&\mbox{(\textup{by}~(\ref{eq5.11}))}
		\end{array}
	\end{equation}
		By composing with $(\eta,C,\matholdcal M)$ on the left and $(A,\epsilon,\matholdcal M)$ on the right on both sides of \eqref{eq6.5}, we obtain
			\begin{equation*}
			\begin{array}{lll}
			&(\eta,C,\matholdcal M)\circ (A,\epsilon,\matholdcal M)\\\notag&=(\eta,C,\matholdcal M)\circ (A,\Delta,\matholdcal M)\circ (A,\rho_{(C,\matholdcal M)})\circ (\mu,A,C,\matholdcal M)\circ (A,A,\Delta,\matholdcal M)\circ (A,\psi,C,\matholdcal M)\circ \sigma_{(A,C,\matholdcal M)}\circ (A,\epsilon,\matholdcal M)\\
			&=(\eta,C,\matholdcal M)\circ (A,\Delta,\matholdcal M)\circ (A,\rho_{(C,\matholdcal M)})\circ (\mu,A,C,\matholdcal M)\circ (A,A,\Delta,\matholdcal M)\circ (A,\psi,C,\matholdcal M)\circ(A,C,A,\epsilon,\matholdcal M)\circ \sigma_{(A,\matholdcal M)}\\
			&=(\eta,C,\matholdcal M)\circ (A,\Delta,\matholdcal M)\circ (A,\rho_{(C,\matholdcal M)})\circ (\mu,A,C,\matholdcal M)\circ (A,A,\Delta,\matholdcal M)\circ (A,A,\epsilon,C,\matholdcal M)\circ(A,\psi,\matholdcal M)\circ \sigma_{(A,\matholdcal M)}\\
			&=(\eta,C,\matholdcal M)\circ (A,\Delta,\matholdcal M)\circ (A,\rho_{(C,\matholdcal M)})\circ (\mu,A,C,\matholdcal M)\circ (A,\psi,\matholdcal M)\circ \sigma_{(A,\matholdcal M)}\hspace{4cm}(\textup{as}~\Delta\circ (\epsilon,C)=id)\\
			&=(\Delta,\matholdcal M)\circ (\eta,C,C,\matholdcal M)\circ (A,\rho_{(C,\matholdcal M)})\circ (\mu,A,C,\matholdcal M)\circ (A,\psi,\matholdcal M)\circ \sigma_{(A,\matholdcal M)}\\
			&=(\Delta,\matholdcal M)\circ  (\rho_{(C,\matholdcal M)})\circ (\eta,A,A,C,\matholdcal M)\circ (\mu,A,C,\matholdcal M)\circ (A,\psi,\matholdcal M)\circ \sigma_{(A,\matholdcal M)}\\
			&=(\Delta,\matholdcal M)\circ  (\rho_{(C,\matholdcal M)})\circ(A,\psi,\matholdcal M)\circ \sigma_{(A,\matholdcal M)}\hspace{8.5cm}(\textup{as}~(\eta,A)\circ \mu=id)
	\end{array}
\end{equation*}
		Since $(\eta,C,\matholdcal M)\circ (A,\epsilon,\matholdcal M)=(\epsilon,\matholdcal M)\circ(\eta,\matholdcal M)$, we obtain (\ref{eq6.1}).
		
		\smallskip
		Further, on applying (\ref{eq6.4}) to $(C,\matholdcal M)\in~^{[C,-]}\mathfrak S$ we obtain
		\begin{equation}\label{eq6.6}
			\begin{array}{lll}
			id_{^{[C,-]}\mathcal T(C,\matholdcal M)}=id_{(A,C,\matholdcal M)}&=\kappa_{^{[C,-]}\mathcal T(C,\matholdcal M)}\circ ^{[C,-]}\mathcal T(\tau_{(C,\matholdcal M)})\\
			&=(A,\Delta,\matholdcal M)\circ (\psi,C,\matholdcal M)\circ \rho_{(A,C,\matholdcal M)}\circ (A,\mu,C,\matholdcal M)\circ (A,A,\Delta,\matholdcal M)\circ (A,\sigma_{(C,\matholdcal M)})
		\end{array}
	\end{equation}
	Again by composing with $(\eta,C,\matholdcal M)$ on the left and $(A,\epsilon,\matholdcal M)$ on the right on both sides of \eqref{eq6.6}, we arrive at (\ref{eq6.2}), i.e., we have
			\begin{equation*}
			\begin{array}{lll}
			&(\eta,C, \matholdcal M)\circ (A,\epsilon,\matholdcal M)\\\notag&=(\eta,C, \matholdcal M)\circ (A,\Delta,\matholdcal M)\circ (\psi,C,\matholdcal M)\circ \rho_{(A,C,\matholdcal M)}\circ (A,\mu,C,\matholdcal M)\circ (A,A,\Delta,\matholdcal M)\circ (A,\sigma_{(C,\matholdcal M)})\circ (A,\epsilon,\matholdcal M)\\
			&=	(\eta,C,\matholdcal M)\circ (A,\Delta,\matholdcal M)\circ (A,\rho_{(C,\matholdcal M)})\circ (\mu,A,C,\matholdcal M)\circ (A,A,\Delta,\matholdcal M)\circ (A,\sigma_{(C,\matholdcal M)})\circ (A,\epsilon,\matholdcal M)&\mbox{(\textup{by}~(\ref{eq5.11}))}\\
			&= (\eta,C,\matholdcal M)\circ (A,\Delta,\matholdcal M)\circ (A,\rho_{(C,\matholdcal M)})\circ (\mu,A,C,\matholdcal M)\circ (A,A,\Delta,\matholdcal M)\circ (A,A,C,\epsilon,\matholdcal M)\circ (A,\sigma_{\matholdcal M})\\
			&=(\eta,C,\matholdcal M)\circ (A,\Delta,\matholdcal M)\circ (A,\rho_{(C,\matholdcal M)})\circ (\mu,A,C,\matholdcal M)\circ  (A,\sigma_{\matholdcal M})&\mbox{\((\textup{as}~\Delta\circ (\epsilon,C)=id)\)}\\
			&=(\Delta,\matholdcal M)\circ (\eta,C,C,\matholdcal M)\circ (A,\rho_{(C,\matholdcal M)})\circ (\mu,A,C,\matholdcal M)\circ  (A,\sigma_{\matholdcal M})\\
			&=(\Delta,\matholdcal M)\circ\rho_{(C,\matholdcal M)}\circ (\eta,A,A,C,\matholdcal M)\circ (\mu,A,C,\matholdcal M)\circ  (A,\sigma_{\matholdcal M})\\
			&=(\Delta,\matholdcal M)\circ \rho_{(C,\matholdcal M)}\circ  (A,\sigma_{\matholdcal M})&\mbox{\((\textup{as}~(\eta,A)\circ \mu=id)\)}
			\end{array}
	\end{equation*}
	\end{proof}
	\begin{Thm}\label{T6.2}
		Let $(A,C,\psi)$ be an entwining structure and let $\mathfrak S$ be a $k$-linear Grothendieck category. Then, the adjoint pair $(^{[C,-]}\mathcal F, ^{[C,-]}\mathcal T)$ is Frobenius if and only if there exist natural transformations $\sigma\in V_1$ and $\rho\in W_1$ satisfying the  conditions in \eqref{eq6.1} and \eqref{eq6.2}.
	\end{Thm}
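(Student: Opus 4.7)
The forward direction is exactly Lemma~\ref{L6.1}. For the converse, suppose $\sigma\in V_1$ and $\rho\in W_1$ satisfy the identities \eqref{eq6.1} and \eqref{eq6.2}. I would set $\tau:=\chi(\sigma)\in V$ and $\kappa:=\chi'(\rho)\in W$ using the isomorphisms from Proposition~\ref{P5.1} and Proposition~\ref{P5.4}. By the standard characterization of Frobenius adjunctions in \cite[\S 1]{BCMZ}, it then suffices to verify the triangle identities \eqref{eq6.3} and \eqref{eq6.4} for these $\tau$ and $\kappa$.

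For \eqref{eq6.3}, fix $\matholdcal M\in~_{\hspace{1em}A}^{[C,-]}\mathfrak S(\psi)$ and expand $\kappa_{\matholdcal M}\circ \tau_{\matholdcal M}$ using \eqref{eq5.2} and \eqref{eq5.13}. The entwining compatibility \eqref{eq3.9} rewrites $(A,\mu_{\matholdcal M})\circ (A,\pi_{\matholdcal M})$ as $(A,A,\pi_{\matholdcal M})\circ (A,\psi,\matholdcal M)\circ (A,C,\mu_{\matholdcal M})$. Naturality of $\rho$ at $\pi_{\matholdcal M}$ converts $\rho_{\matholdcal M}\circ (A,A,\pi_{\matholdcal M})$ into $(C,\pi_{\matholdcal M})\circ \rho_{(C,\matholdcal M)}$, and naturality of $\sigma$ at $\mu_{\matholdcal M}$ converts $(A,C,\mu_{\matholdcal M})\circ \sigma_{\matholdcal M}$ into $\sigma_{(A,\matholdcal M)}\circ \mu_{\matholdcal M}$. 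The contramodule axiom \eqref{eq3.1} replaces $\pi_{\matholdcal M}\circ (C,\pi_{\matholdcal M})$ by $\pi_{\matholdcal M}\circ (\Delta,\matholdcal M)$; hypothesis \eqref{eq6.1} then collapses the composition to $\pi_{\matholdcal M}\circ (\epsilon,\matholdcal M)\circ (\eta,\matholdcal M)\circ \mu_{\matholdcal M}$, which equals $id_{\matholdcal M}$ by the unit laws of the left $C$-contramodule and left $A$-module structures.

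For \eqref{eq6.4}, fix $\matholdcal N\in~^{[C,-]}\mathfrak S$. I would detect the identity on $(A,\matholdcal N)$ inside $~_{\hspace{1em}A}^{[C,-]}\mathfrak S(\psi)$ via the adjunction of Proposition~\ref{L3.10}: since $id_{(A,\matholdcal N)}$ corresponds under the natural bijection with $~^{[C,-]}\mathfrak S((A,\matholdcal N),\matholdcal N)$ to the morphism $(\eta,\matholdcal N)$, it is equivalent to show $(\eta,\matholdcal N)\circ \kappa_{(A,\matholdcal N)}\circ (A,\tau_{\matholdcal N})=(\eta,\matholdcal N)$. Naturality of $(\eta,-)$ together with the second diagram of \eqref{eq3.11} rewrites $(\eta,\matholdcal N)\circ \pi_{(A,\matholdcal N)}$ as $\pi_{\matholdcal N}\circ (C,\eta,\matholdcal N)$. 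Naturality of $\rho$ at $(\eta,\matholdcal N)$ moves $(C,\eta,\matholdcal N)$ across $\rho_{(A,\matholdcal N)}$, yielding $\rho_{\matholdcal N}\circ (A,A,\eta,\matholdcal N)$; the cancellation $(A,A,\eta,\matholdcal N)\circ (A,\mu,\matholdcal N)=id_{(A,A,\matholdcal N)}$, derived from $\mu\circ (\eta\otimes A)=id_A$, then collapses two factors. Naturality of $\rho$ at $\pi_{\matholdcal N}$ and the contramodule axiom \eqref{eq3.1} bring the expression to $\pi_{\matholdcal N}\circ (\Delta,\matholdcal N)\circ \rho_{(C,\matholdcal N)}\circ (A,\sigma_{\matholdcal N})$, and hypothesis \eqref{eq6.2} finishes with $\pi_{\matholdcal N}\circ (\epsilon,\matholdcal N)\circ (\eta,\matholdcal N)=(\eta,\matholdcal N)$.

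The main obstacle will be the careful bookkeeping of the hom-tensor identifications and of the direction of each naturality square under the contravariant functor $(-,\matholdcal M)$: a single index mismatch in expanding $\kappa$, $\tau$, or composite symbols such as $(A,\psi,\matholdcal M)$ and $(A,A,\eta,\matholdcal N)$ would propagate through every subsequent step. Conceptually, both triangle identities are obtained by running the derivations in Lemma~\ref{L6.1} in reverse, but now applied to arbitrary objects rather than to the specific $(A,C,\matholdcal M)$ and $(C,\matholdcal M)$ used there; the use of the adjunction of Proposition~\ref{L3.10} is what allows the universal identity \eqref{eq6.4} to be reduced to the evaluated identity \eqref{eq6.2}.
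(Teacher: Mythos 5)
Your proposal is correct and follows essentially the same route as the paper: the forward implication is Lemma \ref{L6.1}, and the converse transports $\sigma,\rho$ to $\tau,\kappa$ via the isomorphisms of Propositions \ref{P5.1} and \ref{P5.4} and verifies the two compatibility conditions \eqref{eq6.3} and \eqref{eq6.4} from \cite[\S 1]{BCMZ}, with your computation for \eqref{eq6.3} matching the paper's diagram chase step for step. The only (valid) deviation is in \eqref{eq6.4}, where you reduce to an identity after composing with $(\eta,\matholdcal N)$ using the faithfulness of the adjunction of Proposition \ref{L3.10} and then need only naturality of $\rho$ plus the unit cancellation, whereas the paper argues directly on $(A,\matholdcal N)$ and invokes the compatibility \eqref{eq5.11} of $\rho$ with the multiplication; both are sound.
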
	
	\begin{proof} The ``only if'' part follows from Lemma \ref{L6.1}. 
	For the converse,  we choose $\tau\in V\cong V_1$ and $\kappa\in W\cong W_1$ corresponding to $\sigma\in V_1$ and $\rho\in W_1$ respectively. To show  $(^{[C,-]}\mathcal F, ^{[C,-]}\mathcal T)$ is a Frobenius pair, by \cite[\S 1]{BCMZ}, it suffices to show that $\tau$ and $\kappa$ satisfy the conditions in (\ref{eq6.3}) and (\ref{eq6.4}). We consider an object $\matholdcal M\in ~_{\hspace{1em}A}^{[C,-]}\mathfrak S(\psi)$. On composing \eqref{eq6.1} on the right with $\mu_{\matholdcal M}$ we get
		\begin{equation*}
		\begin{array}{lll}
			(\epsilon,\matholdcal M)&=(\epsilon,\matholdcal M)\circ (\eta,\matholdcal M)\circ \mu_{\matholdcal M}\\&=(\Delta,\matholdcal M)\circ (\rho_{(C,\matholdcal M)})\circ (A,\psi,\matholdcal M)\circ \sigma_{(A,\matholdcal M)}\circ \mu_{\matholdcal M}\\
			&=(\Delta,\matholdcal M)\circ (\rho_{(C,\matholdcal M)})\circ (A,\psi,\matholdcal M)\circ(A,C,\mu_{\matholdcal M})\circ \sigma_{\matholdcal M}\\
			&=(\Delta,\matholdcal M)\circ (\rho_{(C,\matholdcal M)})\circ (A,\mu_{(C,\matholdcal M)})\circ \sigma_{\matholdcal M}
		\end{array}
	\end{equation*}
		Therefore, we obtain the following commutative diagram 
		\begin{equation}\label{eq6.7}
			\begin{tikzcd}[row sep=2.8em, column sep = 4.8em]
				\matholdcal M\arrow{r}{\sigma_{\matholdcal M}}\arrow{rddd}[swap]{(\epsilon,\matholdcal M)} &(A,C,\matholdcal M)\arrow{d}{(A,\mu_{(C,\matholdcal M)})}\arrow{r}{(A,\pi_{\matholdcal M})}&(A,\matholdcal M)\arrow{d}{(A,\mu_{\matholdcal M})}\\
				&(A,A,C,\matholdcal M) \arrow{r}
				{(A,A,\pi_{\matholdcal M})} \arrow{d}{\rho_{(C,\matholdcal M)}}&(A,A,\matholdcal M)\arrow{d}{\rho_{\matholdcal M}}\\
				&(C,C,\matholdcal M) \arrow{r}
				{(C,\pi_{\matholdcal M})}\arrow{d}{(\Delta,\matholdcal M)} &(C,\matholdcal M)\arrow{d}{\pi_{\matholdcal M}}\\
				&(C,\matholdcal M)\arrow{r}{\pi_{\matholdcal M}}&\matholdcal M
			\end{tikzcd} 
		\end{equation}
		Hence, the composition $^{[C,-]}\mathcal F(\kappa_{\matholdcal M})\circ \tau_{^{[C,-]}\mathcal F(\matholdcal M)}=\pi_{ \matholdcal M}\circ \rho_{\matholdcal M}\circ (A,\mu_{\matholdcal M})\circ (A,\pi_{ \matholdcal M})\circ \sigma_{\matholdcal M}=\pi_{ \matholdcal M}\circ (\epsilon,\matholdcal M)=id_{^{[C,-]}\mathcal F(\matholdcal M)}$, i.e., $\matholdcal M\in ~_{\hspace{1em}A}^{[C,-]}\mathfrak S(\psi)$ satisfies \eqref{eq6.3}. 
		
		\smallskip
		We now consider an object $(\matholdcal N,\pi_{\matholdcal N})\in~^{[C,-]}\mathfrak S$. Since $\sigma$ and $\rho$ also satisfy \eqref{eq6.2}, we have
		\begin{equation}\label{62e}
			(\epsilon,\matholdcal N)\circ (\eta,\matholdcal N)=(\Delta,\matholdcal N)\circ \rho_{(C,\matholdcal N)}\circ (A,\sigma_{\matholdcal N})
		\end{equation}
		On applying the functor $(A,-)$ to (\ref{62e}) and then composing on the right with $(\mu, \matholdcal N)$, we obtain
		\begin{equation*}
		\begin{array}{lll}
			(A,\epsilon,\matholdcal N)=(A,\epsilon,\matholdcal N)\circ (A,\eta,\matholdcal N)\circ (\mu,\matholdcal N)&=(A,\Delta,\matholdcal N)\circ (A,\rho_{(C,\matholdcal N)})\circ (A,A,\sigma_{\matholdcal N})\circ (\mu,\matholdcal N)\\
			&=(A,\Delta,\matholdcal N)\circ (A,\rho_{(C,\matholdcal N)})\circ (\mu,A,C,\matholdcal N)\circ (A,\sigma_{\matholdcal N})\\
			&=(A,\Delta,\matholdcal N)\circ(\psi,C,\matholdcal N)\circ \rho_{(A,C,\matholdcal N)}\circ (A,\mu,C,\matholdcal N)\circ (A,\sigma_{\matholdcal N})&\mbox{(\textup{by}~(\ref{eq5.11}))}
		\end{array} 	
	\end{equation*}
		Therefore, we have the following commutative diagram
		\begin{equation}\label{eq6.8}
			\begin{tikzcd}[row sep=2em, column sep = 4.8em]
				(A,\matholdcal N)\arrow{r}{(A,\sigma_{\matholdcal N})}\arrow{rdddd}[swap]{(A,\epsilon,\matholdcal N)} &(A,A,C,\matholdcal N)\arrow{d}{(A,\mu, C,\matholdcal N)}\arrow{r}{(A,A,\pi_{\matholdcal N})}&(A,A,\matholdcal N)\arrow{d}{(A,\mu,\matholdcal N)}\\
				&(A,A,A,C,\matholdcal N) \arrow{r}
				{(A,A,A,\pi_{\matholdcal N})} \arrow{d}{\rho_{(A,C,\matholdcal N)}}&(A,A,A,\matholdcal N)\arrow{d}{\rho_{(A,\matholdcal N)}}\\
				&(C,A,C,\matholdcal N) \arrow{r}
				{(C,A,\pi_{\matholdcal N})}\arrow{d}{(\psi,C,\matholdcal N)} &(C,A,\matholdcal N)\arrow{d}{(\psi,\matholdcal N)}\\
				&(A,C,C,\matholdcal N)\arrow{r}{(A,C,\pi_{\matholdcal N})}\arrow{d}{(A,\Delta,\matholdcal N)}&(A,C,\matholdcal N)\arrow{d}{(A,\pi_{\matholdcal N})}\\
				&(A,C,\matholdcal N)\arrow{r}{(A,\pi_{ \matholdcal N})}&(A,\matholdcal N)
			\end{tikzcd} 
		\end{equation}
		It now follows that the composition
		\begin{equation}\kappa_{^{[C,-]}\mathcal T(\matholdcal N)}\circ ^{[C,-]}\mathcal T(\tau_{\matholdcal N})=(A,\pi_{ \matholdcal N})\circ(\psi,\matholdcal N)\circ \rho_{(A,\matholdcal N)}\circ (A,\mu,\matholdcal N)\circ (A,A,\pi_{ \matholdcal N})\circ (A,\sigma_{\matholdcal N}) =id_{^{[C,-]}\mathcal T(\matholdcal N)}
\end{equation} In other words $\matholdcal N \in~^{[C,-]}\mathfrak S$ satisfies \eqref{eq6.4}. This proves the result.

	\end{proof}

	\smallskip
	We now come to the setting of entwined comodule objects in $\mathfrak S$ over $(A,C,\psi)$. We briefly discuss when the adjoint pair $(\mathcal T^C, \mathcal F^C)$ is Frobenius. By \cite[\S 1]{BCMZ}, the pair $(\mathcal T^C, \mathcal F^C)$ is Frobenius if and only if there exist natural transformations $\tau\in V'=Nat(\mathcal F^C\mathcal T^C, id_{\mathfrak S^C})$ and $\kappa\in W'=Nat(id_{\mathfrak S_A^C(\psi)}, \mathcal T^C\mathcal F^C)$ satisfying the following conditions:
	\begin{align}
		&\tau_{\mathcal F^C(\mathcal M)}\circ \mathcal F^C(\kappa_{\mathcal M})=id_{\mathcal F^C(\mathcal M)}:\mathcal M\xrightarrow{ \mathcal F^C(\kappa_{\mathcal M})}\mathcal M\otimes A\xrightarrow{\tau_{\mathcal F^C(\mathcal M)}}\mathcal M\label{eq6.9*}\\
		&\mathcal T^C(\tau_{\mathcal N})\circ \kappa_{\mathcal T^C(\mathcal N)}=id_{\mathcal T^C(\matholdcal N)}:\mathcal N\otimes A\xrightarrow{\kappa_{\mathcal T^C(\mathcal N)}}\mathcal N\otimes A\otimes A\xrightarrow{\mathcal T^C(\tau_{\mathcal N})}\mathcal N\otimes A\label{eq6.10*}
	\end{align} 
	for any $(\mathcal M,\Delta_{\mathcal M},\mu_{\mathcal M})\in \mathfrak S_A^C(\psi)$ and $(\mathcal N,\Delta_{\mathcal N})\in \mathfrak S^C$. Accordingly, we can prove the following result. 
	\begin{Thm}\label{tT6.3}
	Let $(A,C,\psi)$ be an entwining structure and let $\mathfrak S$ be a $k$-linear Grothendieck category. 	Then, the adjoint pair $(\mathcal T^C, \mathcal F^C)$ is Frobenius if and only if there exist natural transformations $\sigma\in V_1'$ and $\rho\in W_1'$ such that
		\begin{align}
			&(\mathcal M\otimes \eta)\circ (\mathcal M\otimes \epsilon)=\sigma_{(\mathcal M\otimes A)}\circ (\mathcal M\otimes\psi\otimes A)\circ \rho_{(\mathcal M\otimes C)}\circ (\mathcal M\otimes \Delta)\label{eq6.11}\\
			&(\mathcal M\otimes \eta)\circ (\mathcal M\otimes \epsilon)=(\sigma_{\mathcal M}\otimes A)\circ \rho_{(\mathcal M\otimes C)}\circ (\mathcal M\otimes \Delta)\label{eq6.12}
		\end{align}
		for any $\mathcal M\in\mathfrak S$.
	\end{Thm}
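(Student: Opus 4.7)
The plan is to follow the pattern established in Lemma \ref{L6.1} and Theorem \ref{T6.2}, adapted to the setting of entwined comodule objects, where the roles of units/counits and of the functors $\mathcal T^C$, $\mathcal F^C$ reverse the positions played by $^{[C,-]}\mathcal T$, $^{[C,-]}\mathcal F$. The adjunction is $(\mathcal T^C,\mathcal F^C)$, whose unit is $\Delta_{\mathcal M}:\mathcal M\longrightarrow \mathcal M\otimes A$ at the level of $\mathcal T^C\mathcal F^C$ evaluation, and whose counit $\mathcal M\otimes A\longrightarrow\mathcal M$ is $\mu_{\mathcal M}$. The Frobenius condition becomes the existence of $\tau\in V'$ and $\kappa\in W'$ with  $\tau_{\mathcal F^C(\mathcal M)}\circ \mathcal F^C(\kappa_{\mathcal M})=id$ and $\mathcal T^C(\tau_{\mathcal N})\circ \kappa_{\mathcal T^C(\mathcal N)}=id$ for all $\mathcal M\in \mathfrak S_A^C(\psi)$ and $\mathcal N\in \mathfrak S^C$. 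Since Theorem \ref{T5.3} and Theorem \ref{Tlst} provide isomorphisms $\chi:V_1'\longrightarrow V'$ and $\chi':W_1'\longrightarrow W'$, it suffices to translate the two Frobenius equations through these isomorphisms.

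For the ``only if'' direction, given $\tau\in V'$ and $\kappa\in W'$ satisfying the Frobenius conditions, I would set $\sigma:=\vartheta(\tau)\in V_1'$ and $\rho:=\vartheta'(\kappa)\in W_1'$ using the explicit formulas from Section 5. To obtain \eqref{eq6.12}, I would apply \eqref{eq6.10*} to the object $\mathcal N=\mathcal M\otimes C\in\mathfrak S^C$ (for arbitrary $\mathcal M\in\mathfrak S$), giving $\mathcal T^C(\tau_{\mathcal M\otimes C})\circ \kappa_{\mathcal M\otimes C\otimes A}=id_{\mathcal M\otimes C\otimes A}$; then, unwinding the explicit formulas $\tau_{\mathcal N}=\sigma_{\mathcal N}\circ(\Delta_{\mathcal N}\otimes A)$ and $\kappa_{\mathcal M'}=(\mu_{\mathcal M'}\otimes A)\circ \rho_{\mathcal M'}\circ \Delta_{\mathcal M'}$ and post-composing with $\mathcal M\otimes\epsilon\otimes A$ while pre-composing along $\mathcal M\otimes C\otimes \eta$, exactly the identity $(\mathcal M\otimes\eta)\circ(\mathcal M\otimes\epsilon)=(\sigma_{\mathcal M}\otimes A)\circ\rho_{(\mathcal M\otimes C)}\circ(\mathcal M\otimes \Delta)$ should emerge after using the comodule/module axioms for $\mathcal M\otimes C$. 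For \eqref{eq6.11}, I would instead apply \eqref{eq6.9*} to the object $\mathcal M\otimes A\otimes C\in\mathfrak S_A^C(\psi)$ (whose structure maps come from Lemma \ref{L2.4} and its dual), and collapse the resulting identity using the conditions \eqref{eq5.7}, \eqref{eq5.28}, \eqref{eq5.29} defining $V_1'$ and $W_1'$, together with \eqref{eq2.1}--\eqref{eq2.2}; the crossing by $\psi\otimes A$ that distinguishes \eqref{eq6.11} from \eqref{eq6.12} will enter precisely because the $C$-coaction on $\mathcal M\otimes A\otimes C$ involves $\psi$, whereas the $C$-coaction on $\mathcal M\otimes C$ does not.

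For the ``if'' direction, starting from $\sigma\in V_1'$ and $\rho\in W_1'$ satisfying \eqref{eq6.11} and \eqref{eq6.12}, I would set $\tau:=\chi(\sigma)$ and $\kappa:=\chi'(\rho)$ and verify \eqref{eq6.9*} and \eqref{eq6.10*}. To verify \eqref{eq6.9*} for $\mathcal M\in \mathfrak S_A^C(\psi)$, I would post-compose \eqref{eq6.11} with $\mu_{\mathcal M}$ and use the entwined comodule condition \eqref{Etm}, which lets the $\psi$-twist inside \eqref{eq6.11} be absorbed into $\Delta_{\mathcal M}\circ\mu_{\mathcal M}$. The resulting commutative diagram (the analogue of \eqref{eq6.7}) should immediately read $\tau_{\mathcal F^C(\mathcal M)}\circ \mathcal F^C(\kappa_{\mathcal M})=\mu_{\mathcal M}\circ(\mathcal M\otimes\eta)\circ(\mathcal M\otimes\epsilon)\circ \Delta_{\mathcal M}=id$, using the counit and unit axioms for $C$ and $A$. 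To verify \eqref{eq6.10*} for $\mathcal N\in\mathfrak S^C$, I would apply the functor $-\otimes A$ to \eqref{eq6.12} and post-compose with $\mathcal N\otimes\mu$, then invoke the definition of $\sigma\in V_1'$ from \eqref{eq5.7} to route the $\psi$ through the tensor factors in the required order, paralleling the computation that produced \eqref{eq6.8}.

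The main obstacle will be the careful bookkeeping of the tensor positions in the second half of the ``only if'' direction: the identity \eqref{eq6.11} must be extracted from \eqref{eq6.9*} applied to $\mathcal M\otimes A\otimes C$ rather than $\mathcal M\otimes C$, and this requires recognizing when $\psi$ appears from the entwined structure on the test object and when it enters through the definitions \eqref{eq5.28}, \eqref{eq5.29} of $W_1'$. Once these diagrammatic manipulations are settled, everything else reduces to straightforward unwinding of the isomorphisms $\chi$, $\vartheta$, $\chi'$, $\vartheta'$ from Section 5 and the (co)unit/(co)multiplication axioms.
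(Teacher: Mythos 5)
Your proposal is correct and follows essentially the same route as the paper: both directions are obtained by transporting the Frobenius conditions \eqref{eq6.9*} and \eqref{eq6.10*} through the isomorphisms $V'\cong V_1'$ and $W'\cong W_1'$ of Section 5, evaluating them on induced test objects for the forward implication (with \eqref{eq6.12} coming from \eqref{eq6.10*} applied to $\mathcal M\otimes C$, exactly as in the paper), and mirroring the diagram chases of Theorem \ref{T6.2} for the converse. The one place you diverge is the test object for \eqref{eq6.11}: the paper applies \eqref{eq6.9*} to $\mathcal M\otimes C\otimes A=\mathcal T^C(\mathcal M\otimes C)$, the exact dual of the object $(A,C,\matholdcal M)$ used in Lemma \ref{L6.1}, whereas you use $\mathcal M\otimes A\otimes C$, whose structure maps are $\Delta=\mathcal M\otimes A\otimes\Delta$ and $\mu=(\mathcal M\otimes\mu\otimes C)\circ(\mathcal M\otimes A\otimes\psi)$. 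Both choices work: pre-/post-composing with $\mathcal M\otimes\eta\otimes C$ and $\mathcal M\otimes A\otimes\epsilon$ in your case (respectively $\mathcal M\otimes C\otimes\eta$ and $\mathcal M\otimes\epsilon\otimes A$ in the paper's) and using naturality of $\sigma$ and $\rho$ together with \eqref{eq2.1}--\eqref{eq2.2} collapses each composite to $\sigma_{(\mathcal M\otimes A)}\circ(\mathcal M\otimes\psi\otimes A)\circ\rho_{(\mathcal M\otimes C)}\circ(\mathcal M\otimes\Delta)$. One caveat on your heuristic, though: with your test object the twist $\mathcal M\otimes\psi\otimes A$ is produced by the $A$-action on $\mathcal M\otimes A\otimes C$, not by its $C$-coaction, which is the plain $\mathcal M\otimes A\otimes\Delta$ --- the opposite of what you predicted (it is the paper's object $\mathcal M\otimes C\otimes A$ whose \emph{coaction} carries the $\psi$). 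A second harmless slip: the unit of $(\mathcal T^C,\mathcal F^C)$ is $\mathcal N\otimes\eta:\mathcal N\to\mathcal N\otimes A$, not $\Delta_{\mathcal N}$; this does not affect your argument since you quote \eqref{eq6.9*} and \eqref{eq6.10*} correctly.
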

	\begin{proof}
		If the pair $(\mathcal T^C, \mathcal F^C)$ is Frobenius, there exists $\tau\in V'$ and $\kappa\in W'$ satisfying (\ref{eq6.9*}) and (\ref{eq6.10*}). From Section 5, we have the $V'\cong V_1'$ and $W'\cong W_1'$, and we may choose $\sigma\in V_1'$ and $\rho\in W_1'$ corresponding to $\tau$ and $\kappa$ respectively. Then, (\ref{eq6.11}) is obtained by applying (\ref{eq6.9*}) to the object $\mathcal M \otimes C \otimes A \in \mathfrak S_A^C(\psi)$, while (\ref{eq6.12}) follows by applying \eqref{eq6.10*} to the object \((\mathcal M \otimes C) \in \mathfrak S^C\) for any $\mathcal M \in \mathfrak S$. The proof of the converse is similar to the proof of Theorem \ref{T6.2}.
	\end{proof}
	\section{Semisimple and Maschke functors}
	In this section, we discuss semisimple and Maschke type properties in the setting of both entwined contramodule objects and entwined comodule objects in $\mathfrak S$. We continue with an entwining structure $(A,C,\psi)$. In this section, we will always assume that the algebra $A$ is finite dimensional over $k$. As a consequence,   there exists a coevaluation map $coev_A:k\longrightarrow A\otimes A^{\ast}$ given by
	$r\mapsto r\underset{i\in I}{\sum }a_i\otimes a_1^{\ast}$  where $\{a_i\}_{i\in I}$ is a basis for $A$ over $k$, and $\{a_i^\ast\}_{i\in I}$ is its dual basis. We now recall the notion of a normalized cointegral from \cite[Definition 4.5]{Tb*}.
	\begin{defn}\label{D7.1}
		Let $(A,C,\psi)$ be an entwining structure. A $k$-linear map $\varphi:A^{\ast}\otimes C\longrightarrow A$ is said to be a normalized cointegral for $(A,C,\psi)$ if the following conditions hold.
		
		\smallskip
		(i) $(A\otimes \psi)\circ (\psi\otimes\varphi)\circ (C\otimes coev_A\otimes C)\otimes \Delta=(A\otimes \varphi\otimes C)\circ (coev_A\otimes \Delta)$
		
		\smallskip
		(ii) $(A\otimes \mu)\circ (A\otimes \varphi\otimes A)\circ (coev_A\otimes C\otimes A)=(\mu\otimes \varphi)\circ (A\otimes coev_A\otimes C)\circ \psi$
		
		\smallskip
		(iii) $\mu\circ (A\otimes \varphi)\circ (coev_A\otimes C)=\eta\circ \epsilon$		 		
	\end{defn}
The following is now an entwined  contramodule counterpart for the arguments of Brzezi\'nski \cite{Tb*} with entwined modules (see also work by Caenepeel, Militiaru and Zhu \cite{CMZ} with Doi-Hopf modules). 
	\begin{lem}\label{L7.2}
	Let $\mathfrak S$ be a $k$-linear Grothendieck category.	Let  $(A,C,\psi)$ be an entwining structure and let $\varphi:A^*\otimes C\longrightarrow A$ be a normalized cointegral for $(A,C,\psi)$. Let $(\matholdcal M,\pi_{\matholdcal M},\mu_{\matholdcal M}),(\matholdcal N,\pi_{\matholdcal N},\mu_{\matholdcal N})\in$ $_{\hspace{1em}A}^{[C,-]}\mathfrak S(\psi)$ and $\xi:\matholdcal M\longrightarrow \matholdcal N$ be a morphism in $^{[C,-]}\mathfrak S$. Then, there exists a morphism $\tilde{\xi}:\matholdcal M\longrightarrow \matholdcal N$ in $_{\hspace{1em}A}^{[C,-]}\mathfrak S(\psi)$ given by the composition
		\begin{equation}\label{eq7.1}
			\tilde{\xi}=\pi_{ \matholdcal N}\circ (C,coev_A,\matholdcal N)\circ (\varphi, A,\matholdcal N)\circ (A,\mu_{\matholdcal N})\circ (A,\xi)\circ \mu_{\matholdcal M}
		\end{equation}
	\end{lem}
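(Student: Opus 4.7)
The plan is to verify directly that the map $\tilde{\xi}$ defined in \eqref{eq7.1} is both $A$-linear and $C$-contralinear, which, in view of Definition \ref{D3.8}, exactly says $\tilde{\xi}\in\, _{\hspace{1em}A}^{[C,-]}\mathfrak S(\psi)(\matholdcal M,\matholdcal N)$. Before starting the two diagram chases, I would first record the dualized versions of the three cointegral axioms from Definition \ref{D7.1}, obtained by applying the contravariant functor $(-,\matholdcal N)$ to (i)--(iii) and reading them in the enriched notation of \eqref{eq3.10}. These dualized identities are what lets the internal $\varphi$ and $coev_A$ interact correctly with $\pi_{\matholdcal N}$, $\mu_{\matholdcal N}$ and $\psi$ inside $\mathfrak S$.

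First I would prove $A$-linearity: $\mu_{\matholdcal N}\circ\tilde{\xi}=(A,\tilde{\xi})\circ\mu_{\matholdcal M}$. Starting from the left-hand side, the key move is to use the entwined compatibility \eqref{eq3.9} for $\matholdcal N$ to rewrite $\mu_{\matholdcal N}\circ\pi_{\matholdcal N}=(A,\pi_{\matholdcal N})\circ(\psi,\matholdcal N)\circ(C,\mu_{\matholdcal N})$, pushing $\mu_{\matholdcal N}$ past $\pi_{\matholdcal N}$ and $(C,coev_A,\matholdcal N)$. The resulting expression contains $(\psi,\matholdcal N)$ composed with $(C,\mu_{\matholdcal N})\circ(C,coev_A,\matholdcal N)\circ(\varphi,A,\matholdcal N)$, at which point the dualized form of cointegral axiom (ii) lets me replace this combination with $(A,\varphi,\matholdcal N)\circ(A,coev_A,A,\matholdcal N)\circ(\mu,\matholdcal N)$ composed appropriately. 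After this substitution, associativity of the $A$-action on $\matholdcal N$ (giving $(A,\mu_{\matholdcal N})\circ\mu_{\matholdcal N}=(\mu,\matholdcal N)\circ(A,\mu_{\matholdcal N})\circ\mu_{\matholdcal N}\circ\cdots$, or rather using the module axiom on $\matholdcal N$) followed by naturality of $\mu_{\matholdcal M}$ and of $(A,-)$ collapses the expression to $(A,\tilde{\xi})\circ\mu_{\matholdcal M}$.

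Next I would prove $C$-contralinearity: $\tilde{\xi}\circ\pi_{\matholdcal M}=\pi_{\matholdcal N}\circ(C,\tilde{\xi})$. Starting from the left-hand side, I would first use that $\xi$ is a morphism in $^{[C,-]}\mathfrak S$ to move $\pi_{\matholdcal M}$ across $(A,\xi)$, replacing it by $\pi_{\matholdcal N}$ applied to $(A,C,\xi)$. Next, the entwined compatibility \eqref{eq3.9} for both $\matholdcal M$ and $\matholdcal N$, together with naturality, allows me to migrate the two $\mu$'s past the $\pi_{\matholdcal N}\circ(C,coev_A,\matholdcal N)\circ(\varphi,A,\matholdcal N)$ block. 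This produces an expression in which $(C,\Delta,\matholdcal N)$ or $(\Delta,\matholdcal N)$ appears paired with $\varphi$; here the dualized version of cointegral axiom (i) converts the combination $(\psi,\matholdcal N)\circ(C,coev_A,\matholdcal N)\circ(C,\Delta,\matholdcal N)\circ(\varphi,-,\matholdcal N)$ into $(coev_A,C,\matholdcal N)\circ(\varphi,C,\matholdcal N)\circ(\Delta,\cdots,\matholdcal N)$. A final application of the contramodule coassociativity \eqref{eq3.1} for $\matholdcal N$, in the form $\pi_{\matholdcal N}\circ(\Delta,\matholdcal N)=\pi_{\matholdcal N}\circ(C,\pi_{\matholdcal N})$, together with the naturality of $(C,-)$, yields $\pi_{\matholdcal N}\circ(C,\tilde{\xi})$.

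The main obstacle I expect is bookkeeping: the definition of $\tilde{\xi}$ is a long composition in the enriched Hom-notation \eqref{eq3.10}, and both key steps rely on rewriting cointegral axioms (i) and (ii) after dualization and then threading them through the right tensor slots. In particular, carefully tracking the positions at which $A^{\ast}$, $A$, $C$ appear (and that they match the grouping dictated by \eqref{eq3.10}) is where errors are most likely; once the dualized cointegral identities are identified and parsed correctly, the rest reduces to applications of naturality, the module/contramodule axioms and the entwining compatibility \eqref{eq3.9}, which I would organize as two large commutative diagrams (one per verification) in the final write-up.
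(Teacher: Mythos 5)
Your proposal is correct and follows essentially the same route as the paper: both verifications ($A$-linearity via \eqref{eq3.9} for $\matholdcal N$ and the dualized cointegral axiom (ii); contra-linearity via the contramodule coassociativity \eqref{eq3.1} and the dualized axiom (i)), with the only cosmetic difference being that the paper first strips off the factors $(A,\xi)\circ\mu_{\matholdcal M}$ and $(A,\mu_{\matholdcal N})\circ(A,\xi)\circ\mu_{\matholdcal M}$, which are already morphisms in $_A\mathfrak S$ and $^{[C,-]}\mathfrak S$ respectively, and then checks only the remaining block $\pi_{\matholdcal N}\circ(C,coev_A,\matholdcal N)\circ(\varphi,A,\matholdcal N)\circ(A,\mu_{\matholdcal N})$ (resp.\ $\pi_{\matholdcal N}\circ(C,coev_A,\matholdcal N)\circ(\varphi,A,\matholdcal N)$) rather than chasing the full composite.
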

	\begin{proof}
		We begin by verifying that $\tilde{\xi}$ is a morphism in $_A\mathfrak S$. Since $ (A, \xi) \circ \mu_{\matholdcal M}$ is clearly a morphism in $_A\mathfrak S$, it remains to prove that the composition $\pi_{\matholdcal N}\circ(C, \mathrm{coev}_A, \matholdcal N) \circ (\varphi, A, \matholdcal N)\circ (A,\mu_{\matholdcal N})$ is also a morphism in $_A\mathfrak S$. For this, we consider the following diagram.
		\begin{equation}\label{eq7.2}
		\small
			\begin{tikzcd}[row sep=2.8em, column sep = 4.8em]
				(A,\matholdcal N)\arrow{r}{(A,\mu_{\matholdcal N})}\arrow{dd}{(\mu,\matholdcal N)}&(A,A,\matholdcal N)\arrow{r}{(\varphi,A,\matholdcal N)}\arrow{d}{(A,A,\mu_{\matholdcal N})} &(C,A^{\ast},A,\matholdcal N)\arrow{d}{(C,A^{\ast},A,\mu_{\matholdcal N})}\arrow{r}{(C,coev_A,\matholdcal N)}&(C,\matholdcal N)\arrow{d}{(C, \mu_{\matholdcal N})}\arrow{r}{\pi_{\matholdcal N}}&\matholdcal N \arrow{dd}{\mu_{\matholdcal N}}\\
				&(A,A,A,\matholdcal N)\arrow{r}{(\varphi,A,A,\matholdcal N)}&(C,A^{\ast},A,A,\matholdcal N)\arrow{r}{(C,coev_A,A,\matholdcal N)}&(C,A,\matholdcal N)\arrow{d}{(\psi,\matholdcal N)}\\
			%	&(A,A,\matholdcal N)\arrow{d}{(\mu,A,\matholdcal N)}\\
				(A,A,\matholdcal N)\arrow{r}{(A,A,\mu_{\matholdcal N})}&(A,A,A,\matholdcal N)\arrow{r}{(A,\varphi,A,\matholdcal N)}&(A,C,A^{\ast},A,\matholdcal N)\arrow{r}{(A,C,coev_A,\matholdcal N)}&(A,C,\matholdcal N)\arrow{r}{(A,\pi_{\matholdcal N})}&(A,\matholdcal N)
			\end{tikzcd}
		\end{equation}
		We see that
		\begin{equation}\label{eq7.3}
		\begin{array}{lll}
			&\mu_{\matholdcal N}\circ \pi_{\matholdcal N}\circ (C,coev_A,\matholdcal N)\circ (\varphi,A,\matholdcal N)\circ (A,\mu_{\matholdcal N})&\\ 
			&=(A,\pi_{\matholdcal N})\circ(\psi,\matholdcal N)\circ (C,\mu_{\matholdcal N})\circ (C,coev_A,\matholdcal N)\circ (\varphi,A,\matholdcal N)\circ (A,\mu_{\matholdcal N})& 
			\mbox{(by \eqref{eq3.9})} \\
			&=(A,\pi_{\matholdcal N})\circ(\psi,\matholdcal N)\circ (C,coev_A,A,\matholdcal N)\circ (\varphi,A,A,\matholdcal N)\circ (A,A,\mu_{\matholdcal N})\circ (A,\mu_{\matholdcal N})&\\
			&=(A,\pi_{\matholdcal N})\circ(\psi,\matholdcal N)\circ (C,coev_A,A,\matholdcal N)\circ (\varphi, \mu,\matholdcal N)\circ (A,\mu_{\matholdcal N})&\\ 
			&=(A,\pi_{\matholdcal N})\circ(A,C,coev_A,\matholdcal N)\circ (A,\varphi,A,\matholdcal N)\circ (\mu,A,\matholdcal N)\circ (A,\mu_{\matholdcal N})
			&\mbox{(by Definition \ref{D7.1})}\\ 
			&=(A,\pi_{\matholdcal N})\circ(A,C,coev_A,\matholdcal N)\circ (A,\varphi,A,\matholdcal N)\circ (A,A,\mu_{\matholdcal N})\circ (\mu, \matholdcal N)
			\end{array}
		\end{equation}
		Hence the diagram \eqref{eq7.1} in commutes. 
		
		\smallskip
		We now prove that $\tilde{\xi}$ is a morphism in $^{[C,-]}\mathfrak S$. Since $(A,\mu_{\matholdcal N}), (A,\xi)$ and $\mu_{\matholdcal M}$ are already the morphisms in $^{[C,-]}\mathfrak S$, it is enough to check that the following diagram commutes.
	
		\begin{equation}\label{eq7.4}
			\begin{tikzcd}[row sep=2.8em, column sep = 4.8em]
				(C,A,A,\matholdcal N) \arrow{r}{(C,\varphi, A,\matholdcal N)} \arrow{d} {(\psi, A,\matholdcal N)}
				&  (C,C,A^{\ast},A,\matholdcal N)\arrow{r}{(C,C,coev_A,\matholdcal N)} &(C,C,\matholdcal N)\arrow{r}{(C,\pi_{\matholdcal N})}&(C,\matholdcal N)\arrow{dd}{id}\\
				(A,C,A,\matholdcal N)\arrow{d}{(A,\psi,\matholdcal N)}\\(A,A,C,\matholdcal N)\arrow{r}{(\varphi,A,C,\matholdcal N)}\arrow{d}{(A,A,\pi_{\matholdcal N})}&(C,A^{\ast},A,C,\matholdcal N)\arrow{r}{(C,coev_A,C,\matholdcal N)}\arrow{d}{(C,A^{\ast},A,\pi_{ \matholdcal N})}&(C,C,\matholdcal N)\arrow{r}{(\Delta,\matholdcal N)}\arrow{d}{(C,\pi_{ \matholdcal N})}&(C,\matholdcal N)\arrow{d}{\pi_{ \matholdcal N}}\\
				(A,A,\matholdcal N) \arrow{r}
				{(\varphi,A,\matholdcal N)} &(C,A^{\ast},A,\matholdcal N)\arrow{r}{(C,coev_A,\matholdcal N)}&(C,\matholdcal N)\arrow{r}{\pi_{ \matholdcal N}}&\matholdcal N
			\end{tikzcd}
		\end{equation}
		We note that 
		\begin{equation}
		\begin{array}{lll}
			&\pi_{ \matholdcal N}\circ (C,coev_A,\matholdcal N)\circ(\varphi,A,\matholdcal N)\circ (A,A,\pi_{ \matholdcal N})\circ (A,\psi,\matholdcal N)\circ (\psi,A,\matholdcal N)&\\\
			&=\pi_{ \matholdcal N}\circ (\Delta,\matholdcal N)\circ (C,coev_A,C,\matholdcal N)\circ (\varphi,A,C,\matholdcal N)\circ  (A,\psi,\matholdcal N)\circ (\psi,A,\matholdcal N)&\\ 
			&=\pi_{ \matholdcal N}\circ (\Delta,\matholdcal N)\circ (C,coev_A,C,\matholdcal N)\circ(\varphi,\psi,\matholdcal N)\circ (\psi, A,\matholdcal N)&\\
			&=\pi_{ \matholdcal N}\circ(\Delta, coev_A,\matholdcal N)\circ (C,\varphi, A,\matholdcal N)& \mbox{(by Definition \ref{D7.1}))} \\
			&=\pi_{\matholdcal N}\circ (\Delta,\matholdcal N)\circ (C,C,coev_A,\matholdcal N)\circ (C,\varphi,A,\matholdcal N)&\\
			&=\pi_{ \matholdcal M}\circ (C,\pi_{ \matholdcal N})\circ (C,C,coev_A,\matholdcal N)\circ (C,\varphi,A,\matholdcal N) & \\
			\end{array}
		\end{equation}
		Hence, diagram in (\ref{eq7.4}) commutes. This completes the proof.
	\end{proof}
	\begin{thm}\label{P7.3}
		Let  $(A,C,\psi)$ be an entwining structure and $\varphi$ be a normalized cointegral for $(A,C,\psi)$. Then, a morphism in $_{\hspace{1em}A}^{[C,-]}\mathfrak S(\psi)$ has a section (resp. retraction) in $_{\hspace{1em}A}^{[C,-]}\mathfrak S(\psi)$ if and only if it has a section (resp. retraction) in $^{[C,-]}\mathfrak S$.
	\end{thm}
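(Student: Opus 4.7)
The forward implication is immediate, since every morphism in $_{\hspace{1em}A}^{[C,-]}\mathfrak S(\psi)$ is \emph{a fortiori} a morphism in $^{[C,-]}\mathfrak S$, so a section or retraction in the richer category is automatically one in the weaker.

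For the converse, let $\phi:\matholdcal M\longrightarrow \matholdcal N$ be a morphism in $_{\hspace{1em}A}^{[C,-]}\mathfrak S(\psi)$, and suppose first that $s:\matholdcal N\longrightarrow \matholdcal M$ is a section of $\phi$ in $^{[C,-]}\mathfrak S$, i.e., $\phi\circ s=id_{\matholdcal N}$. I would apply Lemma \ref{L7.2} to $s$ to obtain $\tilde s:\matholdcal N\longrightarrow \matholdcal M$ in $_{\hspace{1em}A}^{[C,-]}\mathfrak S(\psi)$, and then verify that $\tilde s$ remains a section of $\phi$. The strategy is to slide $\phi$ through each factor in the definition of $\tilde s$: past $\pi_{\matholdcal M}$ using the compatibility of $\phi$ with $\pi$, past $(C,coev_A,\matholdcal M)$ and $(\varphi,A,\matholdcal M)$ by naturality of $coev_A$ and $\varphi$ in the $\matholdcal M$-slot, and past $(A,\mu_{\matholdcal M})$ using the $A$-linearity of $\phi$. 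After all these commutations, the factor $(A,\phi)\circ (A,s)=(A,\phi\circ s)=id_{(A,\matholdcal N)}$ cancels, leaving
\[
\phi\circ \tilde s=\pi_{\matholdcal N}\circ (C,coev_A,\matholdcal N)\circ (\varphi,A,\matholdcal N)\circ (A,\mu_{\matholdcal N})\circ \mu_{\matholdcal N}.
\]

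Two final reductions will close the argument. The $A$-module associativity rewrites $(A,\mu_{\matholdcal N})\circ \mu_{\matholdcal N}$ as $(\mu,\matholdcal N)\circ \mu_{\matholdcal N}$, and applying the functor $(-,\matholdcal N)$ to the cointegral normalization condition (iii) of Definition \ref{D7.1} yields
\[
(C,coev_A,\matholdcal N)\circ (\varphi,A,\matholdcal N)\circ (\mu,\matholdcal N)=(\epsilon,\matholdcal N)\circ (\eta,\matholdcal N).
\]
The $A$-unit axiom $(\eta,\matholdcal N)\circ \mu_{\matholdcal N}=id_{\matholdcal N}$ and the $C$-contramodule counit axiom $\pi_{\matholdcal N}\circ (\epsilon,\matholdcal N)=id_{\matholdcal N}$ then collapse the whole composition to $id_{\matholdcal N}$.

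The retraction case is dual. Given $r:\matholdcal N\longrightarrow \matholdcal M$ with $r\circ \phi=id_{\matholdcal M}$ in $^{[C,-]}\mathfrak S$, set $\tilde r$ via Lemma \ref{L7.2}. This time $\phi$ appears on the right, so the plan is to use the $A$-linearity of $\phi$ in the form $\mu_{\matholdcal N}\circ \phi=(A,\phi)\circ \mu_{\matholdcal M}$ to push $\phi$ inward, where $(A,r)\circ (A,\phi)=(A,r\circ \phi)=id_{(A,\matholdcal M)}$ absorbs; the identical chain of reductions (associativity, cointegral condition, unit, counit) then yields $\tilde r\circ \phi=id_{\matholdcal M}$. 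The only real care required throughout is in tracking the $(V_1,\ldots,V_n,\matholdcal M)$ notation when commuting morphisms past the iterated internal-hom factors; no new idea beyond Lemma \ref{L7.2} and the defining axioms is needed, so I do not anticipate any substantial obstacle.
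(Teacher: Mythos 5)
Your argument is correct and follows essentially the same route as the paper: the forward direction is trivial, and the converse applies Lemma \ref{L7.2} to the section/retraction and then cancels $\phi$ through the averaging operator, reducing to the normalization condition (iii) of Definition \ref{D7.1} together with the unit and counit axioms. The paper merely packages the same computation slightly differently, by first proving $\tilde{\xi}=\xi$ for any $\xi$ already in $_{\hspace{1em}A}^{[C,-]}\mathfrak S(\psi)$ and then invoking $\xi\circ\tilde{\zeta}=\widetilde{\xi\circ\zeta}=\widetilde{id}=id$.
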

	\begin{proof}
		We begin with a morphism $\xi:\matholdcal M\longrightarrow \matholdcal N$ in $_{\hspace{1em}A}^{[C,-]}\mathfrak S(\psi)$. In that case, we note that
		\begin{equation}
		\begin{array}{lll}
			\tilde{\xi}&=\pi_{ \matholdcal N}\circ (C,coev_A,\matholdcal N)\circ (\varphi,A,\matholdcal N)\circ (A,\mu_{\matholdcal N})\circ (A,\xi)\circ \mu_{\matholdcal M}&\\
			&=\pi_{ \matholdcal N}\circ (C,coev_A,\matholdcal N)\circ (\varphi,A,\matholdcal N)\circ (A,\mu_{\matholdcal N})\circ\mu_{\matholdcal N}\circ \xi &\\
			&=\pi_{ \matholdcal N}\circ (C,coev_A,\matholdcal N)\circ (\varphi,A,\matholdcal N)\circ (\mu,\matholdcal N)\circ\mu_{\matholdcal N}\circ \xi &\\
			&=\pi_{ \matholdcal N}\circ (\epsilon,\matholdcal N)\circ (\eta,\matholdcal N)\circ \mu_{\matholdcal N}\circ \xi
			&\mbox{(by Definition \ref{D7.1})} \\ 
			&=\xi &
			\end{array}
		\end{equation}
		We now consider a morphism $\zeta:\matholdcal N\longrightarrow \matholdcal M$ such that $\zeta$ is a section of $\xi$ in $^{[C,-]}\mathfrak S$, i.e., $\xi\circ \zeta=id$. By Lemma \ref{L7.2}, we obtain a morphism $\tilde{\zeta}:\matholdcal N\longrightarrow \matholdcal M$ in $_{\hspace{1em}A}^{[C,-]}\mathfrak S(\psi)$. Then, it is easy to see that $\xi\circ \tilde{\zeta}=\widetilde{\xi\circ \zeta}=id.$ This shows that $\tilde{\zeta}:\matholdcal N\longrightarrow\matholdcal M$ is a section of $\xi$ in $_{\hspace{1em}A}^{[C,-]}\mathfrak S(\psi)$. 
		The case for retraction can also be proved similarly.
	\end{proof}
	We now recall the notions of semisimple and Maschke functors from \cite[\S 3]{CM}. Let $\mathcal T:\mathcal E\longrightarrow \mathcal D$ be a functor between abelian categories. The functor $\mathcal T$ is said to be \textit{semisimple} if it reflects split exact sequences, i.e., a short exact sequence $0\longrightarrow \matholdcal N\longrightarrow \matholdcal M\longrightarrow \matholdcal P\longrightarrow 0$ is split exact in $\mathcal E$   if $0\longrightarrow \mathcal T(\matholdcal N)\longrightarrow \mathcal T(\matholdcal M)\longrightarrow \mathcal T(\matholdcal P)\longrightarrow 0$ is split exact in $\mathcal D$.
	The functor $\mathcal T$ is said to be \textit{Maschke} if for any morphisms $i:\matholdcal M'\longrightarrow \matholdcal M$ and $f:\matholdcal M'\longrightarrow \matholdcal N$ in $\matholdcal E$ such that $\mathcal T(\iota):\mathcal T(\matholdcal M')\longrightarrow \mathcal T(\matholdcal M)$ is a split monomorphism in $\mathcal D$, then there exists a morphism $g:\matholdcal M\longrightarrow \matholdcal N$ such that $g\circ i=f$. Additionally, \cite[Proposition 3.7]{CM} shows that if $\mathcal T$ is semisimple and reflects monomorphisms (or epimorphisms), then $\mathcal T$ is a Maschke functor.
	\begin{Thm}\label{T7.4}
		Let $\mathfrak S$ be a $k$-linear Grothendieck category. Let  $(A,C,\psi)$ be an entwining structure and $\varphi:A^\ast\otimes C\longrightarrow A$ be a normalized cointegral for $(A,C,\psi)$. Then,
		$^{[C,-]}\mathcal F:~_{\hspace{1em}A}^{[C,-]}\mathfrak S(\psi)\longrightarrow~ ^{[C,-]}\mathfrak S$ is a semisimple and Maschke functor.
		
	\end{Thm}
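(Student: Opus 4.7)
The plan is to derive both properties of $^{[C,-]}\mathcal F$ from the lifting result already established in Proposition \ref{P7.3}, which encapsulates the averaging performed by the normalized cointegral $\varphi$: a morphism in $_{\hspace{1em}A}^{[C,-]}\mathfrak S(\psi)$ has a section (resp.\ retraction) there if and only if it has one in $^{[C,-]}\mathfrak S$. As a preliminary observation I will note that $^{[C,-]}\mathcal F$ is exact and both preserves and reflects monomorphisms and epimorphisms. This is immediate because, by Theorem \ref{T3.13}, kernels and cokernels in $_{\hspace{1em}A}^{[C,-]}\mathfrak S(\psi)$ are computed in $\mathfrak S$, and the standing exactness assumption on $(C,-)$ gives the same description in $^{[C,-]}\mathfrak S$.

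For semisimplicity, I will take a short exact sequence
\begin{equation*}
0\longrightarrow \matholdcal N \xrightarrow{\iota} \matholdcal M \xrightarrow{\sigma} \matholdcal P \longrightarrow 0
\end{equation*}
in $_{\hspace{1em}A}^{[C,-]}\mathfrak S(\psi)$ whose image under $^{[C,-]}\mathcal F$ splits in $^{[C,-]}\mathfrak S$. A splitting is a morphism $\zeta:\matholdcal P\to\matholdcal M$ in $^{[C,-]}\mathfrak S$ with $\sigma\circ\zeta=\mathrm{id}_{\matholdcal P}$. Applying the averaging construction from Lemma \ref{L7.2} produces $\tilde\zeta:\matholdcal P\to\matholdcal M$ in $_{\hspace{1em}A}^{[C,-]}\mathfrak S(\psi)$, and the naturality identity $\sigma\circ\tilde\zeta=\widetilde{\sigma\circ\zeta}$ (which holds whenever $\sigma$ is already a morphism in $_{\hspace{1em}A}^{[C,-]}\mathfrak S(\psi)$, by the $A$-linearity of $\sigma$ and the naturality of $\mathrm{coev}_A$ and $\varphi$) combined with the identity $\widetilde{\mathrm{id}_{\matholdcal P}}=\mathrm{id}_{\matholdcal P}$ recorded in the proof of Proposition \ref{P7.3}, shows that $\sigma$ has a section in $_{\hspace{1em}A}^{[C,-]}\mathfrak S(\psi)$. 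Hence the sequence splits.

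For the Maschke property I will invoke \cite[Proposition 3.7]{CM}, which asserts that a semisimple functor between abelian categories that reflects monomorphisms is automatically a Maschke functor; reflection of monomorphisms was already observed in the preliminary step above. I do not anticipate any substantive obstacle, since the nontrivial cointegral-based averaging has been carried out in Lemma \ref{L7.2} and Proposition \ref{P7.3}, and the current theorem is a formal consequence of those results together with standard categorical generalities.
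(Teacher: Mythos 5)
Your proposal is correct and follows essentially the same route as the paper: semisimplicity is deduced from the section/retraction lifting of Proposition \ref{P7.3} (whose proof already contains the identities $\widetilde{\mathrm{id}}=\mathrm{id}$ and $\xi\circ\tilde\zeta=\widetilde{\xi\circ\zeta}$ that you unpack), and the Maschke property follows from \cite[Proposition 3.7]{CM} together with the observation that kernels in both categories are computed in $\mathfrak S$, so $^{[C,-]}\mathcal F$ reflects monomorphisms.
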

	\begin{proof}
		We consider an exact sequence $0\longrightarrow \matholdcal N\longrightarrow \matholdcal M\longrightarrow \matholdcal P\longrightarrow 0$ in $~_{\hspace{1em}A}^{[C,-]}\mathfrak S(\psi)$ that splits in $^{[C,-]}\mathfrak S$. By  Proposition \ref{P7.3}, it is clear that $0\longrightarrow \matholdcal N\longrightarrow \matholdcal M\longrightarrow \matholdcal P\longrightarrow 0$ also splits in $~_{\hspace{1em}A}^{[C,-]}\mathfrak S(\psi)$. This shows that $^{[C,-]}\mathcal F$ is a semisimple functor. Since kernels in $~_{\hspace{1em}A}^{[C,-]}\mathfrak S(\psi)$ and $^{[C,-]}\mathfrak S$ are both computed in $\mathfrak S$, it is clear that the forgetful functor $^{[C,-]}\mathcal F$ preserves and reflects monomorphisms. It now follows from \cite[Proposition 3.7]{CM} that the functor $^{[C,-]}\mathcal F$ is Maschke.
	\end{proof}
	We conclude by giving a brief sketch of the corresponding results in the case of entwined comodule objects in $\mathfrak S$ over $(A,C,\psi)$. Let $(\mathcal M,\Delta_{\mathcal M},\mu_{\mathcal M}),$ $(\mathcal N,\Delta_{\mathcal N},\mu_{\mathcal N})\in ~\mathfrak S_A^C(\psi)$ and suppose $\xi:\mathcal M\longrightarrow \mathcal N$ is a morphism in $\mathfrak S^C$. In a manner similar to Brzezi\'nski \cite[Lemma 4.7]{Tb*},  there exists a morphism $\hat{\xi}:\mathcal M\longrightarrow \mathcal N$ in $\mathfrak S_A^C(\psi)$ given by the composition
	\begin{equation}
		\hat{\xi}=\mu_{\mathcal N}\circ (\xi\otimes A)\circ (\mu_{\mathcal M}\otimes A)\circ (\mathcal M\otimes A\otimes \varphi)\circ (\mathcal M\otimes coev_A\otimes C)\circ \Delta_{\mathcal M}
	\end{equation}
	Accordingly, it may be checked that a morphism in $\mathfrak S_A^C(\psi)$ has a section (resp. retraction) in $\mathfrak S_A^C(\psi)$ if and only if it has a section (resp. retraction) in $\mathfrak S^C$. Moreover, the following result holds for the forgetful functor $\mathcal F^C:\mathfrak S_A^C(\psi)\longrightarrow \mathfrak S^C$.
	\begin{Thm}\label{tT7.5}
		Let $\mathfrak S$ be a $k$-linear Grothendieck category. Let  $(A,C,\psi)$ be an entwining structure and $\varphi:A^\ast\otimes C\longrightarrow A$ be a normalized cointegral for $(A,C,\psi)$. Then,
		$\mathcal F^C:\mathfrak S_A^C(\psi)\longrightarrow~ \mathfrak S^C$ is a semisimple and Maschke functor.
		
	\end{Thm}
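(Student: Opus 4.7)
The plan is to mirror the strategy used for Theorem \ref{T7.4}, passing from the contramodule setting to the comodule setting by dualizing all arrows. The excerpt already indicates the key ingredient: for any $(\mathcal M,\Delta_{\mathcal M},\mu_{\mathcal M}), (\mathcal N,\Delta_{\mathcal N},\mu_{\mathcal N})\in\mathfrak S_A^C(\psi)$ and any morphism $\xi:\mathcal M\longrightarrow \mathcal N$ in $\mathfrak S^C$, the composition
\begin{equation*}
\hat{\xi}=\mu_{\mathcal N}\circ (\xi\otimes A)\circ (\mu_{\mathcal M}\otimes A)\circ (\mathcal M\otimes A\otimes \varphi)\circ (\mathcal M\otimes coev_A\otimes C)\circ \Delta_{\mathcal M}
\end{equation*}
is a morphism in $\mathfrak S_A^C(\psi)$. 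So first I would carefully verify this fact, which is the comodule analogue of Lemma \ref{L7.2}: $A$-linearity of $\hat{\xi}$ uses condition (ii) of Definition \ref{D7.1} together with associativity of the $A$-action on $\mathcal N$, while $C$-colinearity uses condition (i) of Definition \ref{D7.1} together with the entwining compatibility \eqref{Etm} that relates $\Delta_{\mathcal M}$ and $\mu_{\mathcal M}$ through $\psi$. These are exactly the arrows-reversed analogues of the diagrams \eqref{eq7.2} and \eqref{eq7.4} in the proof of Lemma \ref{L7.2}.

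Next I would prove the analogue of Proposition \ref{P7.3}: a morphism in $\mathfrak S_A^C(\psi)$ admits a section (respectively retraction) in $\mathfrak S_A^C(\psi)$ if and only if it admits one in $\mathfrak S^C$. The key calculation is that if $\xi$ is already a morphism of $A$-modules, then $\hat{\xi}=\xi$. Indeed, using that $\xi\circ \mu_{\mathcal M}=\mu_{\mathcal N}\circ(\xi\otimes A)$ and that multiplication is associative, $\hat\xi$ collapses to $\xi\circ \mu_{\mathcal M}\circ(\mathcal M\otimes\mu\circ(A\otimes\varphi)\circ(coev_A\otimes C))\circ \Delta_{\mathcal M}$, and condition (iii) of Definition \ref{D7.1} identifies the inner composition with $\mathcal M\otimes\eta\circ\epsilon$, so the counit axiom of $\Delta_{\mathcal M}$ finishes the reduction. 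Consequently, given $\xi:\mathcal M\to\mathcal N$ in $\mathfrak S_A^C(\psi)$ and a section $\zeta:\mathcal N\to\mathcal M$ in $\mathfrak S^C$, the lifted map $\hat\zeta$ lies in $\mathfrak S_A^C(\psi)$ and satisfies $\xi\circ\hat\zeta=\widehat{\xi\circ\zeta}=\widehat{id_{\mathcal N}}=id_{\mathcal N}$; the retraction case is symmetric.

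With the section/retraction statement in hand, the semisimplicity of $\mathcal F^C$ is immediate: any short exact sequence in $\mathfrak S_A^C(\psi)$ whose image under $\mathcal F^C$ splits in $\mathfrak S^C$ admits, by the lifting, a splitting in $\mathfrak S_A^C(\psi)$. For the Maschke property, I will invoke \cite[Proposition 3.7]{CM}: since kernels in both $\mathfrak S_A^C(\psi)$ and $\mathfrak S^C$ are computed in $\mathfrak S$ (by Theorem \ref{T2.6} and the discussion after Definition \ref{D2.1}), the forgetful functor $\mathcal F^C$ preserves and reflects monomorphisms, so being semisimple forces $\mathcal F^C$ to be Maschke as well.

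The main obstacle will be the verification that $\hat{\xi}$ is simultaneously $A$-linear and $C$-colinear, because this is where all three axioms of a normalized cointegral get used in conjunction with the entwining axioms \eqref{eq2.1}--\eqref{eq2.2} and the compatibility diagram \eqref{Etm}. The diagram chase is essentially the one dualizing \eqref{eq7.2} and \eqref{eq7.4}; once it is completed, every remaining step in the proof is a short formal consequence of the lifting construction and standard categorical principles.
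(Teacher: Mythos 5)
Your proposal is correct and follows essentially the same route as the paper: the paper's own proof simply defers to the argument of Theorem \ref{T7.4}, relying on the comodule analogues of Lemma \ref{L7.2} and Proposition \ref{P7.3} sketched just before the statement, and then invokes \cite[Proposition 3.7]{CM} exactly as you do. Your identification of where each axiom of the normalized cointegral enters (conditions (i)--(iii) for colinearity, linearity, and the collapse $\hat{\xi}=\xi$ respectively) matches the structure of the paper's contramodule computation.
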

	\begin{proof}
		The proof of this is similar to that of Theorem \ref{T7.4}.
	\end{proof}

	\begin{bibdiv}
		\begin{biblist}
			
			\bib{Abu}{article}{
   author={Abuhlail, J.~Y.},
   title={Dual entwining structures and dual entwined modules},
   journal={Algebr. Represent. Theory},
   volume={8},
   date={2005},
   number={2},
   pages={275--295},
}

\bib{AJ}{article}{
   author={Anel, M.},
      author={Joyal, A.},
   title={Sweedler theory for (co)algebras and the bar-cobar constructions},
   journal={arXiv:1309.6952},
   date={2013},
}

\bib{AZ0}{article}{
   author={Artin, M.},
   author={Zhang, J.~J.},
   title={Noncommutative projective schemes},
   journal={Adv. Math.},
   volume={109},
   date={1994},
   number={2},
   pages={228--287},
}

\bib{ASZ}{article}{
   author={Artin, M.},
   author={Small, L. W.},
   author={Zhang, J. J.},
   title={Generic flatness for strongly Noetherian algebras},
   journal={J. Algebra},
   volume={221},
   date={1999},
   number={2},
   pages={579--610},
}

			\bib{AZ}{article}{
				author={Artin, M.},
				author={Zhang, J. J.},
				title={Abstract Hilbert schemes},
				journal={Algebr. Represent. Theory},
				volume={4},
				date={2001},
				number={4},
				pages={305--394},
			}
			
			\bib{BBR1}{article}{
   author={Balodi, M.},
   author={Banerjee, A.},
   author={Ray, S.},
   title={Cohomology of modules over $H$-categories and co-$H$-categories},
   journal={Canad. J. Math.},
   volume={72},
   date={2020},
   number={5},
   pages={1352--1385},
}
			
			\bib{BBR2}{article}{
   author={Balodi, M.},
   author={Banerjee, A.},
   author={Ray, S.},
   title={Entwined modules over linear categories and Galois extensions},
   journal={Israel J. Math.},
   volume={241},
   date={2021},
   number={2},
   pages={623--692},
}

\bib{ABtra}{article}{
   author={Banerjee, A.},
   title={Entwined modules over representations of categories},
   journal={Algebr. Represent. Theory},
   volume={26},
   date={2023},
   number={6},
   pages={3185--3221},
}

			\bib{BBK}{article}{
				author={Balodi, M.},
				author={Banerjee, A.},
				author={Kour, S.},
				title={Comodule theories in Grothendieck categories and relative Hopf objects},
				journal={Journal of Pure and Applied Algebra},
				volume={228},
				date={2024},
				number={6},
				pages = {107607},}
				
				\bib{BBR3}{article}{
   author={Balodi, M.},
   author={Banerjee, A.},
   author={Ray, S.},
   title={Categories of modules, comodules and contramodules over
   representations},
   journal={Forum Math.},
   volume={36},
   date={2024},
   number={1},
   pages={111--152},
}

\bib{BKtga}{article}{
   author={Banerjee, A.},
   author={Kour, S.},
   title={On measurings of algebras over operads and homology theories},
   journal={Algebr. Geom. Topol.},
   volume={22},
   date={2022},
   number={3},
   pages={1113--1158},
}

\bib{BKg}{article}{
				author={Banerjee, A.},
				author={Kour, S.},
				title={Noncommutative supports, local cohomology and spectral sequences},
				journal={arXiv:2205.04000v6 [math.CT] },
				date={2023},
			}
			
			\bib{BKvda}{article}{
   author={Banerjee, A.},
   author={Kour, S.},
   title={Measurings of Hopf algebroids and morphisms in cyclic (co)homology
   theories},
   journal={Adv. Math.},
   volume={442},
   date={2024},
   pages={Paper No. 109581, 48},
}

			\bib{BKsm}{article}{
   author={Banerjee, A.},
   author={Kour, S.},
   title={Entwined comodules and contramodules over coalgebras with several objects: Frobenius, separability and Maschke theorems},
				journal={	arXiv:2410.17852 [math.CT]},
				date={2024},
}

\bib{Bat}{article}{
   author={Batchelor, M.},
   title={Difference operators, measuring coalgebras, and quantum group-like
   objects},
   journal={Adv. Math.},
   volume={105},
   date={1994},
   number={2},
   pages={190--218},
}

				\bib{Bazz}{article}{
   author={Bazzoni, S.},
   author={Positselski, L.},
   author={\v{S}\v{t}ov\'{\i}\v{c}ek, J.},
   title={Projective covers of flat contramodules},
   journal={Int. Math. Res. Not. IMRN},
   date={2022},
   number={24},
   pages={19527--19564},
}

			\bib{BlMo}{article}{
   author={Blattner, R.~J.},
   author={Montgomery, S.},
   title={Crossed products and Galois extensions of Hopf algebras},
   journal={Pacific J. Math.},
   volume={137},
   date={1989},
   number={1},
   pages={37--54},
}
				\bib{BM}{article}{
				author={Brzezi\'nski, T.},
				author={Majid, S.},
				title={Coalgebra bundles},
				journal={Comm. Math. Phys.},
				volume={191},
				date={1998},
				number={2},
				pages={467--492},
			}
				\bib{Tb}{article}{
				author={Brzezi\'nski, T.},
				title={On modules associated to coalgebra Galois extensions},
				journal={J. Algebra},
				volume={215},
				date={1999},
				number={1},
				pages={290--317},
			}
				\bib{BCMZ}{article}{
				author={Brzezi\'nski, T.},
				author={Caenepeel, S.},
				author={Militaru, G.},
				author={Zhu, S.},
				title={Frobenius and Maschke type theorems for Doi-Hopf modules and
					entwined modules revisited: a unified approach},
				conference={
					title={Ring theory and algebraic geometry},
					address={Le\'on},
					date={1999},
				},
				book={
					series={Lecture Notes in Pure and Appl. Math.},
					volume={221},
					publisher={Dekker, New York},
				},
				date={2001},
				pages={1--31},
			}
			\bib{Tb99}{article}{
				author={Brzezi\'nski, T.},
				author={Hajac, P. M.},
				title={Coalgebra extensions and algebra coextensions of Galois type},
				journal={Comm. Algebra},
				volume={27},
				date={1999},
				number={3},
				pages={1347--1367},
			}
			\bib{Tb*}{article}{
				author={Brzezi\'nski, T.},
				title={Frobenius properties and Maschke-type theorems for entwined
					modules},
				journal={Proc. Amer. Math. Soc.},
				volume={128},
				date={2000},
				number={8},
				pages={2261--2270},
			}
			
			\bib{Tbart}{article}{
   author={Brzezi\'nski, T.},
   title={The structure of corings: induction functors, Maschke-type
   theorem, and Frobenius and Galois-type properties},
   journal={Algebr. Represent. Theory},
   volume={5},
   date={2002},
   number={4},
   pages={389--410},
}
			\bib{BWb}{book}{
				author={Brzezi\'nski, T.},
				author={Wisbauer, R.},
				title={Corings and comodules},
				series={London Mathematical Society Lecture Note Series},
				volume={309},
				publisher={Cambridge University Press, Cambridge},
				date={2003},
				pages={xii+476},
			}
			
			\bib{BCT1}{article}{
   author={Bulacu, D.},
   author={Caenepeel, S.},
   author={Torrecillas, B.},
   title={Frobenius and separable functors for the category of entwined
   modules over cowreaths, I: General theory},
   journal={Algebr. Represent. Theory},
   volume={23},
   date={2020},
   number={3},
   pages={1119--1157},
}

\bib{BCT2}{article}{
   author={Bulacu, D.},
   author={Caenepeel, S.},
   author={Torrecillas, B.},
   title={Frobenius and separable functors for the category of entwined
   modules over cowreaths, II: applications},
   journal={J. Algebra},
   volume={515},
   date={2018},
   pages={236--277},
}

	\bib{CM}{article}{
				author={Caenepeel, S.},
				author={Militaru, G.},
				title={Maschke functors, semisimple functors and separable functors of
					the second kind: applications},
				journal={J. Pure Appl. Algebra},
				volume={178},
				date={2003},
				number={2},
				pages={131--157},
			}
			
					\bib{CM99}{article}{
   author={Caenepeel, S.},
   author={Militaru, G.},
   author={Ion, B.},
   author={Zhu, S.},
   title={Separable functors for the category of Doi-Hopf modules,
   applications},
   journal={Adv. Math.},
   volume={145},
   date={1999},
   number={2},
   pages={239--290},
   
}

				\bib{CMZ}{article}{
				author={Caenepeel, S.},
				author={Militaru, G.},
				author={Zhu, S.},
				title={A Maschke type theorem for Doi-Hopf modules and applications},
				journal={J. Algebra},
				volume={187},
				date={1997},
				number={2},
				pages={388--412},
			}

\bib{CMZ0}{article}{
   author={Caenepeel, S.},
   author={Militaru, G.},
   author={Zhu, S.},
   title={Doi-Hopf modules, Yetter-Drinfel\cprime d modules and Frobenius type
   properties},
   journal={Trans. Amer. Math. Soc.},
   volume={349},
   date={1997},
   number={11},
   pages={4311--4342},
 
}

			\bib{CR}{article}{
			author={Caenepeel, S.},
			author={Raianu, \c S.},
			title={Induction functors for the Doi-Koppinen unified Hopf modules},
			conference={
				title={Abelian groups and modules},
				address={Padova},
				date={1994},
			},
			book={
				series={Math. Appl.},
				volume={343},
				publisher={Kluwer Acad. Publ., Dordrecht},
			},
			isbn={0-7923-3756-5},
			date={1995},
			pages={73--94},
		}
		
		\bib{DoiY}{article}{
   author={Doi, Y.},
   title={Hopf extensions of algebras and Maschke type theorems},
   note={Hopf algebras},
   journal={Israel J. Math.},
   volume={72},
   date={1990},
   number={1-2},
   pages={99--108},
}

			\bib{ENak}{article}{
   author={Eilenberg, S.},
   author={Nakayama, T.},
   title={On the dimension of modules and algebras. II. Frobenius algebras
   and quasi-Frobenius rings},
   journal={Nagoya Math. J.},
   volume={9},
   date={1955},
   pages={1--16},
}
			
			\bib{EM}{article}{
   author={Eilenberg, S.},
   author={Moore, J. C.},
   title={Foundations of relative homological algebra},
   journal={Mem. Amer. Math. Soc.},
   volume={55},
   date={1965},
   pages={39},
}

			\bib{Gro}{article}{
				author={Grothendieck, A.},
				title={Sur quelques points d’alg`ebre homologique},
				journal={Tohoku Math. J.(2)},
				volume={9},
				date={1957},
				pages={119--221},
			}	
			
			\bib{bim1}{article}{
   author={Grunenfelder, L.},
   author={Mastnak, M.},
   title={On bimeasurings},
   journal={J. Pure Appl. Algebra},
   volume={204},
   date={2006},
   number={2},
   pages={258--269},
}

\bib{bim2}{article}{
   author={Grunenfelder, L.},
   author={Mastnak, M.},
   title={On bimeasurings. II},
   journal={J. Pure Appl. Algebra},
   volume={209},
   date={2007},
   number={3},
   pages={823--832},
}

\bib{Vas1}{article}{
   author={Hyland, M.},
   author={L\'{o}pez Franco, I.},
   author={Vasilakopoulou, C.},
   title={Hopf measuring comonoids and enrichment},
   journal={Proc. Lond. Math. Soc. (3)},
   volume={115},
   date={2017},
   number={5},
   pages={1118--1148},
}

			\bib{Jia}{article}{
   author={Jia, L.},
   title={The sovereign structure on categories of entwined modules},
   journal={J. Pure Appl. Algebra},
   volume={221},
   date={2017},
   number={4},
   pages={867--874},
}

\bib{Kad99}{article}{
   author={Kadison, L.},
   title={Separability and the twisted Frobenius bimodule},
   journal={Algebr. Represent. Theory},
   volume={2},
   date={1999},
   number={4},
   pages={397--414},
}

\bib{LGS}{article}{
   author={Lowen, W.},
   author={Ramos Gonz\'{a}lez, J.},
   author={Shoikhet, B.},
   title={On the tensor product of linear sites and Grothendieck categories},
   journal={Int. Math. Res. Not. IMRN},
   date={2018},
   number={21},
   pages={6698--6736},
}
			\bib{Mac}{book}{
				author={Mac Lane, S.},
				title={Categories for the working mathematician},
				series={Graduate Texts in Mathematics},
				volume={5},
				edition={2},
				publisher={Springer-Verlag, New York},
				date={1998},
				pages={xii+314},
			}
			\bib{NP}{book}{
				author={Popescu, N.},
				title={Abelian categories with applications to rings and modules},
				series={London Mathematical Society Monographs},
				volume={No. 3},
				publisher={Academic Press, London-New York},
				date={1973},
				pages={xii+467},
			}
			
			\bib{Porst}{article}{
   author={Porst, H.-E.},
   author={Street, R.},
   title={Generalizations of the Sweedler dual},
   journal={Appl. Categ. Structures},
   volume={24},
   date={2016},
   number={5},
   pages={619--647},
}
			
			\bib{Pos1}{book}{
   author={Positselski, L.},
   title={Homological algebra of semimodules and semicontramodules},
   series={Instytut Matematyczny Polskiej Akademii Nauk. Monografie
   Matematyczne (New Series) [Mathematics Institute of the Polish Academy of
   Sciences. Mathematical Monographs (New Series)]},
   volume={70},
   note={Semi-infinite homological algebra of associative algebraic
   structures;
   Appendix C in collaboration with Dmitriy Rumynin; Appendix D in
   collaboration with Sergey Arkhipov},
   publisher={Birkh\"{a}user/Springer Basel AG, Basel},
   date={2010},
   pages={xxiv+349},
}

\bib{Pos2}{article}{
   author={Positselski, L.},
   title={Two kinds of derived categories, Koszul duality, and
   comodule-contramodule correspondence},
   journal={Mem. Amer. Math. Soc.},
   volume={212},
   date={2011},
   number={996},
   pages={vi+133},
}
			
			\bib{Sch}{article}{
   author={Schauenburg, P.},
   title={Doi-Koppinen Hopf modules versus entwined modules},
   journal={New York J. Math.},
   volume={6},
   date={2000},
   pages={325--329},
}

			\bib{Sweed}{book}{
   author={Sweedler, M. E.},
   title={Hopf algebras},
   series={Mathematics Lecture Note Series},
   publisher={W. A. Benjamin, Inc., New York},
   date={1969},
}

\bib{Vas2}{article}{
   author={Vasilakopoulou, C.},
   title={On enriched fibrations},
   journal={Cah. Topol. G\'{e}om. Diff\'{e}r. Cat\'{e}g.},
   volume={59},
   date={2018},
   number={4},
   pages={354--387},
}

\bib{Vas3}{article}{
   author={Vasilakopoulou, C.},
   title={Enriched duality in double categories: $\mathcal{V}$-categories and
   $\mathcal{V}$-cocategories},
   journal={J. Pure Appl. Algebra},
   volume={223},
   date={2019},
   number={7},
   pages={2889--2947},
}

		\end{biblist}
		
	\end{bibdiv}

\end{document}